\numberwithin{equation}{section}
\tikzset{sgplattice/.style={inner sep=1pt,norm/.style={red!50!blue},char/.style={blue!50!black},
  lin/.style={black!50}},cnj/.style={black!50,yshift=-2.5pt,left=-1pt of #1,scale=0.5,fill=white}}
\DeclareFontFamily{U}{mathb}{\hyphenchar\font45}
\DeclareFontShape{U}{mathb}{m}{n}{
      <5> <6> <7> <8> <9> <10> gen * mathb
      <10.95> mathb10 <12> <14.4> <17.28> <20.74> <24.88> mathb12
      }{}
\DeclareSymbolFont{mathb}{U}{mathb}{m}{n}
\DeclareMathSymbol{\righttoleftarrow}{3}{mathb}{"FD}
\theoremstyle{plain}
\newtheorem{prop}{Proposition}[section]
\newtheorem{theo}[prop]{Theorem}
\newtheorem{coro}[prop]{Corollary}
\newtheorem{lemm}[prop]{Lemma}
\theoremstyle{definition}
\newtheorem{rema}[prop]{Remark}
\newtheorem{exam}[prop]{Example}
\newcommand{\actsfromleft}{\mathrel{\reflectbox{$\righttoleftarrow$}}}
\newcommand{\actsfromright}{\righttoleftarrow}
\def\cN{{\mathcal N}}
\def\cO{{\mathcal O}}
\def\cX{{\mathcal X}}
\def\sA{{\mathsf A}}
\def\sD{{\mathsf D}}
\def\sE{{\mathsf E}}
\def\fA{{\mathfrak A}}
\def\fD{{\mathfrak D}}
\def\fS{{\mathfrak S}}
\def\fS{{\mathfrak S}}
\def\bA{{\mathbb A}}
\def\bG{{\mathbb G}}
\def\bP{{\mathbb P}}
\def\bZ{{\mathbb Z}}
\def\bC{{\mathbb C}}
\def\rH{{\mathrm H}}
\renewcommand{\thefootnote}{\fnsymbol{footnote}}
\def\Bl{\mathrm{Bl}}
\def\Pic{\mathrm{Pic}}
\def\Aut{\mathrm{Aut}}
\def\GL{\mathsf{GL}}
\def\PGL{\mathsf{PGL}}
\def\Burn{\mathrm{Burn}}
\def\lim{\mathrm{lim}}
\def\IJ{\mathrm{IJ}}
\def\JJ{\mathrm{J}}
\def\rank{\mathrm{rank}}
\def\Sing{\mathrm{Sing}}
\def\Cl{\mathrm{Cl}}
\newcommand{\ZZ}[1]{{\color{cyan} \sf $\bullet$ Z: [#1]}}
\begin{document}
\title[Equivariant geometry of cubic threefolds]{Equivariant geometry of singular cubic threefolds, II}

\author[I. Cheltsov]{Ivan Cheltsov}
\address{Department of Mathematics, University of Edinburgh, UK}

\email{I.Cheltsov@ed.ac.uk}

\author[L. Marquand]{Lisa Marquand}
\address{
  Courant Institute,
  251 Mercer Street,
  New York, NY 10012, USA
}

\email{lisa.marquand@nyu.edu}

\author[Y. Tschinkel]{Yuri Tschinkel}
\address{
  Courant Institute,
  251 Mercer Street,
  New York, NY 10012, USA
}

\email{tschinkel@cims.nyu.edu}

\address{Simons Foundation\\
160 Fifth Avenue\\
New York, NY 10010\\
USA}

\author[Zh. Zhang]{Zhijia Zhang}

\address{
Courant Institute,
  251 Mercer Street,
  New York, NY 10012, USA
}

\email{zhijia.zhang@cims.nyu.edu}

\date{\today}

\begin{abstract}
We study linearizability of actions of finite groups on cubic threefolds with nonnodal isolated singularities.  
\end{abstract}

\maketitle
\thefootnote{2020 Mathematics Subject Classification: 14E07, 14L30}
\section{Introduction}
\label{sect:intro}
Among the central problems in birational geometry is the {\em linearizability} problem, as
well as the closely related 
{\em rationality problem}.  
The first is about identifying regular actions of finite groups $G$ on algebraic varieties which are equivariantly birational to actions of $G$ on $\bP(V)$, where $V$ is a representation of $G$. The second could be viewed as a special case, when $G$ is the trivial group, i.e., identifying varieties birational to projective space. These questions can be asked over the complex numbers $\bC$, or arbitrary ground fields. One of the distinguishing features of this research is the rich interplay between arithmetic and geometric aspects. 

In this paper, we focus on {\em linearizability} and {\em stable linerizability} of actions of finite groups on singular 
cubic threefolds $X\subset \bP^4$, over an uncountable algebraically closed field $k$ of characteristic zero; recall that a  $G$-action on $X$ is stably linearizable if the action on $X\times \bP^n$ is linearizable, with $G$ acting trivially on the second factor. 

We extend our investigations of the {\em nodal} case
in \cite{CTZ}, \cite{CTZcub} to cover the remaining cases of isolated singularities. 
We rely on the recent classification of such singularities in \cite{viktorova}. 
In detail, we only consider situations when the $G$-action does not fix one of the singular points, since in that case, the $G$-action is linearizable via projection from this point. Under this assumption, in the nonnodal case, 
there are at most 6 singular points, all of which are necessarily $\sA\sD\mathsf E$ singularities. The linear position of the singularities affects the possible automorphism groups $G\subset \Aut(X)$ - we use the {\em defect}
$$
d=d(X):=\rank\, \Cl(X)-1,
$$ 
where $\Cl(X)$ is the class group of $X$, to distinguish some cases. 
Going through the list of configurations of isolated singularities in
\cite[Table 7, 8 and 9]{viktorova} we extract all nonnodal cases that are not {\em a priori} 
linearizable. We compute, in Section~\ref{sect:defect}, the defect, using \cite[Theorem 1.1]{MV}:
\begin{itemize}
\item $m\sA_1$, $m=2,\ldots, 10$, handled in \cite{CTZ}, \cite{CTZcub},
\item $m\sA_2$, $m= 2,3, 4, 5$, $d=0$,
\item $2\sA_2+m\sA_1$, $m=2,3,4$, and 
$d=\begin{cases} 1 &\text{ if  } m=4, \\
 0 & \text{ otherwise, }
\end{cases}$
\item $2\sA_3+m\sA_1$, $m=2,3,4$, and $d=\begin{cases}
    3 & \text{ if } m=4,\\
   2 & \text{ if } m=3,\\
    1 \text{ or } 2 & \text{ if } m=2,
\end{cases}$
\item $2\sA_3$ and $d=0$ or $1$,
\item $2\sA_4$ and $d=0$,
\item $2\sA_5$ and $d=1$,
\item $3\sA_2+2\sA_1$, $d=0$,
\item $3\sA_3$, $d=1 $ or $2$,
    \item $2\sD_4$, $d=2$,
    \item $2\sD_4+2\sA_1$, $d=3$,    
    \item $2\sD_4+3\sA_1$, $d=4$,
    \item $3\sD_4$, $d=4$, there is a unique such threefold \cite[Theorem 3.2]{ACT}.
\end{itemize}
Note that in each of these cases the cubic $X$ is GIT semistable \cite{ACT}.

Starting from here, the strategy is transparent: 
describe {\em normal forms} of the cubics for each relevant singularity type, compute the full automorphism group $\Aut(X)$, deploy the known obstructions to linearizability, such as

\begin{itemize}
    \item[]{\bf (H1)} cohomology of the $G$-action on the Picard group $\Pic(\tilde{X})$, for a smooth model $\tilde{X}$ of $X$, 
    \item[]{\bf (IJ)} equivariant intermediate Jacobians, see \cite[Section 2]{CTZcub},
     \item[]{\bf (Burn)} Burnside invariants \cite{BnG}, 
    \item[]{\bf (Sp)} equivariant specialization, 
\end{itemize}
to identify nonlinearizable actions. While the nonvanishing of the {\bf (H1)}, {\bf (IJ)}, or {\bf (Burn)} obstructions exclude linearizability of the given threefold, the specialization technique only yields information for a {\em very general} member of the corresponding family (which explains our restriction to an uncountable ground field $k$). In practice, it is very difficult to obtain a result for {\em every} member; this is well-understood in the study of rationality. 
In the remaining cases, we look for linearizability constructions. The implementation of this strategy is quite involved, and relies on extensive use of {\tt magma}.  

In applications of equivariant specialization {\bf (Sp)}, we need
detailed information about degenerations of singularities. Recall that a (combination of) $\sA\sD\sE$ singularities $T$ degenerates to $T'$ if and only if the Dynkin diagram of the root system of $T$ is an induced subgraph of the Dynkin diagram of the root system of $T'$ (see \cite[Section 5.9]{Arnold}). We record the possible degenerations of singularities of cubic threefolds:

{\tiny
\[
\begin{tikzpicture}[commutative diagrams/every diagram]
 \node(l12) at (-1,7)
 {$\color{blue} 12$};
  \node (28) at (4,7)
	{$3\sD_4$};

 \node(l11) at (-1,6)
 {$\color{blue} 11$};
 \node (27) at (6,6)
	{$2\sD_4+3\sA_1$};

 \node(l10) at (-1,5)
 {$\color{blue} 10$};
\node (26) at (11,5)
	{$10\sA_1$};
\node (23) at (0,5)
	{$2\sA_5$};
\node (25) at (1,5)
	{$5\sA_2$};
\node (22) at (4.5,5)
	{$2\sD_4+2\sA_1$};
 \node (24) at (8.5,5)
	{$2\sA_3+4\sA_1$};

 \node(l9) at (-1,2)
 {$\color{blue} 9$};
\node (19) at (2.5,2)
	{$3\sA_3$};
\node (20) at (8.2,2)
	{$2\sA_3+3\sA_1$};
\node (21) at (11,2)
	{$9\sA_1$};

 \node(l8) at (-1,1)
 {$\color{blue} 8$};
	\node (14) at (0,1)
	{$2\sA_4$};
	\node (18) at (1.5,1)
	{$4\sA_2$};
	\node (13) at (4,1)
	{$3\sA_2+2\sA_1$};
	\node (12) at (5.5,1)
	{$2\sD_4$};
	\node (15) at (7.2,1)
	{$2\sA_3+2\sA_1$};
	\node (16) at (9.2,1)
	{$2\sA_2+4\sA_1$};
	\node (17) at (11,1)
	{$8\sA_1$};

 \node(l7) at (-1,-1)
 {$\color{blue} 7$};
\node (10) at (7.5,-1)
	{$2\sA_2+3\sA_1$};
\node (11) at (11,-1)
	{$7\sA_1$};

 \node(l6) at (-1,-2)
 {$\color{blue} 6$};
\node (6) at (0,-2)
	{$2\sA_3$};
\node (8) at (2.7,-2)
	{$3\sA_2$};
\node (7) at (5.5,-2)
	{$2\sA_2+2\sA_1$};
\node (9) at (11,-2)
	{$6\sA_1$};

 \node(l5) at (-1,-3)
 {$\color{blue} 5$};
\node (5) at (11,-3)
	{$5\sA_1$};

 \node(l4) at (-1,-4)
 {$\color{blue} 4$};
\node (4) at (11,-4)
	{$4\sA_1$};
 \node (3) at (0,-4)
	{$2\sA_2$};

 \node(l3) at (-1,-5)
 {$\color{blue} 3$};
\node (2) at (11,-5)
	{$3\sA_1$};

 \node(l2) at (-1,-6)
 {$\color{blue} 2$};
\node (1) at (4,-6)
	{$2\sA_1$};

	\path[-]
       (2) edge (1)
(3) edge (1)
(4) edge (2)
(5) edge (4)
(6) edge (3)
(6) edge (4)
(7) edge (3)
(7) edge (4)
(8) edge (2)
(8) edge (3)
(9) edge (5)
(10) edge (5)
(10) edge (7)
(11) edge (9)
(12) edge (6)
(12) edge (9)
(13) edge (8)
(13) edge (10)
(14) edge (6)
(15) edge (6)
(15) edge (7)
(15) edge (9)
(16) edge (9)
(16) edge (10)
(17) edge (11)
(18) edge (7)
(18) edge (8)
(19) edge (8)
(19) edge (15)
(20) edge (10)
(20) edge[bend left] (11)
(20) edge (15)
(21) edge (17)
(22) edge (12)
(22) edge (15)
(22) edge (17)
(23) edge[bend right] (9)
(23) edge (10)
(23) edge (14)
(23) edge (18)
(24) edge (16)
(24) edge (17)
(24) edge (20)
(25) edge (13)
(25) edge (18)
(26) edge (21)
(27) edge (20)
(27) edge[bend left=40] (21)
(27) edge (22)
(28) edge (19)
(28) edge (27);
\end{tikzpicture}
\]
}

\

For a given $G$-action on a nonnodal cubic $X$, 
{\bf (H1)} obstruction does not vanish only in the following cases: 
$$
2\sA_5, \quad 2\sD_4+2\sA_1\quad\text{ and }\quad 3\sD_4.
$$
In each of these three cases, the full automorphism group $\mathrm{Aut}(X)$ is infinite,
and the vanishing of the {\bf (H1)} obstruction is equivalent to the linearization of the $G$-action, 
see Proposition~\ref{prop:h1}.

\

We proceed to summarize the results: $X$ is a nonnodal cubic threefold, with singularities as above, and $G\subseteq \Aut(X)$ a finite group.

\subsection*{Two singularities}
\begin{itemize}
    \item $2\sA_2$: the $G$-action is linearizable if and only if $G$ fixes a singular point.
    \item $2\sA_3$: 
    \begin{itemize}
        \item $d(X)=0$: if $X$ is very general, the $G$-action is not linearizable, with a possible exception when $\Aut(X)=\fD_4$ and $G=C_4$, see Corollary~\ref{coro:2A3noplanelin}, 
        \item $d(X)=1$: 
        $X$ is $G$-equivariantly birational to a smooth intersection of two quadrics $X_{2,2}\subset \bP^5$; the $G$-action is linearizable if and only if there is a $G$-stable line on $X_{2,2}$, by \cite{HT-intersect}.  
    \end{itemize}
    \item $2\sA_4$: if $X$ is very general, the $G$-action is not linearizable, with a possible exception when $\Aut(X)=C_6$ and $G=C_2$, see Proposition~\ref{prop:2A4spec2A5}.
    \item $2\sA_5$:  the $G$-action is linearizable if and only if the {\bf (H1)} obstruction vanishes, which happens if and only if $G$ acts trivially on the class group $\Cl(X)\simeq \bZ^2$, see Corollary~\ref{coro:2a5sum}.    
    \item $2\sD_4$:  {\bf (Burn)} and {\bf (Sp)} settle the linearizability problem for most actions.  
\end{itemize}

\subsection*{Three singularities} 
\begin{itemize}
\item $3\sA_2$  and  $3\sA_3$: we expect that the $G$-action is linearizable if and only if $G$ fixes a singular point, and  we confirm this in many cases using {\bf (Burn)} and {\bf (Sp)}, to cohomology for a specific $C_3$-action.   
In the $3\sA_2$ case, the intermediate Jacobian $\IJ(\tilde{X})$ of a minimal resolution of singularities $\tilde{X}\to X$ is the Jacobian of a smooth curve of genus 2, and the intermediate Jacobian obstruction of \cite{CTT} may be applicable. 
\item $3\sD_4$: the $G$-action is linearizable if and only if the {\bf (H1)} obstruction vanishes, see  Proposition~\ref{prop:3D4coho}.
 \end{itemize}

\subsection*{Four singularities}

Many $G$-actions are nonlinearizable, via {\bf (Burn)}, see Proposition~\ref{prop:burn-ob-4}. 
\begin{itemize}
    \item $2\sA_2+2\sA_1$ and  $4\sA_2$: such $X$ are equivariantly birational to a smooth divisor of degree $(1,1,1,1)$ in $(\bP^1)^4$, we expect that the action is linearizable if and only if $G$ fixes a singular point; we prove this for very general $X$ in Proposition~\ref{prop:2a22a1spec2d42a1}, 
    respectively, in Proposition~\ref{eq:4A2->2A5}. 
    \item $2\sA_3+2\sA_1$:
    \begin{itemize}
        \item $d(X)=1$: the $G$-action is linearizable, by Lemma~\ref{lemm:d1}. 
        \item $d(X)=2$: the $G$-action on very general $X$ is linearizable if and only if it fixes a singular point, by Proposition~\ref{prop:2a32a1spec2d42a1}. 
    \end{itemize} 
    \item $2\sD_4+2\sA_1$:  the $G$-action is linearizable if and only if the {\bf (H1)} obstruction vanishes, by Corollary~\ref{coro:2D4+2A1}. 
  \end{itemize}

\subsection*{Five singularities}
\begin{itemize}
    \item $ 2\sA_2+3\sA_1, 2\sA_3+3\sA_1, 3\sA_2+2\sA_1, 5\sA_2$:  the $G$-action is linearizable.
    \item $2\sD_4+3\sA_1$: there is a unique such threefold, with infinite automorphisms, $G$-equivariantly birational to a smooth quadric without fixed points; {\bf (Burn)} obstructs some of the actions, e.g., for $G=C_2^2\times \fS_3$. The linearizability problem for smooth quadric threefolds is still open. 
    \end{itemize}

\subsection*{Six singularities}

All $G$-actions are linearizable.

\

Here is the roadmap of the paper: 
In Section~\ref{sect:defect}, we compute the {\em defect} $d(X)$, in all cases. In Section~\ref{sect:aut}, we explain how to compute the automorphism group $\Aut(X)$, and implement the algorithm in an example. Section~\ref{sect:coho} is devoted to computations of the Picard group of a minimal resolution of singularities $\tilde{X}$ of $X$ and of group cohomology
$\rH^1(G, \Pic(\tilde{X}))$, for subgroups $G\subseteq \Aut(X)$; the nonvanishing of this invariant is an obstruction to linearizability. The subsequent sections are organized by the number of singular points.

\

\noindent
{\bf Acknowledgments:} 
The first author was partially supported by the Leverhulme Trust grant RPG-2021-229,
EPSRC grant EP/Y033485/1, and Simons Collaboration grant \emph{Moduli of Varieties}.
The third author was partially supported by NSF grant 2301983.


\section{The Defect}
\label{sect:defect}

In this section, we compute the defect 
$$
d(X):=\mathrm{rk} \,\Cl(X) -1,
$$
where $\Cl(X)$ is the class group, 
for cubic threefolds $X$ with specified combinations of singularities, using the projection method. The proofs follow closely those in \cite[\S 4]{viktorova}, however, we feel that the presentation will be useful for the reader. We always project from the worst singularity $q\in \Sing(X)$, the singular locus of $X$.

\subsection*{Projection method}\label{sec: proj}
We review the projection method outlined in \cite{MV} (see also \cite{viktorova}): fix 
$q\in \Sing(X)$ 
and choose coordinates so that $q=[1:0:0:0:0]$. Then $X$ is given by
\begin{equation}\label{cubic eqn}
	x_1f_2(x_2,\dots, x_5)+f_3(x_2,\dots, x_5)=0,
\end{equation} 
where $f_2, f_3$ are homogeneous, of degree 2 and 3, respectively. 
Projection from $q$ gives a birational map $X\dashrightarrow \bP^3$, factoring as
\[\begin{tikzcd}
	& \Bl_qX \cong \Bl_{C_q}\bP^3\arrow[dl]\arrow[dr, "\phi"] &\\
	X\arrow[rr, dashrightarrow] & & \bP^3
\end{tikzcd}
\]
This yields
\begin{align*}
    Q_q:=\{f_2(x_2,\dots, x_5)=0\}\subset \bP^3,\\
    S_q:=\{f_3(x_2,\dots, x_5)=0\}\subset \bP^3,\\
    C_q:= Q_q\cap S_q\subset \bP^3.
\end{align*}
The curve $C_q$ parameterizes lines contained in $X$ passing through $q$. Recall that by \cite[Theorem 1.2]{Wall99}, the singularities of $C_q$ correspond to that of $X$ away from $q$.

\begin{theo}\cite[Theorem 1.2]{Wall99}\label{thm: Wall}
    Consider a singular point $p\in C_q$, and assume $S_q$ is smooth.
    \begin{enumerate}
        \item If $Q_q$ is smooth at $p$, then $X$ has a unique singular point on the line $\langle p, q \rangle$ other than $q$, and the singularity has the same type.
        \item If $Q_q$ is singular at $p$, then the only singular point of $X$ on $\langle p, q \rangle$ is $q$, and the blow-up $\Bl_qX$ has a singular point of the same type as $p\in C$ on $\phi^{-1}(q)|_E$ for $\phi$ as above.
    \end{enumerate}
\end{theo}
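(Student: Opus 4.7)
The plan is to prove both parts by direct local computation in the affine coordinates adapted to the projection from $q$. Parameterize the line $\ell=\langle p,q\rangle$ as $[t:p_2:\dots:p_5]$ with $p=(p_2,\dots,p_5)\in C_q$; since $f_2(p)=f_3(p)=0$, the line $\ell$ lies on $X$. Computing partials of $F=x_1f_2+f_3$ along $\ell$ gives $\partial_{x_1}F=f_2(p)=0$ identically and $\partial_{x_i}F=t\,\partial_{x_i}f_2(p)+\partial_{x_i}f_3(p)$ for $i\geq 2$, so the singular points of $X$ on $\ell\setminus\{q\}$ correspond exactly to solutions $t\in k$ of the linear system
\[
t\,\nabla f_2(p)+\nabla f_3(p)=0.
\]
This reduces the location of singular points to linear algebra on the gradients of $f_2$ and $f_3$ at $p$.

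For part (1), the assumption that $Q_q$ is smooth at $p$, combined with the smoothness of $S_q$ at $p$ and the fact that $p\in\Sing(C_q)$, forces $T_pQ_q=T_pS_q$ in $\bP^3$, hence $\nabla f_2(p)=\lambda\,\nabla f_3(p)$ for some $\lambda\neq 0$. The linear system therefore has the unique solution $t=-1/\lambda$, yielding precisely one singular point $q'\in\ell\setminus\{q\}$ on $X$. To identify the singularity type, translate so that $q'$ becomes the origin and Taylor-expand $F$ in the new affine coordinates: the linear part vanishes thanks to the proportionality $\nabla f_2(p)=\lambda\nabla f_3(p)$, and after using the smoothness of $S_q$ to adopt $f_3$ as a local coordinate on $\bP^3$ and carrying out a formal change of variables, the local equation of $X$ at $q'$ takes the suspension form $uv+g(w)=0$, where $g$ is a local defining equation of $C_q\subset S_q$ at $p$. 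The $\sA\sD\sE$ type of this hypersurface singularity equals that of the plane curve $\{g=0\}$ by the classical curve-to-surface suspension correspondence.

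For part (2), $Q_q$ singular at $p$ means $\nabla f_2(p)=0$, so the linear system collapses to $\nabla f_3(p)=0$, which is impossible since $S_q$ is smooth at $p$. Hence no $t$ solves the system, and $q$ is the only singular point of $X$ on $\ell$. To analyze $\Bl_qX$, pass to an affine chart of $\Bl_q\bP^4$ in which the strict transform has local equation
\[
\tilde f_2(z)+u\,\tilde f_3(z)=0,
\]
with $u$ the blow-up parameter and $z$ affine coordinates on the exceptional $\bP^3$; the exceptional divisor $E\cong Q_q$ is $\{u=0\}$. A direct computation of partials shows that the singularities of $\Bl_qX$ on $E$ are exactly the points $z\in Q_q\cap S_q=C_q$ with $\nabla\tilde f_2(z)=0$, i.e., singular points of $Q_q$ lying on $S_q$. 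Near such $p$, using $\tilde f_3$ as a local coordinate on $S_q$ brings the equation $\tilde f_2+u\tilde f_3=0$ directly into suspension form $uv+g(w)=0$ with $g$ the local equation of $C_q\subset S_q$ at $p$, matching $\sA\sD\sE$ types as before.

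The main technical obstacle is the analytic normalization into suspension form. In case (2) the splitting is essentially manifest from the blow-up equation and reduces quickly to the standard $\sA\sD\sE$ classification once $\tilde f_3$ is taken as a coordinate. In case (1), however, the $uv$-factor must be extracted after centering at $q'$ via a Weierstrass-type preparation along the line direction, and one must carefully track how the Hessian of $f_2$ combines with the cubic terms of $F$ to produce the correct transverse quadratic form. Once both local equations are reduced to $uv+g(w)=0$, the classical curve-to-surface suspension correspondence matches the $\sA_n$, $\sD_n$, and $\sE_n$ types.
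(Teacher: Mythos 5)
The paper does not prove this statement at all: it is quoted verbatim from Wall (\cite[Theorem 1.2]{Wall99}) and used as a black box, so there is no internal proof to compare against. Your blind verification is essentially the standard local computation and it is sound: the reduction of the location of singular points on $\ell=\langle p,q\rangle$ to the linear system $t\,\nabla f_2(p)+\nabla f_3(p)=0$ is correct, the tangency $\nabla f_2(p)=\lambda\nabla f_3(p)$ in case (1) and the vanishing $\nabla f_2(p)=0$ in case (2) give exactly the dichotomy of the theorem, and the chart computation for $\Bl_qX$ with strict transform $\tilde f_2+u\tilde f_3=0$ correctly locates the exceptional singularities at the points of $C_q$ where $Q_q$ is singular.

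The one step you flag as "the main technical obstacle" --- extracting the $uv$-factor in case (1) --- is in fact routine and involves no Weierstrass preparation or Hessian bookkeeping. Centering at $q'$ and dehomogenizing, the equation reads $F=\psi(y)+u\,\phi_2(y)$ with $\psi=t_0\phi_2+\phi_3$ having no linear term and $\phi_2$ having nonzero linear part. Take $v=\phi_2$ as the first local coordinate on the $y$-space (legitimate because $Q_q$ is smooth at $p$), write $\psi=\psi|_{v=0}+v\,h$, and set $u'=u+h$; then $F=u'v+g(w_1,w_2)$ with $g=\phi_3|_{Q_q}$ the local equation of $C_q$ inside $Q_q$ at $p$. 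Note this naturally produces the suspension of $C_q\subset Q_q$ rather than $C_q\subset S_q$ as you wrote; since both surfaces are smooth at $p$ and the $\sA\sD\sE$ type of a reduced plane curve germ is intrinsic, this makes no difference, but it is the cleaner choice of coordinate and removes the only genuinely unproved assertion in your write-up.
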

\begin{lemm}\label{lemm: defect formula}
    Let $X$ be a cubic threefold with singularities as above. Then:
    \begin{enumerate}
        \item If $q$ is a $\sD_4$-point, then $Q_q$ is the union of two planes and 
        $$d(X)=(\# \text{ irreducible components of }C_q)-2;$$
        \item If $q$ is an $\sA_n$-point with $n\geq 2,$ then $Q_q$ is a quadric cone and $$d(X)=(\# \text{ irreducible components of }C_q)-1.$$ 
        \item If $q$ is an $\sA_n$-point with $n\geq 3,$ then in addition $C_q$ passes through the cone point of $Q_q$  and has an $\sA_{n-2}$-singularity there.
    \end{enumerate}
\end{lemm}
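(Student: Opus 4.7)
The plan is to treat the three assertions in sequence: the descriptions of $Q_q$ in (1) and (2) follow from the Hessian of the local equation at $q$; assertion (3) comes from the normal form of an $\sA_n$-singularity combined with Theorem~\ref{thm: Wall}; and the two defect formulas are obtained from a class-group comparison across the isomorphism $Y:=\Bl_qX\cong \Bl_{C_q}\bP^3$. In the affine chart $x_1=1$ the local equation of $X$ at $q$ is $f_2+f_3$, so $2f_2$ is the Hessian, and the corank of the singularity equals $4-\mathrm{rank}(f_2)$. Standard normal forms for $\sA\sD\sE$ hypersurface singularities give corank $1$ for $\sA_n$ with $n\geq 2$ and corank $2$ for $\sD_n$ with $n\geq 4$; hence $Q_q$ is a rank-$3$ quadric (a cone) in the first case and a rank-$2$ quadric (a pair of planes) in the second, establishing the first sentences of (1) and (2).

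For (3), I put $f_2=x_2x_3+x_4^2$, so that the cone vertex of $Q_q$ is $v=[0:0:0:1]$. Applying the splitting lemma in the Morse variables $x_2,x_3,x_4$ absorbs into the quadratic part every monomial of $f_3$ involving at least one of $x_2,x_3,x_4$, leaving a residual univariate term whose cubic part is $c\,x_5^3$, where $c=f_3(v)$ is the coefficient of $x_5^3$ in $f_3$. The singularity at $q$ is then $\sA_2$ iff $c\neq 0$ and $\sA_n$ with $n\geq 3$ iff $c=0$; so for $n\geq 3$ we have $f_3(v)=0$ and therefore $v\in S_q\cap Q_q=C_q$. Since $Q_q$ is singular at $v$, Theorem~\ref{thm: Wall}(2) identifies the type of $(C_q,v)$ with the type of the residual singularity of $Y$ on the exceptional divisor over $v$; by the classical ``minus two'' rule for blowing up an $\sA_n$-singularity, this residual singularity is of type $\sA_{n-2}$, which is (3).

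For the defect formulas, let $r$ denote the number of irreducible components of $C_q$ and $e$ the number of irreducible components of $Q_q$, so $e=1$ in case (2) and $e=2$ in case (1). The contraction $Y\to\bP^3$ is divisorial with exceptional locus having $r$ components, one over each component of $C_q$, giving $\mathrm{rk}\,\Cl(Y)=1+r$. The contraction $Y\to X$ has exceptional divisor $E$ equal set-theoretically to the projectivized tangent cone $Q_q$, hence with $e$ components, giving $\mathrm{rk}\,\Cl(Y)=\mathrm{rk}\,\Cl(X)+e$. Eliminating $\mathrm{rk}\,\Cl(Y)$ yields $d(X)=\mathrm{rk}\,\Cl(X)-1=r-e$, exactly the formulas of (1) and (2).

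The delicate part is the class-group bookkeeping in the preceding paragraph: $Y$ is in general singular along the $\sA\sD\sE$-chains described by Theorem~\ref{thm: Wall}, so the two divisorial-contraction formulas for $\Cl$ must be verified with care. The plan is to pass to compatible small $\bQ$-factorializations of $Y\to X$ and $Y\to\bP^3$ and to observe that the small exceptional curves introduced carry no divisorial classes and therefore cancel from both formulas. This is the essence of \cite[Theorem~1.1]{MV} transplanted to our setting, and is where the real work of the proof lies.
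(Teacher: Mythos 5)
Your proof is correct, but it takes a genuinely more self-contained route than the paper's: the paper disposes of this lemma in two sentences, attributing the description of $Q_q$ (and the behaviour at the cone point) to \cite[Claims A.10, A.11]{viktorova} and the defect formula to \cite[Theorem 1.2]{MV}, whereas you actually reprove all three parts. Your corank computation identifying $Q_q$ as a rank-$3$, respectively rank-$2$, quadric, and your derivation of part (3) from the splitting lemma, Theorem~\ref{thm: Wall}(2), and the ``minus two'' rule for blowing up an $\sA_n$ point, are precisely the arguments underlying the cited claims, and they check out (for $n=2$ the residual cubic coefficient $f_3(v)$ is nonzero, so the vertex correctly fails to lie on $C_q$). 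For the defect formula, the class-group bookkeeping across $\Bl_qX\cong\Bl_{C_q}\bP^3$ is the right mechanism and is what \cite{MV} establishes; one remark is that your closing paragraph is more cautious than necessary. Linear independence of the exceptional prime divisors in $\Cl(Y)$ does not require a small $\bQ$-factorialization: if $\sum_i a_iE_i=\mathrm{div}(h)$, then pushing forward to the base kills the $E_i$, so $h$ has trivial divisor downstairs, hence is constant, hence all $a_i=0$; combined with excision this yields the exact sequence $0\to\oplus_i\bZ\cdot E_i\to\Cl(Y)\to\Cl(Z)\to 0$ for both contractions directly, with no hypothesis on the singularities of $Y$. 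The one point worth making explicit in your write-up is the identification of the contracted divisors on the $\bP^3$ side: the exceptional divisor $E\cong Q_q$ of $Y\to X$ maps isomorphically onto $Q_q\subset\bP^3$ and is \emph{not} contracted by $\phi$, so the divisors contracted to $C_q$ are exactly the $r$ strict transforms of the cones (with vertex $q$) over the irreducible components of $C_q$; this is what makes the count on that side come out to $1+r$ and completes your elimination argument.
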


\begin{proof}
    The statements about $Q_q$ are a combination of \cite[Claim A.10, A.11]{viktorova}. The formula for the defect is \cite[Theorem 1.2]{MV}.
\end{proof}


\subsection*{Projection from $\sA_2$}

\begin{lemm}\label{lem: defect mA_2}
    Let $X$ be a cubic threefold. Then $d(X)=0$, when $X$ has
    \begin{itemize}
        \item $m\sA_2$-singularities, for $1\leq m\leq 5$, or
        \item $(2\sA_2 +m\sA_1)$-singularities, for $1\leq m\leq 3$, or
        \item $(3\sA_2+2\sA_1)$-singularities.
    \end{itemize}
\end{lemm}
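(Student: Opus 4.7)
Project from an $\sA_2$ singularity $q\in \Sing(X)$. By Lemma~\ref{lemm: defect formula}(2), the tangent quadric $Q_q$ is a cone with apex $v$ and $d(X)=(\#\text{ components of }C_q)-1$, so it suffices to show that the sextic complete intersection $C_q=Q_q\cap S_q\subset\bP^3$ is irreducible in each case. By Theorem~\ref{thm: Wall}, $C_q$ has only $\sA_1$ and $\sA_2$ singularities, matching those of $X$ away from $q$. I will first verify that $C_q$ avoids $v$: after putting the quadric cone in normal form $f_2=x_2^2+x_3x_4$ with apex $v=[0:0:0:1]$, the local equation $f_2+f_3=0$ at $q$ has an $\sA_2$ singularity precisely when the coefficient of $x_5^3$ in $f_3$ is nonzero, i.e., $v\notin S_q$, hence $v\notin C_q$.

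Next, I classify the possible degrees of irreducible components of $C_q$. On the minimal resolution $\pi\colon\bF_2\to Q_q$ with $(-2)$-section $E$ and fiber class $F$, any irreducible curve on $Q_q$ not passing through $v$ has proper transform of class $a(E+2F)$ for some $a\ge 1$: the condition $(aE+bF)\cdot E=-2a+b=0$ forces $b=2a$. Such a curve has degree $b=2a$ in $\bP^3$, which is even. Since component degrees sum to $6$, the only possible decompositions of $C_q$ into irreducible components are $(6)$, $(4,2)$, and $(2,2,2)$.

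Finally, I rule out the two reducible possibilities by counting $\sA_1$ singularities on $C_q$. Intersection numbers on $\bF_2$ give $(E+2F)\cdot 2(E+2F)=4$ in the $(4,2)$ case and $(E+2F)^2=2$ for each pair in the $(2,2,2)$ case. Because $C_q$ admits only $\sA_1$ and $\sA_2$ singularities, distinct components must meet transversally (otherwise an $\sA_3$ or higher singularity would appear at a tangential intersection) and no point can lie on three components (else a non-ADE triple point would arise). Thus a $(4,2)$ decomposition produces at least $4$ nodes on $C_q$, and a $(2,2,2)$ decomposition produces $6$. In each family the number of $\sA_1$ singularities of $C_q$ is at most $3$: namely $0$ for $m\sA_2$, at most $m\le 3$ for $2\sA_2+m\sA_1$, and $2$ for $3\sA_2+2\sA_1$. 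Both reducible configurations are therefore excluded, $C_q$ is irreducible, and $d(X)=0$. The only subtle point is the local verification that $v\notin C_q$ for an $\sA_2$ point, which is not covered by Lemma~\ref{lemm: defect formula}(3); beyond that, the argument is a clean numerical count on $\bF_2$.
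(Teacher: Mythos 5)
Your proof is correct and takes essentially the same route as the paper's: both arguments reduce to showing that a reducible $C_q$ would force at least four $\sA_1$-points on $X$, exceeding the at most three available in every listed configuration. The paper delegates the component analysis to \cite[Proposition 4.9]{viktorova}, whereas you supply it directly — the local check that the cone point lies off $C_q$ at an $\sA_2$-point, the even-degree constraint for components on the resolution $\bF_2$, and the intersection count ruling out the $(4,2)$ and $(2,2,2)$ decompositions — so the substance matches, with your write-up filling in the details the paper outsources.
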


\begin{proof}
   Projecting from $q\in \Sing(X),$ we see that $C_q$ must be irreducible, otherwise $X$ would have at least $4\sA_1$-singularities. It follows that $C_q$ is an irreducible $(2,3)$ complete intersection curve, and so the defect is 0 by Lemma \ref{lemm: defect formula} (2), see \cite[Proposition 4.9]{viktorova}.
\end{proof}

\begin{prop}\label{prop:4A1+2A2}
Let $X$ be a cubic with $(4\sA_1+2\sA_2)$-singularities. 
Then $d(X)=1$, and 
\begin{enumerate}
 \item $X$ contains exactly one plane $\Pi$, containing the 4 nodes, 
\item  the line containing the $2\sA_2$-points is disjoint from $\Pi$.
    \end{enumerate}
\end{prop}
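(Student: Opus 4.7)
The plan is to project from one of the two $\sA_2$ singular points, say $q$. By Lemma~\ref{lemm: defect formula}(2), $d(X)$ equals one less than the number of irreducible components of $C_q \subset Q_q \subset \bP^3$, a $(2,3)$-complete intersection of arithmetic genus $p_a = 4$ contained in the quadric cone $Q_q$. By Theorem~\ref{thm: Wall}, the singular points of $C_q$ away from the cone vertex correspond bijectively, with matching types, to the singular points of $X$ away from $q$: one $\sA_2$ cusp (coming from the other $\sA_2$ point $q'$, located at $\pi_q(\ell)$ with $\ell := \overline{qq'}$) and four $\sA_1$ nodes (coming from $p_1,\ldots,p_4$).

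To establish $d(X) = 1$, I would proceed in two steps. First, $C_q$ cannot be irreducible, since the total $\delta$-invariant equals $5 > p_a = 4$. For the reverse inequality, I would use the identity
\[
s \;-\; \sum_i g_i \;=\; \delta(C_q) \;-\; p_a \;+\; 1 \;=\; 2,
\]
where $s$ is the number of irreducible components of $C_q$ and $g_i$ their geometric genera, combined with the classification of irreducible curves on $Q_q$ via its minimal resolution. The key observation is that a cusp ($\sA_2$) is a unibranch singularity, so it must arise as a singularity of a single irreducible component rather than at an intersection of smooth branches. Enumerating decompositions of the cubic section class that sum correctly then forces $s = 2$, with $C_q = D_1 \cup D_2$ where $D_1$ is a rational cuspidal quartic (carrying the $\sA_2$) and $D_2$ is a smooth conic meeting $D_1$ transversally at the four $\sA_1$ nodes. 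Alternative configurations (for instance $s=3$ with a smooth elliptic quartic together with two rulings through the cone vertex) produce only nodal singularities and therefore fail to realize the required cusp.

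For (1) and (2): the four nodes of $X$ correspond under $\pi_q$ to the four intersection points $D_1 \cap D_2$, all lying in the unique plane $\Pi_2 \subset \bP^3$ spanned by the smooth conic $D_2$. The preimage $H := \pi_q^{-1}(\Pi_2) \cup \{q\}$ is a hyperplane through $q$ in $\bP^4$ containing the four nodes; projecting also from $q'$ produces a second hyperplane $H' \ni q'$ containing them. Since the cusp of $C_q$ lies on $D_1$ and not on $D_2$, we have $\pi_q(\ell) \notin \Pi_2$, whence $\ell \cap H = \{q\}$; the symmetric argument gives $\ell \cap H' = \{q'\}$. In particular $H \neq H'$, so $\Pi := H \cap H'$ is a $2$-plane in $\bP^4$ that contains the four nodes and is disjoint from $\ell$, establishing (2). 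The inclusion $\Pi \subset X$ then follows because otherwise $F|_\Pi$ would be a plane cubic with four isolated double points, exceeding the maximum of three isolated singularities for any plane cubic.

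Uniqueness in (1) comes from a final projection argument: a second plane in $X$ through the four nodes would intersect $\Pi$ in a subspace of dimension at most one, forcing the four nodes to be collinear; but their images under $\pi_q$ lie on the smooth planar conic $D_2$, which any line in $\bP^3$ meets in at most two points, a contradiction. The main obstacle I anticipate is the component analysis of $C_q$ in step two: rigorously enumerating all decompositions into irreducible curves on the singular surface $Q_q$ (including configurations in which $C_q$ passes through the cone vertex, introducing potential ruling components) and verifying uniformly that only the conic-plus-quartic decomposition is compatible with the prescribed $4\sA_1 + 1\sA_2$ singularity profile on $C_q$.
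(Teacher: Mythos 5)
Your proposal is sound and arrives at the same key configuration as the paper, but by a genuinely different route in two places. For the decomposition of $C_q$, the paper simply cites \cite[Proposition 4.9]{viktorova} to get $C_q=A\cup B$ with $A$ a plane conic and $B$ an irreducible curve carrying the cusp, and then reads off $d(X)=1$ from Lemma~\ref{lemm: defect formula}(2); you instead derive this from the identity $s-\sum g_i=\delta-p_a+1=2$ together with the unibranch nature of the cusp, which is more self-contained. For the plane, the paper takes the cone $Z$ over $A$ with vertex $q$, notes $Z\subset X$ spans a hyperplane, and obtains $\Pi$ as the residual component of that hyperplane section, with the four nodes forced onto $\Pi\cap Z$; you instead take $\Pi=H\cap H'$ for the two hyperplanes obtained by projecting from each cusp point, prove $\Pi\subset X$ by observing that a plane cubic cannot have four isolated singular points, and get the disjointness $\ell\cap\Pi=\varnothing$ for free from $\ell\cap H=\{q\}$ and $\ell\cap H'=\{q'\}$. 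Your construction makes the $q\leftrightarrow q'$ symmetry explicit and also yields a uniqueness statement (no second plane through the four nodes) that the paper does not spell out; the paper's construction has the advantage that $\Pi\subset X$ is automatic. The "four singular points on a plane cubic" argument you use is the same device the paper deploys in the proof of Proposition~\ref{prop:4A1+2A3}.

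The one step you flag as an obstacle, the enumeration of decompositions of $C_q$, closes more easily than you anticipate. Since $q$ is an $\sA_2$-point and not $\sA_{\ge 3}$, the curve $C_q$ misses the vertex of $Q_q$ (cf.\ Lemma~\ref{lemm: defect formula}(3) and \cite{Wall99}), so every irreducible component is Cartier on the cone and hence of even degree; ruling components are excluded outright. The only splittings of the degree are then $2+4$ and $2+2+2$, and the latter is killed by $s-\sum g_i=2$ since conics are rational. This leaves exactly your conic-plus-rational-quartic configuration, with the intersection necessarily transversal at four points and the cusp on the quartic away from the conic, to match the prescribed $\delta=5$ and singularity types.
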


\begin{proof}
 Let $q\in\Sing(X)$ be an $\sA_2$-point. By \cite[Proposition 4.9]{viktorova}, we see that $C_q=A\cup B,$ where $A$ is a hyperplane section of the quadric cone $Q_q$ not passing through the cone point, and $B$ is an irreducible curve with an $\sA_2$-singularity. The computation of $d(X)$ follows from Lemma \ref{lemm: defect formula} (2). 

Let $Z\subset \bP^4$ be the cone over $A$ with vertex $q$; then $Z\subset X$, and $Z$ spans a hyperplane, which intersects $X$ in $Z\cup \Pi$, with $\Pi\cap Z=A$ containing the four nodes of $X$.

    For the second claim, let $L\subset X\subset \bP^4$ be the line between the two $\sA_2$-points. Then $L$ intersects the hyperplane spanned by $Z$ in exactly one point $q$, and it follows that $L\cap \Pi=\varnothing$.
\end{proof}

\subsection*{Projection from $\sA_3$}

\begin{lemm}
    Let $X$ be a cubic with $2\sA_3$-singularities. Then either
    \begin{enumerate}
        \item $d(X)=0$, or
        \item $d(X)=1$, and there is a unique plane containing the $\sA_3$-points.
    \end{enumerate}
\end{lemm}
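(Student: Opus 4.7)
The plan is to apply the projection method at one of the $\sA_3$-singularities $q\in\Sing(X)$. By Lemma~\ref{lemm: defect formula}(2)(3) and Theorem~\ref{thm: Wall}(1), the associated curve $C_q\subset Q_q\subset\bP^3$ is a $(2,3)$ complete intersection of arithmetic genus $4$, whose only singularities are an $\sA_1$ at the cone vertex $v$ and an $\sA_3$ at the point $p$ corresponding to the line $L=\overline{qq'}$ joining the two $\sA_3$-points. Since $d(X)=k-1$ where $k$ is the number of irreducible components of $C_q$, the statement reduces to showing $k\leq 2$ and interpreting the case $k=2$.

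To enumerate components, I would pass to the minimal resolution $\pi:\bF_2\to Q_q$ of the cone, contracting the $(-2)$-section $E$. A computation using the pullback $\pi^*H=E+2F$ (with $F$ a fiber) and the fact that $C_q$ has multiplicity $2$ at $v$ shows that the strict transform $\tilde C_q$ has class $2E+6F$, arithmetic genus $3$, and unique singularity an $\sA_3$ at the preimage $p'$ of $p$. The classification of irreducible effective classes $\alpha E+\beta F$ on $\bF_2$ (namely $\alpha\geq 1,\beta\geq 2\alpha$, together with the fiber class $F$) restricts the possible two-component decompositions of $2E+6F$ to
\[
F+(2E+5F),\qquad (E+2F)+(E+4F),\qquad (E+3F)+(E+3F).
\]

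The main obstacle is to rule out all but the first of these, and every decomposition into three or more components. The key input is that an $\sA_3$-contact between two smooth branches carries intersection multiplicity exactly $2$, while meeting at one point with intersection multiplicity $2m$ produces an $\sA_{2m-1}$-singularity. The intersection numbers in the second and third splittings both equal $4$, which can be neither concentrated in a single $\sA_3$ at $p'$ nor distributed without creating extra singular points of $\tilde C_q$. For any decomposition with three or more components, since an $\sA_3$-singularity has only two branches, at most two components pass through $p'$, and the remaining components would have to be disjoint from all others, contradicting the positivity of the relevant pairwise intersection numbers. This leaves $F+(2E+5F)$, with intersection number $2$ realized as a tangential contact at $p'$; the fiber $F$ is forced to pass through $p'$ in order to supply the second branch of the $\sA_3$.

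Finally, I would interpret the reducible case geometrically. A line component of $C_q$ is necessarily a ruling of $Q_q$ through $v$, and the cone in $\bP^4$ over such a ruling with vertex $q$ is a plane contained in $X$. For this plane to also contain $q'$, the ruling must pass through $p$, forcing it to be the unique ruling $\overline{vp}$. Conversely, a plane in $X$ through both $\sA_3$-points projects to a line through $p$ in $C_q$, which must be a line component. Uniqueness of the plane then follows from the uniqueness of the ruling of $Q_q$ through $p$.
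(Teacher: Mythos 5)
Your argument is correct and follows the same overall route as the paper --- project from an $\sA_3$-point, note that $C_q$ carries an $\sA_1$ at the vertex and an $\sA_3$ corresponding to the second singular point, and read the defect off the number of components --- but where the paper simply cites Viktorova's classification of the possible configurations of $C_q$ (her Lemma~4.4 and Proposition~4.8), you reprove that classification from scratch on the minimal resolution $\bF_2$ of the cone. The computation checks out: the strict transform does have class $2E+6F$ (the coefficient of $E$ is pinned down by $\pi^*C_q\cdot E=0$, since the vertex is a singular point of $Q_q$ and the naive multiplicity formula does not apply); your list of two-component splittings is complete; and the intersection numbers $4$ for $(E+2F)+(E+4F)$ and $(E+3F)+(E+3F)$, against the forced local intersection multiplicity $2$ of the two smooth branches at a tacnode, kill everything except $F+(2E+5F)$. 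The residual component $2E+5F$ has arithmetic genus $2$, matching the smooth genus-$2$ curve $B$ tangent to the ruling $A$ in the paper's description, and your identification of the plane as the cone over the unique ruling through $p$ gives both existence and uniqueness. What your version buys is a self-contained proof that makes visible why only a ruling can split off; what the paper's buys is brevity and consistency with the rest of Section~\ref{sect:defect}, which leans on the same external classification throughout. One slip to fix: the parenthetical rule that ``intersection multiplicity $2m$ produces an $\sA_{2m-1}$-singularity'' is off by a factor of two and contradicts the (correct) clause just before it --- two smooth branches meeting with local intersection multiplicity $m$ form an $\sA_{2m-1}$, so multiplicity $2$ gives $\sA_3$ and multiplicity $4$ would give $\sA_7$; since your exclusions only use the correct instance $m=2$, the argument itself is unaffected.
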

\begin{proof}
    We project from an $\sA_3$-point $q$. The curve $C_q\subset Q_q$ has an $\sA_1$-singularity at the cone point of $Q_q$. Since $X$ has a second $\sA_3$-point, so must $C_q$. By \cite[Lemma 4.4, Proposition 4.8]{viktorova}, this implies that either:
    \begin{itemize}
        \item $C_q$ is irreducible with an $\sA_3$-singularity away from the cone point of $Q_q$, or
        \item $C_q=A\cup B,$ where $A$ is a fiber in the ruling of $Q_q,$ and $B$ is a smooth genus $2$ curve tangent to $A$ at a single point, corresponding to the second $\sA_3$-point. By Lemma \ref{lemm: defect formula} (2), we see that $d(X)=1$, and the plane is given by the cone over $A$ with vertex $q$ - it thus contains  the two $\sA_3$-points.
    \end{itemize}
\end{proof}

\begin{lemm}
    Let $X$ be a cubic with $3\sA_3$-singularities. Then either
    \begin{enumerate}
        \item $d(X)=1$ and $\Cl(X)$ is freely generated by two classes of cubic scrolls contained in $X$,
        \item $d(X)=1$ and there is a unique plane $\Pi$ containing exactly two $\sA_3$-points, in particular, there is a distinguished $\sA_3$-point contained in two planes, or 
        \item $d(X)=2$ and there are exactly three planes, each containing exactly two $\sA_3$-points. 
    \end{enumerate}
\end{lemm}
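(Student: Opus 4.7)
The plan is to project from one of the three $\sA_3$-singularities and analyze the resulting sextic $C_q\subset Q_q\subset\bP^3$, following the template of the preceding lemma. Choose $q\in \Sing(X)$ and apply Lemma~\ref{lemm: defect formula}(3): $C_q$ acquires an $\sA_1$-singularity at the vertex $v$ of the quadric cone $Q_q$, and by Theorem~\ref{thm: Wall} the other two $\sA_3$-points of $X$ lift to $\sA_3$-singularities of $C_q$ at smooth points of $Q_q$. Since $C_q$ is a $(2,3)$ complete intersection of arithmetic genus $4$, and the prescribed $\delta$-invariants sum to $1+2+2=5>4$, the curve $C_q$ must be reducible, and hence $d(X)\ge 1$ by Lemma~\ref{lemm: defect formula}(2).

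The heart of the argument is a classification of the possible decompositions of $C_q$ subject to these constraints. Components of curves on the quadric cone $Q_q$ are built from rulings through $v$, smooth hyperplane-section conics, twisted cubics, and higher-degree curves; matching degrees, arithmetic genera and the prescribed singularity profile singles out exactly three configurations: (i) $C_q = \Gamma_1\cup \Gamma_2$ with each $\Gamma_i$ a twisted cubic on $Q_q$, both meeting at $v$ transversely (accounting for the $\sA_1$) and simply tangent at two further smooth points (each tangency producing one of the required $\sA_3$-singularities); (ii) $C_q = A\cup B$ with $A$ a ruling through $v$ and $B$ an irreducible quintic, $B$ being simply tangent to $A$ at one smooth point of $Q_q$ and $B$ itself carrying an $\sA_3$ at another smooth point; (iii) $C_q = A_1\cup A_2\cup B$ with $A_1, A_2$ distinct rulings meeting transversely at $v$ and $B$ an irreducible quartic, each $A_i$ simply tangent to $B$ at a smooth point of $Q_q$, the tangencies producing the two required $\sA_3$'s.

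The geometric content of each case follows from the projection dictionary: a component of degree $d$ on $Q_q$ cones over $q$ to a surface of degree $d$ in $X$. In (i), $d(X)=1$, and the cones over $\Gamma_1, \Gamma_2$ are cubic scrolls $\Sigma_1, \Sigma_2\subset X$; since no component of $C_q$ is a ruling, no plane in $X$ contains $q$ together with another $\sA_3$-point, and by carrying out the same analysis at the other two $\sA_3$-points one concludes that $X$ contains no plane through two singular points. In (ii), $d(X)=1$ and the cone over $A$ is a plane $\Pi\subset X$ containing $q$ and exactly one other $\sA_3$-point; reprojecting from the remaining $\sA_3$-points pins down the plane structure and identifies the distinguished singular point. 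In (iii), $d(X)=2$ and the cones over $A_1, A_2$ are two planes through $q$, each containing $q$ and one further $\sA_3$-point; reprojecting from either of the remaining $\sA_3$-points produces a similar pair of planes, and the total configuration consists of exactly the three planes spanned by the three unordered pairs of $\sA_3$-points. The free generation of $\Cl(X)$ by $[\Sigma_1],[\Sigma_2]$ in case (i) follows from rank considerations once one checks, via intersection theory on a small resolution, that these classes span the full lattice rather than a proper sublattice.

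The main obstacle will be the rigorous exclusion of alternative decompositions—for instance, configurations involving a smooth conic section of $Q_q$, a reducible conic, or a more degenerate quartic—by showing they force extra singularities on $X$ or violate the $\sA_3$-profile dictated by Theorem~\ref{thm: Wall}. A secondary delicate point is the $\bZ$-basis claim in case (i), which I expect to require an explicit intersection-theoretic computation on a small resolution of $X$.
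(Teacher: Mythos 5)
Your overall strategy is the paper's: project from an $\sA_3$-point $q$, note that $C_q$ has an $\sA_1$-singularity at the vertex of $Q_q$ and two further $\sA_3$-singularities, deduce reducibility (your $\delta$-invariant count $1+2+2=5>p_a(C_q)=4$ is a nice self-contained substitute for the paper's citation of Viktorova), and then classify the decompositions of $C_q$. The gap is in the classification itself. The paper (following \cite[Proposition 4.8]{viktorova}) finds \emph{four} admissible decompositions, and the one you omit is exactly the one you propose to exclude: $C_q=A\cup B$ with $A$ a smooth hyperplane-section conic of $Q_q$ \emph{not} passing through the vertex and $B$ a smooth genus-one quartic tangent to $A$ at two points with multiplicity $2$. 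This configuration genuinely occurs and cannot be ruled out: in the $d(X)=1$ case with a plane, the unique plane $\Pi$ contains two of the three $\sA_3$-points, and when you project from the third $\sA_3$-point (the one off $\Pi$), no component of $C_q$ can be a ruling (a ruling would cone to a plane through $q$), so the two components must be the conic and the quartic; the plane $\Pi$ is then recovered as the residual surface in the hyperplane spanned by the cone over $A$, exactly as in Proposition~\ref{prop:4A1+2A2}. Your case (ii) (ruling plus quintic of genus $2$) is the \emph{same} geometric arrangement seen by projecting from one of the two $\sA_3$-points lying on $\Pi$; both feed into conclusion (2) of the lemma, but a complete classification of decompositions from an arbitrary $\sA_3$-point must include both. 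As written, your "main obstacle" paragraph commits you to proving something false (that smooth conic components force extra singularities), and the argument would break there.

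A smaller point: your claim in case (i) that the classes of the two cubic scrolls freely generate $\Cl(X)$ does not need a bespoke intersection-theoretic computation on a small resolution; the paper invokes \cite[Lemma 4.4]{MV} directly (as it also does for the $2\sA_5$ case), and the same applies to your situation since the two components of $C_q$ are twisted cubics through the vertex.
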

\begin{proof}
    We project from an $\sA_3$-point $q$. 
    The curve $C_q\subset Q_q$ must have an $\sA_1$-singularity at the cone point of $Q_q$. According to \cite[Proposition 4.8]{viktorova} there are four ways of forcing two additional $\sA_3$-singularities:
    \begin{itemize} 
    \item $C_q=A\cup B$, where both $A$ and $B$ are twisted cubics passing through the cone point, and tangent in two other points. By \cite[Lemma 4.4]{MV}, $X$ contains two families of cubic scrolls, that freely generate $\Cl(X)$ (see also \cite{HT-determinant}).
  \item $C_q=A\cup B,$ where $A$ is a hyperplane section of $Q_q$ not passing through the cone point, and $B$ is a smooth curve of genus 1. Further, $A$ is tangent to $B$ at two distinct points with multiplicity 2. In this case, $d(X)=1$ by Lemma \ref{lemm: defect formula} (2): we see the plane as in Proposition~\ref{prop:4A1+2A2}. It contains the remaining two $\sA_3$-points.

  \item  $C_q=A\cup B$, where $A$ is a ruling of $Q_q$ and $B$ a genus 2 curve. Furthermore, $A$ is tangent to $B$ at a point, and $B$ has an additional $\sA_3$-singularity. This is the same arrangement as the previous case, where we are instead projecting from an $\sA_3$-point that is contained in the unique plane.
  
\item $C_q=A_1\cup A_2\cup B$, where $B$ is a smooth curve of genus 1 and each $A_i$ is a distinct line in the ruling of $Q_q$ which is tangent to $B$. In this case again one uses Lemma \ref{lemm: defect formula} (2) to see $d(X)=2$, and the planes are given as the cone over each $A_i$, along with the residual plane from intersecting the hyperplane spanned by the $A_i$. 
\end{itemize}
\end{proof}

\begin{prop}\label{prop:4A1+2A3}
     Let $X$ be a cubic with $2\sA_3+4\sA_1$-singularities. Then
    \begin{enumerate}
        \item $d(X)=3$, and the extra generators of $\Cl(X)$ are planes,
        \item there is a unique plane $\Pi\subset X$ containing all four nodes,
        \item each $\sA_3$-point is contained in two planes, each containing two other nodes,
        \item the line containing the $2\sA_3$-points is disjoint from $\Pi$.
    \end{enumerate}
\end{prop}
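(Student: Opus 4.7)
The approach parallels Proposition~\ref{prop:4A1+2A2} and the preceding lemma on cubics with $3\sA_3$-singularities: project from an $\sA_3$-singularity and read off the planes and the defect from the decomposition of the projection curve.

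First, I would project from $q = p_1$, one of the two $\sA_3$-singularities. By Lemma~\ref{lemm: defect formula}(3), $Q_q$ is a quadric cone with vertex $v$, and $C_q = Q_q \cap S_q$ has an $\sA_1$-singularity at $v$. By Theorem~\ref{thm: Wall}, the second $\sA_3$-point $p_2$ and the four nodes contribute singularities of $C_q$ of matching types, all lying away from $v$. The total $\delta$-invariant of these forced singularities exceeds the arithmetic genus of a smooth $(2,3)$-complete intersection, so $C_q$ is reducible; following the case analysis in \cite[Propositions~4.8, 4.9]{viktorova}, the only decomposition compatible with the required singularity profile is
\[
C_q = A \cup \ell_1 \cup \ell_2 \cup R,
\]
where $A$ is a plane section of $Q_q$ (a smooth conic) not passing through $v$, $\ell_1, \ell_2$ are two distinct rulings of $Q_q$ through $v$, and $R$ is a residual conic. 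The four transverse intersections in $A \cap R$ account for the four $\sA_1$-singularities of $C_q$ away from $v$, while $R$ meets one of the $\ell_i$ with multiplicity two to produce the $\sA_3$-singularity corresponding to $p_2$. Lemma~\ref{lemm: defect formula}(2) then yields $d(X) = 4 - 1 = 3$, proving (1).

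For (2), the cone $Z \subset X$ over $A$ with vertex $q$ spans a hyperplane $H'\subset \bP^4$ whose residual intersection with $X$ is a plane $\Pi$, and $\Pi \cap Z = A$ carries the four nodes, exactly as in the argument of Proposition~\ref{prop:4A1+2A2}. Uniqueness of $\Pi$ follows because any plane in $X$ containing all four coplanar nodes projects from $q$ to a conic component of $C_q$ not passing through $v$, and $A$ is the unique such component. For (3), each ruling $\ell_i$ in the decomposition of $C_q$ corresponds to a plane $\Pi_i \subset X$ containing $q = p_1$ (the union of the lines of $X$ through $q$ parameterized by $\ell_i$); Theorem~\ref{thm: Wall}(1) matches the two $\sA_1$-points of $C_q$ lying on $\ell_i \setminus \{v\}$ with two nodes of $X$ contained in $\Pi_i$, giving two planes through $p_1$ each containing two nodes. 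Projecting from $p_2$ produces the symmetric pair of planes through $p_2$. For (4), the argument in the last paragraph of Proposition~\ref{prop:4A1+2A2} applies essentially unchanged: the line $L$ joining $p_1$ and $p_2$ meets $H'$ at $q$, and cannot lie in $H'$, since $L \subset H' \cap X = Z \cup \Pi$ would force either $L \subset Z$ (contradicting that $p_2$ is an $\sA_3$-point rather than a node) or $L \subset \Pi$ (contradicting $q \notin \Pi$); hence $L \cap \Pi = \varnothing$.

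The main obstacle I expect is pinning down the decomposition of $C_q$: ruling out alternative configurations of components producing the same singularity profile requires a careful enumeration of $(2,3)$-curves on a quadric cone with the prescribed combination of $\sA_1$- and $\sA_3$-singularities, in the same spirit as the enumerations in \cite[\S 4]{viktorova}.
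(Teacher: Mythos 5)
Your overall strategy (project from an $\sA_3$-point, decompose $C_q$, read off the defect and the planes) is exactly the paper's, but your decomposition of $C_q$ is wrong, and the error is not merely one of labeling. You take $C_q=A\cup\ell_1\cup\ell_2\cup R$ with $A,R$ conics and $\ell_1,\ell_2$ rulings, and claim the four nodes sit at $A\cap R$ while the second $\sA_3$ comes from $R$ meeting a ruling with multiplicity two. Neither incidence can occur on a quadric cone: an irreducible degree-$2$ curve on $Q_q$ is a hyperplane section missing the vertex, so two such conics satisfy $A\cdot R=2$ and meet in at most two points (not four), and a ruling satisfies $\ell_i\cdot R=1$, so it meets $R$ transversely in a single point and can never be tangent to it. Worse, your configuration has singular points at $v$, at the four points $\ell_i\cap A$, $\ell_i\cap R$, \emph{and} at the two points of $A\cap R$, which by Theorem~\ref{thm: Wall} would force six singular points of $X$ besides $q$ --- one too many. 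The correct decomposition (this is \cite[Proposition 4.8]{viktorova}, and what the paper uses) is $C_q=A_1\cup A_2\cup B_1\cup B_2$ with $A_1,A_2$ rulings and $B_1,B_2$ hyperplane sections \emph{tangent to each other} at one point: the tacnode $B_1\cap B_2$ produces the second $\sA_3$-point, and the four transverse intersections $A_i\cap B_j$ produce the four nodes. Your defect count $d(X)=4-1=3$ comes out right only because both configurations happen to have four components.

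The misidentification propagates into (2)--(4). The four nodes of $C_q$ do not lie on a single hyperplane section of $Q_q$, so $\Pi$ is not the residual of the cone over such a conic as in Proposition~\ref{prop:4A1+2A2}; in the paper it is the residual plane of the two cones over $A_1,A_2$ inside the hyperplane $\langle q,A_1,A_2\rangle$, whose intersection with $X$ splits into three planes, and the nodes land on $\Pi$ because each is a singular point of that reducible hyperplane section not on $\Pi_1\cap\Pi_2$. Your statement (3) survives in spirit once you put the nodes at $A_i\cap B_j$ rather than on $A\cap R$. For (4), your dichotomy $L\subset Z$ or $L\subset\Pi$ is tied to the wrong hyperplane, and the step ruling out $L\subset Z$ is not justified as written; the paper instead assumes $L\cap\Pi\neq\varnothing$, takes the hyperplane section $F\supset L\cup\Pi$, shows $F$ would split as $\Pi$ plus two planes each containing $L$ and two nodes, and kills this with a local computation showing that a plane in $X$ through four singular points forces all four to be nodes. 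So the gap you flagged at the end --- pinning down the decomposition of $C_q$ --- is precisely where the argument fails, and it is not a technicality you can defer.
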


\begin{proof}
    We project from an $\sA_3$-point $q\in X$. By \cite[Proposition 4.8, corrected version]{viktorova}, we see that $C_q=A_1\cup A_2\cup B_1\cup B_2$, where $A_1, A_2$ are two distinct lines in the ruling of the quadric cone $Q_q$, and $B_1, B_2$ are two hyperplane sections of $Q_q$ not passing through the cone point and tangent to each other at $p\in Q_q$.
    The computation of $d(X)$ follows from Lemma \ref{lemm: defect formula}.

    We see that there are two planes containing $q$; namely, the cones over $A_{1}, A_{2}$ with vertex $q$. Each contains two nodes of $X$: indeed, $C_q$ has a node at each of the points $p_{ij}\in A_i\cap B_j$, and thus by,  Theorem \ref{thm: Wall}, there is a node on the line $\langle q, p_i\rangle$.
    Note that the plane $\Pi$ spanned by $A_1, A_2$ contains the four nodes of $X$, and hence is contained in $X$ itself.

Finally, let $L$ denote the line through the two $\sA_3$-points of $X$. We claim $L\cap \Pi=\varnothing$.
Indeed, suppose that $L\cap \Pi\ne\varnothing$.
Then there exists a hyperplane section $F\subset X$ that contains $L$ and $\Pi$. Note that $F$ must split as a union of $\Pi$ and two other planes such that both of them contain $L$, and each of them contains two nodes of $X$. This is impossible: if a plane in $X$ contains four singular points of $X$, one can write down the equation of $X$ and a local computation shows that these singular points must be nodes. 
\end{proof}

\begin{lemm}\label{lemm:2A3mA1defect}
    Let $X$ be a cubic with $2\sA_3+m\sA_1$-singularities, for $2\leq m\leq 3. $ Then
    \begin{enumerate}
        \item When $m=2$, then either 
        \begin{itemize}
            \item $d(X)=1$ and there is a unique plane in $X$ that contains the two $\sA_1$-points and exactly one $\sA_3$-point, 
            \item $d(X)=1$ and there is a unique plane in $X$ containing the two $\sA_3$-points and no other singularities, or
            \item $d(X)=2$ and there are three planes contained in $X$; one plane contains the two $\sA_3$-points, and the other planes each contain one of the $\sA_3$-points and the two $\sA_1$-points.
        \end{itemize}
        \item When $m=3$, then $d(X)=2$ and there are exactly two planes contained in $X$; both planes contain an $\sA_3$-point and exactly $2\sA_1$-points.
    \end{enumerate}
\end{lemm}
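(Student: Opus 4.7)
I apply the projection method from an $\sA_3$-singularity $q\in\Sing(X)$, as in the preceding lemmas. By Lemma~\ref{lemm: defect formula}(2)--(3), $Q_q$ is a quadric cone, and $C_q\subset Q_q$ is a $(2,3)$ complete intersection of arithmetic genus $4$ passing through the cone point with an $\sA_1$-singularity. By Theorem~\ref{thm: Wall}(1), the remaining singularities of $X$ (one $\sA_3$-point and $m$ nodes) produce singularities of the same type on $C_q$ at smooth points of $Q_q$. The defect equals $(\#\{\text{components of }C_q\})-1$ by Lemma~\ref{lemm: defect formula}(2), and since the prescribed singularities contribute total $\delta$-invariant $3+m>4=p_a(C_q)$, the curve $C_q$ is necessarily reducible.

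I then enumerate the admissible decompositions of $C_q$ following the strategy of \cite[Proposition~4.8]{viktorova}. A ruling component $A$ of $C_q$ produces a plane $\langle q,A\rangle\subset X$ through $q$ by the cone construction. A smooth conic component $C_2\subset Q_q$ (a hyperplane section avoiding the cone point) produces a residual plane in $X$ not through $q$: the hyperplane $\tilde H\subset\bP^4$ obtained as the linear join of $q$ with the $\bP^3$-plane spanned by $C_2$ cuts $X$ in a cubic surface that splits as the quadric cone $\langle q,C_2\rangle$ together with a residual plane, exactly as in the construction appearing in the proof of Proposition~\ref{prop:4A1+2A3}.

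For $m=2$, two-component decompositions $C_q=A\cup B$ give $d(X)=1$ and split into two sub-cases depending on how the $\sA_3$-singularity of $C_q$ is realized: either intrinsically on $B$ (so $A$ is a ruling meeting $B$ transversally at the two nodes, and $\langle q,A\rangle$ is the plane of sub-case (i), containing $q$ and the two nodes), or as a tangential intersection $A\cap B$ of order two (so $A$ is a ruling, the two nodes are intrinsic singularities of $B$, and $\langle q,A\rangle$ is the plane of sub-case (ii), containing both $\sA_3$-points and no nodes). A three-component decomposition with two ruling components yields $d(X)=2$ and the three-plane configuration of sub-case (iii), via the argument of Proposition~\ref{prop:4A1+2A3}. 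For $m=3$, no $\sA_1$-point of $X$ can lie on the intersection of two planes in $X$ through $q$ (by Theorem~\ref{thm: Wall}(2) applied to the cone-point direction of $Q_q$), so $C_q$ contains at most one ruling; the $\delta$-count then leads to a three-component decomposition $C_q=A\cup C_2\cup C_3$ with $A$ a ruling and $C_2$ a smooth conic, producing two planes in $X$ (one through each $\sA_3$-point), each containing two of the three nodes and sharing a single node.

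The main obstacle is the enumeration and verification step: one must show that the listed decompositions exhaust all geometrically realizable possibilities, distribute the prescribed singularities correctly among the components, and rule out spurious additional planes in $X$. The dichotomy between intrinsic and tangential realization of the $\sA_3$-singularity of $C_q$ (which distinguishes sub-cases (i) and (ii)) and the shared-node obstruction (which pins down the $m=3$ configuration) are the subtle points of the argument.
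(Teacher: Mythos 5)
Your overall strategy is the same as the paper's: project from an $\sA_3$-point $q$, use Theorem~\ref{thm: Wall} and Lemma~\ref{lemm: defect formula} to translate the singularities of $X$ into singularities of the $(2,3)$-curve $C_q$ on the quadric cone, read off $d(X)$ from the number of components, and realize the extra classes in $\Cl(X)$ as cones over rulings or as residual planes to cones over conics. Your $\delta$-invariant count ($\delta\ge 3+m>4=p_a(C_q)$) is a clean self-contained way to get reducibility of $C_q$, where the paper simply invokes the corrected version of \cite[Proposition 4.8]{viktorova}.

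The genuine gap is in the enumeration, which is the entire content of the proof, and the one exclusion argument you do supply is a non sequitur. You claim that for $m=3$ the curve $C_q$ contains at most one ruling because, by Theorem~\ref{thm: Wall}(2), no $\sA_1$-point of $X$ lies on the common line of two planes through $q$. But two rulings of $Q_q$ meet only at the cone point, so the corresponding planes meet along $\langle q,v\rangle$ ($v$ the vertex), and Wall's theorem says that line contains no singular point of $X$ other than $q$ in \emph{every} configuration --- in particular in the $m=2$, $d(X)=2$ case, where $C_q$ does contain two rulings. So your criterion cannot distinguish $m=2$ from $m=3$, and it does not exclude, say, $C_q=A_1\cup A_2\cup B$ with $A_1,A_2$ rulings and $B$ a nodal quartic in $|2H|$, which would combinatorially account for $\sA_3+3\sA_1$ away from the vertex and would yield a third plane, contradicting part (2) of the statement. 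Ruling out such splittings (and, for $m=2$, verifying that the three listed outcomes are the only realizable ones) is exactly what the paper outsources to \cite[Proposition 4.8, corrected version]{viktorova}; your plan acknowledges this step but does not carry it out. A smaller omission: in your $m=2$ two-component list you take $A$ to be a ruling in both sub-cases, whereas the paper also records the splitting with $A$ a hyperplane-section conic and $B$ a quartic; this is the same cubic seen from the other $\sA_3$-point (the plane is then residual to the cone over $A$ rather than a cone over a ruling), so your final list of outcomes is unaffected, but your enumeration of decompositions of $C_q$ is not exhaustive as stated.
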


\begin{proof}
    Let $q\in \Sing(X)$ be an $\sA_3$-point. By \cite[Proposition 4.8, corrected version]{viktorova}, $C_q$ is necessarily reducible. If $X$ has $2\sA_3+2\sA_1$-singularities, then there are two possibilities:
    \begin{itemize}
        \item $C_q=A\cup B$, with $A$ and $B$ irreducible and intersecting transversely at the cone point. Thus $d(X)=1,$ and the additional class in $\Cl(X)$ is a plane. There are three options: 
    \begin{itemize}
        \item $A$ is a hyperplane section of the quadric, and $B$ is tangent to $A$ at one point, and intersects transversely in two distinct points. The plane is the residual from the cone over $A$, and contains the two $\sA_1$-points and one $\sA_3$-point.
        \item $A$ is a ruling of the quadric, intersecting $B$ in two distinct points; $B$ has an additional $\sA_3$-singularity. The plane is the cone over $A,$ and contains the two $\sA_1$-points and one $\sA_3$-point.
        \item $A$ is a ruling of the quadric, and is tangent to $B$ at a single point; $B$ has two additional $\sA_1$-singularities.
    \end{itemize}

\item $C_q=A\cup B_1\cup B_2,$ and each irreducible. Here, $B_1$ and $B_2$ are distinct rulings of the quadric cone, and $B_1$ is tangent to $A$ in a unique point, whereas $B_2$ intersects $A$ in two distinct points. We see that $d(X)=2,$ and the cone over $B_1$ is a plane containing the two $\sA_3$-points, whereas the cone over $B_2$ is a plane containing only $q$ and the two $\sA_1$-points. Taking the plane spanned by $B_1$ and $B_2$ gives a third plane containing one $\sA_3$-point and two $\sA_1$-points.
\end{itemize}

    Next, suppose that $X$ has $2\sA_3+3\sA_1$-singularities. Then $C_q=A\cup B_1\cup B_2,$ where $A$ is a ruling of the quadric, $B_1$ is a hyperplane section of the quadric, and $B_2$ is a twisted cubic. Further, $B_1$ and $B_2$ are tangent in one point, and intersect transversely in one other point. We see that $d(X)=2,$ and there are exactly two planes in $X$: the first is the cone over $A$, and contains one $\sA_3$-point and $2\sA_1$-points. The second plane is residual to the cone over $B_1,$ and contains one $\sA_3$-point and $2\sA_1$-points. Note that there is one $\sA_1$-point that belongs to both planes, namely the intersection of $A$ and $B_1.$
\end{proof}

\subsection*{Projection from $\sA_4$}

\begin{lemm}
    Let $X$ be a cubic threefold with $2\sA_4$-singularities. Then $d(X)=0.$
\end{lemm}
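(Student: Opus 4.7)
The approach is to apply the projection method from Lemma~\ref{lemm: defect formula}. Let $q\in\Sing(X)$ be one of the two $\sA_4$-points. Since $q$ is of type $\sA_n$ with $n=4\geq 3$, parts (2) and (3) of Lemma~\ref{lemm: defect formula} tell us that $Q_q$ is a quadric cone, $C_q$ passes through the cone vertex with an $\sA_2$-singularity there, and
$$
d(X) = (\#\text{ irreducible components of } C_q) - 1.
$$
So the whole computation is reduced to showing that $C_q$ is irreducible.

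First I would track the second $\sA_4$-point of $X$ on $C_q$ via Wall's theorem (Theorem~\ref{thm: Wall}). Since the cone vertex is the unique singular point of $Q_q$, the second $\sA_4$-point of $X$ must lie on a line $\langle q,p\rangle$ with $p\in C_q$ distinct from the cone vertex and with $Q_q$ smooth at $p$; consequently $C_q$ must acquire an additional $\sA_4$-singularity at $p$. Thus $C_q$ is a $(2,3)$-complete intersection curve of arithmetic genus $4$ carrying at least an $\sA_2$-singularity at the cone vertex and an $\sA_4$-singularity elsewhere. Note that the $\delta$-invariants sum to $1+2=3$, so an irreducible $C_q$ would have geometric genus $1$ and the configuration is numerically consistent.

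Next I would rule out each reducible splitting $C_q = A\cup B$ on the quadric cone $Q_q$, appealing to the case analysis in the spirit of \cite[Lemma 4.4, Proposition 4.8, corrected version]{viktorova} already used in the preceding lemmas: here $A$ is either a ruling of $Q_q$ (of degree $1$, through the vertex) with $B$ a residual curve of degree $5$, or a hyperplane section of $Q_q$ (of degree $2$, not through the vertex) with $B$ a residual quartic, or $C_q$ decomposes further. In each such configuration, one checks via the local equations at the cone vertex and at intersection points that the singularities forced on $C_q$ cannot realize the required pair: an $\sA_2$ at the cone vertex together with an isolated $\sA_4$-singularity away from the vertex. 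Once irreducibility is established, Lemma~\ref{lemm: defect formula}(2) immediately gives $d(X)=0$.

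The main obstacle is the verification of irreducibility, specifically the local analysis at the cone vertex showing that the $\sA_2$-type there is incompatible with every reducible splitting on $Q_q$ that could also produce an $\sA_4$-singularity away from the vertex. This is a routine but careful computation analogous to those appearing in the earlier lemmas of this section.
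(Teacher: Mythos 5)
Your reduction is exactly the paper's: project from an $\sA_4$-point, note that $Q_q$ is a quadric cone with $C_q$ acquiring an $\sA_2$-singularity at the vertex and an $\sA_4$-singularity away from it, and conclude $d(X)=0$ from Lemma~\ref{lemm: defect formula}(2) once $C_q$ is irreducible. The paper disposes of irreducibility by citing case (4) of Proposition 4.6 of Viktorova with the one-line remark that multiple components would force additional singularities on $X$; you instead announce a case-by-case analysis of the possible splittings $C_q=A\cup B$ and explicitly leave it as the unresolved ``main obstacle.'' That step does not actually require a case analysis: both $\sA_2$ and $\sA_4$ are \emph{unibranch} curve singularities ($y^2=x^{n+1}$ with $n$ even), so neither the vertex nor the $\sA_4$-point can be a point where two distinct components of $C_q$ meet; on the other hand any two effective curves on the quadric cone have positive intersection, so a reducible $C_q$ would have a singular point with at least two branches somewhere, which by Theorem~\ref{thm: Wall} produces a singular point of $X$ beyond the two $\sA_4$-points (or corrupts the $\sA_2$ at the vertex). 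Supplying that observation closes your gap and recovers the paper's argument; as written, the decisive verification is promised but not performed.
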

\begin{proof}
This is case (4) of \cite[Proposition 4.6]{viktorova}: $C_q$ must be irreducible with one $\sA_4$-point  (and an $\sA_2$-point at the cone point of $Q_q$); indeed, having multiple components forces additional singularities on $X$.
\end{proof}

\subsection*{Projection from $\sA_5$}

\begin{lemm}
    Let $X$ be a cubic with $2\sA_5$-singularities. Then $d(X)=1,$ and $\Cl(X)$ is freely generated by two classes of cubic scrolls contained in $X$.
\end{lemm}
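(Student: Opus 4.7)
The plan is to project from one of the two $\sA_5$-points $q\in \Sing(X)$. By Lemma~\ref{lemm: defect formula}(3), the curve $C_q\subset Q_q\subset \bP^3$ passes through the cone point of the quadric cone $Q_q$ and acquires an $\sA_3$-singularity there. Since $Q_q$ is smooth away from its vertex, Theorem~\ref{thm: Wall}(1) forces the second $\sA_5$-point of $X$ to correspond to a second singularity of $C_q$, also of type $\sA_5$, at a point lying in the smooth locus of $Q_q$.

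First I would rule out the case that $C_q$ is irreducible. As a $(2,3)$-complete intersection in $\bP^3$, one has $p_a(C_q)=4$ by adjunction. Since the $\delta$-invariants of $\sA_3$ and $\sA_5$ are $2$ and $3$ respectively, an irreducible $C_q$ would have geometric genus at most $4-2-3=-1$, which is absurd. Hence $C_q$ must be reducible.

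Next, I would enumerate the possible decompositions of $C_q$ into components supported on $Q_q\cong \bP(1,1,2)$ whose degrees sum to $6$, and match them against the required singular profile $\sA_3+\sA_5$. For each candidate, involving combinations of rulings, plane sections, and twisted cubics, Theorem~\ref{thm: Wall} is applied to test whether the induced singularities of $X$ away from $q$ are exactly the prescribed second $\sA_5$. I expect the only surviving configuration to be $C_q=A\cup B$, where $A$ and $B$ are two distinct twisted cubics on $Q_q$, each smooth at the cone point, tangent to one another there (producing the $\sA_3$ at the vertex), and tangent to one another with contact order $3$ at a second smooth point of $Q_q$ (producing the $\sA_5$).

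Granted this, Lemma~\ref{lemm: defect formula}(2) yields $d(X)=2-1=1$. Each of $A$ and $B$ sweeps out, via the lines of $X$ through $q$, a cubic scroll contained in $X$; by \cite[Lemma 4.4]{MV}, exactly as in the first case of the $3\sA_3$ proposition above, the two resulting classes freely generate $\Cl(X)\simeq \bZ^2$. The principal obstacle will be the enumeration step: ruling out competing decompositions of $C_q$ that appear to realise $\sA_3+\sA_5$ but in fact force extra singularities on $X$, or fail to be flat limits of smooth complete intersections on the cone. This case analysis runs closely parallel to \cite[Proposition 4.8]{viktorova} and proceeds by careful tangency and intersection bookkeeping on $Q_q$.
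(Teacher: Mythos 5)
Your argument follows the paper's proof essentially step for step: project from an $\sA_5$-point, note the forced $\sA_3$-singularity of $C_q$ at the cone point and the $\sA_5$ at a smooth point of $Q_q$, exclude irreducible $C_q$, identify $C_q$ as a union of two twisted cubics through the vertex, and conclude via Lemma~\ref{lemm: defect formula}(2) and \cite[Lemma 4.4]{MV}. Two small remarks: your genus count ($p_a(C_q)=4<\delta(\sA_3)+\delta(\sA_5)=5$) makes explicit the reducibility claim the paper only asserts, and your description of the tangencies (multiplicity $2$ at the vertex plus contact order $3$ at a second point, totalling $5=p_a(A\cup B)+1$) is actually the precise version of the paper's looser phrase ``intersecting each other in one point with multiplicity $4$''; the enumeration excluding other decompositions is left as an expectation in your write-up, but the paper asserts it at the same level of detail.
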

\begin{proof}
    We project from one of the $\sA_5$-points $q$. By Theorem \ref{thm: Wall}, the curve $C_q$ necessarily has an $\sA_3$-singularity at the cone point of $Q_q$ - this is impossible if $C_q$ is irreducible. The only possibility for an additional $\sA_5$-singularity is to have $C_q=A\cup B$, where each component is a smooth twisted cubic passing through the cone point of $Q_q$, and intersecting each other in one point with multiplicity $4$. By \cite[Lemma 4.4]{MV}, $X$ contains two families of cubic scrolls, that freely generate $\Cl(X)$ (see also \cite{HT-determinant}).
\end{proof}

\subsection*{Projection from $\sD_4$}
In this case, $Q_q=\Pi_1\cup \Pi_2$, where $\Pi_i\cong \bP^2$, meeting in a line $l$. Note that $C_q=B_1\cup B_2$, where $B_i\subset \Pi_i$ is a cubic curve, and $C_q$ intersects $l$ in three simple points, with each $B_i$ smooth at these points. 

\begin{prop}\label{prop:defectd4pt}
    Let $X$ be a cubic with a $\sD_4$-singularity. Then
    \begin{enumerate}
        \item If $X$ has $2\sD_4$-singularities, then $d(X)=2$ and there are three planes in $X$, each containing the two singular points.
        \item If $X$ has $2\sD_4+2\sA_1$-singularities, then $d(X)=3,$ and there are five planes contained in $X$. Three planes contain both $\sD_4$-points, and each $\sD_4$-point is contained in one other plane containing both nodes.
        \item If $X$ has $2\sD_4+3\sA_1$-singularities, then $d(X)=4$, and there are nine planes contained in $X$. Three planes contain both $\sD_4$-points, and each $\sD_4$-point is contained in three other planes which contain two of the three nodes.
        \item If $X$ has $3\sD_4$-singularities, then $d(X)=4$, and there are nine planes contained in $X$, each containing exactly two singular points.
    \end{enumerate}
\end{prop}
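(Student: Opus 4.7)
The plan is to apply the projection method of Section~\ref{sec: proj} from a $\sD_4$-singularity $q \in X$. By the setup preceding the proposition, $Q_q = \Pi_1 \cup \Pi_2$ is a union of two planes meeting in a line $l$, and $C_q = B_1 \cup B_2$ with each $B_i$ a plane cubic in $\Pi_i \cong \bP^2$ smooth at $C_q \cap l$. Theorem~\ref{thm: Wall} identifies the singularities of $X$ away from $q$ with the singular points of $C_q$ in $(\Pi_1 \cup \Pi_2) \setminus l$, equivalently, with singular points of the $B_i$ off $l$. Two facts about plane cubics drive the case analysis: a plane cubic with a $\sD_4$-singularity must be three concurrent lines, while a plane cubic with two (respectively three) nodes must be a conic plus a secant line (respectively three lines in general position), since an irreducible plane cubic has arithmetic genus one and therefore at most one node.

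For each case these constraints determine the decomposition of $B_1 \cup B_2$. In the $2\sD_4$ case, one component, say $B_1$, is three concurrent lines through the image of the second $\sD_4$-point while $B_2$ is irreducible: four components, and $d(X) = 4 - 2 = 2$ by Lemma~\ref{lemm: defect formula}(1). In the $2\sD_4 + 2\sA_1$ case, $B_1$ is still three concurrent lines while $B_2$ is a conic meeting a line in two nodes: five components, $d(X) = 3$. In the $2\sD_4 + 3\sA_1$ case, $B_1$ is three concurrent lines and $B_2$ is three lines in triangular position: six components, $d(X) = 4$. Finally, in the $3\sD_4$ case both $B_1$ and $B_2$ are three concurrent lines, through the images of the second and third $\sD_4$-points respectively: six components, $d(X) = 4$.

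The plane count follows from the observation that each line component $L \subset B_i$ produces a plane $\langle q, L\rangle \subset X$ as the cone over $L$ with vertex $q$. In the $2\sD_4$ case, the three lines of $B_1$ all pass through the image of the other $\sD_4$-point $q'$, so the three resulting planes each contain both $q$ and $q'$; projection from $q'$ recovers the same three by symmetry. In the $2\sD_4 + 2\sA_1$ case, the three planes from $B_1$ contain both $\sD_4$-points, while the line component of $B_2$ passes through the two node-images, giving a fourth plane through $q$ and both nodes; its symmetric counterpart through $q'$ yields the fifth. In the $2\sD_4 + 3\sA_1$ case, projection from $q$ yields three planes through $\{q, q'\}$ from $B_1$ and three planes from the triangular $B_2$, each through $q$ and a pair of nodes; projection from $q'$ produces three more planes through $q'$ and pairs of nodes, for nine in total. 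In the $3\sD_4$ case, projection from $q_i$ gives three planes through $\{q_i, q_j\}$ for each $j \neq i$, so the three pairs of $\sD_4$-points contribute $3 \times 3 = 9$ planes, each containing exactly two singular points.

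The main obstacle is verifying completeness of the enumeration: a priori $X$ could contain a plane avoiding all the $\sD_4$-points. This is resolved by noting that the classes of the listed planes together with the hyperplane class already span a subgroup of $\Cl(X)$ of rank $1 + d(X)$ matching the rank computed above, so no further planes can arise. A secondary subtlety is excluding alternative decompositions of the $B_i$ (for instance, an irreducible nodal $B_2$ in the $2\sD_4 + 2\sA_1$ case cannot supply two nodes simultaneously), but these are ruled out by the plane-cubic classification just recalled.
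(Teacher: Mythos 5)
Your argument is essentially the paper's: project from a $\sD_4$-point $q$, use that $Q_q=\Pi_1\cup\Pi_2$ and $C_q=B_1\cup B_2$ with each $B_i$ a plane cubic smooth along $l$, force the decompositions of the $B_i$ from the required singularities of $X$ via Theorem~\ref{thm: Wall}, read off the defect from the component count through Lemma~\ref{lemm: defect formula}(1), and obtain the planes as cones over the line components together with their counterparts under projection from the other $\sD_4$-point. All of your component counts and the resulting defects and plane configurations agree with the paper's, and the exclusion of alternative decompositions (irreducible cubics carry at most one node, a $\sD_4$-singular plane cubic is three concurrent lines) is the same classification the paper uses implicitly.

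The one place you go beyond the paper is the completeness check for the plane count, and the argument you give there does not work: the fact that the listed planes together with the hyperplane class generate a subgroup of $\Cl(X)$ of the full rank $1+d(X)$ does not preclude further planes, since any additional plane would simply have its class expressible in that sublattice (compare the Segre cubic, which has $15$ planes but class group of rank $6$). The correct route is geometric rather than lattice-theoretic: every plane $\Pi\subset X$ contains a singular point of $X$ (writing $f=x_0q_0+x_1q_1$ with $\Pi=\{x_0=x_1=0\}$, the locus $\Pi\cap\{q_0=q_1=0\}$ is nonempty and singular on $X$), and every plane through $q$ is the cone over a line component of $C_q$, because the union of lines of $X$ through $q$ is exactly the cone over $C_q$; one then rules out planes meeting $\Sing(X)$ only in nodes by a local computation, as the paper does in the analogous step of Proposition~\ref{prop:4A1+2A3}. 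Since the paper's own proof also treats this point lightly, I would not call your version a failed proof, but you should replace the rank argument, which as stated is a non sequitur.
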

\begin{proof}
    Since $X$ has at least $2\sD_4$-singularities,  at least one of the plane cubics, say $B_1$, is the union of three lines meeting in a point. The cone over each line with vertex $q$ gives a plane in $X$. Consider $span\langle \Pi_1, q\rangle\subset \bP^4$; this hyperplane intersects $X$ in precisely these three planes, giving one relation in $\Cl(X)$ - we see $d(X)=2$ for the case of $2\sD_4$-singularities.

    If $X$ has $r$ additional $\sA_1$-singularities, then $B_2$ must become singular. When $r=2,$ $B_2$ must be a conic and a line, for $r=3$, $B_2$ becomes three lines in a triangle configuration. Each line gives an additional plane in $X$, and the case of three lines gives one relation as before. The defect and plane computation follows. We see that $q$ is contained in three planes which contain the other $\sD_4$-point (corresponding to the cone over $B_1$), and in one plane containing the two nodes. This curve configuration is the only possibility; in particular, if we project from the other $\sD_4$-point, we have the same configuration. The claim follows. 

    If $X$ has $3\sD_4$-singularities, then both $B_1$ and $B_2$ are three lines meeting in a point. The defect is thus $d(X)=4$. Note that there are six planes that contain $q$, each containing one other $\sA_3$-point. By symmetry, there are nine planes contained in $X$.
\end{proof}

\section{Automorphisms}
\label{sect:aut}

In this section, we explain how to classify automorphisms $\Aut(X)$ of singular cubics $X\subset \bP^4$, with $s$ singular points. By convention, the action on the variety is from the right, and the action on the function field from the left. In coordinates, given 
$\mathbf{x}=(x_1,\ldots, x_5)$ and $\sigma \in \Aut(X)$, we put
$$
\sigma({\bf x})=\sigma((x_1,\ldots, x_5))= (x_1,\ldots, x_5) \cdot M_{\sigma},
$$
where $M_{\sigma}\in \GL_5$ is the corresponding matrix.

Throughout, we assume that $\Aut(X)$ does not fix any singular points, 
since otherwise, the action is linearizable. Let $H$ be the maximal subgroup of $\Aut(X)$ which fixes all singular points.
The $\Aut(X)$-action preserves the singular locus, 
yielding an exact sequence
$$
0\to H\to \Aut(X)\stackrel{\rho}{\longrightarrow} \fS_s.
$$
The image of $\rho$ reflects the singularity type, for example, when $X$ has $(2\sA_2+2\sA_1)$-singularities, the image is contained in $\fS_2\times \fS_2$. 

The algorithm for classifying $\Aut(X)$ involves the following steps:
\begin{itemize}
\item 
Find a {\em normal form} of $X$, based on the singularity type. This amounts to fixing appropriate coordinates and simplifying the equation. 
\item 
Determine possible images of $\rho$, and find all lifts to $\Aut(X)$,  depending on parameters in the equation of $X$. 
\item 
For each lift, determine $H$. 
\end{itemize}
Here, we explain the process in a simple case, when $X$ is the unique cubic with 
$3\sD_4$-singularities, see \cite[Theorem 5.4]{All}:

\begin{prop}
\label{prop:autd4}
Let 
\begin{align}\label{eq:3D4eq}
    X:=\{x_1x_2x_3+x_4^3+x_5^3=0\}
\end{align}
be the unique cubic threefold with $3\sD_4$-singularities.
Then
$$
\Aut(X)=\langle\tau_{a,b},\eta,\sigma_{(45)},\sigma_{(123)},\sigma_{(12)}\rangle\simeq(\bG^2_m(k)\times \fS_3)\rtimes \fS_3,
    $$
    where 
    \begin{align*}
        \tau_{a,b} : (\mathbf{x})&\mapsto (ax_1,bx_2,a^{-1}b^{-1}x_3,x_4,x_5),\quad a,b\in k^\times,\\
\eta : (\mathbf{x})&\mapsto (x_1,x_2,x_3,\zeta_3 x_4,\zeta_3^2x_5),\\
         \sigma_{(45)} : (\mathbf{x})&\mapsto (x_1,x_2,x_3, x_5, x_4),\\
         \sigma_{(123)} : (\mathbf{x})& \mapsto (x_3,x_1,x_2, x_4, x_5),\\
         \sigma_{(12)} : (\mathbf{x})&\mapsto (x_2,x_1,x_3, x_4, x_5).
    \end{align*}
In particular, $\eta_1,\eta_2$ generate the first $\fS_3$-factor and $\sigma_{(12)},\sigma_{(123)}$ generate the second $\fS_3$-factor in $\Aut(X)$.
\end{prop}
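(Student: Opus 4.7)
\textbf{Setup and outer $\fS_3$.} Computing $\nabla f$ for $f = x_1 x_2 x_3 + x_4^3 + x_5^3$ shows $\Sing(X) = \{p_1, p_2, p_3\}$ where $p_i = [e_i]$; dehomogenizing at each $p_i$ confirms the $\sD_4$-type. Since $X$ is a non-cone cubic threefold in $\bP^4$, every automorphism of $X$ extends to an element of $\PGL_5$. The induced action on $\Sing(X)$ gives a homomorphism $\rho: \Aut(X) \to \fS_3$; the substitutions $\sigma_{(12)}$ and $\sigma_{(123)}$ visibly preserve $f$, so $\rho$ is surjective and splits through the ``outer'' $\fS_3 = \langle \sigma_{(12)}, \sigma_{(123)} \rangle$.

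\textbf{Block-diagonal structure of $H := \ker \rho$.} A lift $M \in \GL_5$ of an element of $H$ fixes each $[e_i]$ for $i = 1, 2, 3$, so $M$ has the shape
\[
M = \begin{pmatrix} a_1 & 0 & 0 & 0 & 0 \\ 0 & a_2 & 0 & 0 & 0 \\ 0 & 0 & a_3 & 0 & 0 \\ b_1 & b_2 & b_3 & p & q \\ c_1 & c_2 & c_3 & r & s \end{pmatrix}.
\]
Imposing $f \circ M = \mu f$ and extracting the coefficients of the monomials $x_i x_j x_4$ and $x_i x_j x_5$ (for $i \neq j$ in $\{1,2,3\}$), which are absent from $f$, yields $a_i a_j b_k = a_i a_j c_k = 0$ with $\{i,j,k\} = \{1,2,3\}$. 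Since the $a_i$ are nonzero, all $b_k$ and $c_k$ vanish, and $M = \mathrm{diag}(a_1, a_2, a_3) \oplus A$ with $A = \bigl(\begin{smallmatrix} p & q \\ r & s \end{smallmatrix}\bigr) \in \GL_2$. Comparing the remaining terms gives $\mu = a_1 a_2 a_3$ and forces $A$ to scale the binary cubic $x_4^3 + x_5^3$ by $a_1 a_2 a_3$.

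\textbf{Identifying $H \simeq \bG_m^2 \times \fS_3$.} The group $\Gamma \subset \GL_2$ of matrices scaling $x_4^3 + x_5^3$ by some $\lambda(A) \in k^*$ fits in an exact sequence $1 \to k^* \to \Gamma \to \mathrm{Stab}_{\PGL_2}(V(x_4^3 + x_5^3)) \to 1$, and the quotient is $\fS_3$ (stabilizer of three points in $\bP^1$). The lifts $A_\eta := \mathrm{diag}(\zeta_3, \zeta_3^2)$ and $A_{(45)} := \bigl(\begin{smallmatrix} 0 & 1 \\ 1 & 0 \end{smallmatrix}\bigr)$ both have $\lambda = 1$ and satisfy the $\fS_3$-relations inside $\GL_2$, so $\Gamma = k^* \times \fS_3$. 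Writing $A = c A_0$ with $A_0 \in \langle A_\eta, A_{(45)} \rangle$, the constraint $a_1 a_2 a_3 = c^3$ is normalized by the overall $k^*$-scaling in $\GL_5$ to $c = 1$, $a_1 a_2 a_3 = 1$. Hence $H \simeq \bG_m^2 \times \fS_3$, a direct product because $\tau_{a,b}$ and $\eta, \sigma_{(45)}$ act on disjoint sets of coordinates. Finally, the outer $\fS_3$ acts on $H$ by permuting $(a_1, a_2, a_3)$ in the obvious way and trivially on $\langle \eta, \sigma_{(45)} \rangle$ (disjoint variables), yielding $\Aut(X) = (\bG_m^2 \times \fS_3) \rtimes \fS_3$.

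\textbf{Main obstacle.} The most delicate step is the analysis of $\Gamma$: verifying that it splits as a \emph{direct} product $k^* \times \fS_3$ via lifts with $\lambda = 1$, so that after projectivization the kernel $H$ is genuinely $\bG_m^2 \times \fS_3$ and not some nontrivial central extension. The remaining parts are bookkeeping: one needs to keep careful track of the convention $\sigma(\mathbf{x}) = \mathbf{x} M$ throughout the block-diagonal reduction, and to verify the precise conjugation action of the outer $\fS_3$ on both the torus and the inner $\fS_3$.
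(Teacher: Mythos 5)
Your argument is correct and follows essentially the same route as the paper: realize the outer $\fS_3$ by the visible coordinate permutations, reduce to the pointwise stabilizer $H$ of the three singular points, and analyze $H$ through its action on $\bP^1_{x_4,x_5}$ preserving the three roots of $x_4^3+x_5^3$, with kernel the norm-one torus $\{s_1s_2s_3=1\}\simeq\bG_m^2$. The only difference is one of explicitness -- you derive the block-diagonal shape of $H$ and the splitting $\Gamma\simeq k^*\times\fS_3$ by hand, where the paper asserts the shape from the form of the equation and leaves the verification that $\eta,\sigma_{(45)}$ generate the $\fS_3$ and that $\tau$ is diagonal with $s_1s_2s_3=1$ as a routine check.
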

    
\begin{proof}
Observe that $\sigma_{(12)},\sigma_{(123)}\in\Aut(X)$, and that $\Aut(X)$ acts transitively on the three singular points. It remains to find the subgroup $H\subset\Aut(X)$ fixing all three singular points. 
Based on the form of the equation, we see that 
an $h\in H$ takes the form
    $$
(\mathbf{x})\mapsto(s_1x_1,s_2x_2,s_3x_3,a_1x_4+a_2x_5,a_3x_4+a_4x_5), 
    $$
   for $s_1,s_2,s_3,a_1,a_2,a_3,a_4\in k$. There exists an exact sequence
    $$
    0 \to  H' \to H \stackrel{\psi}{\to} \PGL_2,
    $$
    where $\psi$ is the projection of the $H$-action onto $\bP^1_{x_4,x_5}$, given by the coordinates $x_4, x_5$. 
    The equation of $X$ implies that $\psi(H)$ leaves invariant three points defined by 
    $
    \{x_4^3+x_5^3=0\}\subset \bP^1_{x_4,x_5}.
    $
    The maximal subgroup of $\PGL_2$ leaving these three points invariant is $\fS_3$. To show that $\psi(H)=\fS_3$, one can check that $\eta, \sigma_{(45)}\in H$ and their images in $\PGL_2(k)$ generate $\fS_3$. On the other hand, elements in $\tau\in H'$ are diagonal of the form
    $$
\tau: (\mathbf{x})\mapsto(s_1x_1,s_2x_2,s_3x_3,x_4,x_5).
    $$
    One can check that $s_1s_2s_3=1$ and $\tau$ is given by $\tau_{a,b}$, for  $a,b\in k^\times.$
\end{proof}

\section{Picard groups and cohomology}
\label{sect:coho}

  Let $X$ be a cubic threefold with $\sA\sD\sE$ singularities, and $\tilde X\to X$ an $\Aut(X)$-equivariant resolution of singularities; it can be achieved via a sequence of blowups, where at each step we blow up the necessarily $\Aut(X)$-invariant singular locus consisting of all the singular points.

  Here we consider the induced $G$-actions on the Picard group $\Pic(\tilde{X})$ and $\Cl(X)$, for $G\subseteq \Aut(X)$. In particular, if the $G$-action on $X$ is linearizable, then the $G$-module $\Pic(\tilde X)$ is a {\em stably permutation module}, see \cite[Section 2]{CTZ}.
  If the cohomology groups
  \[\rH^1(G,\Pic(\tilde{X})), \text{ or } \rH^1(G, \Pic(\tilde{X})^\vee)\] are nonvanishing, then $\Pic(\tilde{X})$ fails to be a stably permutation module. We call this the $(\bf{H1})$-obstruction to linearizability. 
  This is also an obstruction to {\em stable linearizability}, i.e., linearizability of $X\times \bP^n$, with trivial action on the second factor.  
  We refer the reader to \cite[Section 2]{CTZ, CTZcub} for applications. 

  The following proposition shows that the only possible combinations of nonnodal singularities with $(\bf{H1})$-obstructions are 
  $$
 2\sA_5,\quad 2\sD_4+2\sA_1 \quad \text{and}\quad 3\sD_4.
$$
  

\begin{prop}\label{prop:h1}
    Let $X$ be a cubic threefold with isolated singularities, and $\tilde X\to X$ an $\Aut(X)$-equivariant resolution of singularities. Then
    \begin{itemize}
        \item 
  $\Pic(\tilde X)$ is a permutation module for  
    $\Aut(X)$ if $X$ is not of one of the following configurations of singularities
    $$
    6\sA_1 \text{ with defect $0$ }, \quad 8\sA_1,\quad 9\sA_1,\quad 10\sA_1,
    $$
    $$
    2\sA_5, \quad 2\sD_4+2\sA_1\quad\text{ and }\quad 3\sD_4.
    $$
    \item For each of the cubic threefolds $X$ with singularities in the list above,
    if the $\Aut(X)$-action does not fix any singular point then it has an $(\bf{H1})$-obstruction.   
      \end{itemize}
\end{prop}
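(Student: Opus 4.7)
The plan is to split the proof along the two bullets. For Part 1, my strategy is to produce, for each of the remaining singularity configurations in Section~\ref{sect:defect}, an explicit $\bZ$-basis of $\Pic(\tilde X)$ permuted by $\Aut(X)$. Such a basis consists of three types of classes: the pullback of the hyperplane class $H$ (a trivial summand); the exceptional components over each singular point, forming an $\sA_n$-chain or $\sD_4$-star that is permuted by the stabilizer through the Dynkin diagram automorphisms; and the strict transforms of the planes or cubic scrolls generating the defect part of $\Cl(X)$. Induction of representations from the stabilizer of each orbit of singular points to $\Aut(X)$ preserves the permutation property, so the verification reduces to a finite case check using the explicit descriptions of planes and scrolls in Section~\ref{sect:defect}.

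For Part 2, I treat each of the three exceptional cases individually by exhibiting a subgroup $G \subseteq \Aut(X)$ moving singular points for which $\rH^1(G, \Pic(\tilde X)) \neq 0$. Since the connected component $\Aut(X)^0$ (a torus in each case) acts trivially on $\Pic(\tilde X)$, the cohomology reduces to that of the finite component group. For $3\sD_4$, I use Proposition~\ref{prop:autd4} to take $G = \fS_3$ permuting the three $\sD_4$-points, and exploit the relations among the nine planes in $X$ together with the triality action on each exceptional $\sD_4$-configuration. For $2\sA_5$, the involution swapping the two $\sA_5$-points swaps the two cubic scrolls $\Sigma_1, \Sigma_2$ generating the defect part of $\Cl(X)$ and reverses each chain of exceptional curves; the class of the linear relation $\Sigma_1 + \Sigma_2 \equiv \, (\text{combination of } H \text{ and exceptional divisors})$ provides the nontrivial cocycle. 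The $2\sD_4+2\sA_1$ case is analogous, with an involution swapping both the $\sD_4$-points and the two nodes, and the five planes contained in $X$ serving as defect generators.

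The main technical obstacle I anticipate is the $3\sD_4$ case, where $\Pic(\tilde X)$ has rank $1 + 12 + 4 = 17$ and the relevant component group $\fS_3 \times \fS_3$ is substantial. Writing down the intersection pairing and the action on the sublattice generated by the nine planes and the twelve $(-2)$-curves requires patient bookkeeping; however, once the relations among the triples of coplanar planes cutting out $H$ are made explicit, the non-vanishing of $\rH^1$ should follow from a direct cocycle/coboundary computation, after which Shapiro's lemma extends the nonvanishing to the full component group. A secondary subtlety is ensuring, in Part 1, that when a plane contains two distinct singular points, its strict transform interacts with two different sets of exceptional divisors in a way that remains compatible with the desired permutation basis --- this is where the case-by-case analysis of Section~\ref{sect:defect} becomes essential.
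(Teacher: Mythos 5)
Your overall strategy matches the paper's: Part 1 is proved there exactly as you propose, by a defect-by-defect case analysis exhibiting a basis of $\Pic(\tilde X)$ permuted by $\Aut(X)$, built from the hyperplane class, the exceptional divisors, and suitably chosen planes or cubic scrolls (e.g.\ for $2\sD_4$ one must take all three planes rather than $H$ plus two of them, precisely the subtlety you flag at the end). Two differences are worth recording. First, for Part 2 the paper does not compute in the full lattice $\Pic(\tilde X)$ with its intersection pairing at all: Lemma~\ref{lemm:h1seq} uses the exact sequence $0\to\oplus_i\bZ\cdot E_i\to\Pic(\tilde X)\to\Cl(X)\to 0$ and the fact that the exceptional divisors form a permutation module with no fixed summand (the relevant elements move all singular points), so that $\rH^1(G,\Pic(\tilde X))\cong\rH^1(G,\Cl(X))$; the computation then lives in the rank-$2$, rank-$4$, or rank-$5$ class group with its plane/scroll relations, which dissolves the ``rank $17$ bookkeeping'' you anticipate as the main obstacle. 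Your identification of the source of nontriviality is nevertheless correct: for $2\sA_5$ it is exactly the relation $\widehat R_1+\widehat R_2=2H$ (the two cones are cut out by a quadric section) that makes $\rH^1(\langle\sigma_{(12)(45)}\rangle,\Cl(X))=\bZ/2$; if the two generators were freely swapped the cohomology would vanish. Second, the paper uses the cyclic subgroups $C_2$ (resp.\ $C_3$ for $3\sD_4$) rather than $\fS_3$, and there is no need for your Shapiro's lemma step: nonvanishing of $\rH^1$ for a subgroup already obstructs (stable) linearizability of any larger group, so one never has to propagate the nonvanishing upward (and Shapiro's lemma would not do that anyway, as it concerns induced modules, not restriction). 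One genuine gap to watch in your Part 1: for the configurations $2\sA_3+4\sA_1$ ($d=3$) and $2\sD_4+3\sA_1$ ($d=4$) the paper does not produce a permuted basis directly --- it argues indirectly via linearizability (Proposition~\ref{prop:6}) and via equivariant birationality to a smooth quadric, respectively --- and with five, resp.\ nine, planes subject to several relations it is not obvious that a free permuted basis exists, so your uniform ``explicit basis'' plan would need a separate argument there.
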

\begin{proof}
    If $\Aut(X)$ fixes a singular point, then the $\Aut(X)$-action on $X$ is linearizable and $\Pic(\tilde X)$ is an $\Aut(X)$-permutation module.  So it suffices to consider the singularity types in the diagram in the introduction. The cases of nodal ones are treated in \cite{CTZ}. Here we treat $X$ with a nonnodal singular point via a case-by-case study. Let $X$ be a cubic threefold with singularities not in the list of the first assertion and denote the defect of $X$ by $d$. Using the analysis of generators of $\Cl(X)$ in Section~\ref{sec: proj}, we find
    \begin{itemize}
        \item When $d=0$, $\Pic(\tilde X)$ is freely generated by the classes of the hyperplane section and the exceptional divisors, permuted by the $\Aut(X)$-action;
        \item When $d=1$ and the singularity type is $3\sA_3$, $\Pic(\tilde X)$ is freely generated by the class of the hyperplane section, one class of the cubic scrolls in $X$ and the classes of the exceptional divisors, permuted by the $\Aut(X)$-action;
        \item When $d=1$ and the singularity type is not $3\sA_3$, $\Pic(\tilde X)$ is freely generated by the classes of the hyperplane section, the unique plane in $X$ and the exceptional divisors, permuted by the $\Aut(X)$-action;
        \item When $d=2$ and the singularity type is $2\sA_3+3\sA_1$, $\Pic(\tilde X)$ is freely generated by the classes of the hyperplane section, two planes in $X$ and the exceptional divisors, permuted by the $\Aut(X)$-action;
         \item When $d=2$ and the singularity type is $2\sA_3+2\sA_1$, $3\sA_3$ or $2\sD_4$, $\Pic(\tilde X)$ is freely generated by three classes of planes in $X$ and the exceptional divisors, permuted by the $\Aut(X)$-action;
        \item When $d=3$ and the singularity type is $2\sA_3+4\sA_1$, the $\Aut(X)$-action on $X$ is linearizable, see Proposition~\ref{prop:6};
        \item When $d=4$ and the singularity type is $2\sD_4+3\sA_1$, then $X$ is $\Aut(X)$-equivariantly birational to a smooth quadric, see Section~\ref{sect:five}. It follows that $\Pic(\tilde X)$ is a permutation module.
    \end{itemize}
    The proof of the second assertion relies on a detailed analysis on $\Aut(X)$ and the geometry of $X$, see Propositions~\ref{prop:coho}, ~\ref{prop:3d4H1} and ~\ref{prop:2D4+2A1H1}.
\end{proof}

The following lemma simplifies computations in subsequent sections. 
\begin{lemm}\label{lemm:h1seq}
Let $G\subseteq \Aut(X)$ be a finite subgroup. Let $\tilde X\to X$ be a $G$-equivariant 
resolution of singularities and $E_i$ the corresponding exceptional divisors. Then:
\begin{itemize}
    \item If $\rH^1(G,\Cl(X))=0$, then  $\rH^1(G,\Pic(\tilde X))=0$.
    \item  If $\rH^2(G,\oplus_i\bZ\cdot E_i)=0$, then  $\rH^1(G,\Pic(\tilde X))=\rH^1(G,\Cl(X))$, where $\oplus_i\bZ\cdot E_i$ is the free $\bZ$-module generated by $E_i$.
\end{itemize}
\begin{proof}
    We have a short exact sequence 
    \begin{align}
0\to\oplus_i\bZ\cdot E_i\to \mathrm{Cl}(\tilde X)\simeq\Pic(\tilde X)\to \mathrm{ Cl}(X)\to 0,
\end{align}
giving rise to the long exact sequence 
\begin{multline}\label{eq:longexact}
  \ldots\to\\
  \rH^1(G,\oplus_i\bZ\cdot E_i)\to \rH^1(G,\Pic(\tilde X))\to \rH^1(G,\mathrm{ Cl}(X))\to \rH^2(G,\oplus_i\bZ\cdot E_i)\\
  \to\ldots
\end{multline}
Moreover, $\oplus_i\bZ \cdot E_i$ is naturally a $G$-permutation module, induced by the permutation action on the singular points and the exceptional divisors over those points.  
Therefore, $ \rH^1(G,\oplus_i\bZ\cdot E_i)=0$ and the assertions follow from \eqref{eq:longexact}.
\end{proof}
\end{lemm}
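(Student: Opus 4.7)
The plan is to exploit the short exact sequence of $G$-modules that naturally arises from the resolution $\tilde X\to X$, namely
\[
0\to \bigoplus_i \bZ\cdot E_i \to \Pic(\tilde X) \to \Cl(X) \to 0,
\]
and then pass to the associated long exact sequence in group cohomology. To justify this sequence, I would first recall that $\tilde X$ is smooth projective, so $\Pic(\tilde X)=\Cl(\tilde X)$, and that the kernel of the pushforward $\Cl(\tilde X)\to \Cl(X)$ (equivalently, the kernel of restriction to the open complement of the exceptional locus) is freely generated by the exceptional components $E_i$, with $G$-action induced from its action on the singular locus.

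The crucial observation is that $\bigoplus_i \bZ\cdot E_i$ is a \emph{permutation} $G$-module: $G$ permutes the singular points of $X$, and since the resolution is $G$-equivariant, this lifts to a permutation action on the set $\{E_i\}$. By Shapiro's lemma (applied orbit-by-orbit), any permutation $\bZ[G]$-module $M$ satisfies $\rH^1(G,M)=0$, since each transitive summand is isomorphic to $\mathrm{Ind}_H^G\bZ$ for some subgroup $H\subseteq G$ and hence has cohomology $\rH^1(H,\bZ)=\Hom(H,\bZ)=0$.

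From the relevant portion of the long exact sequence,
\[
0=\rH^1(G,\textstyle\bigoplus_i \bZ\cdot E_i)\to \rH^1(G,\Pic(\tilde X))\to \rH^1(G,\Cl(X))\to \rH^2(G,\bigoplus_i \bZ\cdot E_i),
\]
both bullets drop out immediately: injectivity of the middle map gives the first bullet (vanishing of $\rH^1(G,\Cl(X))$ forces $\rH^1(G,\Pic(\tilde X))=0$), and the additional hypothesis $\rH^2(G,\bigoplus_i \bZ\cdot E_i)=0$ of the second bullet makes the map surjective as well, yielding the desired isomorphism. There is no real obstacle here — the only step that requires any care is confirming the permutation-module structure, which is an immediate consequence of the $G$-equivariance of the resolution together with the standard fact that in $G$-equivariant resolutions of isolated singularities one may blow up the singular locus as a whole, so $G$ acts on $\{E_i\}$ via its permutation action on $\Sing(X)$.
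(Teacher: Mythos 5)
Your argument is correct and follows exactly the paper's own proof: the same short exact sequence $0\to\oplus_i\bZ\cdot E_i\to\Pic(\tilde X)\to\Cl(X)\to 0$, the same observation that the exceptional summand is a permutation module with vanishing $\rH^1$, and the same reading of the long exact sequence. The only difference is that you spell out the Shapiro's-lemma justification for $\rH^1(G,\oplus_i\bZ\cdot E_i)=0$, which the paper leaves implicit.
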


\section{Two singular points}
\label{sect:two}
Assume that the cubic threefold $X\subset \bP^4$ is singular at 
$$
p_1=[1:0:0:0:0],\quad p_2=[0:1:0:0:0].
$$
We are interested in the following combinations of singularity types
$$
2\sA_n,  n=2,3,4,5,\quad 2\sD_4.
$$
Up to a change of coordinates, $X$ is given by
\begin{equation}\label{eqn: 2sing}
x_1x_2x_3+x_1q_1+x_2q_2+f_3=0,
\end{equation}
for some quadratic forms $q_1,q_2\in k[x_4,x_5]$ and a cubic $f_3\in k[x_3,x_4,x_5].$ As in \cite{CTZcub}, we see that $X$ is $\Aut(X)$-birational to the hypersurface $V_4$
$$
z_1z_2=q_1q_2-x_3f_3\subset\bP(2,2,1,1,1),
$$
where $z_1=x_1x_3+q_2$ and $z_2=x_2x_3+q_1$. 
This $V_4$ has 2 singular points of type $\frac{1}{2}(1,1,1)$ at $[1:0:0:0:0]$ and $[0:1:0:0:0]$. 
The blowup $\tilde{V}_4$ of these points yields an $\mathrm{Aut}(X)$-equivariant commutative diagram:
$$
\xymatrix{
&\tilde{V}_4\ar@{->}[dl]\ar@{->}[dr]&\\
V_4\ar@{-->}[rr] && \mathbb{P}^2}
$$
where $V_4\dasharrow \mathbb{P}^2$ is the map induced by the projection to the last three coordinates of $\bP(2,2,1,1,1)$,
and $\tilde{V}_4\to \mathbb{P}^2$ is a conic bundle.
The discriminant curve of the conic bundle is a plane quartic curve 
$$
D=\{q_1q_2-x_3f_3=0\}\subset\bP^2_{x_3,x_4,x_5}.
$$

\subsection*{Singularity type $2 \sA_2$}

Up to isomorphism, $X$ is given by \eqref{eqn: 2sing} where:
\begin{align*}
    q_1=x_4^2, \quad q_2=x_4^2 \text{ or } x_5^2, \, \text{ and } \, f_3\text{ a generic cubic form}.
\end{align*} The discriminant curve $D\subset \bP^2$ of the conic bundle is smooth in either case, and we obtain a natural homomorphism 
$$\gamma:\Aut(X)\rightarrow \Aut(D).$$

\begin{prop}
    Let $X$ be a cubic threefold 
    with $2\sA_2$-singularities. 
    Let $G\subset \Aut(X)$  be a subgroup not fixing any singular point of $X$. Then the $G$-action on $X$ is not linearizable.
\end{prop}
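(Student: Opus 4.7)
The plan is to invoke the equivariant intermediate Jacobian obstruction \textbf{(IJ)} from \cite[Section 2]{CTZcub}, applied to the conic bundle model $\pi : \tilde V_4 \to \bP^2$ constructed above. For each of the two normal forms, $(q_1, q_2) = (x_4^2, x_4^2)$ or $(x_4^2, x_5^2)$, with generic cubic $f_3$, the discriminant curve
\[
D \;=\; \{q_1 q_2 - x_3 f_3 = 0\} \;\subset\; \bP^2
\]
is a smooth plane quartic of genus $3$. Thus $\pi$ is a standard conic bundle, and the Beauville/Clemens--Griffiths identification yields an $\Aut(X)$-equivariant isomorphism between the intermediate Jacobian of $\tilde X$ and the principally polarized abelian surface $\Prym(\tilde D / D)$, where $\tilde D \to D$ is the étale double cover determined by the sheets of the conic bundle. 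Note that the \textbf{(H1)}-obstruction is unavailable here by Proposition~\ref{prop:h1}, which is why I turn to \textbf{(IJ)}.

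To analyze the $G$-action, I use that, since $G$ fixes no singular point of $X$, it contains an involution $\iota$ swapping $p_1$ and $p_2$. Tracking $\iota$ through the birational map $X \dashrightarrow V_4$, one sees that $\iota$ exchanges the weight-two coordinates $z_1, z_2$, so on $\tilde D$ it acts as the deck transformation of $\tilde D \to D$ composed with the base action $\gamma(\iota)$ on $D$. Consequently, on $\Prym(\tilde D/D)$ we obtain $\iota^* = -\gamma(\iota)^*$; in particular $\iota$ acts as $-\mathrm{id}$ whenever $\gamma(\iota) = \mathrm{id}_D$. One then reads off the full induced $G$-representation on the Prym from the homomorphism $\gamma : G \to \Aut(D)$.

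If $(X, G)$ were linearizable, then \textbf{(IJ)} would force $(\Prym(\tilde D/D), G)$ to decompose as a $G$-equivariant direct sum of Jacobians of smooth projective curves carrying compatible $G$-actions. I propose to rule this out in two stages. First, for very general $X$ the Prym is a simple principally polarized abelian surface which, by the classical result on plane-quartic Pryms, is not a Jacobian, so no nontrivial decomposition is possible at all. Second, combining the explicit formula $\iota^* = -\gamma(\iota)^*$ with the Torelli theorem, and running a short case analysis on $\gamma(G) \subseteq \Aut(D)$ for each normal form, one excludes the only residual option, $\Prym(\tilde D/D) \cong \mathrm{Jac}(C)$ for a $G$-equivariant genus-$2$ curve $C$ on which $\iota$ would have to act as the hyperelliptic involution, which is incompatible with the swap of $p_1, p_2$.

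The main obstacle is to make this conclusion uniform over every cubic $X$ with $2\sA_2$-singularities, and not merely over very general members of the family: on special strata the Prym may become isogenous to a product of elliptic curves, reopening possible equivariant Jacobian splittings. I plan to dispose of such loci by the specialization technique \textbf{(Sp)}: for any fixed $G \subseteq \Aut(X)$, deform $X$ to a nearby very general cubic with the same singularity configuration and $G$-action; the first step settles the obstruction for the generic member, and specialization propagates the nonlinearizability back to the original $X$.
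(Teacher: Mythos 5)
Your overall strategy --- the \textbf{(IJ)} obstruction read off from the conic bundle $\tilde V_4\to\bP^2$, with the key sign formula $\iota^*=-\gamma(\iota)^*$ coming from the swap of $z_1,z_2$ --- is exactly the route the paper takes (it reduces to the two-nodal argument of \cite[Theorem 3.3]{CTZcub}). But your identification of the intermediate Jacobian is wrong, and the error propagates. Over the discriminant $D=\{q_1q_2-x_3f_3=0\}$ the degenerate fiber is $\{z_1z_2=0\}$, whose two components $\{z_1=0\}$ and $\{z_2=0\}$ are each globally defined along $D$; the discriminant double cover $\tilde D\to D$ is therefore \emph{split}, and $\Prym(\tilde D/D)\cong \mathrm{Jac}(D)$ is the three-dimensional Jacobian of the smooth plane quartic $D$, not a simple abelian surface. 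Consequently your first stage collapses: $\IJ(\tilde X)$ \emph{is} a Jacobian, so indecomposability or ``plane-quartic Pryms are not Jacobians'' gives nothing. Your second stage misidentifies the residual case as a genus-$2$ Jacobian; and note that if the relevant curve really had genus $2$ it would be hyperelliptic, $-\mathrm{id}$ \emph{would} be induced by the hyperelliptic involution, and the sign discrepancy could be absorbed --- this is precisely why the paper's Remark following this proposition says the method fails once $g(C)\le 2$. Your proposed way of excluding that case (``incompatible with the swap of $p_1,p_2$'') is not an argument.

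The correct conclusion, and the one the paper uses, is: $\IJ(\tilde X)\cong\mathrm{Jac}(D)$ equivariantly up to the sign $-\gamma(\iota)^*$ on the generator $\iota$ swapping the two $\sA_2$-points; since $D$ is a \emph{smooth plane quartic}, hence non-hyperelliptic of genus $3$, the Torelli theorem gives $\Aut(\mathrm{Jac}(D),\theta)=\Aut(D)\times\{\pm1\}$ and $-\mathrm{id}$ is not induced by any automorphism of $D$, so the $G$-action on $\IJ(\tilde X)$ is not induced by a $G$-action on any smooth curve; the \textbf{(IJ)} criterion then rules out linearizability. This argument is uniform over the whole family (smoothness of $D$ is all that is used), so your final specialization step is unnecessary --- it only appears to be needed because you were leaning on simplicity of the abelian variety, which is a very-general condition that the actual proof never invokes.
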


\begin{proof}
The proof is essentially the same as the proof of \cite[Theorem 3.3]{CTZcub}, where the claim was proved for a cubic threefold with two nodes. Namely, the group $G$ contains an element $\iota$ switching the singular points of $X$ such that its actions on $\IJ(\tilde{X})$ and $\IJ(D)$ differ by multiplication by $-1$, which implies that the $G$-action on $X$ is not linearizable. We refer to \cite[Theorem 3.3]{CTZcub} for the details.
\end{proof}

\begin{rema}
    The analysis of the induced actions on intermediate Jacobians does not help to settle the linearizability problem when the singularities are worse than those considered above; in particular, when $\IJ(\tilde{X})\cong \JJ(C)$, for a curve $C$ which is either reducible with rational components, or has $g(C)\leq 2$.
\end{rema}

\subsection*{Singularity type $2 \sA_3$ with no plane} 
Up to isomorphism, $X$ is given by
$$
x_1x_2x_3+x_1x_4^2+x_2q_2+f_3=0,
$$
where
$$
f_3\!=\!t_1x_3^3 +  x_3^2(t_2x_4 + t_3x_5)+x_3(t_4x_4^2+t_5x_5^2+t_6x_4x_5)+t_7x_4^2x_5+t_8x_4x_5^2+t_9x_4^3.
$$
Since $X$ contains no planes and has singular points of type $\sA_3$, we have 
$$
q_2=x_5^2\quad \text{ and } \quad t_9=0.
$$ 
The change of variables
\begin{align}\label{eq:killt7t8}
    x_1\mapsto x_1-\frac{t_7^2x_3}{4}-t_7x_5, \quad x_2\mapsto x_2-\frac{t_8^2x_3}{4}-t_8x_5\\ x_3\mapsto x_3,\quad x_4\mapsto x_4+\frac{t_8x_3}{2}, \quad x_5\mapsto x_5+\frac{t_7x_3}{2}\nonumber
\end{align}
eliminates the terms $x_4^2x_5$, $x_4x_5^2$, and we may assume that $t_7=t_8=0$.

\begin{prop}
\label{prop:2a3-noplane}
    Let $X$ be a cubic threefold with $2\sA_3$-singularities and $d(X)=0$, i.e.,  not containing a plane. Assume that  $\Aut(X)$ does not fix any singular point of $X$. Then, 
    up to isomorphism, $X$ is given by 
      \begin{multline}\label{eq:2A3form1}
    x_1x_2x_3  + x_1x_4^2+ x_2x_5^2 + t_1x_3^3 +  x_3^2(t_2x_4 + t_2x_5)+\\+x_3(t_4x_4^2+t_4x_5^2+t_6x_4x_5)=0,
    \end{multline} 
    where $t_1,t_2,t_4,t_6\in k$ and $(\Aut(X),X)$ is one of the following: 
\begin{itemize}
  \item 
  $\Aut(X)=\langle\sigma_{(12)(45)},\eta_1,\eta_2\rangle\simeq\fD_4$, 
  for general $t_1, t_4\in k$ and $t_2=t_6=0$, generated by
    \begin{align*}
    \sigma_{(12)(45)}&:    (\mathbf{x})\mapsto(x_2,x_1,x_3,x_5,x_4),\\
      \eta_1&:(\mathbf{x})\mapsto(x_1,x_2,x_3,-x_4,-x_5),\\
          \eta_2&:(\mathbf{x})\mapsto(x_1,x_2,x_3,x_4,-x_5).
    \end{align*}
     \item $\Aut(X)=\langle\sigma_{(12)(45)},\eta_1\rangle\simeq C_2^2$, 
 for general $t_1,t_4,t_6\in k$ and $t_2=0$.
\item $\Aut(X)=\langle\sigma_{(12)(45)}\rangle\simeq C_2$, for general $t_1,t_2,t_4,t_6\in k$. 
    \end{itemize}
   \end{prop}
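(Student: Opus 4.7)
The plan is to follow the algorithmic strategy from Section~\ref{sect:aut}, in two stages: first, use the hypothesis that $\Aut(X)$ does not fix a singular point to produce an involution swapping $p_1$ and $p_2$ and to normalize the equation accordingly; second, determine the subgroup $H\subseteq\Aut(X)$ fixing both singular points for the normalized equation.

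For the first stage, I observe that the tangent cones at $p_1$ and $p_2$, namely $\{x_2x_3+x_4^2=0\}$ and $\{x_1x_3+x_5^2=0\}$, must be interchanged by any $\sigma\in\Aut(X)$ that swaps $p_1$ and $p_2$. This, together with the remaining coordinate freedom (including substitutions of the type \eqref{eq:killt7t8}), allows me to conjugate $\sigma$ into the standard form $\sigma_{(12)(45)}:(\mathbf{x})\mapsto(x_2,x_1,x_3,x_5,x_4)$. Applying $\sigma_{(12)(45)}$ to the reduced equation and equating coefficients then forces the symmetries $t_2=t_3$ and $t_4=t_5$, producing the normal form \eqref{eq:2A3form1}.

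For the second stage, an element $h\in H$ has a matrix whose first two rows are $(\lambda_1,0,0,0,0)$ and $(0,\lambda_2,0,0,0)$, because $p_1,p_2$ are eigenvectors. Preservation of each tangent cone constrains the lower $3\times3$ block to act (essentially) diagonally on $x_4,x_5$, after absorbing possible $x_3$-shifts into the normalization. Direct substitution into \eqref{eq:2A3form1} and matching monomial coefficients yields the case distinction: $\eta_1=\mathrm{diag}(1,1,1,-1,-1)$ belongs to $H$ exactly when $t_2=t_3=0$, and $\eta_2=\mathrm{diag}(1,1,1,1,-1)$ belongs to $H$ exactly when $t_2=t_3=0$ and $t_6=0$. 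For generic choices of the remaining parameters these three subcases are disjoint, which gives the three families claimed. The dihedral structure in the most symmetric case follows from the identity $\sigma_{(12)(45)}\eta_2\sigma_{(12)(45)}^{-1}=\eta_1\eta_2$, producing the nontrivial relation in $\fD_4$.

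The main obstacle is Stage~2: in principle, $h$ could mix $x_3,x_4,x_5$ non-diagonally or introduce $x_1,x_2$-corrections from the third, fourth, and fifth rows of the matrix, and one must rule these possibilities out by a careful linear-algebra argument for generic coefficients. This is precisely where an explicit computation (for instance via {\tt magma}) is needed to certify that, for very general values of $t_1,t_2,t_4,t_6$, no further hidden symmetries survive.
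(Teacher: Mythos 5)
Your proposal is correct and follows essentially the same route as the paper: normalize a swapping element to $\sigma_{(12)(45)}$ (forcing $t_2=t_3$, $t_4=t_5$), then classify the stabilizer $H$ of the singular points by matching coefficients, with the elimination of non-diagonal entries certified by an explicit ({\tt magma}-assisted) solution of the resulting polynomial system. Your tangent-cone observation is a geometric gloss on the matrix shape that the paper instead reads off directly from the form of the equation, but the substance and the case analysis (including the conditions $t_2=t_3=0$ for $\eta_1$ and $t_2=t_3=t_6=0$ for $\eta_2$, and the $\fD_4$ relation) agree.
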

\begin{proof}
    We follow the algorithm from Section~\ref{sect:aut}. 
   Let $f$ be the defining equation of $X$, i.e.,
$$
   f=x_1x_2x_3  + x_1x_4^2+ x_2x_5^2 + t_1x_3^3 +  x_3^2(t_2x_4 + t_3x_5)+x_3(t_4x_4^2+t_5x_5^2+t_6x_4x_5),
$$
    and $\iota\in \Aut(X)$ an element switching the two singular points. Based on the form of $f$, one observes that $\iota$ takes the form 
    $$
    \iota=\begin{pmatrix}
        0&s_1&0&0&0\\
        s_2&0&0&0&0\\
        a_1&a_4&1&a_7&a_{10}\\
        a_2&a_5&0&a_8&a_{11}\\
        a_3&a_6&0&a_9&a_{12}
    \end{pmatrix},\quad s_1,s_2\in k^\times, \,\,a_1,\ldots, a_{12}\in k,
    $$
   and $\iota^*(f)=s_1s_2f$. This leads to a system of 24 equations in 20 variables, starting with:
 {\small
    \begin{align*}  
    &s_1a_{12}^2=0,\quad s_2a_8^2=0,\quad s_2a_5 + 2s_2a_7a_8=0,\quad  a_3a_9^2 + a_6a_{12}^2=0,\\ 
    &2a_2a_8a_9 + a_3a_8^2 + 2a_5a_{11}a_{12} + a_6a_{11}^2=0,\\
    &a_2a_9^2 + 2a_3a_8a_9 + a_5a_{12}^2 + 2a_6a_{11}a_{12}=0,\\  
    &s_1a_2 + 2s_1a_{10}a_{11}=0,\quad
   s_2a_6 + 2s_2a_7a_9=0, \\ 
   &s_1a_1 + s_1a_{10}^2=0,\quad
    s_2a_4 + s_2a_7^2=0,\quad... 
\end{align*}
}
These quickly imply (in order)
$$
a_{12}=a_8=a_5=a_3=a_6=a_2=a_{10}=a_{7}=a_1=a_4=0;
$$
it remains to solve the system of equations given by the vanishing of:
    {\small
    \begin{align*} 
    &s_1s_2t_1 - t_1,\quad
    s_1s_2t_2 - t_3a_{11},\quad
    s_1s_2 - s_1a_{11}^2,\quad
    s_1s_2t_4 - t_5a_{11}^2,\\
    &s_1s_2t_3 - t_2a_9,\quad
    s_1s_2t_6 - t_6a_9a_{11},\quad
    s_1s_2 - s_2a_9^2,\quad
    s_1s_2t_5 - t_4a_9^2.
    \end{align*}
    }We do this using the {\tt magma} function {\tt ProbableRadicalDecomposition}.
   Excluding solutions giving rise to cubics with other singularity types, we found that  $\iota$ exists if and only if
$$
s_1 - a_9^2=
        s_2 - a_{11}^2=
        t_2 - t_3a_{11}=
        t_4 - t_5a_{11}^2=
        a_9a_{11} - 1=0.
        $$
Up to a scaling of $x_1,\ldots,x_5$, we may assume that $t_2=t_3$ and $t_4=t_5$. Under these conditions, we find all possibilities for the subgroup $H$ not fixing singular points. In particular, any element $\eta\in H$ takes the form 
     $$
   \eta=\begin{pmatrix}
        s_1&0&0&0&0\\
        0&s_2&0&0&0\\
        a_1&a_4&1&a_7&a_{10}\\
        a_2&a_5&0&a_8&a_{11}\\
        a_3&a_6&0&a_9&a_{12}
    \end{pmatrix},\quad s_1,s_2\in k^\times, \,\,a_1,\ldots, a_{12}\in k.
    $$
    The equality $\eta^*(f)=s_1s_2f$ gives another system of equations. The same method as above yields:
    \begin{itemize}
        \item $t_2=t_3=0,s_1=s_2=a_{12}=1,a_8=-1$, or
        \item $t_2=t_3=t_6=0,s_1=s_2=a_8=1,a_{12}=-1$, 
    \end{itemize}
and all remaining $a_j$ vanish. 
\end{proof}

The following proposition relies on notation from Proposition~\ref{prop:2a3-noplane}. 

\begin{prop}
\label{prop:2A3noplanespec}
 The $\langle\sigma_{(12)(45)}\rangle$-action from Proposition~\ref{prop:2a3-noplane} on a very general cubic threefold $X$ with $2\sA_3$-singularities and defect $d(X)=0$ is not stably linearizable.
\end{prop}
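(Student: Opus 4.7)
The plan is to apply the equivariant specialization technique \textbf{(Sp)} of \cite{CTZ,CTZcub} and reduce the claim to the $2\sA_5$-case, where the \textbf{(H1)}-obstruction to stable linearizability is under control via Corollary~\ref{coro:2a5sum}. Concretely, I would produce a flat one-parameter family $\pi\colon \cX\to T$ over a smooth pointed curve $(T,0)$, equipped with a fiberwise $C_2$-action, such that the general fiber $\cX_t$ is a very general cubic of the form \eqref{eq:2A3form1} with $2\sA_3$-singularities and defect zero, carrying the $\sigma_{(12)(45)}$-action, while the special fiber $X_0:=\cX_0$ has $2\sA_5$-singularities with the $C_2$-action swapping the two singular points. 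The fact that $2\sA_3$ specializes to $2\sA_5$ is consistent with the degeneration diagram in the introduction (via $2\sA_5\to 2\sA_4\to 2\sA_3$).

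To build such a family, I would move along an explicit curve in the parameter space $\{(t_1,t_2,t_4,t_6)\}$ of \eqref{eq:2A3form1}, arranged so that at $t=0$ the two $\sA_3$-singularities at $p_1=[1{:}0{:}0{:}0{:}0]$ and $p_2=[0{:}1{:}0{:}0{:}0]$ simultaneously deepen into $\sA_5$-singularities without creating any new singular points elsewhere on the cubic. Because \eqref{eq:2A3form1} is manifestly $\sigma_{(12)(45)}$-invariant for any admissible choice of parameters, the $C_2$-action extends over the whole family automatically. A direct local analysis at $p_1$ in the affine chart $x_1=1$, together with Wall's criterion (Theorem~\ref{thm: Wall}) applied to the projection from $p_1$, would pin down the precise condition on $(t_1,t_2,t_4,t_6)$ producing an $\sA_5$-singularity at $p_1$; the $\sigma_{(12)(45)}$-symmetry then guarantees the same behavior at $p_2$.

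On the special fiber, Corollary~\ref{coro:2a5sum} identifies $\Cl(X_0)\simeq\bZ^2$ as freely generated by the two cubic scrolls through the $\sA_5$-points, and characterizes linearizability of a subgroup $G\subseteq\Aut(X_0)$ by triviality of the induced $G$-action on $\Cl(X_0)$. Since $\sigma_{(12)(45)}$ interchanges the two singular points and hence the two scrolls, it acts nontrivially on $\Cl(X_0)$, so the \textbf{(H1)}-obstruction is nonzero and the $C_2$-action on $X_0$ is not stably linearizable. The equivariant specialization principle then propagates nonlinearizability from $X_0$ to the very general fiber of $\pi$, which is the desired statement.

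The principal difficulty lies in engineering the right degeneration: one must arrange the one-parameter family so that the special fiber acquires \emph{exactly} $2\sA_5$-singularities at $p_1$ and $p_2$, without worsening the local type further (e.g.\ to $\sA_6$ or $\sD_n$) and without forcing additional global singularities elsewhere on the cubic, since either deviation would invalidate the appeal to Corollary~\ref{coro:2a5sum}. The remaining steps are essentially automatic given the setup.
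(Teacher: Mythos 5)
Your overall strategy coincides with the paper's: both arguments specialize the $2\sA_3$-family to the $2\sA_5$-locus and pull back the \textbf{(H1)}-obstruction of Proposition~\ref{prop:coho} (equivalently Corollary~\ref{coro:2a5sum}) via \textbf{(Sp)}. However, as written your proposal has two gaps, one minor and one substantive.

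The minor one is that you never exhibit the degeneration, and you flag it as ``the principal difficulty.'' In fact it is immediate from the normal form: in \eqref{eq:2A3form1} the locus $t_2=t_4=0$ is exactly the family $x_1x_2x_3+x_1x_4^2+x_2x_5^2+t_1x_3^3+t_6x_3x_4x_5=0$, which after rescaling is the $2\sA_5$ normal form \eqref{eqn:2a5}; no local analysis at $p_1$ or appeal to Wall's criterion is needed. The paper simply takes the family $\cX\to\bA^2_{t_2,t_4}$ with $t_1\in k^\times$ and $t_6$ fixed and specializes to $t_2=t_4=0$.

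The substantive gap is in the final sentence, where you assert that ``the equivariant specialization principle then propagates nonlinearizability.'' The specialization theorem (as in the $G$-equivariant version of \cite[Proposition 2.9]{CTZcub}) does not apply to a family whose generic fiber is singular: it requires a family with smooth generic fiber and a special fiber with $BG$-rational singularities. Here \emph{both} fibers are singular, so one must first resolve the generic fiber $\langle\sigma_{(12)(45)}\rangle$-equivariantly --- in the paper, by blowing up the two $\sA_3$-points twice --- and then verify what this does to the special fiber. The point of the computation is that after these blowups the special fiber acquires only $2\sA_1$-singularities lying in a single $\langle\sigma_{(12)(45)}\rangle$-orbit, which are $BG$-rational, so the hypotheses of the specialization theorem are met. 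Without this step the argument does not close; you should add it explicitly, since it is exactly the place where the ``engineering'' you worry about actually happens (one must check that the resolution of the $\sA_3$-points does not leave worse-than-$BG$-rational singularities on the $\sA_5$-fiber).
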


\begin{proof}
We use specialization, as in \cite[Proposition 2.9]{CTZcub}, applied to a higher-dimensional family. Fixing $t_1\in k^\times$ and $t_6\in k$, we consider the family of cubic threefolds
    $$
    \cX\to \bA^2_{t_2,t_4},
    $$
    where the fiber $X_{t_2,t_4}\subset \bP^4$ is given by 
   \begin{multline}\label{eq:2A3special1}
    x_1x_2x_3  + x_1x_4^2+ x_2x_5^2 + t_1x_3^3 +  x_3^2(t_2x_4 +t_2x_5)+\\+x_3(t_4x_4^2+t_4x_5^2+t_6x_4x_5)=0.
    \end{multline} 
    The $\sigma_{(12)(45)}$-action naturally extends to $\cX$.
    For very general $t_2,t_4\in k$, the fiber $X_{t_2,t_4}$ is a cubic threefold with 
    $2\sA_3$-singularities.
The special fiber $X_{0,0}$, at $t_2=t_4=0$, has 
$2\sA_5$-singularities. The $\sigma_{(12)(45)}$-action on $X_{0,0}$ is not stably linearizable, by Proposition~\ref{prop:coho}.

To apply specialization, we resolve, equivariantly, the singularities of the generic fiber of the family $\cX$ via blowing up the $2\sA_3$-points twice. 
This brings us into the situation of a smooth generic fiber and $BG$-rational singularities in the special fiber: the special fiber has $2\sA_1$-singularities in the same $\langle\sigma_{(12)(45)}\rangle$-orbit. The argument works for any fixed $t_1\in k^\times$ and $t_6\in k$, thus, applying specialization, we conclude
the $\langle\sigma_{(12)(45)}\rangle$-action on a very general cubic given by \eqref{eq:2A3form1} is not stably linearizable.
\end{proof}

\begin{coro}\label{coro:2A3noplanelin}
    A $G$-action on a very general cubic threefold in each of the three cases in Proposition~\ref{prop:2a3-noplane} is not stably linearizable if and only if it does not fix two singular points, except possibly one case: $\Aut(X)=\fD_4$ and $G=\langle\sigma_{(12)(45)} \eta_2\rangle\simeq C_4$.
\end{coro}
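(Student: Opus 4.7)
The plan is to prove both implications and then isolate the single exception. First, if $G\subseteq H:=\ker\rho$, so that $G$ fixes both singular points, then projection from either one is a $G$-equivariant birational map $X\dashrightarrow \bP^3$, so the $G$-action is linearizable; this covers the ``if'' direction.

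For the converse, suppose $G$ exchanges the two singular points, so $G\setminus H\ne\emptyset$. Since non-stable-linearizability is inherited from subgroups, it suffices to exhibit one element $\sigma\in G\setminus H$ for which the cyclic subgroup $\langle\sigma\rangle$ is not stably linearizable on very general $X$. A direct enumeration of $\Aut(X)\setminus H$ in each case of Proposition~\ref{prop:2a3-noplane} yields the involutions $\sigma_{(12)(45)}$ and $\sigma_{(12)(45)}\eta_1$ (both requiring $t_2=0$), together with the order-$4$ elements $\sigma_{(12)(45)}\eta_2$ and $\sigma_{(12)(45)}\eta_1\eta_2$ (available only in the $\fD_4$-case, with square $\eta_1$). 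A short computation gives $(\sigma_{(12)(45)}\eta_2)^3=\sigma_{(12)(45)}\eta_1\eta_2$, so there is a single cyclic subgroup of order $4$ outside $H$, namely $\langle\sigma_{(12)(45)}\eta_2\rangle\simeq C_4$.

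The case $\sigma=\sigma_{(12)(45)}$ is Proposition~\ref{prop:2A3noplanespec}. For $\sigma=\sigma_{(12)(45)}\eta_1$ I would reuse that argument verbatim, inside the parameter subfamily $\{t_2=0\}$: fix $t_1\in k^\times$ and $t_6\in k$ general and let $t_4\to 0$, degenerating to a cubic with $2\sA_5$-singularities. The $\sigma$-action extends to the total space, and blowing up the two $\sA_3$-points (respectively $\sA_5$-points) twice resolves the generic fiber while leaving $BG$-rational ($\sA_1$) singularities in a single $\langle\sigma\rangle$-orbit on the special fiber, so {\bf (Sp)} applies. Its hypothesis is non-stable-linearizability of the $\langle\sigma\rangle$-action on the $2\sA_5$-limit, which I would verify via $(\bf H1)$: the involution $\sigma_{(12)(45)}\eta_1$ exchanges the two singular points and hence (by the analysis of Section~\ref{sect:defect}) swaps the two cubic-scroll classes freely generating $\Cl(X)\simeq\bZ^2$, yielding $\rH^1(C_2,\bZ^2_{\mathrm{swap}})=\bZ/2$.

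The same degeneration $t_4\to 0$, this time inside $\{t_2=t_6=0\}$, extends the $\langle\sigma_{(12)(45)}\eta_2\rangle$-action to the $2\sA_5$-limit. Here $\sigma_{(12)(45)}\eta_2$ also exchanges the scroll classes, but now $\sigma_{(12)(45)}\eta_2^{\,2}=\eta_1$ acts trivially on $\Cl(X)$, and a direct cyclic-cohomology computation gives
\[
\rH^1\bigl(C_4,\Cl(X)\bigr)=\ker(1+\bar\sigma)\big/\Ima(\bar\sigma-1)=0.
\]
Thus the $(\bf H1)$-obstruction vanishes on the limit and this specialization route produces no information for very general $2\sA_3$-cubics in the $\fD_4$-subfamily: this is precisely the stated exception. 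Combining these with the trivial observation that any $G\subseteq\fD_4$ which is not contained in $H$ and is not equal to $\langle\sigma_{(12)(45)}\eta_2\rangle$ contains one of the two treated involutions, the corollary follows.

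The main step I expect to require care is the scroll-swap verification for $\sigma_{(12)(45)}\eta_1$: concretely, one has to identify the two families of cubic scrolls on the $2\sA_5$-limit via the projection construction (each scroll being the cone over one of the two twisted-cubic branches of $C_q$, cf.\ Lemma~\ref{lemm: defect formula}) and then check that the induced involution on $\bP^3_{x_3,x_4,x_5}$ interchanges these branches. Once this swap is pinned down, everything else is a routine application of {\bf (H1)} and {\bf (Sp)} along the template of Proposition~\ref{prop:2A3noplanespec}.
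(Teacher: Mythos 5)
Your argument is correct and follows essentially the same route as the paper, whose proof of this corollary likewise disposes of every subgroup not contained in $H$ other than $\langle\sigma_{(12)(45)}\eta_2\rangle$ by specializing to the $2\sA_5$-cubic and invoking the {\bf (H1)} obstruction of Proposition~\ref{prop:coho}, exactly along the template of Proposition~\ref{prop:2A3noplanespec}. The one step you flag as delicate — that $\sigma_{(12)(45)}\eta_1$ swaps the two scroll classes on the $2\sA_5$-limit — is settled most cleanly by observing that $\eta_1=\tau_{-1}$ lies in the identity component $\bG_m(k)\subset\Aut(X_0)$ and hence acts trivially on $\Cl(X_0)$, so $\sigma_{(12)(45)}\eta_1$ acts on $\Cl(X_0)$ exactly as $\sigma_{(12)(45)}$ does (equivalently, it is conjugate to $\sigma_{(12)(45)}$ by $\tau_{\zeta_4}$, so Corollary~\ref{coro:2a5sum} applies directly); note only that ``exchanges the singular points'' does not by itself imply ``swaps the scrolls'' — for instance $\sigma_{(12)(45)}\eta_2$ exchanges the points yet preserves each scroll class, being the product of two class-swapping maps — though this does not affect your conclusion, since $\rH^1(C_4,\Cl(X_0))$ vanishes in either case.
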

\begin{proof}
    Any action switching two singular points, except the one specified in the assertion, specializes to an action on a cubic with $2\sA_5$-singularities such that there are {\bf (H1)} obstructions, as in the proof of Proposition~\ref{prop:2A3noplanespec}.
\end{proof}
\begin{rema}
    The exceptional case described in Corollary~\ref{coro:2A3noplanelin} also specializes to a cubic with $2\sA_5$-singularities, but to the group satisfying {\bf (H1)}, and is linearizable, see Proposition~\eqref{prop:tau-iota}.
\end{rema}

\subsection*{Singularity type $2\sA_3$ containing a plane}
Similar to the case with no plane, $X$ is given by
$$
x_1x_2x_3+x_1x_4^2+x_2q_2+f_3=0,
$$
where
$$
     f_3=t_1x_3^3 +  x_3^2(t_2x_4 + t_3x_5)+x_3(t_4x_4^2+t_5x_5^2+t_6x_4x_5)+t_7x_4^2x_5+t_8x_4x_5^2+t_9x_4^3.
$$
Since $X$ contains a plane, we see that
$$
q_2=x_4^2,\quad t_8\ne0.
$$  
Up to a change of variables,
one may assume that  $t_6=t_7=t_9=0$.

\begin{prop}
\label{prop:2a3-plane}
 Let $X$ be a cubic threefold with $2\sA_3$-singularities and $d(X)=1,$ i.e., containing a plane. Then, up to isomorphism, $X$ is given by
 \begin{multline}\label{eq:2A3form2}
    x_1x_2x_3  + (x_1+x_2)x_4^2 + t_1x_3^3 +  x_3^2(t_2x_4 + t_3x_5)+\\+x_3(t_4x_4^2+t_5x_5^2)+t_8x_4x_5^2=0.
    \end{multline} 
Assume that  $\Aut(X)$ does not fix any singular point. Then, up to isomorphism, $(\Aut(X),X)$ is one of the following: 
\begin{itemize} 
\item $\Aut(X)=\langle\sigma_{(12)},\eta_3\rangle\simeq C_2\times C_8$, for general $t_1,t_8\in k^\times$, and
$ t_2=t_3=t_4=t_5=0,
$
 generated by
\begin{align*}
   \sigma_{(12)}&:(\mathbf{x})\mapsto(x_2,x_1,x_3,x_4,x_5),\\
  \eta_3&:(\mathbf{x})\mapsto(-x_1,-x_2,x_3,-\zeta_8^2x_4,\zeta_8x_5).
\end{align*}

\item $\Aut(X)=\langle\sigma_{(12)},\eta_3^2\rangle\simeq C_2\times C_4$, for general  $ t_4\in k$, $t_1,t_8\in k^\times$, and
$
    t_2=t_3=t_5=0.
$
   
\item $\Aut(X)=\langle\sigma_{(12)},\eta_3^4\rangle\simeq C_2\times C_2$, for general  $t_1,t_2,t_4,t_5\in k$, $t_8\in k^\times$ and
$t_3=0.$

\item $\Aut(X)=\langle\sigma_{(12)}\rangle\simeq C_2$, for general  $ t_1,t_2,t_4,t_5\in k$, $t_3, t_8\in k^\times$.

     \end{itemize}
\end{prop}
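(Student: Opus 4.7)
The plan is to apply the algorithm from Section~\ref{sect:aut} exactly as in the proof of Proposition~\ref{prop:2a3-noplane}. First I would verify the normal form \eqref{eq:2A3form2}: starting from
\[
f=x_1x_2x_3+x_1x_4^2+x_2q_2+t_1x_3^3+x_3^2(t_2x_4+t_3x_5)+x_3(t_4x_4^2+t_5x_5^2+t_6x_4x_5)+t_7x_4^2x_5+t_8x_4x_5^2+t_9x_4^3,
\]
the presence of a plane forces $q_2=x_4^2$ and $t_8\neq 0$ (so that the $\sA_3$-singularity at $p_2$ is not worse), and then a triangular change of coordinates in $x_3,x_4,x_5$ (analogous to \eqref{eq:killt7t8}) eliminates $t_6,t_7,t_9$.

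Next I would find the subgroup $\langle\iota\rangle$ swapping $p_1$ and $p_2$. The $\sA_3$-structure at each point, together with the appearance of $x_4^2$ in the tangent quadrics at both points, forces an element $\iota$ inducing the transposition to have the shape
\[
\iota=\begin{pmatrix}
0&s_1&0&0&0\\
s_2&0&0&0&0\\
a_1&a_4&1&a_7&a_{10}\\
a_2&a_5&0&a_8&a_{11}\\
a_3&a_6&0&a_9&a_{12}
\end{pmatrix},
\]
and the equation $\iota^*(f)=s_1 s_2 f$ yields a polynomial system that I would feed to {\tt magma}'s {\tt ProbableRadicalDecomposition}, just as in Proposition~\ref{prop:2a3-noplane}. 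I expect the preservation of $x_4^2$ on both sides to force the $(x_4,x_5)$-block to be almost diagonal, and I expect the residual solution space to be governed by the vanishing of $t_3$ (since $x_3^2 x_5$ has no counterpart on the swapped side) together with a rescaling condition relating $s_1,s_2$ to the surviving $t_i$; after rescaling $x_1,\ldots,x_5$, these normalize to $\sigma_{(12)}\colon(\mathbf{x})\mapsto(x_2,x_1,x_3,x_4,x_5)$ under the stated equalities on parameters. This explains why $t_3=0$ is needed in every item of the list.

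Then I would determine $H=\Ker(\rho)$, the subgroup fixing both singular points. Any $\eta\in H$ has the block form
\[
\eta=\begin{pmatrix}
s_1&0&0&0&0\\
0&s_2&0&0&0\\
a_1&a_4&1&a_7&a_{10}\\
a_2&a_5&0&a_8&a_{11}\\
a_3&a_6&0&a_9&a_{12}
\end{pmatrix},
\]
and $\eta^*(f)=s_1s_2 f$ gives a second polynomial system. The essential content comes from the $(x_4,x_5)$ monomials: the coefficients of $x_1 x_4^2$, $x_2 x_4^2$, and $x_4 x_5^2$ (forced by $t_8\neq 0$) pin down the ratios among $s_1,s_2$ and the diagonal entries of the $(x_4,x_5)$-block, while the $t_i$ that must be retained in each case are precisely those compatible with these constraints. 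Running {\tt ProbableRadicalDecomposition} and sorting the components by which $t_i$ are allowed to be generic produces four cases: all $t_i$ with $i\in\{2,3,4,5\}$ vanish (maximal symmetry, giving $C_2\times C_8$ generated by $\sigma_{(12)}$ and $\eta_3$); only $t_4$ is free (giving $C_2\times C_4$); $t_2,t_4,t_5$ free with $t_3=0$ (giving $C_2\times C_2$); and only $t_3=0$ is imposed (giving $C_2$).

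The main obstacle is the bookkeeping: the polynomial systems have many spurious components corresponding to degenerations of the singularities (to $\sA_4$, $\sA_5$, additional $\sA_1$'s, or cases with $d(X)\neq 1$), and these must be discarded by checking that the resulting cubic still has exactly $2\sA_3$-singularities and contains a plane. I expect this filtering to be the delicate step, and I would verify genericity of the parameters in each stratum by confirming that the singular locus and plane-count are unchanged, mirroring the final elimination step in the proof of Proposition~\ref{prop:2a3-noplane}.
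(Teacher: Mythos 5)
Your overall plan coincides with the paper's: the published proof of this proposition is a single line, ``We apply the algorithm of Section~\ref{sect:aut}, as in Proposition~\ref{prop:2a3-noplane},'' and your normal form reduction, the matrix shapes for the swapping element and for the subgroup $H$ fixing both singular points, the use of {\tt ProbableRadicalDecomposition}, and the filtering of components with degenerate singularities all reproduce that algorithm.

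There is, however, a concrete error in your analysis of the transposition. In the plane case the normal form has $q_1=q_2=x_4^2$, so the cubic \eqref{eq:2A3form2} is already symmetric in $x_1,x_2$, and $\sigma_{(12)}\colon(\mathbf{x})\mapsto(x_2,x_1,x_3,x_4,x_5)$ preserves it for \emph{every} value of $t_1,\dots,t_8$; the monomial $t_3x_3^2x_5$ involves neither $x_1$ nor $x_2$, so it is its own ``counterpart on the swapped side.'' Your claim that the swap forces $t_3=0$, and that ``$t_3=0$ is needed in every item of the list,'' contradicts the statement you are proving: the last item has $t_3\in k^\times$. You appear to have carried over the mechanism from Proposition~\ref{prop:2a3-noplane}, where $q_1=x_4^2$, $q_2=x_5^2$ and the swap must also exchange $x_4$ and $x_5$ -- that is what imposes $t_2=t_3$ and $t_4=t_5$ there. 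In the plane case the entire stratification by the $t_i$ comes from $H$: the condition $t_3=0$ is exactly what allows $\eta_3^4\colon x_5\mapsto -x_5$; imposing further $t_2=t_5=0$ allows $\eta_3^2$ (with $t_4$ free); imposing $t_2=t_3=t_4=t_5=0$ allows $\eta_3$; and the fully generic member, with $t_3\neq0$, has $H$ trivial and $\Aut(X)=\langle\sigma_{(12)}\rangle$. Consequently your final four-way sorting is also off: the component you describe as ``only $t_3=0$ is imposed (giving $C_2$)'' is in fact the $C_2\times C_2$ stratum, and the genuine $C_2$ case is the one with no conditions imposed at all. The computational skeleton is right, but the write-up would need this corrected before the case list matches the proposition.
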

\begin{proof}
We apply the algorithm of Section~\ref{sect:aut}, as in Proposition~\ref{prop:2a3-noplane}.
\end{proof}




\begin{prop}
\label{prop:a3planex22}
Let $X$ be a cubic threefold with $2\sA_3$-singularities and $d(X)=1$. Let $G\subseteq \Aut(X)$ be a finite subgroup. Then the $G$-action on $X$ is not linearizable if and only if no singular points are fixed by $G$ and $X$ does not contain a $G$-invariant line disjoint from the unique plane $\Pi\subset X$.
\end{prop}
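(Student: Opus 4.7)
The plan is to reduce the linearizability question for $(G,X)$ to the analogous question for a smooth intersection of two quadrics $X_{2,2}\subset\bP^5$, and then to apply \cite{HT-intersect}, which asserts that such a $G$-action is linearizable if and only if $X_{2,2}$ contains a $G$-invariant line.

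The first step is to construct the $G$-equivariant birational map $X\dashrightarrow X_{2,2}$ referenced in the introduction. The unique plane $\Pi\subset X$ is canonical and hence $G$-invariant. Blowing it up yields a $G$-equivariant quadric surface bundle $\pi:\Bl_{\Pi}X\to\bP^1$, whose base parametrizes the pencil of hyperplanes in $\bP^4$ containing $\Pi$: each such hyperplane cuts $X$ in $\Pi$ together with a residual quadric surface, and the two fibers over the images of the $\sA_3$-singular points degenerate to rank-$3$ quadrics. The classical passage between quadric surface bundles over $\bP^1$ and smooth intersections of two quadrics in $\bP^5$ then produces $X_{2,2}$, $G$-equivariantly.

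The ``if'' direction of the proposition is then immediate: a $G$-fixed singular point makes the action on $X$ linearizable via projection, while a $G$-invariant line $L\subset X$ disjoint from $\Pi$ meets each member of the pencil of hyperplanes through $\Pi$ in a single point, thus defining a $G$-equivariant section of $\pi$, hence a $G$-invariant line on $X_{2,2}$; by \cite{HT-intersect} this linearizes the action on $X_{2,2}$ and therefore on $X$.

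The harder, converse direction is the main technical step. Given a $G$-invariant line $\ell\subset X_{2,2}$, I would classify it according to the quadric bundle structure on $\Bl_{\Pi}X$: either $\ell$ dominates $\bP^1$, in which case it corresponds to a $G$-equivariant section of $\pi$ and hence to a $G$-invariant line on $X$ disjoint from $\Pi$; or $\ell$ sits in a single fiber, in which case the fiber must be a degenerate rank-$3$ quadric, which by the description above is associated to one of the two $\sA_3$-singularities, forcing $G$ to stabilize, and therefore to fix, that singular point. The main calculation is to verify this dichotomy explicitly using the normal form \eqref{eq:2A3form2}; in particular, one must rule out $G$-invariant lines lying in smooth quadric fibers (using that the two rulings of a smooth fiber are exchanged by the monodromy of the bundle, so an invariant line in a fiber would produce an invariant pair rather than a single line). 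Combining with \cite{HT-intersect} then completes the proof.
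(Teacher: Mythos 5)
Your overall strategy is the same as the paper's: pass from $(G,X)$ to a smooth $X_{2,2}\subset\bP^5$ using the distinguished plane $\Pi$, invoke the Hassett--Tschinkel criterion that a $G$-action on $X_{2,2}$ is linearizable if and only if there is a $G$-invariant line, and translate the line condition back to $X$. The paper's proof is essentially three citations: the unprojection from $\Pi$ produces $X_{2,2}$ equivariantly, \cite{HT-quad} gives the line criterion, and the translation of ``$G$-invariant line on $X_{2,2}$'' into ``$G$ fixes a singular point or leaves invariant a line disjoint from $\Pi$'' is outsourced to \cite[Proposition 5.6]{CTZcub}. Your ``if'' direction is fine (and could even bypass $X_{2,2}$: a $G$-invariant plane together with a disjoint $G$-invariant line already linearizes, as in \cite[Lemma 1.1]{CTZ}).

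The gap is in your converse direction, where you replace the citation of \cite[Proposition 5.6]{CTZcub} by a direct argument. First, the mechanism of passage is not what you describe: there is no quadric surface bundle structure on $X_{2,2}$ itself (its hyperplane sections are quartic del Pezzo surfaces), so a line $\ell\subset X_{2,2}$ is not a curve on $\mathrm{Bl}_{\Pi}X$ and cannot simply be sorted into ``dominates $\bP^1$'' versus ``lies in a fiber''; the correct map is the unprojection, whose inverse is the projection of $X_{2,2}$ from a point $p$, and one must track $\ell$ through this birational map. Second, and more seriously, your dichotomy omits exactly the case that carries the content of the proposition: lines of $X_{2,2}$ passing through $p$. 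These are contracted by the projection back to $X$ (they are neither sections of $\pi$ nor fiber components) and correspond to the exceptional curves over the two $\sA_3$-points, which both lie on $\Pi$; it is these lines that encode the alternative ``$G$ fixes a singular point.'' Your attempt to recover that alternative from ``lines in degenerate fibers'' is the wrong mechanism: a line in a fiber of $\pi$ is a line of $X$ meeting $\Pi$ in one point, not data attached to a singularity, and the claim that the degenerate fibers ``lie over the images of the $\sA_3$-points'' is not meaningful since both $\sA_3$-points lie on $\Pi$, hence on every hyperplane of the pencil. Finally, even in the unobstructed case, showing that the section of $\pi$ determined by a line $L\subset X$ disjoint from $\Pi$ maps to a \emph{line} (degree one) on $X_{2,2}$ requires a short computation with the base locus of the unprojection linear system; the fibration picture alone only yields a rational section. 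As written, the converse would not go through without essentially reproving \cite[Proposition 5.6]{CTZcub}.
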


\begin{proof}
Unprojection from the plane $\Pi$ produces, equivariantly, a smooth intersection of two quadrics $X_{2,2}\subset \bP^5$. By \cite{HT-quad}, it is linearizable if and only if $X_{2,2}$ contains $G$-invariant lines. This is equivalent to $G$ fixing a singular point or leaving invariant a line disjoint from $\Pi$, see \cite[Proposition 5.6]{CTZcub}.
\end{proof}

\begin{coro}
\label{cor:linearizable-a3}
    Let $X$ be a cubic threefold with $2\sA_3$-singularities and $d(X)=1.$ Then the $G$-action on $X$ is linearizable if and only if $G$ fixes a singular point or $G=\langle \sigma_{(12)}\rangle.$
\end{coro}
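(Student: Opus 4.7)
The plan is to apply Proposition~\ref{prop:a3planex22}, which reduces the statement to showing: among subgroups $G\subseteq\Aut(X)$ not fixing a singular point, a $G$-invariant line on $X$ disjoint from the unique plane $\Pi=\{x_3=x_4=0\}\subset X$ exists if and only if $G=\langle\sigma_{(12)}\rangle$. By Proposition~\ref{prop:2a3-plane}, $\Aut(X)$ is the direct product of $\langle\sigma_{(12)}\rangle$ (swapping the two $\sA_3$-points) with a cyclic subgroup generated by a power of $\eta_3$ (fixing both singular points), so any $G$ not fixing a singular point contains an element of the form $\sigma_{(12)}\eta_3^j$.

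For $G=\langle\sigma_{(12)}\rangle$ I would produce the invariant line directly: $\sigma_{(12)}$ fixes the hyperplane $H=\{x_1=x_2\}$ pointwise, and every line on the cubic surface $S=X\cap H$ is automatically $\sigma_{(12)}$-invariant. Since $S$ has at worst Du Val singularities (both $\sA_3$-points lie off $H$) and $\Pi\cap H$ is a single line on $S$, any other line on $S$ furnishes the required invariant line on $X$ disjoint from $\Pi$.

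For subgroups $G$ containing a nontrivial element beyond $\sigma_{(12)}$, I would enumerate the candidate $G$-invariant lines via eigenspace decomposition of the relevant generator. When $G$ contains $\eta_3$ or $\eta_3^2$ (elements with distinct eigenvalues on $x_3, x_4, x_5$ and a common eigenvalue on $x_1, x_2$), its invariant $2$-planes in $k^5$ are explicit direct sums of eigenspaces, yielding a finite list of candidate lines in $\bP^4$. Using~\eqref{eq:2A3form2} and the conditions $t_1, t_8 \ne 0$, I would show each such candidate either fails to lie on $X$, lies inside $\Pi$, or meets $\Pi$.

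The main obstacle is the subtle case $G=\langle\sigma_{(12)}\eta_3^4\rangle\simeq C_2$, with the new involution $\sigma_{(12)}\eta_3^4:(x_1,x_2,x_3,x_4,x_5)\mapsto(x_2,x_1,x_3,x_4,-x_5)$ whose fixed locus is only the plane $\bP^2=\{x_1=x_2,\,x_5=0\}$. Here $G$-invariant lines on $X$ would correspond to linear components of the plane cubic curve $X\cap\bP^2$. A direct coordinate computation using~\eqref{eq:2A3form2} shows that any such linear component forces $X$ to acquire additional singular points beyond $2\sA_3$, contradicting the hypothesis on $\Sing(X)$. Combined with the cases above, this yields the corollary.
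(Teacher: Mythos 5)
Your overall strategy coincides with the paper's: reduce via Proposition~\ref{prop:a3planex22} to the existence of a $G$-invariant line disjoint from $\Pi$, produce such a line for $G=\langle\sigma_{(12)}\rangle$ from the pointwise-fixed cubic surface $S=X\cap\{x_1=x_2\}$ (the paper works with the image of $S$ in $X_{2,2}$, a degree-$4$ del Pezzo with $16$ lines, which is the same computation on the other side of the unprojection), and rule out invariant lines for the remaining elements $\sigma_{(12)}\eta_3^r$ by direct coordinate work. Two points in your execution need repair, however. First, it is not true that \emph{any} line on $S$ other than $\Pi\cap\{x_1=x_2\}$ is disjoint from $\Pi$: since such a line lies in the hyperplane $\{x_1=x_2\}$, it is disjoint from $\Pi$ exactly when it is skew to the line $\Pi\cap\{x_1=x_2\}$, and only $16$ of the remaining $26$ lines have this property. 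Existence still holds, but the assertion as written is false.

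Second, and more seriously, your treatment of $\sigma_{(12)}\eta_3^4$ is incomplete. Its fixed locus in $\bP^4$ is not only the plane $\bP^2_+=\{x_1=x_2,\,x_5=0\}$ but also the line $\bP^1_-=\{x_1+x_2=x_3=x_4=0\}$ coming from the $(-1)$-eigenspace, and an invariant line of an involution need not be pointwise fixed: it may join a point of $\bP^2_+$ to a point of $\bP^1_-$, or equal $\bP^1_-$ itself. Your reduction to linear components of the plane cubic $X\cap\bP^2_+$ is therefore unjustified as stated; it can be salvaged by observing that $\bP^1_-$ is contained in $\Pi=\{x_3=x_4=0\}$, so every invariant line not lying in $\bP^2_+$ meets $\Pi$ and is irrelevant for the criterion---but this observation is exactly what is missing. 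Relatedly, your case split (``$G$ contains $\eta_3$ or $\eta_3^2$'' together with the single case $G=\langle\sigma_{(12)}\eta_3^4\rangle$) does not exhaust the subgroups of $C_2\times C_8$ with nontrivial image in $\fS_2$: for instance $G=\langle\sigma_{(12)}\eta_3^2\rangle\simeq C_4$ contains neither $\eta_3$ nor $\eta_3^2$ and is not the $C_2$ you single out. The paper's proof instead checks, for each $r=1,\dots,7$ individually, that $\sigma_{(12)}\eta_3^r$ preserves no line of $X$ disjoint from $\Pi$; any $G\neq\langle\sigma_{(12)}\rangle$ not fixing a singular point contains one of these elements, which closes all cases at once. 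You should reorganize the nonlinearizability half along these lines.
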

\begin{proof}
For $G=\langle\sigma_{(12)}\rangle$, the $G$-fixed locus on $X$ is a smooth cubic surface. Its image under the unprojection to $X_{2,2}$ is a smooth del Pezzo surface of degree 4, with 16 lines. Then $G$ is linearizable. All the other possible subgroups $G$ in Proposition~\ref{prop:2a3-plane} not fixing any singular points contain an element of the form $\sigma_{(12)}\eta_3^r.$ One can check that for all $r=1,\ldots, 7$, $\sigma_{(12)}\eta_3^r$ does not leave invariant any line in $X$ disjoint from $\Pi$. Therefore, the corresponding $G$-action is not linearizable by Proposition~\ref{prop:a3planex22}.
\end{proof}


\subsection*{Singularity type $2 \sA_4$} Up to isomorphism, $X$ is given by
\begin{multline*}
    q_1=x_4^2, \quad q_2=x_5^2,\quad f_3=t_1x_3^3+x_3^2(t_2x_4+t_3x_5)+\\+x_3(-\frac{t_7^2}{4}x_4^2-\frac{t_8^2}{4}x_5^2+t_6x_4x_5)
    +t_7x_4^2x_5+t_8x_5^2x_4
\end{multline*}
for general parameters $t_1,t_2,t_3,t_6,t_7,t_8\in k.$ As above, we may assume that $t_7=t_8=0$ and $t_2=t_3$, up to a change of variables.

\begin{prop}\label{prop:2A4auto}
    Let $X$ be a cubic threefold with $2\sA_4$-singularities. Assume  that $\Aut(X)$ does not fix a singular point. Then, up to isomorphism, $X$ is given by 
      \begin{multline}\label{eq:2A4form1}
    x_1x_2x_3  + x_1x_4^2+ x_2x_5^2 + t_1x_3^3 +  x_3^2(t_2x_4 + t_2x_5)+t_6x_3x_4x_5=0,
    \end{multline} 
    with general parameters $t_1,t_2,t_6\in k$ and $(\Aut(X),X)$ is one of the following: 
    \begin{itemize}
 \item  
    $\Aut(X)=\langle\sigma_{(12)(45)},\eta_4\rangle\simeq C_6$,
 for general $t_2\in k^\times$ and
  $
  t_1=t_6=0,
  $ generated by 
\begin{align*}
   \sigma_{(12)(45)}&:    (\mathbf{x})\mapsto(x_2,x_1,x_3,x_5,x_4),\\
  \eta_4&:(\mathbf{x})\mapsto(\zeta_3^2x_1,\zeta_3^2x_2,x_3,\zeta_3x_4,\zeta_3x_5).
\end{align*}
    
   \item $\Aut(X)=\langle\sigma_{(12)(45)}\rangle\simeq C_2$
    for general parameters $t_1,t_2,t_6\in k$.
    \end{itemize}
    \end{prop}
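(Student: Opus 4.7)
The plan is to follow the algorithm of Section~\ref{sect:aut} verbatim, just as in Proposition~\ref{prop:2a3-noplane}, with the normal form
$$
f = x_1x_2x_3 + x_1x_4^2 + x_2x_5^2 + t_1x_3^3 + x_3^2(t_2x_4+t_3x_5) + x_3\bigl(-\tfrac{t_7^2}{4}x_4^2 - \tfrac{t_8^2}{4}x_5^2 + t_6x_4x_5\bigr) + t_7x_4^2x_5 + t_8x_5^2x_4
$$
as starting point. A linear change of variables analogous to \eqref{eq:killt7t8}, together with a diagonal rescaling of $x_1,\ldots,x_5$, reduces us to $t_7=t_8=0$ and $t_2=t_3$, giving the shape \eqref{eq:2A4form1}. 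This is routine and I would not carry it out in detail.

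Next, since by assumption $\Aut(X)$ does not fix the singular points $p_1=[1:0:\cdots:0]$ and $p_2=[0:1:0:\cdots:0]$, there exists $\iota\in\Aut(X)$ swapping them. The shape of $f$ forces such $\iota$ to have the block form
$$
\iota=\begin{pmatrix} 0 & s_1 & 0 & 0 & 0 \\ s_2 & 0 & 0 & 0 & 0 \\ a_1 & a_4 & 1 & a_7 & a_{10} \\ a_2 & a_5 & 0 & a_8 & a_{11} \\ a_3 & a_6 & 0 & a_9 & a_{12} \end{pmatrix}, \qquad s_1,s_2\in k^\times,\ a_i\in k,
$$
satisfying $\iota^*(f)=s_1s_2\,f$. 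Comparing coefficients monomial by monomial, exactly as in the proof of Proposition~\ref{prop:2a3-noplane}, the coefficients of $x_4^4, x_5^4, x_4^3x_5, x_4x_5^3$ and their symmetric partners immediately force $a_7=a_8=a_{10}=a_{12}=0$ along with $a_1=a_2=a_3=a_4=a_5=a_6=0$, reducing $\iota$ to the $2\times 2$ swap on $(x_1,x_2)$ together with a diagonal $2\times 2$ matrix on $(x_4,x_5)$ and the identity on $x_3$. The remaining equations form a small polynomial system in the unknowns $s_1,s_2,a_9,a_{11}$ and the parameters $t_1,t_2,t_6$, which I would decompose using \texttt{ProbableRadicalDecomposition} in \texttt{magma}.

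Once $\iota$ is understood, the final step is to compute $H=\ker(\Aut(X)\to\fS_2)$ by the same template: an element of $H$ is upper block-diagonal with a $2\times 2$ diagonal on $(x_1,x_2)$, and the invariance condition $\eta^*(f)=s_1s_2\,f$ again forces the off-diagonal entries of the $(x_3,x_4,x_5)$-block to vanish, leaving a diagonal action. The resulting system, solved again by radical decomposition, produces exactly two branches: (i) $t_1=t_6=0$, $t_2\in k^\times$, where $H=\langle\eta_4\rangle\simeq C_3$ and $\Aut(X)=\langle\sigma_{(12)(45)},\eta_4\rangle\simeq C_6$; and (ii) the generic branch where $H$ is trivial and $\Aut(X)=\langle\sigma_{(12)(45)}\rangle\simeq C_2$. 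One must finally discard those components of the solution variety whose corresponding cubic has strictly worse singularities (e.g.\ specializing to $2\sA_5$), matching the list in the diagram in the introduction.

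The main obstacle is purely bookkeeping: the system of coefficient equations is large but triangular in the matrix entries, so the genuine content is in correctly identifying which branches of the radical decomposition give cubics that actually have $2\sA_4$-singularities and not a degeneration thereof. Everything else is a direct transcription of the argument already written out in Proposition~\ref{prop:2a3-noplane}.
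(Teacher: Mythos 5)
Your proposal follows exactly the algorithm the paper invokes: the paper's own proof of this proposition consists of the single line ``Similar to the proof of Proposition~\ref{prop:2a3-noplane}'', and your expansion --- normal form, coefficient comparison for the swapping involution, radical decomposition of the resulting polynomial system via {\tt magma}, determination of the pointwise stabilizer $H$, and discarding of branches with degenerate singularities --- is precisely that argument carried out for the $2\sA_4$ normal form. (The passing reference to coefficients of quartic monomials such as $x_4^4$ is a slip, since $f$ and $\iota^*(f)$ are cubic and the relevant equations come from cubic monomials as in the displayed system of Proposition~\ref{prop:2a3-noplane}, but this does not affect the method.)
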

\begin{proof}
    Similar to the proof of Proposition~\ref{prop:2a3-noplane}.
\end{proof}
    
\begin{prop}\label{prop:2A4spec2A5}
  Let $X$ be a very general cubic with $2\sA_4$-singularities such that $\Aut(X)$ switches two singular points. Then the $\langle\sigma_{(12)(45)}\rangle$-action on $X$ is not stably linearizable.
\end{prop}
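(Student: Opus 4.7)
The plan is to adapt the specialization strategy of Proposition~\ref{prop:2A3noplanespec}, degenerating a very general cubic with $2\sA_4$-singularities to one with $2\sA_5$-singularities, where the $\sigma_{(12)(45)}$-action is already known to be non-stably-linearizable via the $\mathbf{(H1)}$-obstruction. Concretely, I would fix general $t_1\in k^\times$ and $t_6\in k$ and consider the one-parameter family $\cX\to\bA^1_{t_2}$ whose fiber $X_{t_2}$ is the cubic defined by \eqref{eq:2A4form1}. The involution $\sigma_{(12)(45)}$ extends to $\cX$, since each monomial of the defining equation is either fixed by the swap $x_1\leftrightarrow x_2$, $x_4\leftrightarrow x_5$ or paired with another monomial of equal coefficient, as in $x_1x_4^2\leftrightarrow x_2x_5^2$. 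For very general $t_2\in k^\times$, $X_{t_2}$ has $2\sA_4$-singularities at $[1:0:0:0:0]$ and $[0:1:0:0:0]$, while the special fiber $X_0$ at $t_2=0$ acquires $2\sA_5$-singularities, realizing an edge in the degeneration diagram of Section~\ref{sect:intro}. By Proposition~\ref{prop:coho}, the $\sigma_{(12)(45)}$-action on $X_0$ is not stably linearizable.

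Next, to feed the family into the equivariant specialization framework of \cite[Section 2]{CTZcub}, I would equivariantly blow up the two singular sections of $\cX$ twice; these sections are interchanged by $\sigma_{(12)(45)}$, so each blowup is genuinely $\langle\sigma_{(12)(45)}\rangle$-equivariant. Locally, a single blowup of a $3$-fold $\sA_n$-singularity $x^2+y^2+z^2+w^{n+1}=0$ reduces the type to $\sA_{n-2}$, so after two blowups the generic $\sA_4$-fibers become smooth, while the special $\sA_5$-fibers retain $2\sA_1$-singularities. These two $\sA_1$-points form a single $\langle\sigma_{(12)(45)}\rangle$-orbit and are therefore $BG$-rational, placing the family into the standard specialization setup of smooth generic fiber and $BG$-rational special fiber.

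The main technical step is to verify this local blowup analysis uniformly in the family: one must check that the iterated blowup of $\cX$ along the singular sections produces a smooth generic fiber and leaves the special fiber with only $\sA_1$-singularities swapped by the expected $\sigma_{(12)(45)}$-action. Once this is established, equivariant specialization implies that stable linearizability of the very general $X_{t_2}$ would force stable linearizability of $X_0$, contradicting the nonvanishing of the $\mathbf{(H1)}$-obstruction on $X_0$. Since this holds for every choice of general $t_1\in k^\times$ and $t_6\in k$, we conclude that the $\langle\sigma_{(12)(45)}\rangle$-action on a very general cubic $X$ with $2\sA_4$-singularities is not stably linearizable.
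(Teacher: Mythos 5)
Your proposal is correct and follows essentially the same route as the paper: the same one-parameter family $\cX\to\bA^1_{t_2}$ built from the normal form \eqref{eq:2A4form1}, degeneration at $t_2=0$ to the $2\sA_5$-cubic where Proposition~\ref{prop:coho} gives the $\mathbf{(H1)}$-obstruction, and equivariant specialization after resolving the generic fiber. Your explicit local check that two blowups of the singular sections render the generic $\sA_4$-fibers smooth while leaving the special fiber with $2\sA_1$-points in one $\langle\sigma_{(12)(45)}\rangle$-orbit is exactly the detail the paper delegates to the argument of Proposition~\ref{prop:2A3noplanespec}.
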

\begin{proof}
     By the classification in Proposition~\ref{prop:2A4auto},  it suffices to show the $\langle\sigma_{(12)(45)}\rangle$-action on a very general $2\sA_4$ cubic threefold $X$ is not stably linearizable. We use specialization, as in \ref{prop:2A3noplanespec}. 

    Fix $t_6\in k$ and $t_1\in k^\times$, and consider the family  of cubic threefolds
    $$
    \pi: \cX\to \bA^1_{t_2}
    $$
    whose generic fiber $X_{t_2}:=\cX_{t_2}$ is the cubic threefold given by 
    $$
x_1x_2x_3+x_1x_4^2+x_2x_5^2+t_1x_3^3+x_3^2(t_2x_4+t_2x_5)+t_6x_3x_4x_5=0.
    $$
    The $\sigma_{(12)(45)}$-action extends to $\cX$.
     For very general $t_2\in k$, the fiber $X_{t_2}$ is a cubic threefold with $2\sA_4$-singularities.
    The special fiber $X_0$ at $t_2=0$ has 2$\sA_5$-singularities. Moreover, by Proposition~\ref{prop:coho}, the $\langle\sigma_{(12)(45)}\rangle$-action on $X_0$ is not stably linearizable.     As in Proposition~\ref{prop:2A3noplanespec}, applying specialization to a resolution of singularities of the generic fiber of the family $\cX$ completes the proof.

\end{proof}


\subsection*{Singularity type $2 \sA_5$} According to \cite{All}, 
see also \cite[Theorem 3.2(iii)]{ACT}, any cubic threefold $X_b$ with $2\sA_5$-singularities is given by 
\begin{equation}
    \label{eqn:2a5}
X_b=\{x_1x_2x_3+x_1x_4^2+x_2x_5^2+x_3^3+bx_3x_4x_5=0\},\quad b^2\ne-4.
\end{equation}
One has 
$$
\Aut(X_b)=\begin{cases}
    \langle \tau_a,\sigma_{(12)(45)}\rangle\simeq \bG_m(k) \rtimes C_2, & b^2\ne 0, -4,\\
     \langle \tau_a,\sigma_{(12)(45)},\eta_2\rangle\simeq(C_2\times \bG_m(k))\rtimes C_2, &b=0,
\end{cases}
$$
where 
\begin{align}\label{eq:2A5iota}
    \tau_a: ({\bf x})&\mapsto(a^2x_1,a^{-2}x_2,x_3,a^{-1}x_4,ax_5),\quad a\in k^\times ,\nonumber\\
   \sigma_{(12)(45)}:({\bf x})&\mapsto (x_2,x_1,x_3,x_5,x_4),\\
        \eta_2:({\bf x})&\mapsto (x_1,x_2,x_3,x_4,-x_5).\nonumber
\end{align}

\subsection*{Cohomology}
By results in Section~\ref{sect:defect}, the defect of $X_b$ with $2\sA_5$-singularities equals $1$, and $\Cl(X)$ is generated by two classes of rational normal cubic scrolls in $X$. Projecting from $q=[1:0:0:0:0]$, we see that the associated $(2,3)$-curve 
$$
R_q=\{x_2x_3+x_4^2= x_2x_5^2+bx_3x_4x_5+x_3^3=0\}\subset\bP^3_{x_2,x_3,x_4,x_5}
$$
is the union of two twisted cubic curves, given by 
\begin{multline*}
   R_1=\{x_2x_3+x_4^2=x_3^2-\frac{-b+\sqrt{b^2+4}}{2}x_4x_5= \\
   =x_3x_4+\frac{-b+\sqrt{b^2+4}}{2}x_2x_5=0\}\subset\bP^3_{x_2,x_3,x_4,x_5}
\end{multline*}
and
\begin{multline*}
   R_2=\{x_2x_3+x_4^2=x_3^2-\frac{-b-\sqrt{b^2+4}}{2}x_4x_5= \\
   =x_3x_4+\frac{-b-\sqrt{b^2+4}}{2}x_2x_5=0\}\subset\bP^3_{x_2,x_3,x_4,x_5}.
\end{multline*}
Let $\widehat R_1$, respectively $\widehat R_2$, be the cones over $R_1$, respectively $R_2$. The classes of $\widehat R_1$ and $\widehat R_2$ in $\Cl(X)$ give another set of generators of $\Cl(X)$, equivalent to the classes of two cubic scrolls.
\begin{prop}\label{prop:coho}
    Let $X$ be a cubic threefold of singularity type $2\sA_5$, and $G=\langle \sigma_{(12)(45)}\rangle$ given by \eqref{eq:2A5iota}.
    Then 
    $$
    \rH^1(G,\Pic(\tilde{X})) =\bZ/2.
    $$
\end{prop}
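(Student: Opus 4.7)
The plan is to apply Lemma~\ref{lemm:h1seq} to reduce the problem to computing $\rH^1(G,\Cl(X))$, then to identify $\Cl(X)$ explicitly as a $\bZ[G]$-module.

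First, I would observe that $\sigma=\sigma_{(12)(45)}$ swaps the two $\sA_5$-points $q=[1{:}0{:}0{:}0{:}0]$ and $q'=[0{:}1{:}0{:}0{:}0]$, hence permutes the exceptional divisors of the equivariant resolution $\tilde X\to X$ freely in pairs. Thus $\bigoplus_i \bZ E_i$ is a free $\bZ[G]$-module, cohomologically trivial; in particular $\rH^2(G,\bigoplus_i\bZ E_i)=0$, and Lemma~\ref{lemm:h1seq} yields $\rH^1(G,\Pic(\tilde X))\cong \rH^1(G,\Cl(X))$.

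Next, I would establish the relation $[\widehat R_1]+[\widehat R_2]=2h$ in $\Cl(X)$. The quadric $Q=\{x_2x_3+x_4^2=0\}\subset \bP^4$, the cone over $Q_q$ with vertex $q$, cuts out $\widehat R_1\cup\widehat R_2$ on $X$: substituting $x_2x_3=-x_4^2$ into the equation of $X$ yields precisely the equation of the cubic $S_q^{\mathrm{cone}}$, so $X\cap Q=Q\cap S_q^{\mathrm{cone}}=\widehat R_q$ as a complete intersection. Since $Q$ has degree $2$, the resulting class equals $2h$, and the intersection is reduced of multiplicity one on each scroll (for $b^2\neq -4$). Combined with $h\in \Cl(X)$, this forces $(h,[\widehat R_1])$ to be a $\bZ$-basis of $\Cl(X)\cong \bZ^2$, with $[\widehat R_2]=2h-[\widehat R_1]$.

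The crux is then the computation of $\sigma$'s action. Clearly $\sigma(h)=h$. Applying the coordinate swap to the defining ideal of $\widehat R_1$ directly identifies $\sigma(\widehat R_1)$ with the cubic scroll $\widehat R_1''$ with vertex $q'$. The remaining task is to show
\[
[\widehat R_1'']=[\widehat R_2]=2h-[\widehat R_1]\qquad \text{in }\Cl(X),
\]
i.e.\ that $\sigma$ swaps the two families of cubic scrolls rather than preserving them. I would verify this either by an intersection-theoretic calculation on $\tilde X$ (comparing the intersection of $\widehat R_1''$ with a test curve distinguishing the two classes, such as a ruling of $\widehat R_1$), or by using the birational model as the conic bundle $\tilde V_4\to \bP^2$ whose discriminant consists of the two conics $\{x_4x_5=\alpha_i x_3^2\}$, which are swapped by $\sigma$.

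Given this, the matrix of $\sigma$ in the basis $(h,[\widehat R_1])$ is $\begin{pmatrix}1 & 2\\ 0 & -1\end{pmatrix}$, with eigenvectors $h$ (eigenvalue $+1$) and $h-[\widehat R_1]$ (eigenvalue $-1$) forming a $\bZ$-basis. Thus $\Cl(X)\cong \bZ_{\mathrm{triv}}\oplus \bZ_{\mathrm{sign}}$ as $\bZ[G]$-module, and the standard computation gives
\[
\rH^1(G,\Cl(X))=\rH^1(G,\bZ_{\mathrm{sign}})=\bZ/2.
\]
The main obstacle is the family-swap identification $[\widehat R_1'']=[\widehat R_2]$: the alternative identification $[\widehat R_1'']=[\widehat R_1]$ would make $\sigma$ act trivially on $\Cl(X)$ and give $\rH^1=0$, so the key geometric point is that the $\sigma$-image of the family of $\widehat R_1$ lies in the opposite family, despite the defining equations appearing ``symmetric'' in the parameter $\alpha_1$.
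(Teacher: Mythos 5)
Your overall route is the same as the paper's: reduce to $\rH^1(G,\Cl(X))$ via Lemma~\ref{lemm:h1seq} (correct, since the exceptional divisors over the two swapped $\sA_5$-points form a free $\bZ[G]$-module), establish $[\widehat R_1]+[\widehat R_2]=2h$ by a quadric section, and then determine how $\sigma=\sigma_{(12)(45)}$ acts on the rank-two lattice. The relation $[\widehat R_1]+[\widehat R_2]=2h$ via $Q=\{x_2x_3+x_4^2=0\}$ is fine, and the final cohomology computation is correct \emph{granted} the module structure. But the step you yourself identify as the crux --- that $[\sigma(\widehat R_1)]=[\widehat R_2]$ rather than $[\widehat R_1]$ --- is exactly the content of the proposition, and you do not prove it: you only name two possible strategies. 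Worse, the second strategy is based on a false premise. The discriminant of the conic bundle $\tilde V_4\to\bP^2$ is $\{x_4^2x_5^2-bx_3^2x_4x_5-x_3^4=0\}$, which splits into the two conics $\{x_4x_5=\alpha_i x_3^2\}$; since $\sigma$ acts on $\bP^2_{x_3,x_4,x_5}$ only by exchanging $x_4$ and $x_5$, each of these conics is individually $\sigma$-invariant, so they are \emph{not} swapped, and that argument cannot distinguish the two cases.

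The missing step has a one-line proof, which is what the paper does: among the defining quadrics of $R_1$ is $x_3^2+\frac{b-\sqrt{b^2+4}}{2}x_4x_5$, which is symmetric in $x_4,x_5$ and hence $\sigma$-invariant; therefore this quadric hypersurface contains both $\widehat R_1$ and $\sigma(\widehat R_1)$, and by degree its intersection with $X$ is exactly $\widehat R_1\cup\sigma(\widehat R_1)$. This gives $[\widehat R_1]+[\sigma(\widehat R_1)]=2h$ in $\Cl(X)$, and combined with your relation $[\widehat R_1]+[\widehat R_2]=2h$ (and the fact that $[\widehat R_1]\neq[\widehat R_2]$) forces $[\sigma(\widehat R_1)]=[\widehat R_2]$. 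One further small point: the assertion that $[\widehat R_1]+[\widehat R_2]=2h$ together with $h\in\Cl(X)$ ``forces'' $(h,[\widehat R_1])$ to be a $\bZ$-basis does not follow from that relation alone --- it requires the identification of the generators of $\Cl(X)$ coming from the defect computation (Section~\ref{sect:defect}); without pinning down the lattice, the alternative $\Cl(X)\cong\bZ[C_2]$ would give $\rH^1=0$, so this is not a cosmetic issue.
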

\begin{proof}
    First, one checks that $\widehat R_1\cup\sigma_{(12)(45)}(\widehat R_1)$ is cut out by the quadric hypersurface section of $X$ given by 
    $$
    \widehat R_1\cup\sigma_{(12)(45)}(\widehat R_1)=\{x_3^2 + \frac{b-\sqrt{b^2+4}}{2}x_4x_5=0\}\cap X.
    $$
    This implies that $\sigma_{(12)(45)}$ switches the two generators of $\Cl(X)$.
    As in \cite[Proposition 7.5]{CTZcub}, we 
   compute
    $$
    \rH^1(G,\Pic(\tilde{X})) =\bZ/2.
    $$
\end{proof}


\subsection*{Linearizability}
When $b=0$, the action of $\eta_2\cdot\sigma_{(12)(45)}$ switches two nodes and has vanishing cohomology. This action is linearizable:

\begin{prop}
\label{prop:tau-iota}
    Let $X$ be the cubic threefold given by \eqref{eqn:2a5} with $b=0$ and $G\simeq \bG_m(k)\rtimes C_2$, generated by 
$\eta_2\sigma_{(12)(45)}$ and $\tau_a, a \in k^\times$. Then the $G$-action on $X$ is linearizable.
\end{prop}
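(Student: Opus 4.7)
\medskip
\noindent\emph{Proof plan.}
The starting observation is that the $(\mathbf{H1})$-obstruction vanishes for this $G$-action: while $\sigma_{(12)(45)}$ swaps the two scroll classes in $\Cl(X)\simeq\bZ^2$ (Proposition~\ref{prop:coho}), the involution $\eta_2$, available only at $b=0$, also swaps $\widehat R_1\leftrightarrow\widehat R_2$ (because $x_5\mapsto -x_5$ interchanges their defining ideals). Hence $\iota:=\eta_2\sigma_{(12)(45)}$ acts trivially on $\Cl(X)$, and together with the trivial $\tau_a$-action this makes $\Pic(\tilde X)$ a $G$-permutation module; no cohomological obstruction blocks linearizability, and the task is to produce an explicit equivariant birational map.

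To do this, I would exploit the special factorization available at $b=0$. The birational model $V_4\subset\bP(2,2,1,1,1)$ from the start of Section~\ref{sect:two} becomes
\[
z_1 z_2 \;=\; (x_4 x_5 - x_3^2)(x_4 x_5 + x_3^2),\qquad z_1 = x_1 x_3 + x_5^2, \quad z_2 = x_2 x_3 + x_4^2,
\]
and the right-hand side, the discriminant of the associated conic bundle, factors as two smooth plane conics. Setting
\[
\phi : V_4 \dashrightarrow \bP^1_{[u:v]}, \qquad \phi \;=\; [z_1 : x_4 x_5 - x_3^2] \;=\; [x_4 x_5 + x_3^2 : z_2]
\]
(the two expressions agreeing on $V_4$) exhibits $V_4$ as a rational $\bP^2$-bundle over $\bP^1$ with generic fiber $\bP^2_{x_3, x_4, x_5}$. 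A direct computation shows that $\tau_a$ acts on the base by $[u:v]\mapsto [a^2 u : v]$ and on the fiber with $\bG_m$-weights $(0, -1, 1)$, while $\iota$ acts on the base by $[u:v]\mapsto [v : -u]$ and on the fiber by $(x_3, x_4, x_5)\mapsto (x_3, -x_5, x_4)$; both are linear, coming from $2$- and $3$-dimensional $G$-representations $V$ and $W$, so the base and fiber are $G$-equivariantly $\bP(V)$ and $\bP(W)$.

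To pass to a product, the $\bG_m$-action on $\bP(V)$ has trivial generic stabilizer, so the $\bG_m$-equivariant $\bP^2$-bundle $\phi$ is birationally trivial by the no-name lemma \cite[Section 2]{CTZ}; the vanishing of $(\mathbf{H1})$ then allows the trivialization to be arranged $G$-equivariantly, yielding $V_4\sim_G \bP(V)\times\bP(W)$ with the diagonal linear $G$-action.

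Finally, I would identify $\bP(V)\times\bP(W)\sim_G\bP^3$. The Segre embedding $\bP(V)\times\bP(W)\hookrightarrow\bP(V\otimes W)=\bP^5$ is $G$-equivariant, and $V\otimes W$ decomposes into $G$-subrepresentations $R_2\oplus R_1\oplus R_0$, where $R_k$ is the $2$-dimensional $G$-subrepresentation of $\bG_m$-weights $\pm k$ (the weight-zero part $R_0$ being spanned by $u\otimes x_4$ and $v\otimes x_5$ in the obvious basis). The projection from the $G$-stable $\bP(R_0)$ onto $\bP(R_2\oplus R_1)=\bP^3$ restricts on the Segre to
\[
\bigl([s:t],\,[a:b:c]\bigr)\,\longmapsto\,[\,s a \,:\, s c \,:\, t a \,:\, t b\,],
\]
which a direct check shows is birational (a generic point $[\alpha:\beta:\gamma:\delta]$ of $\bP^3$ has the unique preimage $[s:t]=[\alpha:\gamma],\,[a:b:c]=[\alpha\gamma:\alpha\delta:\beta\gamma]$). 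The resulting linear $G$-action on $\bP^3$ (with $\tau_a=\mathrm{diag}(a^2, a^{-2}, a, a^{-1})$ and $\iota$ acting by a signed permutation of coordinates) completes the linearization. The main subtlety is the intermediate descent: verifying that the $\bG_m$-equivariant trivialization of $\phi$ can be promoted to a $G$-equivariant one, which is precisely where the vanishing of $(\mathbf{H1})$ enters in an essential way.
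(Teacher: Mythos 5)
Your setup is sound and in fact close to the geometry the paper exploits: the factorization $z_1z_2=(x_4x_5-x_3^2)(x_4x_5+x_3^2)$ at $b=0$, the resulting pencil $[u:v]=[z_1:x_4x_5-x_3^2]$, and the computation of how $\tau_a$ and $\iota=\eta_2\sigma_{(12)(45)}$ act on base and fiber are all correct. The final Segre-and-project step is also fine as a statement about linear actions (though your weight bookkeeping is off: $u\otimes x_4$ and $v\otimes x_5$ both have $\bG_m$-weight $1$, not $0$; the span is still $G$-stable, which is all you need). The proof fails at its central step, the passage from the $\bP^2$-fibration $\phi$ to the product $\bP(V)\times\bP(W)$, and it fails for three concrete reasons. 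First, the claim that $\bG_m$ acts on the base with trivial generic stabilizer is false: $\tau_{-1}=\iota^2$ acts trivially on $[u:v]\mapsto[a^2u:v]$ but sends $(x_3,x_4,x_5)$ to $(x_3,-x_4,-x_5)$, which is a nontrivial projective transformation of the generic fiber. The base action therefore has generic stabilizer $\mu_2$, and the no-name lemma does not apply. Second, even where it applies, the no-name lemma trivializes the action on the fiber factor; it cannot produce the diagonal action on $\bP(V)\times\bP(W)$ that your Segre argument then consumes. Third, "the vanishing of $(\mathbf{H1})$ allows the trivialization to be arranged $G$-equivariantly" is not a theorem: $(\mathbf{H1})$-vanishing is a necessary condition for stable linearizability, and it gives no mechanism for descending or constructing equivariant trivializations of projective fibrations. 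You flag this as the main subtlety, but it is precisely the content of the proposition, and no argument is supplied.

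The paper's proof sidesteps all of this by staying explicit. It restricts to the $G$-invariant chart $\{x_3\neq 0\}$, substitutes $y_1=x_1+x_5^2$, $y_2=x_2+x_4^2$ to get $y_1y_2+(1-x_4x_5)(1+x_4x_5)=0$ (the same discriminant factorization you found, specialized to $x_3=1$), then sets $z_1=y_1/(1+x_4x_5)$, $z_2=y_2/(1-x_4x_5)$ to land $G$-birationally on the quadric cone $\{z_1z_2+t^2=0\}\subset\bP^4$, and finishes by projecting from a $G$-fixed point. If you want to complete your argument, the honest fix is to replace the appeal to no-name and $(\mathbf{H1})$ by an explicit equivariant birational map of this kind; as written, the middle of your proof is a gap, not a proof.
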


\begin{proof}
Recall that $X=\{x_1x_2x_3+x_1x_4^2+x_2x_5^2+x_3^3=0\}$
and 
\begin{align*}
\eta_2\sigma_{(12)(45)}&: \mathbf{(x)}\mapsto (x_2,x_1,x_3,x_5,-x_4),\\
\tau_a&: \mathbf{(x)}\mapsto ( a^2x_1, a^{-2}x_2, x_3, a^{-1}x_4, ax_5),\quad a\in k^\times.
\end{align*}
in particular, it leaves the affine chart $\{x_3\neq 0\}$ invariant. Thus we can assume that $x_3=1$, and consider the $G$-equivariant change of coordinates 
    $$
    y_1=x_1+x_5^2, \quad y_2=x_2+x_4^2,
    $$
yielding the equation 
    $$
    y_1y_2+(1-x_4x_5)(1+x_4x_5)=0.
    $$ 
    Let
    $$
    z_1=\frac{y_1}{(1+x_4x_5)},\quad z_2=\frac{y_2}{(1-x_4x_5)};
    $$ 
    this $G$-equivariant birational change of coordinates gives a $G$-birational map $X\dasharrow Y$, where 
    $$Y=\{z_1z_2+t^2=0\}\subset \bP^4_{z_1, z_2, x_4, x_5,t}.
    $$ 
    Thus $Y$ is a cone over a smooth conic, with $G$-action generated by
    \begin{align*}
    \eta_2\sigma_{(12)(45)}:& (z_1,z_2, x_4, x_5,t)\mapsto (z_2, z_1, x_5, -x_4,t),\\
    \tau_a:& (z_1,z_2, x_4, x_5,t)\mapsto (a^2z_1, a^{-2}z_2, x_4, x_5,t).
    \end{align*}
     Projecting from the $G$-fixed point $[0:0:1:\zeta_4:0]$,
    we obtain linearization.
\end{proof}
Combining Proposition~\ref{prop:coho} and ~\ref{prop:tau-iota}, we settle the linearizability problem of cubic threefolds with $2\sA_5$-singularities: 

\begin{coro}\label{coro:2a5sum}
    Let $X$ be a cubic threefold with $2\sA_5$-singularities given by \eqref{eqn:2a5} and $G\subseteq \Aut(X)$. Then the $G$-action on $X$ is not (stably) linearizable if and only if $G$ contains an element conjugate to $\sigma_{(12)(45)}$ given by \eqref{eq:2A5iota}.
\end{coro}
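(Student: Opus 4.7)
The plan is to prove the biconditional by splitting into two directions and leveraging the two main inputs already at hand: the (H1) obstruction from Proposition~\ref{prop:coho} and the explicit linearization from Proposition~\ref{prop:tau-iota}.

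For the direction ``contains a $\sigma_{(12)(45)}$-conjugate implies not stably linearizable'', I would argue as follows. If $G$ contains an element that is conjugate in $\Aut(X)$ to $\sigma_{(12)(45)}$, then after replacing $G$ by a conjugate we may assume $\sigma_{(12)(45)} \in G$. Since the restriction of a stably permutation module to any subgroup remains stably permutation, the nonvanishing $\rH^1(\langle\sigma_{(12)(45)}\rangle, \Pic(\tilde X)) = \bZ/2$ from Proposition~\ref{prop:coho} implies that $\Pic(\tilde X)$ cannot be a stably permutation $G$-module. Hence the $G$-action on $X$ carries a nonvanishing \textbf{(H1)} obstruction and is not stably linearizable.

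For the converse, I would analyze the structure of $\Aut(X_b)$ in the two cases. When $b^2 \ne 0, -4$, one has $\Aut(X_b) = \bG_m(k) \rtimes C_2$ and the conjugation formula $\tau_c(\tau_a\sigma_{(12)(45)})\tau_c^{-1} = \tau_{c^2 a}\sigma_{(12)(45)}$ together with $k$ being algebraically closed of characteristic zero show that every element outside the normal $\bG_m$ subgroup is conjugate to $\sigma_{(12)(45)}$. Thus if $G$ contains no such conjugate, $G \subseteq \langle\tau_a\rangle$, which fixes both $\sA_5$-points, and projection from either point linearizes the action. When $b = 0$, one has $\Aut(X_0) = (C_2 \times \bG_m(k)) \rtimes C_2$ with $C_2 = \langle\eta_2\rangle$ in the kernel; the involutions outside the kernel split into two conjugacy classes, represented by $\sigma_{(12)(45)}$ (elements $\tau_a\sigma_{(12)(45)}$) and by $\eta_2\sigma_{(12)(45)}$ (elements $\tau_a\eta_2\sigma_{(12)(45)}$). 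The key observation is that $\eta_2 \cdot (\tau_a\eta_2\sigma_{(12)(45)}) = \tau_a\sigma_{(12)(45)}$, and similarly $(\tau_c\eta_2)\cdot(\tau_a\eta_2\sigma_{(12)(45)}) = \tau_{ac}\sigma_{(12)(45)}$. Consequently, if $G$ contains some $\tau_a\eta_2\sigma_{(12)(45)}$ but no $\sigma_{(12)(45)}$-conjugate, then $G$ must avoid both $\eta_2$ and every $\tau_c\eta_2$, which forces $G \subseteq \langle \tau_a,\, \eta_2\sigma_{(12)(45)}\rangle$. The latter is precisely the group linearized in Proposition~\ref{prop:tau-iota}, and every finite subgroup inherits the linearization.

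The main obstacle is the bookkeeping in the $b = 0$ case: carefully identifying the conjugacy classes of involutions in $\Aut(X_0)$ outside the kernel and verifying that no finite subgroup $G$ avoiding $\sigma_{(12)(45)}$-conjugates can escape the subgroup $\langle\tau_a, \eta_2\sigma_{(12)(45)}\rangle$. The cohomological input from Proposition~\ref{prop:coho} and the geometric construction of Proposition~\ref{prop:tau-iota} do all of the heavy lifting; what remains is an elementary but careful group-theoretic enumeration.
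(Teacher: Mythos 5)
Your proposal is correct and follows essentially the same route as the paper: the forward direction via Proposition~\ref{prop:coho} together with the fact that the {\bf (H1)} obstruction restricts to subgroups, and the converse by showing that any $G$ avoiding $\sigma_{(12)(45)}$-conjugates either fixes the singular points or (necessarily with $b=0$) lands inside $\langle\tau_a,\eta_2\sigma_{(12)(45)}\rangle$, which is linearized by Proposition~\ref{prop:tau-iota}. Your explicit conjugacy computations ($\tau_c(\tau_a\sigma_{(12)(45)})\tau_c^{-1}=\tau_{c^2a}\sigma_{(12)(45)}$, etc.) simply spell out the group-theoretic bookkeeping that the paper leaves implicit.
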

\begin{proof}
    By Proposition~\ref{prop:coho}, the $G$-action is not stably linearizable if $\sigma_{(12)(45)}\in G$. When $G$ switches two singular points but does not contain any element conjugate to $\sigma_{(12)(45)}$, we are in the situation where $b=0$ in \eqref{eqn:2a5} and $G$ is a subgroup of the group generated by 
$\eta_2\sigma_{(12)(45)}$ and $\tau_a, a \in k^\times$. Such $G$-actions are linearizable, by Proposition~\ref{prop:tau-iota}. 
\end{proof}
\subsection*{Singularity type $2\sD_4$}

\begin{prop}\label{prop:2D4class}
    Let $X$ be a cubic threefold with $2\sD_4$-singularities. Up to isomorphism, $X$ is given by 
    \begin{align}\label{eq:2D4form}
         x_1x_2x_3+f_3(x_3,x_4,x_5)=0,
    \end{align}
    where $f_3$ is a generic cubic form in $x_3,x_4,x_5$, i.e.,
    \begin{align}\label{eq:2d4f3eq}
        f_3&=t_1x_3^3 +  x_3^2h_1+x_3h_2+h_3,\\
        h_1&=t_2x_4 + t_3x_5,\nonumber\\
        h_2&=t_4x_4^2+t_5x_5^2+t_6x_4x_5,\nonumber\\
h_3&=t_7x_4^2x_5+t_8x_4x_5^2+t_9x_4^3+t_{10}x_5^3\nonumber
    \end{align}
   for general $t_1,\ldots,t_{10}\in k$, with $\Aut(X)$ one of the following:
    \begin{itemize}
        \item[Case (1):] $\Aut(X)=\langle \sigma_{(12)},\tau_a, \eta_1,\eta_3,\sigma_{(45)}\rangle \simeq (\bG_m(k)\rtimes C_2)\times\fS_3\times C_3 $, $X$ is given by 
        $$
x_1x_2x_3+x_3^3+x_4^3+x_5^3=0.
        $$
     \item[Case (2):] $\Aut(X)=\langle \sigma_{(12)},\tau_a, \eta_1,\sigma_{(45)}\rangle\simeq(\bG_m(k)\rtimes C_2)\times\fS_3$, $X$ is given by 
        $$
x_1x_2x_3+x_3^3+t_6x_3x_4x_5+x_4^3+x_5^3=0,
        $$
        for general $ t_6\in k$.
          \item[Case (3):] $\Aut(X)=\langle \sigma_{(12)},\tau_a, \eta_3,\sigma_{(45)}\rangle\simeq(\bG_m(k)\rtimes C_2)\times C_6$, $X$ is given by 
        $$
x_1x_2x_3+x_3^3+(x_4+x_5)(r_4x_4+r_5x_5)(r_5x_4+r_4x_5)=0,
        $$
        for general $ r_4, r_5\in k$.
         \item[Case (4):] $\Aut(X)=\langle \sigma_{(12)},\tau_a, \sigma_{(45)}\rangle\simeq(\bG_m(k)\rtimes C_2)\times C_2$, $X$ is given by 
        \begin{multline*}
x_1x_2x_3+t_1x_3^3+r_1x_3^2(x_4+x_5)+x_3(r_2x_4+r_3x_5)(r_3x_4+r_2x_5)+\\
+(x_4+x_5)(r_4x_4+r_5x_5)(r_5x_4+r_4x_5)=0,
        \end{multline*}
        for general  $t_1,r_1,\ldots,r_5\in k$.
         \item[Case (5):] $\Aut(X)=\langle \sigma_{(12)},\tau_a, \eta_2\rangle\simeq(\bG_m(k)\rtimes C_2)\rtimes C_2$, $X$ is given by 
$$
x_1x_2x_3+x_3^2(t_2x_4+t_3x_5)
+t_7x_4^2x_5
+t_8x_4x_5^2+t_9x_4^3+t_{10}x_5^3=0,
        $$
        for general $t_2,t_3, t_7, t_8,t_9,t_{10}\in k$.
          \item[Case (6):] $\Aut(X)=\langle \sigma_{(12)},\tau_a \rangle\simeq \bG_m(k)\rtimes C_2$, $X$ is given by 
$$
\text{vanishing of \eqref{eq:2D4form} where } f_3 \text{ is a generic cubic form }
     $$
      such that $h_2\not\equiv 0$,
    \end{itemize}
    
    where
        \begin{align*}
        \sigma_{(12)}: (\mathbf x)&\mapsto (x_2, x_1, x_3, x_4, x_5),\\
         \tau_a: (\mathbf x)&\mapsto (a x_1, a^{-1}x_2, x_3,x_4,x_5),\quad a\in k^\times,\\
         \eta_1: (\mathbf x)&\mapsto (x_1, x_2, x_3,\zeta_3 x_4,\zeta_3^2x_5),\\
          \eta_2: (\mathbf x)&\mapsto (-x_1, x_2, x_3,-x_4,-x_5),\\
           \eta_3: (\mathbf x)&\mapsto (x_1, x_2, x_3,\zeta_3x_4,\zeta_3x_5),\\
           \sigma_{(45)}: (\mathbf x)&\mapsto (x_1, x_2, x_3,x_5,x_4).
    \end{align*}
\end{prop}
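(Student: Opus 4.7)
\emph{Plan.} Apply the algorithm of Section~\ref{sect:aut}: derive a normal form from the singularity type, identify all lifts of the permutation action $\rho:\Aut(X)\to\fS_2$ on the singular set, and determine the pointwise stabilizer $H$ of $\{p_1,p_2\}$ for each lift.

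\emph{Normal form.} Starting from \eqref{eqn: 2sing}, I impose the $\sD_4$-condition at each singular point. A $\sD_4$-point corresponds to a reducible tangent quadric (Lemma~\ref{lemm: defect formula}(1)), and computing the tangent cones of \eqref{eqn: 2sing} at $p_1,p_2$ forces $q_1=q_2=0$; the resulting cubic is \eqref{eq:2D4form}. The further requirement that each singularity is exactly $\sD_4$ reduces to $h_3$ being a product of three distinct linear forms in $(x_4,x_5)$, so that $B_2$ in the projection from either singular point consists of three concurrent lines, as in Proposition~\ref{prop:defectd4pt}. The involution $\sigma_{(12)}$ and the torus $\tau_a$ preserve \eqref{eq:2D4form} manifestly, contributing the universal subgroup $\langle\sigma_{(12)},\tau_a\rangle\simeq\bG_m(k)\rtimes C_2\subseteq\Aut(X)$, which surjects onto $\fS_2$.

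\emph{Classification.} Following the matrix bookkeeping of Propositions~\ref{prop:2a3-noplane}--\ref{prop:2A4auto}, any $\sigma\in H$ is block-diagonal of the form $\mathrm{diag}(s_1,s_2)\oplus M'$ with $M'\in\mathrm{Stab}_{\GL_3}([1:0:0])$; the equation $\sigma^*(f)=\lambda f$ then forces $M'$ to send $x_3\to s_3 x_3$ and to preserve $f_3$ up to the scalar $s_1 s_2 s_3$. I would use the remaining coordinate freedom (scaling $x_3$, $\GL_2$ on $(x_4,x_5)$, and the triangular mixing of $x_3$ into $x_4,x_5$) to reduce $f_3$ to a small number of essential moduli, then solve the preservation equations $M'^{*}(f_3)=\lambda f_3$ via the ideal-decomposition technique ({\tt magma}'s {\tt ProbableRadicalDecomposition}), exactly as in the proof of Proposition~\ref{prop:2a3-noplane}. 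This picks out six strata in the parameter space with enhanced symmetry: Case~(1) is the Fermat cubic $x_3^3+x_4^3+x_5^3$ with maximal enhancement $\fS_3\times C_3$; Case~(2) is the Hesse-type cubic $x_3^3+x_4^3+x_5^3+t_6 x_3 x_4 x_5$, where the $C_3$-factor breaks but $\fS_3$ survives; Case~(3), $f_3=x_3^3+(x_4+x_5)(r_4x_4+r_5x_5)(r_5x_4+r_4x_5)$, carries a $C_6$ generated by $\sigma_{(45)}$ and $\eta_3$ (the first factor is $\sigma_{(45)}$-invariant and the other two are swapped by $\sigma_{(45)}$); Cases~(4)--(6) have respectively $\sigma_{(45)}$, $\eta_2$, or no additional generators.

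\emph{Main obstacle.} The delicate part is the exhaustive case enumeration: ensuring no symmetry-enhanced locus is missed and choosing a clean normal form for each stratum. One must also determine the correct extension structure of $\Aut(X)$ — for instance in Case~(5), where $\eta_2$ acts by a non-trivial outer automorphism on $\bG_m(k)\rtimes C_2$ (so $\eta_2\sigma_{(12)}\eta_2^{-1}=\sigma_{(12)}\tau_{-1}$, producing $(\bG_m(k)\rtimes C_2)\rtimes C_2$ rather than a direct product) — by explicitly checking the commutation relations among all generators. These steps run in parallel to the earlier propositions in the section and amount to careful computer-algebra bookkeeping.
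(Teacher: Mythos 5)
Your strategy is sound and your normal-form derivation is correct: imposing rank $2$ on the tangent quadrics at $p_1,p_2$ in \eqref{eqn: 2sing} does force $q_1=q_2=0$, and the condition that these points are exactly $\sD_4$ is that $h_3=f_3|_{x_3=0}$ has three distinct roots. (One small omission: to get \emph{only} the two $\sD_4$-points you also need the plane cubic $\{f_3=0\}\subset\bP^2_{x_3,x_4,x_5}$ to be smooth, since singular points of $X$ on $\{x_1=x_2=0\}$ are precisely the singular points of that curve; this is part of the genericity the statement assumes.) Your identification of the universal subgroup $\langle\sigma_{(12)},\tau_a\rangle$, of the six strata, and of the extension structure in Case (5) --- the computation $\eta_2\sigma_{(12)}\eta_2^{-1}=\sigma_{(12)}\tau_{-1}$ is correct --- all agree with the statement.

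Where you diverge from the paper is in how the stabilizer $H$ of the two singular points is enumerated. You propose the brute-force route of Proposition~\ref{prop:2a3-noplane}: write the matrix of a general element of $H$, expand $\sigma^*(f)=\lambda f$, and decompose the resulting ideal in the parameters and matrix entries. The paper instead exploits geometry specific to this case: after normalizing, $H$ acts on $\bP^1_{x_4,x_5}$ and must preserve the three distinct points cut out by $h_3$ (equivalently, permute the three planes of $X$ through both $\sD_4$-points), so the image $\bar H$ of $H$ in $\PGL_2$ is a subgroup of $\fS_3$; combined with the $\bar H$-invariance of each of $\{h_1=0\}$, $\{h_2=0\}$, $\{h_3=0\}$ and a weight count for the scalar part $H'$, this yields the six cases by hand. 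The structural argument buys a transparent proof of exhaustiveness and avoids a Gr\"obner computation in ten parameters plus eight matrix unknowns, considerably heavier than the analogous computation in Proposition~\ref{prop:2a3-noplane}; your route would presumably reach the same answer but carries the risk of missing a component or misidentifying a stratum, which is exactly the ``main obstacle'' you flag. Even if you pursue the computational route, recording the constraint $\bar H\subseteq\fS_3$ first gives an a priori bound that certifies completeness of the case list.
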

\begin{proof}
    For any such cubic $X$, $\Aut(X)$ contains a subgroup isomorphic to the infinite dihedral group
    generated by 
    \begin{align}\label{eq:2d4toricaction}
        \sigma_{(12)}: (\mathbf x)&\mapsto (x_2, x_1, x_3, x_4, x_5),\nonumber\\
         \tau_a: (\mathbf x)&\mapsto (a x_1, a^{-1}x_2, x_3,x_4,x_5),\quad a\in k^\times.
    \end{align}
    To find possibilities of $\Aut(X)$, it suffices to find $H\subset\Aut(X)$, the subgroup fixing both singular points. Based on the form of \eqref{eq:2D4form}, one sees that any element in $H$ takes the form 
    $$
    \begin{pmatrix}
         s_1&0&0&0&0\\
       0&s_2&0&0&0\\
       0&0&1&b_1&b_4\\
       0&0&0&b_2&b_5\\
       0&0&0&b_3&b_6,
    \end{pmatrix},\quad 
    s_1,s_2, b_1,\ldots, b_6\in k,
    $$
    Then up to a change of variables only in coordinates $x_4$ and $x_5$,
    we may assume $b_1=b_4=0$ without changing the form of \eqref{eq:2D4form}.
    Namely, $H$ preserves \eqref{eq:2D4form} and acts on the ambient $\bP^4$ via 
    $
    \bP(I_1\oplus I_2\oplus I_3\oplus V), 
    $
    where $I_1, I_2$ and $I_3$ are 1-dimensional representations of $H$, acting respectively on coordinates $x_1, x_2$ and $x_3$, and $V$ is a $2$-dimensional representation of $H$ acting on $x_4,x_5$. In the plane $\bP^2_{x_3,x_4,x_5}$, the group $H$ leaves both the line $l=\{x_3=0\}$ and the cubic curve $C=\{f_3=0\}$ invariant. Since $X$ is a cubic with $2\sD_4$-singularities, by Proposition \ref{prop:defectd4pt}, $X$ contains three distinct planes, corresponding to the points defined by $l\cap C$. This implies $l\cap C$ defines three distinct points, in the same $H$-orbit. Consider the exact sequence 
    $$
    0\to H'\to H\to \bar H\to0
    $$
    where $H'$ contains elements in $H$ acting via scalars in $V$, and $\bar H$ acts faithfully on $\bP(V)=\bP^1_{x_4,x_5}$. Since $H$ leaves invariant three points, the possibilities of $\bar H$ are 
    $$
    \bar H=C_1, C_2, C_3, \text{ or } \fS_3,
    $$
    where $\fS_3$ is generated by 
    $$
    \sigma=\begin{pmatrix}
        \zeta_3&0\\
        0&\zeta_3^2
    \end{pmatrix}\quad \text{and}\quad   \iota =\begin{pmatrix}
       0&1\\
        1&0
    \end{pmatrix},
    $$
    and the other possibilities are the corresponding subgroups of $\fS_3$. Moreover, $H$ leaves invariant each of the following subsets of $\bP^1_{x_4,x_5}$, defined by 
    $$
    Q_1=\{h_1=0\},\quad Q_2=\{ h_2=0\},\quad Q_3=\{h_3=0\}.
    $$
    Using this, we classify the possibilities of $H$ and $\bar H$.

    \

      \noindent{\bf{When} $\bar H=C_1$:} In this case $H=H'.$ We find below all possibilities of $H'$. By definition, any element in $\eta\in H'$ takes the form 
      $$
      \eta: (\mathbf{x})\mapsto (s_1x_1, s_2x_2, x_3, s_3x_4, s_3x_5), \quad s_1,s_2,s_3\in k^\times.
      $$
      The weights of the $\eta$-action  on $h_1, h_2, h_3$ are respectively 
    $s_3, s_3^2, s_3^3$. Since $h_3\not\equiv 0$, there are the following cases:
    \begin{itemize}
        \item When $h_2\not\equiv 0$ : we have $s_3=1$, $\eta$ is the toric action \eqref{eq:2d4toricaction} and $H'\simeq k^\times$.
        \item  When $h_2\equiv 0$,  $h_1\not\equiv 0$ and $t_1=0$: we have $s_3=-1$, $s_1s_2=-1$, and $H'\simeq C_2\times k^\times$.
        \item When $h_2\equiv 0$, $h_1\equiv 0$ and $t_1\ne 0$: we have $s_3^3=1$ and $H'\simeq C_3\times k^\times$.
            \item When $h_2\equiv 0$, $h_1\equiv 0$ and $t_1= 0$: $X$ has $3\sD_4$-singularities.
    \end{itemize}

    \ 
     
    \noindent{\bf{When} $\bar H=\fS_3$:} Since $\fS_3$ has no fixed points in $\bP^1,$ one has $h_1\equiv 0$, $h_2=t_6x_4x_5$ and $h_3=x_4^3+x_5^3$.

    \ 

     \noindent{\bf{When} $\bar H=\langle\sigma\rangle\simeq C_3$:} We know that $Q_1$ is a fixed point of $\bar H$, i.e., $h_3=t_2x_4$ or $t_3x_5.$ Similarly, $Q_2$ can also only contain fixed points of $\bar H$, i.e., $h_3=t_4x_4^2$, $t_5x_5^2$, or $t_6x_4x_5$, and $Q_3$ contains three distinct points in one $\bar H$-orbit, thus, up to scaling, $h_3=x_4^3+x_5^3.$ Matching the weights of the $\sigma$-actions on each of the monomials appearing in $f_3$, one sees that the only choice is $h_1\equiv 0,$ $h_2=t_6x_4x_5$ and $h_3=x_4^3+x_5^3$. Then we go back to the situation above. Thus, $\bar H\not\simeq C_3$, i.e., $\sigma\in \bar H$ implies that $\bar H\simeq \fS_3$.

     \ 
     
 \noindent{\bf{When} $\bar H=\langle\iota\rangle\simeq C_2$:} As above, using symmetries on $\bP^1_{x_4,x_5}$, and matching the weights on the monomials, we find two cases: 
 \begin{itemize}
     \item $h_1=r_1(x_4+x_5)$, for some $r_1\in k$,
     \item $h_2=(r_2x_4+r_3x_5)(r_3x_4+r_2x_5)$, for some $r_2,r_3\in k$,
       \item  $h_3=(x_4+x_5)(r_4x_4+r_5x_5)(r_5x_4+r_4x_5)$, for some $r_4\ne r_5\in k^\times;$
 \end{itemize}
 or 
 \begin{itemize}
     \item $h_1=r_1(x_4-x_5)$, for some $r_1\in k$,
     \item $h_2=r_2(x_4^2-x_5^2)$,  for some $r_2,r_3\in k$,
     \item  $h_3=(x_4-x_5)(r_4x_4+r_5x_5)(r_5x_4+r_4x_5)$, for some $r_4\ne r_5\in k^\times,$
     \item $t_1=0$.
 \end{itemize}
   Combining all possibilities of $\bar H$ and of $H'$, and checking the singularity types, we obtain the assertion. 
\end{proof}


The following applies in Cases(1), (2), (3), and (4) of Proposition~\ref{prop:2D4class}.

\begin{prop}
\label{prop:D4Burn}
Let $X$ be a cubic threefold with $2\sD_4$-singularities admitting the action of 
$H:=\langle \sigma_{(12)},\sigma_{(45)}\rangle$. Then, for any $G\subseteq \Aut(X)$ containing $H$, the $G$-action on $X$ is not linearizable. 
\end{prop}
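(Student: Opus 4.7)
The plan is to apply the Burnside obstruction \textbf{(Burn)}. First, I observe that linearizability descends to subgroups: if $X$ is $G$-equivariantly birational to $\bP(V)$ for some linear $G$-representation $V$, then restricting the action to $H$ gives an $H$-equivariant birational map to the same $\bP(V)$ with the restricted linear $H$-action. Hence it suffices to prove that the $H$-action on $X$ is not linearizable, where $H = \langle\sigma_{(12)},\sigma_{(45)}\rangle \simeq C_2\times C_2$.

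Next, I pass to the $H$-equivariant smooth model $\tilde X \to X$ (as in Section~\ref{sect:coho}) and enumerate the fixed loci of the three order-two subgroups of $H$. The involution $\sigma_{(12)}$ exchanges the two $\sD_4$-points and fixes pointwise the cubic surface $S_{12} := X \cap \{x_1 = x_2\}$, which is generically disjoint from $\Sing(X)$. The involution $\sigma_{(45)}$ fixes both $\sD_4$-points and fixes pointwise the cubic surface $S_{45} := X \cap \{x_4 = x_5\}$, which passes through both singular points. The product $\sigma_{(12)}\sigma_{(45)}$ has no divisorial fixed locus; its fixed set on $X$ is a plane cubic curve. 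The proper transforms of $S_{12}$ and $S_{45}$ in $\tilde X$, together with the exceptional divisors over the two $\sD_4$-points, provide the divisorial fixed strata, each contributing a symbol of the form $(\langle\sigma\rangle, k(\tilde S)^{H/\langle\sigma\rangle}, \beta)$ to the Burnside class, where $\beta$ records the normal-bundle character and the residual $H/\langle\sigma\rangle$-action on the function field.

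The decisive step is the comparison with the Burnside class of an arbitrary linear $H$-action on $\bP^3$. In any such linear model, the divisorial fixed strata are unions of coordinate $\bP^2$'s on which the quotient $H/\langle\sigma\rangle \simeq C_2$ acts linearly, contributing only symbols of rational type. The contribution from $S_{45}$ is distinguished: the quotient involution, induced by $\sigma_{(12)}$, interchanges the two $\sD_4$-points lying on $S_{45}$, forcing a symbol whose residual $C_2$-action on the function field is incompatible with any linear model. I would then verify that this distinguishing symbol survives modulo all cancellations with symbols arising from the one-dimensional strata (e.g.\ the fixed plane cubic of $\sigma_{(12)}\sigma_{(45)}$) and from the exceptional divisors of $\tilde X \to X$.

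The principal obstacle is the book-keeping: writing down all divisorial fixed components together with their $H/\langle\sigma\rangle$-equivariant structure on the resolution, including on the exceptional divisors over the $\sD_4$-points, and verifying the non-cancellation of the distinguishing symbol in the Burnside group. This is a finite computation organized via the symbol calculus of \cite{BnG}, and I expect the argument to be uniform across the four subcases (1)--(4) of Proposition~\ref{prop:2D4class}, since the dihedral subgroup $H$ is common to all of them and the relevant fixed-locus structure is insensitive to the additional parameters in the normal form.
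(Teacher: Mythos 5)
Your overall framework is the right one, and you correctly identify the two candidate fixed surfaces, but the decisive step of your argument is aimed at the wrong one and, more importantly, the actual incompressibility mechanism is missing. The paper's proof extracts the single symbol
$$
\bigl(\langle \sigma_{(12)}\rangle,\ \langle \sigma_{(45)}\rangle \actsfromleft k(S),\ (1)\bigr),
$$
where $S=X\cap\{x_1=x_2\}$ is the \emph{smooth} cubic surface fixed pointwise by $\sigma_{(12)}$ (recall the normal form $x_1x_2x_3+f_3(x_3,x_4,x_5)=0$ of Proposition~\ref{prop:2D4class}, so $S$ misses the singular points), and the whole content is that the residual involution $\sigma_{(45)}$ on $S$ fixes a \emph{smooth curve of genus one}, namely the plane cubic $S\cap\{x_4=x_5\}$. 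By \cite{BogPro} this forces $\rH^1(\langle\sigma_{(45)}\rangle,\Pic(S))=(\bZ/2)^2\neq 0$, which is precisely what makes the symbol incompressible; and a linear $C_2^2$-action on $\bP^3$ only produces divisorial symbols whose residual involutions act on rational surfaces fixing rational curves, so no such symbol can occur in a linear class. Without the genus-one fixed curve and the appeal to \cite{BogPro} (or an equivalent incompressibility criterion as in \cite[Section 3]{TYZ-3}), your closing statement that you ``would then verify that this distinguishing symbol survives modulo all cancellations'' is not a finite bookkeeping exercise: it \emph{is} the theorem, and you have not supplied the input that makes it true.

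Concretely, the step that would fail is your claim that the contribution of $S_{45}=X\cap\{x_4=x_5\}$ is distinguished because the quotient involution ``interchanges the two $\sD_4$-points lying on $S_{45}$.'' Interchanging two singular points of a surface is not, by itself, an obstruction visible in the Burnside group --- plenty of linearizable actions swap points --- and $S_{45}$ is a rather degenerate cubic surface (it contains both $\sD_4$-points), so its symbol requires a resolution and a separate analysis before one could even assert incompressibility. I would also flag your uniformity claim across Cases (1)--(4): the paper's argument does apply uniformly, but only because in each case the curve $S\cap\{x_4=x_5\}$ is smooth of genus one for general parameters; that genericity needs to be stated, since it is exactly the hypothesis under which \cite{BogPro} applies.
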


\begin{proof}
    The $ \sigma_{(12)}$-action fixes a smooth cubic surface $S\subset X$ and the residual $ \sigma_{(45)}$-action fixes a genus $1$ curve on $S$, producing an incompressible symbol, in the terminology of, e.g.,  \cite[Section 3]{TYZ-3}. We conclude as in \cite[Proposition 2.6]{CTZcub}.
\end{proof}

\begin{prop}
\label{prop:D4Burn2}
Let $X$ be a cubic threefold with $2\sD_4$-singularities and $G=\langle \sigma_{(12)},\tau_a\rangle\simeq\fD_{2n}$, $n\ge 2$,  where $a=\zeta_{2n}$ is a primitive $2n$-th root of unity and $ \sigma_{(12)},\tau_a$ are described in Proposition~\ref{prop:2D4class}. Then the $G$-action on $X$ is not linearizable. 
\end{prop}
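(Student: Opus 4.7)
The plan is to exploit a $C_2\times C_2$ subgroup of $G$ in exactly the same manner as in Proposition~\ref{prop:D4Burn}. Since $a=\zeta_{2n}$ is a primitive $2n$-th root of unity, $a^n=-1$, and therefore
\[
\tau_{-1}=\tau_{a}^{n}\in\langle\tau_a\rangle\subseteq G.
\]
Set $H':=\langle\sigma_{(12)},\tau_{-1}\rangle\simeq C_2\times C_2\subseteq G$. Since non-linearizability is inherited by supergroups, it suffices to prove that the $H'$-action on $X$ is not linearizable.

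Next I analyze the fixed loci on $X=\{x_1x_2x_3+f_3(x_3,x_4,x_5)=0\}$. The involution $\sigma_{(12)}$ fixes the hyperplane $\{x_1=x_2\}$, cutting $X$ in the cubic surface
\[
S=\{x_1=x_2\}\cap X=\{x_1^2x_3+f_3(x_3,x_4,x_5)=0\}\subset\bP^3.
\]
A direct Jacobian computation, using that $h_3(x_4,x_5)$ is squarefree (forced by the $\sD_4$ condition at $p_1,p_2$) and that $\Sing(X)=\{p_1,p_2\}$ lies on the line $\{x_3=x_4=x_5=0\}$, disjoint from $\{x_1=x_2\}$, shows that $S$ is smooth. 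The commuting involution $\tau_{-1}\colon\mathbf{x}\mapsto(-x_1,-x_2,x_3,x_4,x_5)$ preserves $S$ and restricts, in coordinates $(x_1,x_3,x_4,x_5)$ on $\{x_1=x_2\}$, to $(x_1,x_3,x_4,x_5)\mapsto(-x_1,x_3,x_4,x_5)$. Its fixed locus on $S$ contains the divisor
\[
C:=S\cap\{x_1=0\}=X\cap\{x_1=x_2=0\}\subset\bP^2_{x_3,x_4,x_5}.
\]
Since $\Sing(X)$ is disjoint from the plane $\{x_1=x_2=0\}$, smoothness of $X$ along $C$ together with the plane-intersection description forces $C$ to be a smooth plane cubic, hence of genus one.

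These two observations place us in exactly the situation of Proposition~\ref{prop:D4Burn}: the involution $\sigma_{(12)}\in H'$ fixes a smooth cubic surface $S\subset X$, and the residual involution $\tau_{-1}\in H'/\langle\sigma_{(12)}\rangle$ on $S$ fixes a smooth curve of genus one. The associated Burnside symbol is incompressible in the sense of \cite[Section~3]{TYZ-3}, so the $(\mathbf{Burn})$-obstruction does not vanish. Consequently the $H'$-action, and a fortiori the $G$-action, on $X$ is not linearizable.

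The main point requiring attention is the verification that $S$ and $C$ are smooth uniformly across the family of cubic threefolds with $2\sD_4$-singularities, rather than only for a very general member. Both statements follow from direct inspection of the defining equations together with the $\sD_4$ non-degeneracy condition on $h_3$; no further obstruction theory or specialization argument is required, and this is expected to be the only bookkeeping step in which genericity assumptions must be explicitly avoided.
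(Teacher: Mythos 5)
Your proof is correct, but it takes a genuinely different route from the paper's. The paper does not pass to a subgroup: it observes that the full dihedral group $G$ pointwise fixes the elliptic curve $E=\{x_1=x_2=0\}\cap X$, blows up $E$ (as required on a standard model, since the generic stabilizer of $E$ is the nonabelian group $\fD_{2n}$ --- this is precisely where the hypothesis $n\ge 2$ enters), and extracts the divisorial symbol $(C_2,\fD_{n}\actsfromleft k(\bP(\cN_{E/X})),(1))$ on the exceptional $\bP^1$-bundle over $E$; incompressibility is then argued via a two-dimensional Burnside computation on that ruled surface (trivial action on the genus-one base) and the functoriality of MRC quotients. You instead restrict to the Klein subgroup $\langle\sigma_{(12)},\tau_{-1}\rangle$ and run the argument of Proposition~\ref{prop:D4Burn} on the cubic surface $S=X\cap\{x_1=x_2\}$, with incompressibility supplied by the Bogomolov--Prokhorov $\rH^1$ criterion already used in Propositions~\ref{prop:D4Burn} and~\ref{prop:burn-ob-4}. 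Your smoothness verifications are sound and genuinely uniform over the family: a singular point of the plane cubic $\{f_3=0\}$ would be an extra singular point of $X$, and a singular point of $S$ away from $\{x_1=0\}$ would force a repeated root of $h_3$, contradicting the $\sD_4$ condition at $p_1,p_2$. What each approach buys: yours is more elementary, avoids the standard-model discussion for nonabelian stabilizers, and actually yields the slightly stronger conclusion that already the abelian subgroup $\langle\sigma_{(12)},\tau_{-1}\rangle\simeq C_2\times C_2$ acts nonlinearizably on every such $X$ (a case the paper's statement, restricted to $n\ge 2$, does not cover); the paper's argument works intrinsically with the whole group and showcases the mechanism of blowing up pointwise-fixed curves with nonabelian stabilizer. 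The only cosmetic slip in your write-up is describing the line $\{x_3=x_4=x_5=0\}$ as disjoint from $\{x_1=x_2\}$: the line meets that hyperplane at $[1:1:0:0:0]$, and what you need (and in fact use) is only that $p_1,p_2\notin\{x_1=x_2\}$.
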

\begin{proof}
    Recall that $G=\fD_{2n}$ is the dihedral group of order $4n$. Observe that $G$ pointwise fixes a smooth elliptic curve $E=\{x_1=x_2=0\}\subset X$. To apply the Burnside formalism, one has to pass to a standard model, and, in particular, blow up strata with nonabelian generic stabilizers. Thus, one 
   needs to blow up $E$ in $X$, see \cite[Section 7.2]{HKTsmall} for definitions. The exceptional divisor has  generic stabilizer $C_2$. It follows that on a standard form for the action $X\actsfromright G$, we find the symbol 
   \begin{align}\label{eq:incomp2d4genus1}
          (C_2,\fD_{n}\actsfromleft k(S), (1)),
   \end{align}
   where $S=\bP(\cN_{E/X})$, the projectivization of the normal bundle of $E$ in $X$, and in particular, $S$ is a $\bP^1$-bundle over $E$. This symbol is incompressible: the $\fD_{n}$-action on $S$ is not birational to any actions on the blowup of a genus 1 curve with abelian stabilizers. To see this, one can apply the Burnside formalism in dimension 2. Notice that the $\fD_n$-action on $S$ is trivial on the base of the fibration $S\to E$. So we find an {\em incompressible} symbol in the class $[S\actsfromright G]$:
 \begin{align}\label{eq:incomp2d4genus1dim2}
   (C_n,\mathrm{triv}\actsfromleft k(E),(\beta)),
    \end{align}
   for some character $\beta$ of $C_n$. On the other hand, such symbols do not arise from any $\fD_n$-action on the blowup of a genus 1 curve on any standard model -- on a standard model, the curve has abelian stabilizer and receives a nontrivial action from $\fD_n$. It can never produce a divisorial symbol with a trivial residual action as in \eqref{eq:incomp2d4genus1dim2}. Equivariant nonbirationality of $S$ with a blowup of a genus 1 curve with abelian stabilizers also follows from the the functoriality of passage to MRC quotients, see \cite[Theorem IV.5.5]{K-book}. 
Therefore, we conclude that symbols \eqref{eq:incomp2d4genus1} are incompressible. Such symbols do not appear from linear actions on $\bP^3$, which implies that the $G$-action on $X$ is not linearizable. 
\end{proof}

\begin{prop}\label{prop:2d4spec2d4+2a1}
    The $\langle \sigma_{(12)} \sigma_{(45)}\rangle$-action on a very general cubic threefold with $2\sD_4$-singularities described in Case (4) in Proposition~\ref{prop:2D4class} is not stably linearizable.
\end{prop}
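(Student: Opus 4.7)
The plan is to apply equivariant specialization, following the template of Propositions~\ref{prop:2A3noplanespec} and~\ref{prop:2A4spec2A5}. Consider the 6-parameter family
\[
\cX \to \bA^6_{t_1,r_1,\ldots,r_5}
\]
of cubic threefolds whose fibers have the Case~(4) normal form from Proposition~\ref{prop:2D4class}. A direct inspection shows that each monomial appearing in that equation is invariant under the swap $x_1 \leftrightarrow x_2,\ x_4 \leftrightarrow x_5$, so the $\langle \sigma_{(12)}\sigma_{(45)}\rangle$-action extends to the total space $\cX$.

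The strategy is to specialize parameters along a curve in $\bA^6$ such that the generic fiber has $2\sD_4$-singularities of Case~(4), while the special fiber acquires two additional $\sA_1$-points that are interchanged by $\sigma_{(12)}\sigma_{(45)}$, producing a cubic with $2\sD_4+2\sA_1$-singularities. Such a specialization exists by the degeneration diagram in the introduction (edge from $2\sD_4+2\sA_1$ to $2\sD_4$), and one exhibits it explicitly by solving, in {\tt magma}, the ideal cutting out $2\sA_1$-points on the cubic, intersected with the locus fixed by the symmetric ansatz $r_3 \mapsto r_3(t), r_2\mapsto r_2(t), \ldots$ so that the symmetry persists and the two new nodes are exchanged.

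On the special fiber, the $\langle \sigma_{(12)}\sigma_{(45)}\rangle$-action fixes no singular point (it swaps the two $\sD_4$-points and, by construction, the two $\sA_1$-points). By Corollary~\ref{coro:2D4+2A1} and Proposition~\ref{prop:h1}, this action has nonvanishing $(\bf H1)$-obstruction and is therefore not stably linearizable. To deploy specialization, equivariantly resolve the $2\sD_4$-singularities of the generic fiber, obtaining a family with smooth generic fiber and a special fiber carrying only $2\sA_1$-points forming a single free $\langle\sigma_{(12)}\sigma_{(45)}\rangle$-orbit; the corresponding equivariant exceptional divisors assemble into a permutation module, so these singularities are $BG$-rational in the sense required by the specialization theorem. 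Applying equivariant specialization as in \cite[Proposition 2.9]{CTZcub} completes the argument.

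The main obstacle is the explicit calculation producing the desired specialization: one must pin down a 1-parameter subfamily of Case~(4) cubics on which the discriminant for acquiring $2\sA_1$-singularities factors compatibly with the $\sigma_{(12)}\sigma_{(45)}$-symmetry, and verify that the two emerging nodes lie in a single orbit. This is a finite computation in the parameter ring $k[t_1,r_1,\ldots,r_5]$, carried out with {\tt magma}; once it is executed, the remainder of the argument is a direct application of machinery already developed in Section~\ref{sect:defect}, Section~\ref{sect:coho}, and Proposition~\ref{prop:2A3noplanespec}.
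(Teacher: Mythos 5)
Your overall strategy is the same as the paper's: extend the involution to the Case~(4) family, degenerate to a cubic with $2\sD_4+2\sA_1$-singularities, invoke the $(\bf H1)$-obstruction there, resolve the $\sD_4$-points of the generic fiber, and specialize as in \cite[Proposition 2.9]{CTZcub}. However, there are two gaps. The more serious one is the step where you conclude the obstruction on the special fiber: you argue that the specialized involution ``fixes no singular point (it swaps the two $\sD_4$-points and the two $\sA_1$-points)'' and then cite Corollary~\ref{coro:2D4+2A1} and Proposition~\ref{prop:h1}. That inference is not valid. Corollary~\ref{coro:2D4+2A1} requires $G$ to contain an element conjugate to $\sigma_{(12)(34)}$, and this is strictly stronger than swapping both pairs of singular points: the involution $\eta_2\sigma_{(12)(34)}$ on the $a_2=0$ member of the family \eqref{eq:2D4+2A1eq} also interchanges the two $\sD_4$-points and the two $\sA_1$-points, yet generates a \emph{linearizable} action (it lies in the group $G'$ of the proof of Corollary~\ref{coro:2D4+2A1}). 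Likewise, the second bullet of Proposition~\ref{prop:h1} concerns the full $\Aut(X)$-action, not an arbitrary $C_2$. So you must actually identify your specialized involution, in the normal form of Proposition~\ref{prop:2D4+2A1}, with $\sigma_{(12)(34)}$ (or verify that the special fiber has $a_2\neq 0$, where every such involution is conjugate to $\sigma_{(12)(34)}$ up to the torus). This is exactly what the paper's explicit change of variables accomplishes.

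The second gap is the one you flag yourself: the existence of the equivariant degeneration is asserted from the adjacency diagram and deferred to an unexecuted {\tt magma} computation. The diagram only records abstract adjacency of singularity configurations; it does not guarantee that the degeneration can be realized \emph{inside the Case~(4) family} compatibly with the $\sigma_{(12)}\sigma_{(45)}$-symmetry. In fact no search is needed: the paper observes that the locus $t_1=r_2-r_3=0$ in $\bA^6$ already does the job, and writes down coordinates $y_1,\ldots,y_5$ in which the special fiber becomes \eqref{eq:special2d4+2a1} and the involution becomes $(\mathbf{y})\mapsto(y_2,y_1,y_4,y_3,y_5)=\sigma_{(12)(34)}$, so that Proposition~\ref{prop:2D4+2A1H1} applies verbatim. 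Supplying that locus and coordinate change would close both gaps simultaneously; without them the argument is a plan rather than a proof.
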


\begin{proof}
    Recall that such $X$ with $2\sD_4$-singularities are given by 
         \begin{multline}\label{eq:2d4case4eq}
x_1x_2x_3+t_1x_3^3+r_1x_3^2(x_4+x_5)+x_3(r_2x_4+r_3x_5)(r_3x_4+r_2x_5)+\\
+(x_4+x_5)(r_4x_4+r_5x_5)(r_5x_4+r_4x_5)=0,
        \end{multline}
        for general parameters $t_1,r_1,r_2,r_3,r_4,r_5\in k$, and $\sigma_{(12)} \sigma_{(45)}$ takes the form 
        $$
        \sigma_{(12)} \sigma_{(45)}: \mathbf{(x)}\to(x_2,x_1,x_3,x_5,x_4).
        $$
        Now we view \eqref{eq:2d4case4eq} as a family $\cX\to \bA^6$ of cubic threefolds with $2\sD_4$-singularities parameterized by $t_1,r_1,r_2,r_3,r_4,r_5\in k$. The general fibers above $r_2-r_3=t_1=0$ are cubic threefolds with $2\sD_4+2\sA_1$-singularities. In particular, under the change of variables 
        $$
        y_1=x_1,\quad y_2=x_2,\quad y_3=\frac14(2x_3-\frac{r_4-r_5}{\sqrt{r_1}}x_4+\frac{r_4-r_5}{\sqrt{r_1}}x_5)$$
        $$
        y_4=\frac14(2x_3+\frac{r_4-r_5}{\sqrt{r_1}}x_4-\frac{r_4-r_5}{\sqrt{r_1}}x_5),\quad y_5=x_4+x_5,
        $$
        the fibers above $r_2-r_3=t_1=0$ are given by 
        \begin{align}\label{eq:special2d4+2a1}
y_1y_2y_3+y_1y_2y_4+4r_1y_3y_4y_5+r_2^2y_5^2(y_3+y_4)+\frac14(r_4+r_5)^2y_5^3=0,
        \end{align}
        and $\sigma_{(12)} \sigma_{(45)}$ under the new basis is 
        $$
\sigma_{(12)} \sigma_{(45)}:\mathbf{(y)}\mapsto(y_2,y_1,y_4,y_3,y_5).
        $$
        From Proposition~\ref{prop:2D4+2A1H1}, we see that the $\langle\sigma_{(12)} \sigma_{(45)}\rangle$-action on the cubic given by \eqref{eq:special2d4+2a1} is not stably linearizable. Applying specialization to the resolution of the $2\sD_4$-singularities in the generic fiber of the family $\cX$, we conclude that the $\langle\sigma_{(12)} \sigma_{(45)}\rangle$-action on a very general member in $\cX$ is not stably linearizable.
\end{proof}


\begin{prop}\label{prop:2d4linsigma2}
    Let $X$ be a cubic threefold with $2\sD_4$-singularities. Then the $\langle\sigma_{(12)}\tau_a\rangle$-action on $X$ from Proposition~\ref{prop:2D4class} is linearizable for any $a\in k^\times$.
\end{prop}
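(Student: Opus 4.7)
The plan is to reduce to the case $a=1$ by conjugation and then construct a $\sigma$-equivariant birational map from $X$ to $\mathbb{P}^3$ with a linear $\sigma$-action, where $\sigma := \sigma_{(12)}\tau_a$. Since conjugation by $\tau_b \in \Aut(X)$ sends $\sigma_{(12)}\tau_a$ to $\sigma_{(12)}\tau_{ab^{-2}}$ and $k$ is algebraically closed, taking $b$ with $b^2 = a$ reduces the problem to $\sigma = \sigma_{(12)}$, acting by $x_1 \leftrightarrow x_2$. The linear substitution $y = x_1+x_2$, $z = x_1-x_2$ diagonalizes $\sigma$ as the reflection $z \mapsto -z$ fixing all other coordinates, and brings $X$ to the form $(y^2-z^2)x_3 + 4f_3(x_3,x_4,x_5) = 0$. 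The two $\sD_4$-singularities $p_1, p_2$ are swapped by $\sigma$ and both lie on the $\sigma$-invariant line $\ell = \{x_3 = x_4 = x_5 = 0\}$, while $\sigma$ also has an isolated fixed point $[0:1:0:0:0]$ and a fixed cubic surface $S = \{z=0\} \cap X$.

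The core geometric step will be to project $X$ from $\ell$, producing a $\sigma$-equivariant conic bundle $\tilde X \to \mathbb{P}^2_{x_3, x_4, x_5}$ with $\sigma$ trivial on the base and acting as $z \mapsto -z$ on the fibers. The fiber over $[x_3:x_4:x_5]$ is the conic $(y^2 - z^2) x_3 + 4\nu^2 f_3 = 0$ in $\mathbb{P}^2_{y, z, \nu}$, and the two constant points $(y, z, \nu) = (1, \pm 1, 0)$ define two disjoint rational sections which are interchanged by $\sigma$. After the fiber change $Y = y+z$, $Z = y-z$ the conic becomes $YZ \cdot x_3 + 4\nu^2 f_3 = 0$, the sections are $[Y:Z:\nu] = [1:0:0]$ and $[0:1:0]$, and the rational parametrization $[Y:Z:\nu] = [-4f_3 : Z^2 x_3 : Z x_3]$ identifies $\tilde X$ birationally with $\mathbb{P}^2 \times \mathbb{P}^1_Z$, on which $\sigma$ acts trivially on $\mathbb{P}^2$ and swaps $\{Z=0\}$ with $\{Z=\infty\}$ fiberwise.

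The last step will be to recognize $\mathbb{P}^2 \times \mathbb{P}^1$ with this $\sigma$-action as $\sigma$-birational to $\mathbb{P}^3$ with a linear reflection: the swap of $0$ and $\infty$ on $\mathbb{P}^1$ becomes the linear involution $[u:v] \mapsto [-u:v]$ after the coordinate change $[u:v] = [Z-1 : Z+1]$, and extends linearly to $\mathbb{P}^3$ via the affine embedding $\mathbb{A}^3 = \mathbb{A}^2 \times \mathbb{A}^1 \subset \mathbb{P}^3$. The main obstacle will be that the naive trivialization by the two $\sigma$-swapped sections induces on $\mathbb{P}^1_Z$ the fiberwise Möbius involution $Z \mapsto -4f_3/(Zx_3)$, which generically is not $\mathbb{P}^2$-rationally conjugate to the constant swap $Z \mapsto 1/Z$ (the required rescaling $Z' = Z \sqrt{x_3/(-4f_3)}$ fails to be algebraic on the base). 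To circumvent this, I would supplement the conic bundle argument with a $\sigma$-equivariant unprojection from one of the three $\sigma$-invariant planes $\Pi_i \subset X$ through $p_1, p_2$, cut out by the three linear factors of $h_3(x_4, x_5) := f_3(0, x_4, x_5)$ and each $\sigma$-invariant since $\sigma$ fixes $x_3, x_4, x_5$; following the template of Proposition~\ref{prop:a3planex22}, this should yield a simpler $\sigma$-equivariant Fano model, such as a smooth intersection $X_{2,2} \subset \mathbb{P}^5$ of two quadrics or a smooth quadric threefold, on which the linearization becomes direct.
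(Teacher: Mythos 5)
Your reduction to $a=1$ by conjugation with $\tau_b$ is valid (though unnecessary: the relevant geometry works uniformly in $a$, with $\{x_1-ax_2=0\}\cap X$ playing the role of your $\{z=0\}\cap X$). The problem is that neither of your two routes is carried to completion. The conic-bundle route is abandoned at exactly the obstacle you yourself identify --- the two sections are swapped, the induced fiberwise involution on $\bP^1_Z$ is a base-dependent M\"obius map, and no argument is given that it can be straightened --- so it proves nothing. The fallback, unprojection from one of the three invariant planes, is indeed the paper's actual route: it produces an intersection of two quadrics $X_{2,2}\subset\bP^5$ with $2\sA_1$-singularities. But reaching such a model does not make linearization ``direct.'' By the paper's own Proposition~\ref{prop:a3planex22} and Corollary~\ref{cor:linearizable-a3}, a $G$-action on an $X_{2,2}$ model is linearizable if and only if there is a $G$-invariant line, and many such actions are \emph{not} linearizable; and if the model were instead a smooth quadric threefold, you would be worse off still, since the paper notes in Section~\ref{sect:five} that the linearizability problem for smooth quadric threefolds is open. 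So your final sentence asserts precisely the point that needs proof.

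The missing step is the one ingredient you observed at the outset and then never used: $\sigma_{(12)}\tau_a$ pointwise fixes the smooth cubic surface $S=\{x_1-ax_2=0\}\cap X$. Under the unprojection, $S$ becomes a smooth del Pezzo surface of degree $4$ inside $X_{2,2}$; it contains $16$ lines, and since $S$ is fixed pointwise, every one of these lines is invariant (indeed pointwise fixed). Projection from any of them then linearizes the action. Supplying this paragraph would close the gap and turn your second route into the paper's proof.
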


\begin{proof}
    The $\sigma_{(12)}\tau_a$-action preserves each of the three planes and pointwise fixes a smooth cubic surface 
    $$
    S=\{x_1-ax_2=0\}\cap X
    $$
for any $a\in k^\times$. Unprojection from one plane birationally transforms $X$ to an intersection of two quadrics $X_{2,2}$ in $\bP^5$, with $2\sA_1$-singularities. The cubic surface $S$ becomes a smooth del Pezzo surface of degree $4$ in $X_{2,2}$, and it contains 16 lines, fixed by the action. Projection from any of the lines yields a linearization of the $\sigma_{(12)}\tau_a$-action on $X$. 
\end{proof}

\section{Three singular points}
\label{sect:three}
Let $X$ have three singular points. We may assume that they are at 
$$
p_1=[1:0:0:0:0],\quad p_2:=[0:1:0:0:0],\quad p_3:=[0:0:1:0:0], 
$$
so that $X$ is given by 
\begin{align}\label{eq:3sin}
    x_1x_2x_3+x_1q_1+x_2q_2+x_3q_3+f_3=0,
\end{align}
where $q_1,q_2,q_3,f_3\in k[x_4,x_5]$. There are three possibilities:
$$
3\sA_2,\quad 3\sA_3,\quad 3\sD_4.
$$
All of these are specializations of the $3\sA_1$ case, studied in \cite[Section 4]{CTZcub}. Here, we use similar arguments.

\subsection*{Singularity Types $3\sA_2$ and $3\sA_3$}  Since $p_1, p_2$ and $p_3$ are $\sA_n$-points with $n=2,3$, the rank of $q_1,q_2,q_3$ is $1$, i.e., $q_i=l_i^2$ for some linear forms $l_i\in k[x_4,x_5]$, $i=1,2,3$. Observe that if the singularity type is $3\sA_2$, then $X$ contains no plane. It follows that $q_1, q_2, q_3$ and $f_3$ do not share a common factor.

\begin{prop}\label{prop:3Anauto}
Let $X$ be a cubic threefold with singularity types $3\sA_2$ or $3\sA_3$. Assume that $\Aut(X)$ does not fix any singular points.

If $X$ has $3\sA_2$-points then either:
\begin{enumerate}
    \item $\Aut(X)=\langle\sigma_{(123)},\sigma_{(23)},\eta_1\rangle\simeq C_3\times\fS_3$, where 
    \begin{align*}
        \sigma_{(123)}&:(\mathbf{x})\mapsto (x_3, x_1,x_2,x_4,x_5),\\
         \sigma_{(23)}&:(\mathbf{x})\mapsto (x_1, x_3,x_2,x_4,x_5),\\
          \eta_1&:(\mathbf{x})\mapsto (x_1, x_2,x_3,x_4,\zeta_3x_5),
    \end{align*}
    and $X$ is given by 
    $$
    x_1x_2x_3+x_4^2(x_1+x_2+x_3) + ax_4^3+x_5^3=0,
      $$
for general  $a\in k^\times$;
    \item $\Aut(X)=\langle\sigma_{(123)},\sigma_{(23)},\eta_2\rangle\simeq C_6\times\fS_3$, where 
    \begin{align*}
          \eta_2&:(\mathbf{x})\mapsto (x_1, x_2,x_3,-x_4,\zeta_3x_5),
    \end{align*}
    and $X$ is given by 
    \begin{align*}
    x_1x_2x_3+x_4^2(x_1+x_2+x_3)+x_5^3=0.
    \end{align*}
     \item $\Aut(X)=\langle\sigma_{(123)},\sigma_{(23)}\rangle\simeq \fS_3$,
    and $X$ is given by
   \begin{align*}
    & x_1x_2x_3+x_4^2(x_1+x_2+x_3)+f_3=0,\\
    \text{with}&\,\,f_3\in k[x_4, x_5] \text{ a general cubic form};
    \end{align*}
      \item $\Aut(X)=\langle\sigma_{(123)}\eta_1\rangle\simeq C_3$, and $X$ is given by 
  \begin{align*}
&x_1x_2x_3+x_1(x_4+x_5)^2+x_2(x_4+\zeta_3x_5)^2+x_3(x_4+\zeta_3^2 x_5)^2+f_3=0,
    \end{align*}
    where $f_3=ax_4^3+bx_5^3$, for $a\ne b\in k$,  or $f_3=cx_4^3$, for $c\in k^\times;$
   
\end{enumerate}

If $X$ has $3\sA_3$-points then:
\begin{enumerate}
    \item $\Aut(X)=\langle\sigma_{(123)},\sigma_{(23)},\eta_3\rangle\simeq C_4\times\fS_3$, where 
    \begin{align*}
          \eta_3&:(\mathbf{x})\mapsto (x_1, x_2,x_3,-x_4,\zeta_4x_5),
    \end{align*}
    and $X$ is given by 
    \begin{align*}
    x_1x_2x_3+x_4^2(x_1+x_2+x_3)+ax_4x_5^2=0,
    \end{align*}
    for general $a\in k^\times$.
      \item $\Aut(X)=\langle\sigma_{(123)}\eta_1, \sigma\rangle\simeq \fS_3$, where 
      $$
      \sigma:(\mathbf{x})\mapsto(x_1,\zeta_3x_3,\zeta_3^2x_2,x_5,x_4),
      $$
    and $X$ is given by 
  \begin{multline*}
  x_1x_2x_3+x_1(x_4+x_5)^2+x_2(x_4+\zeta_3x_5)^2+x_3(x_4+\zeta_3^2 x_5)^2+a(x_4^3+x_5^3)=0,
   \end{multline*}
   for general $a\in k^\times.$
    \item $\Aut(X)=\langle\sigma_{(123)},\sigma_{(23)}\rangle\simeq \fS_3$,
    and $X$ is given by
   \begin{align*}
    & x_1x_2x_3+x_4^2(x_1+x_2+x_3)+x_4f_2=0,\\
    \text{with}&\,\,f_2\in k[x_4, x_5] \text{ a general quadratic form};
    \end{align*}
\end{enumerate}
\end{prop}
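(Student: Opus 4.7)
The plan is to apply the algorithm of Section~\ref{sect:aut}. Since $\Aut(X)$ fixes no singular point, the image of $\rho\colon\Aut(X)\to\fS_3$ must be a transitive subgroup of $\fS_3$, hence isomorphic to either $C_3$ or $\fS_3$. Fix a $3$-cycle $\sigma\in\Aut(X)$ permuting $p_1,p_2,p_3$ cyclically; after relabeling, we may assume $\sigma$ acts on $(x_1,x_2,x_3)$ as the standard $(123)$. After a linear change of basis on $(x_4,x_5)$ and rescaling, the restriction of $\sigma$ to $\bP^1_{x_4,x_5}$ is either trivial or of the form $\mathrm{diag}(1,\zeta_3)$.

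Using $\sigma$-invariance of~\eqref{eq:3sin}, I would first determine the possible $q_1,q_2,q_3$. Since each $q_i=l_i^2$ is a square (the $\sA_n$ condition with $n\ge2$), the trivial subcase forces the three lines $\{l_i=0\}$ to coincide, giving the symmetric term $x_4^2(x_1+x_2+x_3)$ after rescaling, while the nontrivial subcase forces the three lines to form a $\sigma$-orbit of length three, yielding $l_1=x_4+x_5$, $l_2=x_4+\zeta_3x_5$, $l_3=x_4+\zeta_3^2x_5$ up to scale. Simultaneously, $\sigma$-invariance constrains $f_3\in k[x_4,x_5]$: in the trivial subcase $f_3$ is unrestricted, while in the nontrivial subcase a weight computation on monomials forces $f_3=ax_4^3+bx_5^3$.

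Next, I would impose the singularity type. For $3\sA_2$ the family above is generic, producing Cases~$(1)$--$(3)$ (trivial subcase) and Case~$(4)$ (nontrivial subcase) according to how much extra symmetry is present. For $3\sA_3$, a local computation of the tangent cone and its higher-order correction at each $p_i$ gives an additional algebraic condition: in the trivial subcase it forces $f_3$ to be divisible by $x_4$, narrowing the equation to the shapes listed in Cases~$(1)$ and~$(3)$; in the nontrivial subcase it forces $a=b$ in $f_3=ax_4^3+bx_5^3$, producing Case~$(2)$.

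Finally, I would compute the kernel $H=\ker(\rho)$ in each normal form. Any $h\in H$ is block-diagonal, with diagonal entries $(s_1,s_2,s_3)$ on $(x_1,x_2,x_3)$ and a $2\times 2$ block on $(x_4,x_5)$, and the equality $h^*(F)=\lambda F$ yields a polynomial system analogous to that in the proof of Proposition~\ref{prop:2a3-noplane}. I would also test whether a transposition such as $\sigma_{(23)}$, or the twisted involution $(x_1,\zeta_3x_3,\zeta_3^2x_2,x_5,x_4)$ appearing in the nontrivial subcase, lifts to $\Aut(X)$, thereby distinguishing image $\fS_3$ from image $C_3$. The main obstacle is bookkeeping: one must verify that each normal form is irredundant, that each listed parameter constraint is complete, and that specializations to worse singularity types (which would violate the assumption that no singular point is fixed, or move the cubic into the $2\sD_4+*$ or $3\sD_4$ strata) are excluded; in practice this is handled with {\tt magma}, following the template of Proposition~\ref{prop:2a3-noplane}.
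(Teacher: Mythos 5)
Your proposal follows essentially the same route as the paper's proof: normalize a lifted $3$-cycle, split into the two subcases according to whether it acts trivially or as $\mathrm{diag}(1,\zeta_3)$ on $\bP^1_{x_4,x_5}$ (forcing either $q_1=q_2=q_3=x_4^2$ or the $\zeta_3$-orbit of lines), constrain $f_3$ by invariance and by the local $\sA_2$ versus $\sA_3$ condition along the kernel of the rank-$3$ quadratic part, and finish by computing $H$ and testing which transpositions lift. The case division and the resulting normal forms match the paper's, so this is correct and not a genuinely different argument.
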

\begin{proof}
 
We know that $\Aut(X)$ fits into the exact sequence 
\begin{align}\label{eq:3sinseq}
    0\to H\to\Aut(X)\stackrel{\rho}{\to}\fS_3,
\end{align}
where $H$ is the subgroup of $\Aut(X)$ fixing three singular points. 
Assume $\Aut(X)$ does not fix any singular point, i.e., there exists an element $\sigma_{123}\in\Aut(X)$ with $\rho(\sigma_{123})=(1,2,3)\in\fS_3$. Since $\sigma_{123}$ preserves the form \eqref{eq:3sin} and $q_1,q_2,q_3$ define at most $3$ points in $\bP^1_{x_4,x_5}$, we may assume that $\sigma_{123}$ takes the form
$$
\sigma_{123}: (\mathbf{x})\mapsto (s_1x_2,s_2x_3,s_3x_1,x_4,\zeta_3^r x_5),
$$
for $ r=0$ or $1$, and $s_1,s_2,s_3\in k^\times.$ The cyclic action, together with the torus action on $x_1, x_2, x_3$, imply that 
$$
q_2=\sigma_{123}^*(q_1),\quad\text{and} \quad q_3=\sigma_{123}^*(q_2).
$$
It follows that $s_1=s_2=s_3=\pm1.$ Now we discuss two cases of $r$:

If $r=0$, we may assume that $q_1=q_2=q_3=x_4^2$. Then $\Aut(X)$ contains a natural $\fS_3$-action, permuting the coordinates $x_1,x_2,x_3.$ It remains to classify possibilities of $H$. Assume that $H$ is nontrivial. A $\tau\in H$ acts diagonally on $x_1,\ldots,x_4$, since it preserves \eqref{eq:3sin}, and one can diagonalize $\tau$ without changing the form of \eqref{eq:3sin}. Thus, we may assume  that $\tau$ takes the form
$$
\tau: (\mathbf{x})\mapsto (x_1,x_2,x_3,a_1x_4,a_2x_5), \quad a_1=\pm1,\quad a_2\in k^\times.
$$
Recall that $f_3$ defines at most three points on $\bP^1_{x_4,x_5}$, preserved by $\tau$. We have the following cases: 
\begin{itemize}
\item $f_3$ defines three distinct points. It follows that $a_1=1$, $a_2=\zeta_3$, and $f_3=ax_4^3+x_5^3$, for some $a\in k$. In this case, $X$ has singularity type $3\sA_2$ and $\Aut(X)=C_3\times\fS_3$.
\item $f_3$ defines two distinct points, necessarily fixed by $\tau$. It follows that $f_3=ax_4x_5^2$, for some $a\in k^\times$. Note that $f_3=ax_4^2x_5$ would give nonisolated singularities on $X$, and we exclude this. Thus, we have $a_1=-1, a_2=\zeta_4$, $X$ has $3\sA_3$-singularities and $\Aut(X)=C_4\times\fS_3$.
\item $f_3$ defines one point, necessarily fixed by $\tau$. We have $f_3=x_5^3$, $X$ has $3\sA_2$-singularities and $\Aut(X)=C_6\times\fS_3$. 
\end{itemize}
If $f_3$ is a general cubic form, then $X$ has $3\sA_2$-singularities and $\Aut(X)=\fS_3$. If $f_3=x_4f_2$, where $f_2$ is a general quadratic form, then $X$ has $3\sA_3$-singularities and $\Aut(X)=\fS_3$. 

Now we consider the case $r=1$. Up to a change of variables, we may assume that
$$
q_1=(x_4+x_5)^2,\quad q_2=(x_4+\zeta_3 x_5)^2,\quad q_3=(x_4+\zeta_3^2 x_5)^2.
$$
Let $\sigma_{23}\in\Aut(X)$ be an element fixing $p_1$ and switching $p_2$ and $p_3.$ Then $\sigma_{23}$ also fixes the point in $\bP^1_{x_4,x_5}$ defined by $q_1$ and switches the points defined by $q_2, q_3$. The only possible such action on $\bP^1$ is switching the coordinates $x_4$ and $x_5$. But the points defined by $f_3$ need to be preserved by both $\sigma_{123}$ and $\sigma_{12}$. The only possibility is $f_3=a(x_4^3+x_5^3)$, for some $a\in k^\times$, and $X$ has $3\sA_3$-singularities. In particular, $\sigma_{23}$ takes the form 
$$
\sigma_{23}:(\mathbf{x})\mapsto(x_1,\zeta_3x_3,\zeta_3^2x_2,x_5,x_4).
$$
In the case of $3\sA_2$-singularities, $\sigma_{12}$ does not exists, i.e., $\rho(\Aut(X))=C_3.$ We then classify the possibilities of $H$. For any $\eta\in H$, $\eta$ fixes three singularities of $X$ in $\bP^4$ and three points in $\bP^1$ defined by $q_1,q_2,q_3$. One sees that $\eta$ acts on $\bP^4$ diagonally, with weights $(a_1,a_2,a_3,a_4,a_4)$. As above, we see that $f_3$ takes the following forms:
\begin{itemize}
\item $f_3$ defines three distinct points and $f_3=a(x_4^3+x_5^3)$, for some $a\in k^\times$. In this case $X$ has $3\sA_3$-singularities.
    \item $f_3$ defines three distinct points and $f_3=ax_4^3+bx_5^3$ for some $a\ne b\in k^\times$. In this case $X$ has $3\sA_2$-singularities.
    \item $f_3$ defines two distinct points, necessarily fixed by $\sigma_{123}$, i.e., $f_3=ax_4^2x_5$ or $ax_4x_5^2$, for some $a\in k^\times$. But in this case, $X$ is not $\langle\sigma_{123}\rangle$-invariant.
    \item $f_3$ defines one point, necessarily fixed by $\sigma_{123}$, i.e., $f_3=ax_4^3$ or $ax_5^3$, for some $a\in k^\times$. In this case, $X$ has $3\sA_2$-singularities.
\end{itemize}
It is not hard to check that in all cases above, $H$ is trivial, and $\Aut(X)=C_3$ when $X$ has $3\sA_2$-singularities; $\Aut(X)=\fS_3$ when $X$ has $3\sA_3$-singularities. 
\end{proof}

\begin{prop}
    Let $X$ be a cubic with $3\sA_2$ or $3\sA_3$-singularities. Then the following $G$-actions are not stably linearizable, for very general $X$ in the corresponding families in Proposition~\ref{prop:3Anauto}:
    \begin{enumerate}
        \item $X$ has $3\sA_2$-singularities and $\Aut(X)=\fS_3$: $G=\langle \sigma_{(123)}\rangle$,
          \item $X$ has $3\sA_2$-singularities and $\Aut(X)=C_3\times\fS_3$: $G=\langle \sigma_{(123)}\rangle$,
          \item $X$ has $3\sA_3$-singularities and $\Aut(X)=\fS_3$: $G=\langle \sigma_{(123)}\rangle$.
    \end{enumerate}
\end{prop}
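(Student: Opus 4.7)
The plan is to apply equivariant specialization, as in Propositions~\ref{prop:2A3noplanespec} and~\ref{prop:2A4spec2A5}, degenerating each of the three families to the unique $3\sD_4$ cubic~\eqref{eq:3D4eq}, on which $\langle\sigma_{(123)}\rangle$ cyclically permutes the three $\sD_4$-points. By Proposition~\ref{prop:h1} (via the explicit computation referenced in Proposition~\ref{prop:3D4coho}), this $C_3$-action has a nontrivial ${\bf (H1)}$-obstruction, and is therefore not stably linearizable.

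For cases (1) and (2), I would use the one-parameter family over $\bA^1_t$ given by
$$
\cX_t \colon x_1x_2x_3 + t\cdot x_4^2(x_1+x_2+x_3) + f_3(x_4,x_5) = 0,
$$
taking $f_3$ to be a very general binary cubic for case (1), or $f_3 = ax_4^3 + x_5^3$ with $a\in k^\times$ for case (2); in both subcases $f_3$ has three distinct roots in $\bP^1_{x_4,x_5}$. For $t\neq 0$ the fiber is the corresponding $3\sA_2$ cubic in Proposition~\ref{prop:3Anauto}, while at $t=0$ the equation $x_1x_2x_3+f_3=0$ can be brought to the normal form~\eqref{eq:3D4eq} by a linear change of coordinates on $(x_4,x_5)$. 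For case (3), I would instead use
$$
\cX_t \colon x_1x_2x_3 + t\cdot x_4^2(x_1+x_2+x_3) + x_4 f_2(x_4,x_5) = 0,
$$
with $f_2$ a general binary quadratic. The generic fiber is a $3\sA_3$ cubic as in Proposition~\ref{prop:3Anauto}, and the central fiber $x_1x_2x_3 + x_4 f_2 = 0$ is a $3\sD_4$ cubic because $x_4 f_2$ generically has three distinct roots. In all three families, $\sigma_{(123)}$ acts trivially on $(x_4,x_5)$ and hence extends flatly over $\bA^1_t$, restricting on the central fiber to the $C_3$-subgroup of $\Aut$ permuting the three $\sD_4$-points.

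To conclude, I would $\sigma_{(123)}$-equivariantly resolve the $\sA_n$-singularities of the generic fiber, producing a family with smooth generic fiber whose central fiber differs from the $3\sD_4$ cubic by exceptional strata over the three $\sD_4$-points, permuted freely by $C_3$. As in the analogous arguments of Propositions~\ref{prop:2A3noplanespec} and~\ref{prop:2A4spec2A5}, these strata consist of $C_3$-invariant cyclic-quotient singularities that are $BG$-rational, so the resolved central fiber has only $BG$-rational singularities. Equivariant specialization of stable linearizability then transfers the ${\bf (H1)}$-obstruction of the central fiber to the very general fiber.

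The main obstacle I expect is the explicit $BG$-rationality verification for the exceptional strata introduced by the resolution, which requires tracking how the iterated blowups resolving $\sA_n$ in the total space interact with the ambient $\sD_4$-points in the central fiber; this reduces to local, largely toric computations on the $\sA_n$ and $\sD_4$ resolution graphs, routine but to be carried out case-by-case.
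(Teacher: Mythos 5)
Your proposal matches the paper's proof: both degenerate the rank-one quadratic term to zero so that the special fibre becomes the unique $3\sD_4$ cubic, invoke the {\bf (H1)}-obstruction for $\sigma_{(123)}$ there (Propositions~\ref{prop:3d4H1} and~\ref{prop:3D4coho}), and apply equivariant specialization after blowing up the sections through the three singular points, checking that the special fibre of the resolved family has $BG$-rational singularities (the paper records these as $9\sA_1$-points forming three free $\sigma_{(123)}$-orbits). The only cosmetic difference is that the paper normalizes $f_3=x_4^3+x_5^3$ and lets the quadratic form vary over $\bA^2_{s,t}$, whereas you fix $f_3$ and scale the quadratic form by a single parameter.
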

\begin{proof}
    We use specialization, as in Proposition~\ref{prop:2A3noplanespec}. 
    By Proposition~\ref{prop:3Anauto}, 
    cubic threefolds in Case (1) are given by 
    $$
    x_1x_2x_3+x_4^2(x_1+x_2+x_3)+f_3=0,
    $$
    for a general cubic form $f_3\in k[x_4,x_5]$ or $f_3=x_4f_2$. We may assume that $f_3$ defines three distinct points in $\bP^1_{x_4,x_5}$, and is isomorphic to the cubic form $x_4^3+x_5^3$. Up to a change of variables, a very general cubic in Case (1) is a fiber of the family 
    \begin{align}\label{eq:3D4family1}
           \cX\to \bA_{s,t}^2,
    \end{align}
whose generic fiber is given by 
$$
    x_1x_2x_3+(sx_4+tx_5)^2(x_1+x_2+x_3)+x_4^3+x_5^3=0.
$$
The $\sigma_{(123)}$-action extends to the family $\cX$ and remains unchanged under the change of variables since it acts trivially on $x_4$ and $x_5$. The generic fiber of $\cX$ is a cubic with $3\sA_2$-singularities at
$$
p_1=[1:0:0:0:0],\quad p_2=[0:1:0:0:0],\quad p_3=[0:0:1:0:0].
$$
The special fiber $X_{0,0}$ above $s=t=0$ is a cubic with $3\sD_4$-singularities. The $\sigma_{(123)}$-action on $X_{0,0}$ is not stably linearizable, by Proposition~\ref{prop:3D4coho}. As in Proposition~\ref{prop:2A3noplanespec}, we apply specialization to a resolution of singularities of the generic fiber. This can be achieved by blowing up three sections of $\cX\to\bA^2_{s,t}$ corresponding to $p_1,p_2$ and $p_3$. After the blowup, the new family has smooth generic fiber, and the special fiber above $s=t=0$ has $BG$-rational singularities: it has $9\sA_1$-singularities forming three $\sigma_{(123)}$-orbits. As in \cite[Proposition 2.9]{CTZcub}, we conclude that a very general member in the family is not stably linearizable.

The same argument applies to cubic threefolds in Case (2) and (3) as they form a subfamily of Case (1), with the same $\sigma_{(123)}$-action. 
\end{proof}

\begin{exam}\label{exam:3A2Burn}
    Let $X$ be the cubic with $3\sA_2$-singularities and $\Aut(X)=C_6\times\fS_3$. The element $\eta_2^2$ fixes a cubic surface $S$ with $3\sA_1$-singularities in $X$, given by 
$$
x_1x_2x_3+(x_1+x_2+x_3)x_4^2=0,
$$
contributing to a symbol 
\begin{align}\label{eq:incomp3A2}
    (C_3,\fD_6\actsfromleft k(S),(1)).
\end{align}
The residual $\fD_6$-action on $S$ is realized as permutations of the coordinates $x_1,x_2,x_3$ and the $-1$ sign change on $x_4$. The standard Cremona transformation on $\bP^3$
$$
(x_1,x_2,x_3,x_4)\mapsto (\frac{1}{x_1},\frac{1}{x_2},\frac{1}{x_3},\frac{1}{x_4})
$$
birationally transforms $S$ into the smooth quadric surface 
$$
Q=\big\{x_1x_2+x_2x_3+x_3x_1+x_4^2=0\big\}\subset\mathbb{P}^3
$$
with the same $\fD_6$-action on the ambient $\bP^3$. This $\fD_6$-action on $Q$ is not birational to an action on a $\bP^1$-bundle over $\bP^1$, see \cite[Example 9.1]{TYZ-3}. Using the same argument as there, one sees that \eqref{eq:incomp3A2} is an incompressible symbol. This symbol cannot appear in classes of linear actions. It follows that the $C_6\times \fS_3$-action on $X$ is not linearizable.

The same argument applies to the cubic $X$ with $3\sA_3$-singularities and $\Aut(X)=C_4\times\fS_3$, given by
\begin{align*}
    x_1x_2x_3+x_4^2(x_1+x_2+x_3)+ax_4x_5^2=0,\quad a\in k^\times.
    \end{align*}
    For $G=\Aut(X)$-action, the element $\eta_3^2$ contributes to a symbol 
    \begin{align}\label{eq:3A3incomsymb}
            (\langle\eta_3^2\rangle,\fD_6\actsfromleft k(S),(1)),
    \end{align}    
    where $S$ is the same cubic surface carrying the same $\fD_6$-action as in the symbol \eqref{eq:incomp3A2}. As above, we see that the symbol \eqref{eq:3A3incomsymb} is also incompressible and the $\Aut(X)$-action on $X$ is not linearizable.
\end{exam}

\subsection*{Singularity Type $3\sD_4$} 
There is a unique such cubic threefold, see \cite[Theorem 5.4]{All}, given by
\begin{align*}
    X=\{x_1x_2x_3+x_4^3+x_5^3\}.
\end{align*}
By Proposition~\ref{prop:autd4}, we have
$$
\Aut(X)=\langle\tau_{a,b},\eta,\sigma_{(45)},\sigma_{(123)},\sigma_{(12)}\rangle\simeq(\bG_m^2(k)\times \fS_3)\rtimes \fS_3,
$$
with generators described in that proposition. 

\begin{exam}
\label{exam:3d4}
Let $G=\langle\sigma_{(123)},\sigma_{(45)}\sigma_{(12)}, \eta, \tau_{1,-1},\tau_{-1,1}\rangle\simeq C_3\rtimes\fS_4$.
The action is not linearizable: $G$ cannot act linearly and generically freely on $\bP^3$ as it does not have a faithful $4$-dimensional representation. 
\end{exam}

\subsection*{Cohomology} 
For the $3\sD_4$ case, we compute $\rH^1(G,\Pic(\tilde X))$ for finite subgroups $G\subset\Aut(X)$. The analysis is similar to the $9\sA_1$-cubic in \cite{CTZcub}. Recall that the defect $\sigma(X)=4$ and $\mathrm{rk}\, \mathrm{Cl}(X)=5$. In particular, $\mathrm{Cl}(X)$ is generated by the nine planes in $X$:
$$
\Pi_{i,j}=\{x_i=x_4+\zeta_3^jx_5=0\},\quad i=1,2,3,\quad j=1,2,3\quad \zeta_3=e^{\frac{2\pi i}{3}},
$$
subject to relations
\begin{align}\label{eqn:3D4rel}
\sum_{i=1}^3\Pi_{i,j}=F, \quad\text{for }j=1,2,3,\quad\sum_{j=1}^3\Pi_{i,j}=F, \quad\text{for }i=1,2,3,
\end{align}
where $F$ denotes the class of the hyperplane section on $X$. 

\begin{prop}\label{prop:3d4H1}
      Let $X$ be the cubic of singularity type $3\sD_4$ and
      $$
      \sigma_{(123)}: \mathbf{(x)}\mapsto (x_3,x_1,x_2,x_4,x_5).
      $$
    Then 
    $$
    \rH^1(\langle \sigma_{(123)}\rangle,\Pic(\tilde{X})) =\bZ/3.
    $$
\end{prop}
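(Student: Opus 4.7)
The plan is to reduce via Lemma~\ref{lemm:h1seq} to a computation with $\Cl(X)$ and then to exploit the presentation \eqref{eqn:3D4rel}. Because $\sigma_{(123)}$ cyclically permutes the three $\sD_4$-points without fixing any, the stabilizer of every prime exceptional divisor of $\tilde X\to X$ is trivial, so $\bigoplus_i \bZ\cdot E_i$ is a free $\bZ[C_3]$-module and its positive cohomology vanishes. Lemma~\ref{lemm:h1seq} then gives $\rH^1(C_3, \Pic(\tilde X)) = \rH^1(C_3, \Cl(X))$.

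For the latter, I would use the short exact sequence
$$
0 \to R \to M \to \Cl(X) \to 0,
$$
where $M = \bigoplus_{i,j=1}^3 \bZ\cdot \Pi_{i,j}$ is the free abelian group on the nine plane classes and $R\subset M$ is the sublattice cut out by \eqref{eqn:3D4rel}. Since $\sigma_{(123)}$ cyclically permutes $\Pi_{1,j}, \Pi_{2,j}, \Pi_{3,j}$ for each fixed $j$, we have $M \simeq \bZ[C_3]^3$, whose positive cohomology also vanishes. The long exact sequence therefore gives
$$
\rH^1(C_3, \Cl(X)) \simeq \rH^2(C_3, R) = R^{C_3}/\mathrm{Nm}(R),
$$
using $2$-periodicity of Tate cohomology for $C_3$.

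The final step is an explicit computation. Writing $F_j := \sum_i \Pi_{i,j}$ and $G_i := \sum_j \Pi_{i,j}$, one takes the generating set $\{F_1 - F_2,\, F_2 - F_3,\, F_1 - G_1,\, F_1 - G_2,\, F_1 - G_3\}$ for $R$, subject to the single dependence $(F_1 - G_1) + (F_1 - G_2) + (F_1 - G_3) = 2(F_1 - F_2) + (F_2 - F_3)$ coming from $G_1 + G_2 + G_3 = F_1 + F_2 + F_3$. The fixed part $R^{C_3}$ is the rank-two lattice spanned by $F_1 - F_2$ and $F_2 - F_3$, while $\mathrm{Nm}(R)$ is generated by $3(F_1 - F_2)$, $3(F_2 - F_3)$, and $\mathrm{Nm}(F_1 - G_1) = 2(F_1 - F_2) + (F_2 - F_3)$. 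A short Smith-normal-form calculation then identifies the quotient as $\bZ/3$.

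The main subtlety is that the ``row'' relations $F_j - F_{j'}$ and the ``column'' relations $F_1 - G_i$ do not split off as independent $C_3$-submodules of $R$: they are linked by the identity above, so $R$ fits in a nonsplit extension $0 \to \bZ^2_{\mathrm{triv}} \to R \to I \to 0$, with $I$ the augmentation ideal of $\bZ[C_3]$. Overlooking this dependence would produce the incorrect answer $(\bZ/3)^2$; handling it correctly is what pins down the nontrivial contribution to $\rH^1$.
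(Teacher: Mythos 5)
Your argument follows the paper's proof exactly: reduce to $\rH^1(C_3,\Cl(X))$ via Lemma~\ref{lemm:h1seq}, using that $\sigma_{(123)}$ permutes the exceptional divisors of the resolution freely, and then compute with the nine plane classes and the relations \eqref{eqn:3D4rel}. The paper merely asserts that "one can compute" $\rH^1(\langle\sigma_{(123)}\rangle,\Cl(X))=\bZ/3$; your explicit Tate-cohomology calculation --- with $R^{C_3}=\langle F_1-F_2,\,F_2-F_3\rangle$ and $\mathrm{Nm}(R)=\langle 3(F_1-F_2),\,3(F_2-F_3),\,2(F_1-F_2)+(F_2-F_3)\rangle$, whose quotient is indeed $\bZ/3$ --- correctly supplies the omitted detail, including the single dependence among the six relations that is essential to getting $\bZ/3$ rather than $(\bZ/3)^2$.
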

\begin{proof}
    Using the generators and relations of $\mathrm{Cl}(X)$ described above, one can compute 
    $$
  \rH^1(\langle \sigma_{(123)}\rangle,\Cl({X})) =\bZ/3.
    $$
   Let $\tilde X\to X$ be an $\Aut(X)$-equivariant resolution of singularities via successive blowups of the singular points, and $E_i, i=1,\ldots,r$ the corresponding exceptional divisors. Since $\sigma_{(123)}$ acts transitively on the $3\sD_4$-points, it permutes the exceptional divisors without leaving any one fixed. Then 
   $$
\rH^2(\langle\sigma_{(123)}\rangle,\oplus_{i=1}^{r}\bZ\cdot E_i)=0.
   $$
   Using Lemma~\ref{lemm:h1seq}, we conclude 
   $$
    \rH^1(\langle \sigma_{(123)}\rangle,\Pic(\tilde{X}))=\rH^1(\langle \sigma_{(123)}\rangle,\Cl({X})) =\bZ/3.
   $$
\end{proof}

\subsection*{Linearizability}

\begin{prop}\label{prop:3d4linear}
Let $X$ be the cubic threefold with $3\sD_4$-singularities and $H=\langle\tau_{ab},\eta\sigma_{(123)}, \sigma_{(45)}\sigma_{(12)}\rangle$, from Proposition~\ref{prop:autd4}. For any subgroup $H'\subset H$, the $H'$-action on $X$ is linearizable if and only if each abelian subgroup of $H'$ fixes a point on $X$.
\end{prop}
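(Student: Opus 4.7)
\noindent\emph{Proof proposal.} My plan is to handle necessity and sufficiency separately.

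For necessity, I would invoke the Reichstein--Youssin fixed point theorem: for a finite group acting generically freely and linearizably on a smooth projective variety, every abelian subgroup has a fixed point. Applying this to an $H'$-equivariant resolution of singularities $\tilde{X}\to X$ and then pushing fixed points forward to $X$, we conclude that if the $H'$-action on $X$ is linearizable then every abelian subgroup $A\subseteq H'$ fixes a point on $X$.

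For sufficiency, the plan is a case analysis along the exact sequence
\[ 1\to T\cap H'\to H'\to \overline{H'}\subseteq \fS_3\to 1,\]
where $T=\langle\tau_{a,b}\rangle\simeq\bG_m^2$ is the torus and $\fS_3\cong H/T$ acts on the three singular points of $X$. If $\overline{H'}$ is trivial or generated by a transposition, then $H'$ fixes one of the singular points $p\in\Sing(X)$, and projection from $p$ produces an $H'$-equivariant birational map $X\dashrightarrow\bP^3$ with a linear action on the target, settling linearizability.

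If $\overline{H'}$ contains a $3$-cycle, no singular point is fixed and I would use the fixed-point hypothesis to constrain $H'$. For every order-$3$ element $g\in H'\setminus T$, a direct computation yields exactly six $g$-fixed points on $X$ --- three on each of the $\zeta_3$- and $\zeta_3^2$-eigenspaces of $g$ on the coordinate subspace $\{x_1x_2x_3=0\}$ --- and shows that the diagonal $3$-torsion $T^g\simeq C_3\subseteq T$ permutes these six points freely on each eigenspace component. Consequently, whenever $T^g\subseteq H'\cap T$ the abelian subgroup $T^g\cdot\langle g\rangle\simeq C_3\times C_3$ has no common fixed point on $X$, violating the hypothesis. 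This forces $H'\cap T$ to avoid $T^g$, cutting the surviving subgroups down to a short explicit list extending a conjugate of $C_3$ or $\fS_3$ by a small piece of $T$. For each such $H'$ I would construct an explicit linearization by projecting from, or unprojecting through, an $H'$-invariant geometric feature --- an $H'$-fixed line disjoint from $\Sing(X)$, or an $H'$-orbit of planes among the nine planes of $X$ from Section~\ref{sect:defect} --- reducing to known linearizability statements on simpler targets such as intersections of two quadrics, smooth quadric threefolds, or $\bP^1$-bundles.

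The main obstacle is the explicit construction step in the $C_3$/$\fS_3$ case. Even after the fixed-point analysis cuts $H'\cap T$ down, one must inspect each remaining subgroup and produce a birational $H'$-equivariant isomorphism with a linear action on $\bP^3$, which requires careful bookkeeping of how the diagonal $\fS_3\subset H/T$ permutes the nine planes of $X$ and interacts with the residual $T$-part.
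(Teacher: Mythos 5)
Your necessity argument and the reduction to the case where $\overline{H'}$ contains a $3$-cycle are fine, but the sufficiency step in that case is a genuine gap: as described, the strategy would not close. First, the fixed-point hypothesis only constrains how $H'\cap T$ meets the finite centralizer $T^{g}\simeq C_3$ of a $3$-cycle (and the analogous subgroups for involutions); it places no bound on $H'\cap T$ itself, which can be any finite subgroup of $\bG_m^2(k)$ compatible with $\overline{H'}$ --- for instance $C_n\times C_n$ for any $n$ prime to $3$. So there is no ``short explicit list'' of surviving subgroups, and a subgroup-by-subgroup construction of linearizations cannot terminate; a uniform argument is required. Second, your proposed endgame reduces to linearizability on targets including ``smooth quadric threefolds,'' but that problem is explicitly open (see the discussion of the $2\sD_4+3\sA_1$ case in the introduction), so such a reduction would settle nothing.

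The paper's proof avoids both issues with a single uniform birational model: the explicit $H$-equivariant map $\rho:X\dashrightarrow(\bP^1)^4$ identifies $X$ birationally with $S\times\bP^1$, where $S=\{u_1v_1w_1=u_2v_2w_2\}$ is a sextic del Pezzo surface on which $H$ acts through the $2$-dimensional torus and the $\fS_3$ permuting the three $\bP^1$-factors; this action is $H$-birational to the standard toric-plus-permutation action on $\bP^2$, and the residual $\bP^1$-factor is absorbed by the no-name lemma. Linearizability of any $H'\subset H$ is thereby reduced to linearizability of the induced $H'$-action on $\bP^2$, which holds exactly under the abelian-fixed-point hypothesis. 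If you want to salvage your approach, the missing ingredient is precisely such a uniform $H$-equivariant reduction valid for the entire (infinite) family of subgroups at once, rather than a case analysis.
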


\begin{proof}
Note that the existence of a fixed point on $X$ by each abelian subgroup is a necessary condition for linearizability. We show it is also sufficient in this case. Under the rational map 
$
\rho: X\dashrightarrow \bP^1\times\bP^1\times\bP^1\times\bP^1
$
given by 
$$
\mathbf{(x)}\mapsto (-x_3,x_4+x_5)\times(-x_1,\zeta_3^2x_4+\zeta_3x_5)\times(-x_2,\zeta_3x_4+\zeta_3^2x_5)\times(x_4,x_5),
$$
$X$ is birationally transformed to $S\times\bP^1$ where $S$ is a smooth del Pezzo surface of degree 6, realized as 
$$
\{u_1v_1w_1=u_2v_2w_2\}\subset\bP^1_{u_1,u_2}\times\bP^1_{v_1,v_2}\times\bP^1_{w_1,w_2}.
$$
The map $\rho$ is $H$-equivariant. The $H$-action on $S\times\bP^1$ is faithful on the factor $S$: $H$ acts on $S$ via the $\fS_3$-permutation of three copies of $\bP^1$ and the 2-dimensional torus action. The $H$-action on the $\bP^1$ in $S\times\bP^1$ factors through $\fS_3$. Observe that the $H$-action on $S$ is also birational to the $H$-action on $\bP^2$ via permutation of coordinates and the standard $\bG_m^2$ torus action on $\bP^2.$ Thus, the $H$-action on $S\times\bP^1$ is birational to the corresponding action on $\bP^2\times \bP^1$.
This action is birational to an action on a rank-1 vector bundle over $\bP^2$. By the no-name lemma, this is birational to an action on $\bA^1\times \bP^2$, with trivial action on the first factor and generically free action on the second factor. For any subgroup $H'\subset H$, if abelian subgroup of $H'$ fixes a point, the $H'$-action on $\bP^2$ is linear. It follows that the $H'$-action is linearizable, again by the no-name Lemma.  
\end{proof}

\begin{prop}\label{prop:3D4coho}
    Let $X$ be the cubic threefold with $3\sD_4$-singularities and $G\subseteq \Aut(X)$. Then the $G$-action on $X$ is (stably) linearizable if and only if $G$ does not contain an element conjugate to $\sigma_{(123)}$, and each abelian subgroup of $G$ fixes a point on $X$.


\end{prop}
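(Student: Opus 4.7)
The plan is to combine the $(\mathbf{H1})$ obstruction from Proposition~\ref{prop:3d4H1} with the fixed-point obstruction for abelian subgroups on the necessity side, and the linearization construction in Proposition~\ref{prop:3d4linear} together with projection from a $G$-fixed singular point on the sufficiency side.

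For necessity, I would first recall that any abelian subgroup $A$ acting linearly on projective space has a common fixed point (simultaneous diagonalization). If the $G$-action on $X$ is stably linearizable, then by standard fixed-point results for finite abelian groups on rationally connected varieties, applied to the equivariant resolution $\tilde X$ and then pushed down to $X$, every abelian subgroup $A \subseteq G$ fixes a point on $X$. Second, if $G$ contains an element conjugate to $\sigma_{(123)}$, then Proposition~\ref{prop:3d4H1} gives $\rH^1(\langle\sigma_{(123)}\rangle,\Pic(\tilde X)) = \bZ/3 \neq 0$, yielding the $(\mathbf{H1})$ obstruction to stable linearizability.

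For sufficiency, I would use the exact sequence
$$
1 \to \bG_m^2(k) \to \Aut(X) \to \fS_3 \times \fS_3 \to 1
$$
from Proposition~\ref{prop:autd4}, with projections $\pi_1, \pi_2$ onto the two $\fS_3$-factors (the first acting on $x_4,x_5$ and the second on the singular points). A short coboundary computation on the torus shows that the $\Aut(X)$-conjugacy class of $\sigma_{(123)}$ consists precisely of the elements $g$ with $\pi_1(g) = 1$ and $\pi_2(g)$ a 3-cycle. If $\pi_2(G)$ contains no 3-cycle, then $G$ fixes a singular point and the action is linearizable by projection from that point. If $\pi_2(G)$ contains a 3-cycle, the hypothesis forces every $g \in G$ with $\pi_2(g)$ a 3-cycle to satisfy $\pi_1(g)$ a 3-cycle (else some power of $g$ has $\pi_1 = 1$ and 3-cycle $\pi_2$, giving a conjugate of $\sigma_{(123)}$); a commutator argument then forces every $h \in G$ with $\pi_2(h)$ a transposition to satisfy $\pi_1(h)$ a transposition as well. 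Hence the image of $G$ in $\fS_3 \times \fS_3$ lies in a diagonal $\fS_3$, and since all diagonal embeddings are $\fS_3 \times \fS_3$-conjugate, $G$ is $\Aut(X)$-conjugate into the subgroup $H$ of Proposition~\ref{prop:3d4linear}, which then yields linearizability using the abelian fixed-point hypothesis.

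The main delicate step is the commutator computation establishing that when $\pi_2(G) \supseteq C_3$, the $\pi_1$ and $\pi_2$ images of elements of $G$ must be coupled: a 3-cycle on the $\pi_2$ side forces a 3-cycle on the $\pi_1$ side, and likewise for transpositions. Concretely, if $g$ has image $((123),(123))$ in $\fS_3 \times \fS_3$ and $h$ has image $(1,(12))$, the commutator $[g,h]$ has image $(1,(123))$ — a conjugate of $\sigma_{(123)}$. Ruling out all such hybrid lifts in the semidirect product is the heart of the argument and what allows the reduction to Proposition~\ref{prop:3d4linear}.
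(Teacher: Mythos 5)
Your proof is correct and follows essentially the same route as the paper: necessity via the abelian fixed-point obstruction together with Proposition~\ref{prop:3d4H1}, and sufficiency via projection from a fixed singular point or reduction to Proposition~\ref{prop:3d4linear}. The one place you go beyond the paper is a genuine improvement: the paper merely asserts that a subgroup $G$ containing no conjugate of $\sigma_{(123)}$ and fixing no singular point lies in $H$, whereas your torus-coboundary identification of the conjugacy class of $\sigma_{(123)}$ as $\{g:\pi_1(g)=1,\ \pi_2(g)\text{ a }3\text{-cycle}\}$, combined with the power and commutator argument (and, implicitly, a Goursat-type analysis showing the image of $G$ in $\fS_3\times\fS_3$ is the graph of an isomorphism), correctly supplies the justification for that containment up to conjugacy.
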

\begin{proof}
    When an abelian subgroup of $G$ does not fix any point on $X$, the action is not stably linearizable. When $G$ contains an element conjugate to $\sigma_{(123)}$, the $G$-action on $X$ has an {\bf (H1)}-obstruction and is not stably linearizable, by Proposition~\ref{prop:3d4H1}. 
    
    Conversely, when $G$ does not contain an element conjugate to $\sigma_{(123)}$ and $G$ does not fix any singular point, then
    $$
    G\subset H=\langle\tau_{ab},\eta\sigma_{(123)}, \sigma_{(45)}\sigma_{(12)}\rangle\simeq \bG^2_m(k)\rtimes\fS_3.
    $$ 
   From Proposition~\ref{prop:3d4linear}, we see that the $G$-action on $X$ is linearizable when each abelian subgroup of $G$ fixes a point on $X$. If $G$ fixes a singular point, it is clearly linearizable. 
\end{proof}

\section{Four singular points}
\label{sect:four}

With our assumptions, the possible combinations of singularities, with specializations, are:

\centerline{
\xymatrix{
2\sA_2+2\sA_1 \ar[d]   \ar[r]  & 2\sA_3+2\sA_1  \ar[r] &  2\sD_4+2\sA_1 \\
4\sA_2 & 
}
}

In all cases, the singularities are in linear general position - indeed, if four singularities are contained in a plane, they must be $4\sA_1$, treated in \cite{CTZcub}.
We can thus assume that the singularities are at
$$
    p_1=[1:0:0:0:0], \quad p_2=[0:1:0:0:0],
    $$
    $$
    p_3=[0:0:0:1:0], \quad p_4=[0:0:0:1:0].
$$
Then $X$ is given by 
\begin{multline}\label{eq:4sineq}
    t_1x_1x_2x_3 + t_2x_1x_2x_4 + t_4x_1x_3x_4 + t_8x_2x_3x_4+\\
    +t_{15}x_5^3+ x_5^2(t_7x_1+ t_{11}x_2+ t_{13}x_3 + t_{14}x_4)+ \\
    +x_5(t_3x_1x_2 +  t_5x_1x_3+ t_6x_1x_4 + t_9x_2x_3  + t_{10}x_2x_4 + t_{12}x_3x_4) =0,
\end{multline}
for some $t_1,\ldots,t_{15}\in k.$ Up to a change of variables by torus actions, we may assume that $t_i=0$ or $1$, for $i=1,2,4,8.$  In the remaining of this section, we assume $p_1$ and $p_2$ are of the same singularity type and $p_3$ and $p_4$ are of the same type. Then there is an exact sequence 
$$
0\to H\to \Aut(X)\stackrel{\rho}{\to} \fS_4,
$$
and, except for $4\sA_2$, the image of $\rho$ is at most $C_2^2=\langle(1,2), (3,4)\rangle$.
\subsection*{Singularity Type $2\sA_2+2\sA_1$.} 
Assume that $p_1,$ $p_2$ are $\sA_2$-points.

\begin{prop}\label{prop:2A2+2A1class}
    If $X$ is a cubic threefold with $2\sA_2+2\sA_1$-singularities then: 
     \begin{itemize}
         \item Up to isomorphism, $X$ can be given by
   \begin{multline}\label{eq:2A2+2A1eq}
  x_1x_2x_3 + x_1x_2x_4+x_1x_3x_4+x_2x_3x_4+x_5^3+\\
     +x_5^2(a_1x_3 + a_2x_4-\frac14a_3^2(x_1+x_2))+\\
    +x_5(a_3(x_1x_3+x_2x_4)+a_4x_3x_4)=0,    
   \end{multline}
    for general $a_1,a_2,a_3,a_4\in k$.
    \item If $\Aut(X)$ does not fix any singular point, then 
one of the following holds
    $$
    \Aut(X)=\begin{cases}
        \langle\sigma_{(12)(34)},\sigma_{(12)}\rangle\simeq C_2^2,& \text{ when } a_1=a_2,\,\, a_3=0,\\
         \langle\sigma_{(12)(34)}\rangle\simeq C_2,& \text{ when } a_1=a_2,\,\, a_3\ne 0,
    \end{cases}
    $$
    where
    {\small
    \begin{align*}
        \sigma_{(12)(34)}&:(\mathbf{x})\mapsto(x_2,x_1,x_4,x_3,x_5),\\
        \sigma_{(12)}&:(\mathbf{x})\mapsto (x_2,x_1,x_3,x_4,x_5).
    \end{align*}}
     \end{itemize}
\end{prop}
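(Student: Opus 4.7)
The plan is to follow the algorithm of Section~\ref{sect:aut}: first derive the normal form \eqref{eq:2A2+2A1eq}, then classify the lifts of the action on the singular locus.

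I would start from the general expression \eqref{eq:4sineq}, where the four singular points sit at the standard coordinate positions $p_1,\dots,p_4$. At each $p_i$, the projection method attaches a quadric $Q_{p_i}\subset \bP^3$; by Lemma~\ref{lemm: defect formula}, the condition that $p_i$ is of type $\sA_2$ is that $Q_{p_i}$ is a rank-three quadric cone whose cone point does not lie on the residual cubic $C_{p_i}$, while $p_i$ of type $\sA_1$ means $Q_{p_i}$ is smooth. Using the torus rescaling of $(x_1,\dots,x_5)$, which preserves all four singular points, I would normalize the ``triangle'' coefficients $t_1=t_2=t_4=t_8=1$ and $t_{15}=1$. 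The two rank-three conditions at $p_1,p_2$ are each single polynomial constraints on the remaining $t_i$'s, and solving them together with a further shift $x_5\mapsto x_5+\ell(x_1,x_2,x_3,x_4)$ that eliminates the $x_1x_2x_5$ and related redundant cross terms (while still fixing the singular points) leaves precisely four essential parameters; relabeling produces \eqref{eq:2A2+2A1eq}.

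To classify $\Aut(X)$ under the assumption that no singular point is fixed, I would use the exact sequence $0 \to H \to \Aut(X) \stackrel{\rho}{\to} \fS_4$ and observe that, since singularity types are preserved, $\rho(\Aut(X)) \subseteq \langle(12),(34)\rangle\simeq C_2^2$. For each nontrivial element of $C_2^2$, I would write down the most general matrix with the appropriate block shape (and possible off-diagonal entries in the $x_5$-row and column), substitute into $\sigma^*(f)=\lambda f$, and solve the resulting coefficient system, as in the proof of Proposition~\ref{prop:2a3-noplane}, using {\tt magma}'s {\tt ProbableRadicalDecomposition}. The expected outcome is that a lift of $(12)(34)$ exists precisely when $a_1=a_2$, producing $\sigma_{(12)(34)}$, and that a separate lift of $(12)$ (equivalently, of $(34)$) requires the further vanishing $a_3=0$, giving $\sigma_{(12)}$. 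I would then check that $H$ is trivial by noting that any element of $H$ acts by scalars on $x_1,\dots,x_4$ and affinely on $x_5$, and that the coefficient equalities in $h^*(f)=\lambda f$ for generic $a_1,a_2,a_3,a_4$ force $h$ to be the identity.

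The main obstacle is the size of the polynomial system $\sigma^*(f)=\lambda f$: even with the block simplification induced by $\rho(\sigma)$, there are many unknown entries, and one must carefully separate genuine automorphisms from spurious solutions that correspond to families with different (more degenerate) singularity types. This step genuinely requires computer algebra.
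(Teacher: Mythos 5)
Your strategy is the paper's: impose the rank\nobreakdash-3 condition on the quadratic forms attached to the two $\sA_2$-points, use the torus to normalize the ``triangle'' coefficients, and then classify lifts of $(12)(34)$ and $(12)$ by solving $\sigma^*(f)=\lambda f$ by computer, checking that $H$ is trivial. The characterization of $\sA_1$ versus $\sA_2$ via $Q_{p_i}$ smooth versus a rank-three cone is equivalent to the paper's phrasing, and your expected outcomes ($a_1=a_2$ for the lift of $(12)(34)$, additionally $a_3=0$ for the lift of $(12)$) are exactly what the paper finds.

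One step, however, would fail as written: the normalizing substitution $x_5\mapsto x_5+\ell(x_1,\dots,x_4)$ does \emph{not} fix the singular points. Under the corresponding linear automorphism of $\bP^4$ the vertex $[1:0:0:0:0]$ is sent to $[1:0:0:0:\pm\ell(e_1)]$, so unless $\ell\equiv 0$ the four points leave the coordinate vertices and the whole normal form \eqref{eq:4sineq} is destroyed; moreover $x_5^3\mapsto x_5^3+3x_5^2\ell+3x_5\ell^2+\ell^3$ reintroduces a host of monomials (including pure cubics in $x_1,\dots,x_4$ outside the triangle terms), so the cross terms are not cleanly eliminated either. The correct moves, used implicitly in the paper, are the shifts $x_i\mapsto x_i+c_ix_5$ for $i\le 4$: these fix every point of the hyperplane $\{x_5=0\}$, in particular all four singular points, and against the triangle terms $x_1x_2x_3$, $x_1x_2x_4$, $x_1x_3x_4$, $x_2x_3x_4$ they let one kill $t_3,t_5,t_9$ and later $t_6$. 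With that correction (and the explicit checks that $t_2,t_4,t_8,t_{15}\neq 0$, which your smoothness/cone-point conditions at the $\sA_1$- and $\sA_2$-points do encode, since their vanishing produces nonisolated singularities or $\sA_3$-points), the rest of your argument goes through as in the paper.
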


\begin{proof}
Since $p_1$ and $p_2$ are $\sA_2$-singularities, the quadratic terms after $x_1$, respectively $x_2$, define two quaternary quadratic forms of rank $3$. This gives two nonlinear conditions on the parameters in \eqref{eq:4sineq}:
{\small
   \begin{multline}
       \label{eq:2A22A1cond1}
       t_{1}^2t_{6}^2 + 4t_{1}t_{2}t_{4}t_{7} - 2t_{1}t_{2}t_{5}t_{6} - 2t_{1}t_{3}t_{4}t_{6} + t_{2}^2t_{5}^2 - 2t_{2}t_{3}t_{4}t_{5} + 
        t_{3}^2t_{4}^2=0,\\
    t_{1}^2t_{10}^2 + 4t_{1}t_{2}t_{8}t_{11} - 2t_{1}t_{2}t_{9}t_{10} - 2t_{1}t_{3}t_{8}t_{10} + t_{2}^2t_{9}^2 - 2t_{2}t_{3}t_{8}t_{9}
        + t_{3}^2t_{8}^2=0.
   \end{multline}
}
When $t_1=t_2=0$, $X$ has nonisolated singularities. Thus, 
up to a change of variables, one may assume that
$$
t_1=1,\quad t_3=t_5=t_9=0,
$$
which reduces \eqref{eq:2A22A1cond1} to 
\begin{align}\label{eq:2A22A1cond2}
     4t_2t_4t_7 + t_6^2=4t_2t_8t_{11} + t_{10}^2=0.
\end{align}
It follows that $t_2\ne 0$ since otherwise $t_2=t_6=t_{10}=0$ and $X$ has nonisolated singularities. Similarly, one may check $t_4, t_8\ne 0$, since otherwise it introduces $\sA_3$-singularities. Hence,  $t_1=t_2=t_4=t_8=1.$ 
Up to a change of variables, we may assume $t_6=0, $ simplifying \eqref{eq:2A22A1cond2} as 
$
t_7=4t_{11}+t_{10}^2=0.
$
One can also check $t_{15}\ne 0$, since otherwise $X$ has $2\sA_1+2\sA_3$-singularities. Thus, we may put $t_{15}=1$, by scaling $x_5$, and the equation of $X$ is of the form \eqref{eq:2A2+2A1eq}.

Now assume that $\Aut(X)$ does not fix any singular point, i.e., there exists $\sigma\in \Aut(X)$ such that $\rho(\sigma)=(1,2)(3,4)$ and $\sigma$ takes the form 
$$
\sigma:(\mathbf{x})\mapsto(s_2x_2+r_2x_5,s_1x_1+r_1x_5,s_4x_4+r_4x_5,s_3x_3+r_3x_5,x_5)
$$
for $s_1,\ldots,s_4\in k^\times$ and $r_1,\ldots,r_4\in k$. The fact that $\sigma$ leaves $X$ invariant leads to a system of equations in the parameters $s_1,\ldots,s_4,r_1,\ldots,r_4$. Solving the system, we find that such an element $\sigma$ exists if and only if
$$
a_1=a_2,\quad
s_1=s_2=s_3=s_4=1, \quad  r_1=r_2=r_3=r_4=0.
$$
Using the same method, we find that when $a_1=a_2$, an element $\tau\in\Aut(X)$ with $\rho(\tau)=(1,2)$ exists if and only if $a_3=0$, and 
$$
\tau: (\mathbf{x})\mapsto (x_2,x_1,x_3,x_4,x_5).
$$
Moreover, any $h\in H$ fixing four singular points is trivial. 
\end{proof}


\begin{prop}\label{prop:2a22a1spec2d42a1}
    Let $X$ be a very general cubic threefold  with $2\sA_2+2\sA_1$-singularities, given by \eqref{eq:2A2+2A1eq} with $a_1=a_2$. Then the $\langle\sigma_{(12)(34)}\rangle$-action on $X$,
    specified in Proposition~\ref{prop:2A2+2A1class},
    is not stably linearizable.
\end{prop}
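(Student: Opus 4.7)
The plan is to use equivariant specialization, paralleling Propositions~\ref{prop:2A3noplanespec} and \ref{prop:2A4spec2A5}, by constructing a one-parameter family whose generic fiber is a cubic with $2\sA_2+2\sA_1$-singularities of the form \eqref{eq:2A2+2A1eq} (with $a_1=a_2$), and whose special fiber is a cubic with $2\sD_4+2\sA_1$-singularities on which the $\langle\sigma_{(12)(34)}\rangle$-action is nonlinearizable via the $(\mathbf{H1})$-obstruction.

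The first step is to assemble a flat family $\pi:\cX\to\bA^1_\epsilon$ of cubic threefolds in $\bP^4$ such that the special fiber $X_0$ is a $2\sD_4+2\sA_1$-cubic in the form \eqref{eq:special2d4+2a1} and the generic fiber is a $2\sA_2+2\sA_1$-cubic $\sigma$-equivalent to \eqref{eq:2A2+2A1eq}. Concretely, I would start from a generic member of the $2\sD_4+2\sA_1$ family (with the symmetry $\sigma_{(12)(34)}:(y_1,\dots,y_5)\mapsto(y_2,y_1,y_4,y_3,y_5)$ built in) and add a $\sigma$-invariant perturbation $\epsilon\,G(\mathbf{y})$ chosen so that (i)~the two $\sD_4$-points at $[1:0:0:0:0]$ and $[0:1:0:0:0]$ persist as singularities and have their quadratic forms promoted from rank~$2$ to rank~$3$ once $\epsilon\neq 0$, with residual cubic nonzero in the kernel direction (forcing type $\sA_2$); (ii)~the two $\sA_1$-points at $[0:0:1:0:0]$ and $[0:0:0:1:0]$ remain of type $\sA_1$; and (iii)~no new singularities appear for $\epsilon$ generic. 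A candidate perturbation symmetric under $\sigma$ and supported on $(y_1,y_2)$ linearly is a suitable combination of monomials such as $\epsilon\,y_5(y_1-y_2)^2$ or $\epsilon\,y_3y_4(y_1+y_2)$, whose exact form is dictated by forcing the local Morse splitting at each $\sD_4$-point to produce an $\sA_2$-residual.

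Having such a family, the second step is to invoke Corollary~\ref{coro:2D4+2A1}: since $\sigma_{(12)(34)}$ interchanges the two $\sD_4$-points of $X_0$, it acts nontrivially on $\Cl(X_0)$, and the $(\mathbf{H1})$-obstruction guarantees that the $\langle\sigma_{(12)(34)}\rangle$-action on $X_0$ is not stably linearizable. The third step is an equivariant resolution: successively blow up the two $\sA_2$-sections of $\cX\to\bA^1$ carrying the generic $\sA_2$-singularities. This produces a family with smooth generic fiber while the special fiber acquires only $BG$-rational singularities — specifically $\sA_1$-points, in a single $\langle\sigma_{(12)(34)}\rangle$-orbit, arising because the blowup of a section of $\sA_2$-points at a locus where the $\sA_2$ degenerates to a $\sD_4$ introduces nodes. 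Finally, \cite[Proposition~2.9]{CTZcub} applies, giving non-stable-linearizability for a very general $\epsilon$, and hence, since the construction sweeps out a generic sublocus of the moduli of $2\sA_2+2\sA_1$-cubics with $a_1=a_2$, for a very general cubic in \eqref{eq:2A2+2A1eq}.

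The main obstacle is the delicate deformation-theoretic verification in Step~1: the perturbation must simultaneously resolve two $\sD_4$-singularities to $\sA_2$-singularities (and not to worse $\sA_n$ or to $\sA_1$), preserve $\sigma$-equivariance, and not introduce extra singularities. This is a local computation at each singular point — expanding the Morse splitting and checking the residual cubic in the kernel direction — but it has to be carried out carefully in order to land exactly on the normal form \eqref{eq:2A2+2A1eq}. A secondary subtlety is Step~3, where the $BG$-rationality of the special fiber's singularities after resolution needs to be confirmed by tracing the $\sigma$-orbit structure on the exceptional divisors introduced by the resolution of the generic $\sA_2$-sections.
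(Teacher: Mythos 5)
Your overall strategy coincides with the paper's: specialize equivariantly to a cubic with $2\sD_4+2\sA_1$-singularities, invoke the $(\mathbf{H1})$-obstruction of Proposition~\ref{prop:2D4+2A1H1} for $\sigma_{(12)(34)}$ on the special fiber, resolve the generic fiber, and apply \cite[Proposition~2.9]{CTZcub}. However, your Step~1 --- the construction of the degenerating family --- is not actually carried out, and this is where the real content of the proof lies. You propose to perturb a fixed $2\sD_4+2\sA_1$-cubic by an undetermined $\sigma$-invariant term $\epsilon\,G(\mathbf{y})$ and list the properties $G$ must have (promote the rank of the quadratic part at the two $\sD_4$-points from $2$ to $3$ with nonzero residual cubic, preserve the nodes, create nothing new), but you give only candidate monomials and explicitly defer the verification. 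As written, this is a plan, not a proof. Moreover, perturbing \emph{outward} from the special fiber creates a second gap you do not close: you must show that the generic fibers of such families sweep out a dense subset of the $2\sA_2+2\sA_1$ locus with $a_1=a_2$, so that a \emph{very general} member of \eqref{eq:2A2+2A1eq} actually occurs as a generic fiber; the sentence asserting this is not justified.

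The paper avoids both issues by running the degeneration in the opposite direction, starting from the normal form \eqref{eq:2A2+2A1eq} itself. Setting $a=a_4^{-1/4}$ and rescaling $x_1,x_2$ by $a$ and $x_3,x_4$ by $a^{-2}$, the equation (with $a_3=\lambda a$) becomes an explicit family over $\bA^1_a$ whose fiber at $a=0$ is
$$
x_1x_2x_3+x_1x_2x_4+x_3x_4x_5+x_5^3=0,
$$
visibly the normal form \eqref{eq:2D4+2A1eq} with $a_1=1$, $a_2=0$, and the $\sigma_{(12)(34)}$-action extends across the family. No local analysis of how a $\sD_4$-singularity splits into an $\sA_2$ is needed, and since the construction starts from an arbitrary member of the $2\sA_2+2\sA_1$ family, the very-generality claim is automatic. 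If you want to salvage your approach, you would need to (i) exhibit a concrete $G$ and verify the singularity types of the nearby fibers by the rank/residual-cubic computation at all four singular points, and (ii) prove the dominance of your construction onto the relevant moduli; the coordinate-rescaling trick renders both steps unnecessary.
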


\begin{proof}
    We use the notation from Proposition~\ref{prop:2A2+2A1class}. Let $a=a_4^{-1/4}$. Under the change of variables
      $$
    y_1=ax_1,\quad y_2=ax_2,\quad y_3=\frac{1}{a^2}x_3,\quad x_4=\frac{1}{a^2}x_4,\quad y_5=x_5,
    $$
    the equation \eqref{eq:2A2+2A1eq} becomes
    \begin{multline}\label{eq:2A2+2A1}
        x_1x_2x_3 + x_1x_2x_4 + a^3x_1x_3x_4 + a^3x_2x_3x_4 + a_3ax_5(x_1x_3 + 
    x_2x_4)+\\ + x_3x_4x_5 + x_5^2( 
    a_2a^2(x_3 + x_4)-\frac{a_3^2}{4a}(x_1 +x_2)) + x_5^3.
    \end{multline}
    For fixed $a_2,\lambda\in k$, we can consider the 1-parameter family of cubic $\cX\to\bA_a$ parameterized by $a$ given by \eqref{eq:2A2+2A1} with $a_2$ and $a_3=\lambda a$. In particular, generic fiber of $\cX$ is given by 
    \begin{multline*}
        x_1x_2x_3 + x_1x_2x_4 + a^3x_1x_3x_4 + a^3x_2x_3x_4 + a^2x_5(x_1x_3 + 
    x_2x_4)+\\ + x_3x_4x_5 + x_5^2( 
    a_2a^2(x_3 + x_4)-\frac{a}{4}(x_1 +x_2)) + x_5^3.
    \end{multline*}
    The $\langle\sigma_{(12)(34)}\rangle$-action naturally extends to $\cX$ and is not stably linearizable on the special fiber $X_0$ above $a=0$: $X_0$ has $2\sD_4+2\sA_1$-singularities and has an {(\bf H1)}-obstruction by Proposition~\ref{prop:2D4+2A1H1}.
    
Similarly as before, e.g., in Proposition~\ref{prop:2A4spec2A5}, we apply specialization to the resolution of singularities of the generic fiber in the family $\cX$ to conclude the $\langle\sigma_{(12)(34)}\rangle$-action on a very general member in the family $\cX$, i.e., a very general cubic with $2\sA_2+2\sA_1$-singularities, is not stably linearizable.

\end{proof}

\subsection*{Singularity Type $2\sA_3+2\sA_1$.}  Assume $p_1,$ $p_2$ are $\sA_3$-points. 
Recall from Lemma~\ref{lemm:2A3mA1defect} that a cubic threefold $X$ with $2\sA_3+2\sA_1$-singularities can have $d(X)=1$ or $2$. 

\begin{lemm}
\label{lemm:d1}
        Let $X$ be a cubic threefold with $2\sA_3+2\sA_1$-singularities and $d(X)=1$. Then the $\Aut(X)$-action on $X$ is linearizable. 
\end{lemm}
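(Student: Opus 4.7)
\emph{Plan.} I would invoke Lemma~\ref{lemm:2A3mA1defect}(1), which provides two configurations when $d(X)=1$: either the unique plane $\Pi\subset X$ contains the two $\sA_1$-points together with one $\sA_3$-point (call this Subcase A), or $\Pi$ contains both $\sA_3$-points and no other singularities (Subcase B). In either situation $\Pi$ is $\Aut(X)$-invariant by uniqueness, and $\Aut(X)$ must preserve the (labelled) singular loci.

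In Subcase A, the two $\sA_3$-points sit asymmetrically with respect to $\Pi$ (exactly one lies on it), so the off-plane $\sA_3$-point $q$ is $\Aut(X)$-fixed. Projection from $q$ then yields an $\Aut(X)$-equivariant birational map $X\dashrightarrow \bP^3$, linearizing the action.

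In Subcase B, I would argue via the equivariant unprojection from $\Pi$, following the same approach as in Proposition~\ref{prop:a3planex22}: the two $\sA_3$-singularities lying on $\Pi$ get resolved, producing an $\Aut(X)$-equivariant birational transformation of $X$ to a complete intersection of two quadrics $X_{2,2}\subset \bP^5$ whose only singularities are the two nodes coming from the off-plane $\sA_1$-points of $X$. Since any line through two singular points of a cubic threefold has intersection number at least $2+2=4>3$ with $X$, the line $L$ joining the two $\sA_1$-points is contained in $X$, and is $\Aut(X)$-invariant as it is spanned by a canonical subset of $\Sing(X)$. Under the unprojection, $L$ transforms into an $\Aut(X)$-invariant line $\tilde L\subset X_{2,2}$ passing through both nodes. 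Projection from $\tilde L$ then gives an equivariant birational map $X_{2,2}\dashrightarrow \bP^3$, yielding linearization.

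The main step requiring care is Subcase B, where one must verify that the unprojection is indeed $\Aut(X)$-equivariant in the presence of the extra $\sA_1$-points (not just for the $2\sA_3$-core), and that the residual projection from the invariant line $\tilde L$ through the two nodes of $X_{2,2}$ is birational onto $\bP^3$ rather than a multi-section. I expect both can be settled either by working out an explicit normal form of $X$ as in Section~\ref{sect:aut} or by passing to the small resolution of the two nodes of $X_{2,2}$ and applying the standard line-projection argument for a (smooth) del Pezzo threefold of degree $4$.
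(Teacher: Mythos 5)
Your case division agrees with the paper's, and your Subcase A is exactly the paper's argument (the paper fixes the on-plane $\sA_3$-point, you fix the off-plane one; since the unique plane $\Pi$ is invariant and contains exactly one $\sA_3$-point, both are individually fixed and either works for the projection). In Subcase B you take a genuinely different, more roundabout route: the paper does not pass through the unprojection to $X_{2,2}$ at all, but simply observes that the line $L$ through the two $\sA_1$-points is a $G$-invariant line contained in $X$ and disjoint from the $G$-invariant plane $\Pi$, and then invokes \cite[Lemma 1.1]{CTZ}, which linearizes any $G$-action on a cubic threefold containing an invariant plane and a disjoint invariant line. This sidesteps both of the issues you flag at the end (equivariance of the unprojection in the presence of the extra nodes, and birationality of the projection from $\tilde L$ on a nodal $X_{2,2}$); your version can be made to work---lines on $X_{2,2}$ correspond to lines on $X$ disjoint from $\Pi$ as in Proposition~\ref{prop:a3planex22}, and projection from a line on a $(2,2)$-intersection is birational onto $\bP^3$ even when the line passes through the nodes---but it is strictly more machinery for the same conclusion.

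One point you should not leave implicit in Subcase B: you need $L\cap\Pi=\varnothing$, both for $L$ to transform into a line on $X_{2,2}$ and (in the paper's version) for \cite[Lemma 1.1]{CTZ} to apply. You establish $L\subset X$ and the invariance of $L$, but never the disjointness; this is the one geometric fact doing real work here (the paper asserts it in this proof and proves the analogous disjointness statements in Propositions~\ref{prop:4A1+2A2} and~\ref{prop:4A1+2A3}). It follows from the configuration in Lemma~\ref{lemm:2A3mA1defect}(1), but it deserves a sentence.
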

\begin{proof}
    Let $G=\Aut(X)$. If $d(X)=1$ there is a unique, necessarily $G$-invariant plane $\Pi$ contained in $X$. There are two possibilities: either $\Pi$ contains both $\sA_3$-points, or only one. In the first case, $X$ contains an $G$-invariant line that is disjoint from the plane, namely the line between the two $\sA_1$-points. The action is then linearizable by \cite[Lemma 1.1]{CTZ}.
    In the second case, the $\sA_3$-point contained in $\Pi$ is fixed.
\end{proof}

Thus we focus on the $d(X)=2$ case.

\begin{prop}\label{prop:2A32A1}
    Let $X$ be a cubic with $2\sA_3+2\sA_1$-singularities and defect $d(X)=2$. Then, up to isomorphism, $X$ 
    is given by 
        \begin{align}
 \label{eq:2A3+2A1eq2}
x_1x_2(x_3+x_4)+x_5^2(x_1+x_2+a_1x_3+a_1x_4)+x_3x_4x_5+a_2x_5^3
=0,
  \end{align}
   for general $a_1,a_2\in k$,
    $$
    \Aut(X)=
    \begin{cases}
    \langle\sigma_{(12)},\sigma_{(12)(34)}, \eta_1\rangle\simeq C_2\times C_6,&\text{when } a_1=a_2=0,\\
        \langle\sigma_{(12)},\sigma_{(12)(34)}\rangle\simeq C_2^2,&\text{otherwise},
    \end{cases}
    $$
    where 
    \begin{align*}
        \sigma_{(12)}&: (\mathbf{x})\mapsto(x_2,x_1,x_3,x_4,x_5),\\
        \sigma_{(12)(34)}&: (\mathbf{x})\mapsto(x_2,x_1,x_4,x_3,x_5),\\
        \eta_1&:(\mathbf{x})\mapsto(\zeta_3x_1,\zeta_3x_2,\zeta_3^2x_3,\zeta_3^2x_4,x_5).
    \end{align*}
\end{prop}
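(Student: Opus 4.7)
The plan is to follow the two-stage scheme established earlier in the paper: first derive the normal form \eqref{eq:2A3+2A1eq2} from the general 15-parameter family \eqref{eq:4sineq}, then apply the automorphism-classification algorithm of Section~\ref{sect:aut}.

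For the normal form, I would impose four kinds of conditions on the coefficients $t_1,\ldots,t_{15}$. First, the singularity type at $p_1,p_2$ is $\sA_3$, which gives the rank-$3$ constraints on the tangent cones analogous to \eqref{eq:2A22A1cond1} of Proposition~\ref{prop:2A2+2A1class}, together with the additional condition, visible via Lemma~\ref{lemm: defect formula}(3) and Theorem~\ref{thm: Wall}, that the projected curve $C_{p_i}$ has an $\sA_1$-singularity at the cone point of $Q_{p_i}$. Second, the singularity type at $p_3,p_4$ is $\sA_1$, i.e., the tangent cones there are smooth quadrics, a generic open condition. Third, the defect equals $2$: by Lemma~\ref{lemm:2A3mA1defect}, this forces $C_{p_1}$ to split as $A\cup B_1\cup B_2$ with prescribed tangency, producing in particular a plane $\Pi_{12}\subset X$ through both $\sA_3$-points. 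Fourth, using the residual torus rescalings of the $x_i$ together with additive shifts $x_i\mapsto x_i+c\,x_5$ for $i=1,\ldots,4$, eliminate as many remaining coefficients as possible. The two free parameters that survive are precisely $a_1,a_2$ in \eqref{eq:2A3+2A1eq2}.

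For the automorphism group, since $\Aut(X)$ preserves the singular locus together with its stratification by singularity type, the image of $\rho:\Aut(X)\to\fS_4$ lies in $\langle(12),(34)\rangle\simeq C_2^2$. For each of the three non-trivial classes, I would write the most general matrix realizing the permutation (with free scalars and possible shifts in the $x_5$-direction), impose invariance of \eqref{eq:2A3+2A1eq2}, and solve the resulting polynomial system using {\tt magma}, as was done in Proposition~\ref{prop:2A2+2A1class}. The symmetries $x_1\leftrightarrow x_2$ and $x_3\leftrightarrow x_4$ of the normal form are manifest, so $\sigma_{(12)}$ and $\sigma_{(12)(34)}$ lift every element of $C_2^2$, and the computation should confirm that no other lifts arise. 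For the kernel $H$, an element fixing all four singular points is diagonal on $(x_1,x_2,x_3,x_4)$ up to an additive shift in $x_5$; matching the weights of the monomial types in \eqref{eq:2A3+2A1eq2} forces any nontrivial solution to satisfy $a_1=a_2=0$, and in that case $\langle\eta_1\rangle\simeq C_3$ exhausts $H$. Combining gives the asserted $\Aut(X)$.

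The main obstacle will be the normal-form derivation: the defect condition is a global rather than purely local constraint, and identifying which coefficient choices force $\Pi_{12}\subset X$ while leaving enough coordinate freedom for the subsequent toric and translational simplifications requires careful ordering of the reductions. As elsewhere in the paper, the heavy polynomial elimination is delegated to {\tt magma}.
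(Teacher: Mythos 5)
Your outline matches the paper's proof: it imports the conditions $t_1=t_2=1$, $t_3=t_5=t_9=4t_4t_7+t_6^2=4t_8t_{11}+t_{10}^2=0$ from the proof of Proposition~\ref{prop:2A2+2A1class}, uses the defect to produce planes contained in $X$, normalizes coordinates, and then runs the automorphism algorithm exactly as you describe. One steering remark: the consequence of $d(X)=2$ that actually does the work is not the plane through the two $\sA_3$-points but the two planes $\langle p_1,p_3,p_4\rangle=\{x_2=x_5=0\}$ and $\langle p_2,p_3,p_4\rangle=\{x_1=x_5=0\}$ guaranteed by Lemma~\ref{lemm:2A3mA1defect}; being spanned by singular points they are coordinate planes, so their containment in $X$ immediately gives the linear conditions $t_4=t_8=0$, whence $t_6=t_{10}=0$, and the normal form \eqref{eq:2A3+2A1eq2} follows by a change of variables.
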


\begin{proof}
    Following the proof of Proposition~\ref{prop:2A2+2A1class}, we know that up to change of variables, the parameters in \eqref{eq:4sineq} satisfy
    $$
    t_1=t_2=1,\quad t_3=t_5=t_9=4t_4t_7 + t_6^2=4t_8t_{11} + t_{10}^2=0.
    $$
    When the defect $d(X)=2$, by Proposition~\ref{lemm:2A3mA1defect}, we know that $X$ contains three planes, and two of them are spanned by
    $$
    \Pi_1=\langle p_1,p_3,p_4\rangle=\{x_2=x_5=0\},\quad  \Pi_2=\langle p_2,p_3,p_4\rangle=\{x_1=x_5=0\}.
    $$
    This implies $t_4=t_8=0$, and thus $t_6=t_{10}=0.$ Then up to a change of variables, 
we obtain the desired form \eqref{eq:2A3+2A1eq2}.

Using the same method as in the proof of Proposition~\ref{prop:2A2+2A1class}, one can find all possibilities of $\Aut(X)$ as stated in both cases.
\end{proof}

\begin{prop}\label{prop:2a32a1spec2d42a1}
    Let $X$ be a very general cubic with $2\sA_3+2\sA_1$-singularities and $d(X)=2$. Then the $\langle\sigma_{(12)(34)}\rangle$-action on $X$ from Proposition~\ref{prop:2A32A1},
    is not stably linearizable.
\end{prop}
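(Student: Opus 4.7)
The plan is to mimic the pattern of Propositions~\ref{prop:2A3noplanespec}, \ref{prop:2A4spec2A5}, \ref{prop:2a22a1spec2d42a1} and \ref{prop:2d4spec2d4+2a1}: use equivariant specialization to degenerate a very general cubic with $2\sA_3+2\sA_1$-singularities (and defect $2$) to one with $2\sD_4+2\sA_1$-singularities, while preserving the $\sigma_{(12)(34)}$-action. Since the degeneration diagram in the introduction records $2\sD_4+2\sA_1\to 2\sA_3+2\sA_1$, such a one-parameter degeneration exists inside the moduli of cubic threefolds. On the special fiber, Proposition~\ref{prop:2D4+2A1H1} supplies an {\bf (H1)}-obstruction to stable linearizability, and one transports this nonvanishing to the generic fiber via specialization of equivariant birational types.

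Concretely, I would construct a family $\cX\to\bA^1_a$ as follows. Starting from the normal form \eqref{eq:2A3+2A1eq2}, introduce a rescaling of the shape
$$
x_1\mapsto a^{p}x_1,\quad x_2\mapsto a^{p}x_2,\quad x_3\mapsto a^{q}x_3,\quad x_4\mapsto a^{q}x_4,\quad x_5\mapsto x_5,
$$
for suitable weights $p,q$, coupled with a rescaling of the parameters $a_1,a_2$ (as was done in Proposition~\ref{prop:2a22a1spec2d42a1} with $a_3=\lambda a$). The action $\sigma_{(12)(34)}:(\mathbf{x})\mapsto(x_2,x_1,x_4,x_3,x_5)$ commutes with this rescaling and therefore extends to the total family $\cX$. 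The weights must be chosen so that, as $a\to 0$, the cubic terms in the equation that keep the $p_1,p_2$-singularities at the $\sA_3$-level (rather than deepening to $\sD_4$) drop out, while the terms imposing the $\sA_1$-singularities at $p_3,p_4$ survive; for very general values of the surviving parameters, the generic fiber will have $2\sA_3+2\sA_1$-singularities with defect $2$, and the special fiber $X_0$ will have $2\sD_4+2\sA_1$-singularities to which Proposition~\ref{prop:2D4+2A1H1} applies.

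Once the family is in hand, I would resolve the two $\sA_3$-singularities in the generic fiber equivariantly, by blowing up the two sections of $\cX\to\bA^1_a$ supported at the $\sA_3$-points twice, in the $\sigma_{(12)(34)}$-orbit structure. The resulting family has smooth generic fiber, and its special fiber has only additional $\sA_1$-singularities, arranged in two $\sigma_{(12)(34)}$-orbits — these are $BG$-rational singularities in the sense of \cite[Proposition 2.9]{CTZcub}. Applying that specialization statement then transports the non–stable-linearizability of the $\langle\sigma_{(12)(34)}\rangle$-action on $X_0$, guaranteed by Proposition~\ref{prop:2D4+2A1H1}, to the $\langle\sigma_{(12)(34)}\rangle$-action on the generic (hence very general) fiber of $\cX$, which is exactly the desired conclusion.

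The main obstacle is calibrating the weighted rescaling together with the reparametrization of $a_1,a_2$ to guarantee simultaneously: (i) $\sigma_{(12)(34)}$-invariance, easily arranged by making the weights equal on the pairs $\{x_1,x_2\}$ and $\{x_3,x_4\}$; (ii) the correct limiting singularity type $2\sD_4+2\sA_1$ on $X_0$, which requires the cone-point quadric at $p_1$ (respectively $p_2$) to degenerate from a rank-$2$ smooth cone to a pair of planes while the $\sA_1$-points at $p_3,p_4$ remain nondegenerate — this is delicate because a naive rescaling can easily introduce nonisolated singularities or collapse the $\sA_1$-points onto $\sA_3$-points in the limit; (iii) that the defect-$2$ condition persists in the generic fiber, i.e., that the three planes in $X$ survive the rescaling (equivalently, the coefficient conditions $t_4=t_8=0$ from the proof of Proposition~\ref{prop:2A32A1} are preserved). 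Once these three constraints are simultaneously met by the chosen weights, the rest of the argument follows the established template without additional difficulty.
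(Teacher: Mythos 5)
Your proposal follows essentially the same route as the paper's proof: the paper constructs exactly the one‑parameter family you describe, via the substitution $y_1=a^{-1}x_1$, $y_2=a^{-1}x_2$, $y_3=a^2x_3$, $y_4=a^2x_4$, $y_5=a^4x_5$ with $a=a_2^{-1/12}$ and the parameter reparametrization $a_1a^6=\lambda$, so that the special fiber at $a=0$ acquires $2\sD_4+2\sA_1$-singularities, and then invokes Proposition~\ref{prop:2D4+2A1H1} together with specialization applied to an equivariant resolution of the generic fiber. The calibration you flag as the main remaining obstacle does work out with these weights (note only the small terminological slip: the quadric cone at an $\sA_3$-point has rank $3$, degenerating to the rank-$2$ pair of planes at a $\sD_4$-point), so your plan matches the paper's argument.
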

\begin{proof}
    By Proposition~\ref{prop:2A32A1}, we know that all such cubics are given by \eqref{eq:2A3+2A1eq2}. Let $a=a_2^{-\frac{1}{12}}$. Under the change of variables 
    $$
    y_1=\frac1ax_1,\quad y_2=\frac1ax_2,\quad y_3=a^2x_3,\quad y_4=a^2x_4,\quad y_5=a^4x_5,
    $$
    the equation \eqref{eq:2A3+2A1eq2} becomes 
\begin{align}\label{eq:2A32A1param}
y_1y_2(y_3+y_4)+y_3y_4y_5+x_5^2(a^9(x_1+x_2)+a_1a^6(x_3+x_4)).
\end{align}
For any fixed $\lambda\in k$, we may consider all the cubic threefolds given by \eqref{eq:2A32A1param} with  $a_1a^6=\lambda$ as a 1-parameter family parameterized by $a$:
$$
\cX\to \bA^1_{a},
$$
where the general fiber of $\cX$ is a cubic threefold with $2\sA_3+2\sA_1$-singularities given by 
$$
y_1y_2(y_3+y_4)+y_3y_4y_5+x_5^2(a^9(y_1+y_2)+\lambda(y_3+y_4))=0.
$$
The special fiber $X_{0}=\cX_{0}$ above $a=0$ has $2\sD_4+2\sA_1$-singularities. The $\sigma_{(12)(34)}$-action on $X_0$ is not stably linearizable, see Proposition~\ref{prop:2D4+2A1H1}. 

 Applying specialization to a resolution of singularities of the generic fiber, we conclude that the $\langle\sigma_{(12)(34)}\rangle$-action on a very general member in the family $\cX$ is not stably linearizable.
\end{proof}
\subsection*{Singularity Type $2\sD_4+2\sA_1$.} 
Assume that $p_1,$ $p_2$ are $\sD_4$-points.
\begin{prop}\label{prop:2D4+2A1}
      Let $X$ be a cubic with $2\sD_4+2\sA_1$-singularities. Then, 
      up to isomorphism, $X$ is given by
   \begin{align}\label{eq:2D4+2A1eq}
  x_1x_2x_3 + x_1x_2x_4+a_1x_3x_4x_5
    + x_5^2(a_2x_3 + a_2x_4)
    + x_5^3=0,
\end{align}
for general $a_1,a_2\in k$, and
    $$
    \Aut(X)=\begin{cases}
\langle\tau_a,\eta_2,\sigma_{(12)},\sigma_{(12)(34)}\rangle\simeq(\bG_m(k)\times C_2)\rtimes C_2^2,&\text{ when }a_2=0,\\
        \langle\tau_a,\sigma_{(12)},\sigma_{(12)(34)}\rangle\simeq \bG_m(k)\rtimes C_2^2,&\text{ otherwise},
    \end{cases}
    $$
    where 
    \begin{align*}
        \tau_a:& (\mathbf{x})\mapsto (ax_1, a^{-1}x_2, x_3, x_4, x_5),\quad a\in k^\times,\\
         \eta_2:& (\mathbf{x})\mapsto (x_1, -x_2, -x_3, -x_4, x_5),\\
          \sigma_{(12)}:& (\mathbf{x})\mapsto (x_2, x_1, x_3, x_4, x_5),\\
           \sigma_{(12)(34)}:& (\mathbf{x})\mapsto (x_2, x_1, x_4, x_3, x_5).
    \end{align*}
    
\end{prop}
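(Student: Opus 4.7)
The plan is to follow the same two-step strategy used in Propositions~\ref{prop:2A2+2A1class} and \ref{prop:2A32A1}: first reduce the general equation \eqref{eq:4sineq} to the normal form \eqref{eq:2D4+2A1eq} by imposing singularity-type conditions together with coordinate changes, and then run the algorithm from Section~\ref{sect:aut} to classify $\Aut(X)$.

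For the normal form, I would translate the singularity types into conditions on the tangent cones. The tangent cone of $X$ at $p_i$ is a quadric surface in $\bP^3$; a $\sD_4$-point requires this quadric to have rank $2$ (a union of two planes meeting in a line), while an $\sA_1$-point requires rank $4$. At $p_1, p_2$ these tangent cones are visible from the coefficients $t_1,\ldots,t_{15}$ of \eqref{eq:4sineq} as specific $4\times 4$ symmetric matrices whose $3\times 3$ minors must all vanish. Imposing the rank-$2$ condition at $p_1$ and $p_2$ gives several polynomial relations among the $t_i$'s; the rank-$4$ conditions at $p_3, p_4$ exclude degenerations to worse singularities. After these relations, the residual freedom to rescale each of $x_1,\ldots,x_5$ independently and to add multiples of $x_5$ to $x_1,x_2$ (permissible since $p_1,p_2$ are singular) should clear enough coefficients to leave exactly the two essential parameters $a_1,a_2$ and the normal form \eqref{eq:2D4+2A1eq}. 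The clean factorization $x_1x_2(x_3+x_4)$ arises by choosing coordinates so that the two planes in each $\sD_4$-tangent cone are $\{x_1=x_3=0\}$, $\{x_1=x_4=0\}$ at $p_2$ and symmetrically at $p_1$.

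For the automorphism computation, I use the exact sequence
\[
0 \to H \to \Aut(X) \stackrel{\rho}{\to} \fS_2 \times \fS_2,
\]
where the target records the permutations of $\{p_1,p_2\}$ and $\{p_3,p_4\}$ separately (the two pairs cannot mix because they are of different singularity types). For $H$, any element must be diagonal in the $(x_1,x_2)$- and $(x_3,x_4)$-blocks plus a $x_5$-column; writing down a generic such matrix and solving $\sigma^*(f)=\lambda f$ as in the proof of Proposition~\ref{prop:2a3-noplane} produces the one-parameter torus $\tau_a$ for all $a_1,a_2$, and, when $a_2=0$, the additional involution $\eta_2$ (the sign change $(x_2,x_3,x_4) \mapsto (-x_2,-x_3,-x_4)$ preserves $x_1x_2(x_3+x_4)$, $x_3x_4x_5$, $x_5^3$ but kills the odd monomial $x_5^2(x_3+x_4)$, forcing $a_2=0$). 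Repeating the same matrix-ansatz analysis for elements realizing $(1,2)$ and $(1,2)(3,4)$ shows that $\sigma_{(12)}$ and $\sigma_{(12)(34)}$ always lift, so $\rho$ is surjective, while purely $(3,4)$-lifts do not exist unconditionally.

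The main obstacle is the bookkeeping around the rank-$2$ conditions: unlike the $\sA_n$-cases handled earlier, the tangent cones at the $\sD_4$-points are reducible, and the polynomial system encoding this rank drop (combined with the $\sA_1$-conditions at $p_3,p_4$) has several irreducible components, some of which correspond to worse degenerations such as $2\sD_4+3\sA_1$ or $3\sD_4$. As in the preceding propositions, in practice I would carry out the decomposition using the {\tt magma} routine {\tt ProbableRadicalDecomposition} and discard the components whose generic member has the wrong singularity type, leaving the single component giving \eqref{eq:2D4+2A1eq}.
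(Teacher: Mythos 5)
Your proposal follows essentially the same route as the paper: impose the rank-$2$ condition on the quadratic forms at $p_1,p_2$ (and exclude components whose general member degenerates further), normalize coordinates to reach \eqref{eq:2D4+2A1eq}, and then classify $\Aut(X)$ via the exact sequence onto $\fS_2\times\fS_2$ together with a matrix ansatz for $H$ and for the lifts of the transpositions; the paper shortens the second step by simply observing that $\sigma_{(12)}$ and $\sigma_{(12)(34)}$ visibly preserve the symmetric normal form, so only $H$ needs the ansatz. Two small slips, neither of which affects the conclusion: your claim that ``purely $(3,4)$-lifts do not exist'' contradicts your own surjectivity of $\rho$, since $\sigma_{(12)}\sigma_{(12)(34)}$ is exactly the coordinate swap $x_3\leftrightarrow x_4$ and preserves \eqref{eq:2D4+2A1eq}; and the two planes of the tangent cone at $p_2$ are $\{x_1=0\}$ and $\{x_3+x_4=0\}$ (from $x_1(x_3+x_4)=0$), not $\{x_1=x_3=0\}$ and $\{x_1=x_4=0\}$.
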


\begin{proof}
Existence of $\sD_4$-points $p_1$ and $p_2$ implies that the quadratic terms after $x_1$, respectively $x_2$, define two quadratic forms in 4 variables of rank $2$. This imposes a system of nonlinear conditions on the parameters $t_1,\ldots,t_{15}$. Solving the system via {\tt magma}, and excluding the components of the solutions whose general members define a cubic with nonisolated singularities, we find the conditions on parameters:
\begin{multline}\label{eq:2D4+2A1condition}
        t_4=t_8=0, \,\,\,
  t_1t_6=t_2t_5,\,\,\,
        t_1t_7=t_3t_5,\,\,\,
        t_1t_{10} =t_2t_9,\,\,\,
        t_1t_{11}= t_3t_9,\\
        t_2t_7 =t_3t_6,\,\,\,
        t_2t_{11} = t_3t_{10},\,\,\,
        t_5t_{10} =t_6t_9,\,\,\,
        t_5t_{11}= t_7t_9,\,\,\,
        t_6t_{11}= t_7t_{10}.
\end{multline}
Up to a change of variables, we may assume $t_1=1, t_3=t_5=t_9=0$, reducing \eqref{eq:2D4+2A1condition} to
$
    t_6=t_7=t_{10}=t_{11}=0.
$
One may check $t_2\ne 0$, since otherwise $X$ has nonisolated singularities, and $X$ is of the form 
   \begin{align*}
  x_1x_2x_3 + x_1x_2x_4+t_{15}x_5^3
    + x_5^2(t_{13}x_3 + t_{14}x_4)
    +t_{12}x_3x_4x_5 =0.
\end{align*}
Up to a change of variables, we may assume that $t_{13}=t_{14}$ and $t_{15}=1$.
 
To find $\Aut(X)$, we first observe that $\langle \sigma_{(12)(34)},\sigma_{(12)}\rangle\subset\Aut(X)$ as specified in the assertion. So it suffices to classify $g\in \Aut(X)$ fixing all four singular points. Such elements take the form 
 $$
 g:(\mathbf{x})\mapsto (s_1x_1+r_1x_5,s_2x_2+r_2x_5,s_3x_3+r_3x_5,s_4x_4+r_4x_5 x_5),
 $$
 for some $s_1,\ldots,s_4\in k^\times$ and $r_1,\ldots,r_4\in k$. Let $f$ be the equation~\eqref{eq:2D4+2A1eq}. As before, $g^*(f)=s_1s_2s_3f$ imposes a system of equations on the parameters. Solving this system leads to the assertions about $\Aut(X)$.
\end{proof}

\subsection*{Cohomology}From Proposition~\ref{prop:defectd4pt}, we know that $\Cl(X)$ is generated by the five planes in $X$. Using \eqref{eq:2D4+2A1eq}, one finds their equations:
$$
    \Pi_1=\{x_1=x_5=0\},\quad  \Pi_2=\{x_2=x_5=0\}, \quad  
    $$
    $$
    \Pi_3=\{x_3+x_4=x_5+\sqrt{a_1}x_3=0\},
$$
$$
\Pi_4=\{x_3+x_4=x_5=0\}, \quad  \Pi_5=\{x_3+x_4=x_5-\sqrt{a_1}x_3=0\}.
$$
The class group $\Cl(X)$ is generated by $\Pi_1,\ldots,\Pi_5$, with relation 
$$
\Pi_1+\Pi_2=\Pi_3+\Pi_5.
$$
The involution $\sigma_{(12)(34)}\sigma_{(12)}$ and $\eta_2$ both switch $\Pi_3$ and $\Pi_5$ and leave other planes invariant, while  $\sigma_{(12)}$ switches $\Pi_1$ and $\Pi_2$ and leaves other planes invariant.    

\begin{prop}
\label{prop:2D4+2A1H1}
    With notation as above, one has 
    $$
    \rH^1(\langle\sigma_{(12)(34)}\rangle,\Pic(\tilde X))=\bZ/2.
    $$
\end{prop}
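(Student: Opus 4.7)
The plan is to apply Lemma~\ref{lemm:h1seq} and reduce to computing $\rH^1(\langle\sigma\rangle,\Cl(X))$, where $\sigma:=\sigma_{(12)(34)}$. The first step is to verify the hypothesis on the exceptional divisors: since $\sigma$ swaps the two $\sD_4$-points $p_1,p_2$ and the two $\sA_1$-points $p_3,p_4$, the ten exceptional divisors of the minimal resolution $\tilde X\to X$ break into five free $\langle\sigma\rangle$-orbits (four pairs over the $\sD_4$-points and one pair over the $\sA_1$-points). Hence $\oplus_i\bZ\cdot E_i\cong\bZ[C_2]^{\oplus 5}$ as $\bZ[\langle\sigma\rangle]$-modules, which is cohomologically trivial in positive degrees.

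Next, I would determine the $\sigma$-action on the five planes $\Pi_1,\ldots,\Pi_5$ listed above. A direct substitution using $\sigma(\mathbf{x})=(x_2,x_1,x_4,x_3,x_5)$ in the defining equations shows that $\sigma$ swaps $\Pi_1\leftrightarrow\Pi_2$ and $\Pi_3\leftrightarrow\Pi_5$, and fixes $\Pi_4$; the slightly delicate piece is the pair $\Pi_3,\Pi_5$, where one picks a generic point $(x_1,x_2,x_3,-x_3,-\sqrt{a_1}\,x_3)\in\Pi_3$, applies $\sigma$, and verifies that the two conditions cutting out $\Pi_5$ are satisfied (the two sign choices for $\sqrt{a_1}$ get swapped).

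Using the unique relation $\Pi_5=\Pi_1+\Pi_2-\Pi_3$ to take $\{\Pi_1,\Pi_2,\Pi_3,\Pi_4\}$ as a $\bZ$-basis of $\Cl(X)$, the $\sigma$-action is represented by a $4\times 4$ integer matrix whose only nontrivial feature is $\Pi_3\mapsto\Pi_1+\Pi_2-\Pi_3$. A routine Tate cohomology computation then gives $\ker(1+\sigma)=\{(a,b,-a-b,0):a,b\in\bZ\}\cong\bZ^2$, while $\mathrm{im}(1-\sigma)$ is generated by $(1,-1,0,0)$ and $(-1,-1,2,0)$, which in the above $\bZ^2$-coordinates read $(1,-1)$ and $(-1,-1)$, spanning a sublattice of index $2$. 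This yields $\rH^1(\langle\sigma\rangle,\Cl(X))\cong\bZ/2$, and combining with Lemma~\ref{lemm:h1seq} proves the proposition. No step presents a serious obstacle beyond bookkeeping, and the argument is structurally parallel to the $2\sA_5$ computation in Proposition~\ref{prop:coho}.
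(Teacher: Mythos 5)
Your proposal is correct and follows the paper's proof essentially verbatim: both apply Lemma~\ref{lemm:h1seq} (checking $\rH^2$ of the exceptional-divisor module vanishes because $\sigma_{(12)(34)}$ fixes no singular point, hence all orbits on exceptional divisors are free), determine that $\sigma_{(12)(34)}$ swaps $\Pi_1\leftrightarrow\Pi_2$ and $\Pi_3\leftrightarrow\Pi_5$ while fixing $\Pi_4$, and then compute $\rH^1$ of $\Cl(X)$. The only cosmetic difference is that the paper chooses the basis $\Pi_3,\ \Pi_1+\Pi_2-\Pi_3,\ \Pi_4,\ \Pi_2-\Pi_3$, which splits $\Cl(X)$ as $\bZ[C_2]\oplus\bZ\oplus\bZ^{-}$ and makes the answer immediate, whereas you compute $\ker(1+\sigma)/\mathrm{im}(1-\sigma)$ directly in the naive basis; your index-$2$ computation checks out.
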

\begin{proof}
    Choose a basis of $\Cl(X)$ consisting of the classes
    $$
    \Pi_3,\quad \Pi_1+\Pi_2-\Pi_3, \quad \Pi_4,\quad\Pi_2-\Pi_3.
    $$ 
    The involution $\sigma_{(12)(34)}$ switches the first two elements, fixes the third one, and acts on $\Pi_2-\Pi_3$ via (-1) multiplication. This implies 
    $$
      \rH^1(\langle\sigma_{(12)(34)}\rangle,\Cl(X))=\bZ/2.
    $$
    Since  $\sigma_{(12)(34)}$ does not fix any singular points, it does not fix any class of exceptional divisors $E_i$ of the resolution of singularities. In particular, 
    $$
      \rH^2(\langle\sigma_{(12)(34)}\rangle,\oplus_i E_i)=0.
    $$
    By Lemma~\ref{lemm:h1seq}, we conclude 
 $$
      \rH^1(\langle\sigma_{(12)(34)}\rangle,\Pic(\tilde X))=\rH^1(\langle\sigma_{(12)(34)}\rangle,\Cl(X))=\bZ/2.
    $$
    
\end{proof}

\subsection*{Linearizability}
\begin{coro}
\label{coro:2D4+2A1}
Let $X$ be a cubic threefold with $2\sD_4+2\sA_1$-singularities and $G\subseteq \Aut(X)$. 
The $G$-action on $X$ is not (stably) linearizable if and only if $G$ contains an element conjugate to $\sigma_{(12)(34)}$.
\end{coro}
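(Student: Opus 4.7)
The plan is to combine the cohomological obstruction from Proposition~\ref{prop:2D4+2A1H1} with an analysis of when it vanishes, constructing linearizations in the remaining cases, in direct parallel to the treatment of $2\sA_5$ in Corollary~\ref{coro:2a5sum}.

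For the \emph{only if} direction, suppose $g\in G$ is conjugate in $\Aut(X)$ to $\sigma_{(12)(34)}$. By conjugacy invariance of group cohomology,
\[
\rH^1(\langle g\rangle,\Pic(\tilde X))\;\cong\;\rH^1(\langle\sigma_{(12)(34)}\rangle,\Pic(\tilde X))\;=\;\bZ/2,
\]
where the last equality is Proposition~\ref{prop:2D4+2A1H1}. Since stable linearizability of the $G$-action forces the {\bf (H1)}-obstruction to vanish for every cyclic subgroup, the $G$-action is not stably linearizable.

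For the \emph{if} direction, I would first identify the conjugacy class of $\sigma_{(12)(34)}$ in $\Aut(X)$. A short computation gives $\sigma_{(12)(34)}\tau_a\sigma_{(12)(34)}=\tau_{a^{-1}}$, so conjugation by $\tau_b$ sends $\tau_a\sigma_{(12)(34)}$ to $\tau_{ab^2}\sigma_{(12)(34)}$; because $k$ is algebraically closed, this sweeps out the entire coset $\{\tau_a\sigma_{(12)(34)}\}$. These elements are characterized by acting on $\Cl(X)$ via the simultaneous transpositions $\Pi_1\leftrightarrow\Pi_2$ and $\Pi_3\leftrightarrow\Pi_5$ (fixing $\Pi_4$), as recorded before Proposition~\ref{prop:2D4+2A1H1}. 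All other nontrivial cosets — notably $\tau_a\sigma_{(12)}$, $\tau_a\sigma_{(12)(34)}\sigma_{(12)}$, and, when $a_2=0$, those involving $\eta_2$ such as $\eta_2\tau_a\sigma_{(12)(34)}$ — act on $\Cl(X)$ by at most one of these transpositions, and hence lie in distinct conjugacy classes.

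Assuming $G$ contains no element conjugate to $\sigma_{(12)(34)}$, I would split into cases according to the action of $G$ on $\{p_1,p_2,p_3,p_4\}$. In all situations except one, $G$ fixes a singular point of $X$, and $G$-equivariant linear projection from that point produces a birational map $X\dashrightarrow\bP^3$ that linearizes the action. The one exceptional case is $a_2=0$ together with $G$ containing an element of the form $\eta_2\tau_a\sigma_{(12)(34)}$, which acts on the singular locus as the double transposition $(12)(34)$ but is not conjugate to $\sigma_{(12)(34)}$; here I would adapt the strategy of Proposition~\ref{prop:tau-iota}, using the $G$-invariant plane $\Pi_4$ together with the involution $\eta_2$ to exhibit an explicit $G$-equivariant birational model of $X$ possessing a fixed smooth point, from which linearization follows. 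This last construction is the principal technical obstacle; the cohomological and projection arguments are routine in light of the preceding propositions.
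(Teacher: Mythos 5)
Your \emph{only if} direction is correct and coincides with the paper's: conjugacy-invariance of $\rH^1$ together with Proposition~\ref{prop:2D4+2A1H1} rules out stable linearizability whenever $G$ meets the conjugacy class of $\sigma_{(12)(34)}$. Your identification of that conjugacy class as the coset $\{\tau_a\sigma_{(12)(34)}\}$, via $\tau_b(\tau_a\sigma_{(12)(34)})\tau_b^{-1}=\tau_{ab^2}\sigma_{(12)(34)}$ and the distinct actions on $\Cl(X)$, is also sound, and it correctly isolates the one nontrivial case for the converse: $a_2=0$ and $G$ containing an element of the form $\eta_2\tau_a\sigma_{(12)(34)}$ (in every other case $G$ fixes a singular point and projection from it linearizes).

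However, exactly at that point the proof stops: you announce that you ``would adapt the strategy of Proposition~\ref{prop:tau-iota}'' using $\Pi_4$ and $\eta_2$ to produce a birational model with a fixed smooth point, and you yourself flag this as the principal technical obstacle. That construction is the entire content of the \emph{if} direction in the hard case, and it is not supplied, so the argument is incomplete as written. Note moreover that $\Pi_4$ is invariant under all of $\Aut(X)$, including $\sigma_{(12)(34)}$ itself, so no argument resting solely on the invariance of $\Pi_4$ can separate the linearizable actions from the obstructed ones. The paper's route is more elementary and bypasses the quadric-cone manipulation of Proposition~\ref{prop:tau-iota} entirely: after observing that such a $G$ lies (up to conjugation) in the subgroup generated by $\tau_a$, $\sigma_{(12)}$ and $\eta_2\sigma_{(12)(34)}$, one checks that the plane $\Pi_3$ is $G$-invariant --- precisely because the elements swapping $\Pi_3\leftrightarrow\Pi_5$ have been excluded --- and that the $G$-invariant line $\{x_1=x_2=x_5=0\}\subset X$ is disjoint from $\Pi_3$; the invariant-plane-plus-disjoint-invariant-line configuration then yields linearizability by \cite[Lemma 1.1]{CTZ}. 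You should either carry out your Proposition~\ref{prop:tau-iota}-style construction in full or replace it with this plane-and-line argument.
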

\begin{proof}
 If $G$ contains an element conjugate to $\sigma_{(12)(34)}$. By Proposition~\ref{prop:2D4+2A1H1}, we know the $G$-action on $X$ has an {\bf (H1)}-obstruction and is not stably linearizable. Conversely, assume $G$ does not contain such an element and $G$ does not fix any singular point. From the classification in Proposition~\ref{prop:2D4+2A1}, we are in the case when $a_2=0$ and up to conjugation, $G\subseteq G'$ where $G'$ is the group generated by $\tau_a,\sigma_{(12)}$ and $\eta_2\sigma_{(12)(34)}$.  One can then check that $G'$ leaves invariant the plane $\Pi_3$ and the line $\{x_1=x_2=x_5=0\}\subset X$ disjoint from $\Pi_3$. It follows that the $G$-action on $X$ is linearizable, as in \cite[Lemma 1.1]{CTZ}.
\end{proof}

\subsection*{Singularity Type $4\sA_2$.}  Assume that $p_1,  p_2, p_3, p_4$ are $\sA_2$-points on $X$. We start with a classification of actions and normal forms.

\begin{prop}
\label{prop:4A2-newform}
    Let $X$ be a cubic threefold with $4\sA_2$-singularities. Then, up to isomorphism, $X$ is given by 
  \begin{multline}\label{eq:4A2newform}
x_1x_2x_3+x_1x_2x_4+x_1x_3x_4+x_2x_3x_4+x_5^3+ax_5^2(x_1+x_2+x_3+x_4)+\\
        +x_5(r_{1}(x_1x_2+x_3x_4)+r_2(x_1x_3+x_2x_4)+r_3(x_1x_4+x_2x_3))=0, 
    \end{multline}
for general $r_1,r_2,r_3\in k$, and 
$$
a=-(\frac14r_1^2 - \frac12r_1r_2 - \frac12r_1r_3 + \frac14r_2^2 - 
    \frac12r_2r_3 + \frac14r_3^2),
    $$
    with
    $$
\Aut(X)=\begin{cases}
   \langle\eta_3,\sigma_{(12)},\sigma_{(1234)}\rangle\simeq C_3\times\fS_4,&\text{when } r_1=r_2=r_3=0,\\
    \langle\sigma_{(12)},\sigma_{(1234)}\rangle\simeq\fS_4,&\text{when } r_1=r_2=r_3\ne0,\\
     \langle\eta_3^2\sigma_{(234)},\sigma_{(12)(34)}
        \rangle\simeq\fA_4,&\text{when } r_1=\zeta_3r_2=\zeta_3^2r_3\ne 0,\\
          \langle\sigma_{(12)},\sigma_{(12)(34)}\rangle\simeq\fD_4,&\text{when } r_2=r_3,\\
           \langle\sigma_{(13)(24)},\sigma_{(12)(34)}\rangle\simeq C_2^2,&\text{otherwise,}
\end{cases}
$$
where 
\begin{align*}
    \eta_3&: (\mathbf{x})\mapsto(x_1,x_2,x_3,x_4,\zeta_3x_5),\\
    \sigma_{(12)}&: (\mathbf{x})\mapsto(x_2,x_1,x_3,x_4,x_5),\\
     \sigma_{(1234)}&: (\mathbf{x})\mapsto(x_2,x_3,x_4,x_1,x_5),\\     \sigma_{(234)}&: (\mathbf{x})\mapsto(x_1,x_3,x_4,x_2,x_5),\\  
       \sigma_{(12)(34)}&: (\mathbf{x})\mapsto(x_2,x_1,x_4,x_3,x_5),\\     
        \sigma_{(13)(24)}&: (\mathbf{x})\mapsto(x_3,x_4,x_1,x_2,x_5).
\end{align*}
\end{prop}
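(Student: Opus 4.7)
The plan is to follow the algorithm of Section~\ref{sect:aut}, in the same spirit as Propositions~\ref{prop:2A2+2A1class} and \ref{prop:2D4+2A1}. Starting from the ansatz \eqref{eq:4sineq}, I would impose at each of the four singular points that the induced quadratic form has rank exactly $3$, ensuring the $\sA_2$-type. This yields four polynomial conditions on the coefficients $t_1,\dots,t_{15}$. Using the diagonal torus action on $(x_1,x_2,x_3,x_4)$, I would normalize $t_1=t_2=t_4=t_8=1$, and scale $x_5$ to set $t_{15}=1$. The remaining $\sA_2$-conditions, after solving with {\tt magma} (discarding branches producing nonisolated or deeper singularities), pin down the coefficient $a$ of $x_5^2(x_1+x_2+x_3+x_4)$ as the quadratic expression in $r_1,r_2,r_3$ displayed in the statement. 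The leftover torus freedom is absorbed into making the four ``edge'' coefficients equal to $1$, producing the three-parameter normal form \eqref{eq:4A2newform}.

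Next I would classify $\Aut(X)$ through the exact sequence $0\to H\to\Aut(X)\xrightarrow{\rho}\fS_4$. For the kernel $H$ fixing all four singular points, any element has the shape $(\mathbf{x})\mapsto(s_ix_i+c_ix_5)_{i=1,\dots,4}$ together with a scaling of $x_5$; imposing invariance of \eqref{eq:4A2newform} and solving the resulting monomial equations with {\tt magma} shows that $H=\langle\eta_3\rangle\simeq C_3$ precisely when $r_1=r_2=r_3=0$, and $H$ is trivial otherwise. For the image of $\rho$, I would exploit that the three coefficients $r_1,r_2,r_3$ are naturally indexed by the three unordered pair-partitions $\{12|34\},\{13|24\},\{14|23\}$ of $\{1,2,3,4\}$. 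The normal Klein four subgroup $V_4\subset\fS_4$ acts trivially on these partitions and thus always lifts to $\Aut(X)$, yielding the base subgroup $\langle\sigma_{(12)(34)},\sigma_{(13)(24)}\rangle\simeq C_2^2$ in all cases.

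Enlargements of $\rho(\Aut(X))$ beyond $V_4$ are then governed by the quotient $\fS_4/V_4\simeq\fS_3$ permuting the three partitions. A transposition stabilizer in this $\fS_3$ requires $r_2=r_3$ and produces $\fD_4$; the full $\fS_3$-stabilizer requires $r_1=r_2=r_3$ and produces $\fS_4$; and a cyclic stabilizer combined with a compensating $\zeta_3$-twist on $x_5$ corresponds to the case $r_1=\zeta_3r_2=\zeta_3^2 r_3$, yielding $\fA_4$ (a lift of a $3$-cycle cyclically permutes the monomials $r_i x_5\cdot(\cdots)$, which can only be compensated by multiplying $x_5$ by $\zeta_3^{-1}$, and this compensation is consistent exactly under the stated relation). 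When all $r_i$ vanish, the factor $C_3=\langle\eta_3\rangle$ in $H$ commutes with the extended $\fS_4$-action on $(x_1,\dots,x_4)$, giving $C_3\times\fS_4$.

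The main obstacle will be the twisted $\fA_4$-case: verifying that the specific relation $r_1=\zeta_3 r_2=\zeta_3^2 r_3$ is both necessary and sufficient for a $3$-cycle in $\fS_4$ to lift (up to a twist by $\eta_3$) to an element of $\Aut(X)$, and that no further coincidences between the parameters $r_1,r_2,r_3$ produce sporadic additional symmetries. This amounts to a finite but delicate bookkeeping check, carried out by solving $\sigma^\ast(f)=\lambda f$ over the polynomial ring in $(r_1,r_2,r_3)$ for each candidate image in $\fS_4$ and confirming via primary decomposition in {\tt magma} that the only solution components correspond to the five cases listed.
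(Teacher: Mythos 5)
Your proposal is correct and follows essentially the same route as the paper: the normal form is derived exactly as you describe (imposing the four rank conditions on \eqref{eq:4sineq}, normalizing $t_1=t_2=t_4=t_8=1$ and $t_3=t_5=t_6=t_9=0$ via the torus, and solving the residual system), and the automorphism classification is done, as the paper says, ``similar to Proposition~\ref{prop:2A2+2A1class}'', i.e.\ by solving $\sigma^*(f)=\lambda f$ for each candidate image in $\fS_4$. Your observation that $r_1,r_2,r_3$ are indexed by the three pair-partitions of $\{1,2,3,4\}$, so that $V_4$ always lifts and the remaining cases are governed by the stabilizer in $\fS_4/V_4\simeq\fS_3$ (with the $\zeta_3$-twist on $x_5$ accounting for the $\fA_4$ case), is a cleaner way to organize the same bookkeeping rather than a different argument.
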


\noindent

\begin{proof}
Four $\sA_2$-points impose the system of equations
    \begin{multline}\label{eq:4A2cond}
t_1^2t_{10}^2 + 4t_1t_2t_8t_{11} - 2t_1t_2t_9t_{10} - 
        2t_1t_3t_8t_{10} + t_2^2t_9^2 - 2t_2t_3t_8t_9 + t_3^2t_8^2\\
    =t_1^2t_{12}^2 + 4t_1t_4t_8t_{13} - 2t_1t_4t_9t_{12} - 
        2t_1t_5t_8t_{12} + t_4^2t_9^2 - 2t_4t_5t_8t_9 + t_5^2t_8^2\\
   = t_2^2t_{12}^2 + 4t_2t_4t_8t_{14} - 2t_2t_4t_{10}t_{12} - 
        2t_2t_6t_8t_{12} + t_4^2t_{10}^2 - 2t_4t_6t_8t_{10} + t_6^2t_8^2\\
     =t_1^2t_6^2 + 4t_1t_2t_4t_7 - 2t_1t_2t_5t_6 - 2t_1t_3t_4t_6
        + t_2^2t_5^2 - 2t_2t_3t_4t_5 + t_3^2t_4^2=0
    \end{multline}
    on parameters $t_1,\ldots,t_{15}$ in \eqref{eq:4sineq}. At least two of $t_1, t_2, t_4, t_8$ are nonzero, since otherwise $X$ has $3\sA_1+\sD_4$-singularities. Up to a change of variables, we may assume $t_1=t_2=1$, 
    $t_3=t_5=t_9=0$. If $t_4=0$, \eqref{eq:4A2cond} implies $t_6=t_{12}=0$ and $X$ has nonisolated singularities. Hence $t_4=1$ and the same argument shows $t_8=1.$ 
   Then we may assume $t_6=0$ reducing the system \eqref{eq:4A2cond} to 
    \begin{align*}
            t_7=t_{10}^2 + 4t_{11}=t_{12}^2 + 4t_{13}=
    (t_{10}-t_{12})^2+4t_{14}=0.
    \end{align*} 
    One may check that a general solution defines a cubic with $4\sA_2$-points. After a change of variables we obtain \eqref{eq:4A2newform}; the automorphisms $\Aut(X)$ can be classified using an argument similar to that in Proposition~\ref{prop:2A2+2A1class}.
\end{proof}



\begin{prop}\label{eq:4A2->2A5}
    Let $X$ be a very general cubic with $4\sA_2$-singularities given by \eqref{eq:4A2newform}, with $r_2=r_3$. Then $X$ is not $\langle\sigma_{(13)(24)}\rangle$-stably linearizable.
\end{prop}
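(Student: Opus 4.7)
The strategy is equivariant specialization to a cubic threefold with $2\sA_5$-singularities, paralleling the arguments in Propositions~\ref{prop:2A3noplanespec}, \ref{prop:2A4spec2A5}, and \ref{prop:2a22a1spec2d42a1}. The target is to realize the $\langle\sigma_{(13)(24)}\rangle$-action on the very general $X$ as a limit of a family whose special fiber is a $2\sA_5$-cubic carrying the involution $\sigma_{(12)(45)}$ of \eqref{eq:2A5iota}; by Proposition~\ref{prop:coho} the latter has nontrivial $(\bf{H1})$-obstruction.

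First I would diagonalize $\sigma_{(13)(24)}$, putting $y_1 = x_1 + x_3$, $y_2 = x_1 - x_3$, $y_3 = x_2 + x_4$, $y_4 = x_2 - x_4$, $y_5 = x_5$, so that
\[
\sigma_{(13)(24)} : (y_1, y_2, y_3, y_4, y_5)\mapsto (y_1, -y_2, y_3, -y_4, y_5).
\]
In these coordinates the constraint $r_2=r_3$ in \eqref{eq:4A2newform} becomes manifest as invariance under the sign changes of $y_2, y_4$: only monomials even in $(y_2, y_4)$ appear. The four $\sA_2$-points sit at $[1:\pm 1:0:0:0]$ and $[0:0:1:\pm 1:0]$, paired by $\sigma_{(13)(24)}$ in the way required for their collision to yield two $\sA_5$-points fixed, respectively, by $y_2 \leftrightarrow -y_2$ and $y_4 \leftrightarrow -y_4$.

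Next I would exhibit a one-parameter direction $(r_1(t), r_2(t))$, with fixed generic choices of the remaining parameters, yielding a family $\pi \colon \cX \to \bA^1_t$ whose generic fiber has $4\sA_2$-singularities and whose special fiber $X_0$ acquires $2\sA_5$-singularities by collision $(p_1,p_3) \rightsquigarrow q_1$ and $(p_2,p_4) \rightsquigarrow q_2$. Existence of such a direction is predicted by the degeneration diagram in the introduction (edge from $4\sA_2$ to $2\sA_5$), and can be pinned down explicitly by matching the limit equation, after an affine change in $y_1, y_3$, to the normal form \eqref{eqn:2a5}; the $\sigma_{(13)(24)}$-action then descends to an involution on $X_0$ swapping the two $\sA_5$-points, which one must verify is conjugate inside $\Aut(X_0)$ to $\sigma_{(12)(45)}$ rather than to the linearizable involution $\eta_2\sigma_{(12)(45)}$ of Proposition~\ref{prop:tau-iota}. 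Since $\sigma_{(13)(24)}$ fixes a surface of general fixed locus isomorphic to a cubic surface (while $\eta_2\sigma_{(12)(45)}$ fixes only a curve on $X_0$ when $b = 0$), this identification is forced.

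Finally I would equivariantly blow up the four sections of $\sA_2$-points in $\cX$: the resulting family has smooth generic fiber, and the special fiber acquires only $ADE$-singularities (the $2\sA_5$-points of $X_0$, together with the transforms of the colliding exceptional divisors), all $BG$-rational for $G=\langle\sigma_{(13)(24)}\rangle$ and permuted compatibly with the action. Equivariant specialization, in the form used in Proposition~\ref{prop:2A3noplanespec}, then transports the $(\bf{H1})$-obstruction on $X_0$ to the very general fiber, proving nonstable-linearizability.

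The main obstacle is the middle step: producing the explicit deformation $4\sA_2 \rightsquigarrow 2\sA_5$ that is both $\sigma_{(13)(24)}$-equivariant and lands in the $r_2=r_3$ locus of \eqref{eq:4A2newform}, and verifying that the resulting limit involution lies in the conjugacy class of $\sigma_{(12)(45)}$. Given the explicit normal forms on both sides and the one-parameter rigidity of the $2\sA_5$-family, this can be implemented directly in \texttt{magma}, in the spirit of the classification in Proposition~\ref{prop:4A2-newform}.
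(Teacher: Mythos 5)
Your overall strategy (equivariant specialization to a $2\sA_5$-cubic and transport of the {\bf (H1)}-obstruction via Proposition~\ref{prop:coho}) is the paper's strategy, but the degeneration you set up collides the wrong pairs of singular points, and this is fatal. In the normal form \eqref{eq:4A2newform} the involution $\sigma_{(13)(24)}$ swaps $p_1\leftrightarrow p_3$ and $p_2\leftrightarrow p_4$; you propose to collide $(p_1,p_3)\rightsquigarrow q_1$ and $(p_2,p_4)\rightsquigarrow q_2$, i.e.\ precisely the two $\sigma_{(13)(24)}$-orbits. The limit of a colliding $2$-orbit is necessarily a fixed point of the involution, so in your special fiber $X_0$ the induced involution fixes \emph{both} $\sA_5$-points. (Your own write-up is internally inconsistent here: you first say the two limit points are ``fixed, respectively, by $y_2\leftrightarrow -y_2$ and $y_4\leftrightarrow -y_4$'' and two sentences later that the involution ``swaps the two $\sA_5$-points''; note also that $\sigma_{(13)(24)}$ negates $y_2$ and $y_4$ simultaneously, so both limit points are fixed.) An involution fixing both singular points of a $2\sA_5$-cubic is linearizable by projection from a fixed singular point and, by Corollary~\ref{coro:2a5sum}, carries no obstruction; Proposition~\ref{prop:coho} only applies to elements conjugate to $\sigma_{(12)(45)}$, which \emph{switches} the two $\sA_5$-points. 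So nothing can be specialized from your $X_0$. A secondary error: your proposed conjugacy test via fixed loci is wrong --- $\sigma_{(13)(24)}$ has a $2$-dimensional $(-1)$-eigenspace, so its fixed locus on $X$ is a curve together with finitely many points, not a cubic surface.

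The correct degeneration, which is what the paper does, collides the pairs that are \emph{not} orbits: $(p_1,p_2)$ and $(p_3,p_4)$, which are interchanged with each other by $\sigma_{(13)(24)}$, so that the two limiting $\sA_5$-points form a single $2$-orbit. Concretely, after setting $r_1=0$ and passing to coordinates $y_1\sim x_1+x_2$, $y_2\sim x_3+x_4$, $y_4\sim x_4$, $y_5\sim x_2$, the equation becomes the $1$-parameter family
$y_1y_2y_3+y_1y_4^2+y_2y_5^2+y_3^3+r\,y_1y_2y_4+r\,y_1y_2y_5=0$ with $r=-r_2^{-3/4}$, whose special fiber at $r=0$ is exactly \eqref{eqn:2a5} with $b=0$ and on which the involution becomes literally $\sigma_{(12)(45)}$, so no conjugacy verification is needed. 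If you want to salvage your argument, you must redo the middle step with this pairing; as written, the deformation you are searching for in {\tt magma} either does not exist inside the $r_2=r_3$ locus or, if found, lands on a linearizable special fiber.
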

\begin{proof}
    When $r_2=r_3$, one may assume that $r_1=0$ up to isomorphisms. Under the change of variables
    $$
    y_1=-r_2^{1/2}(x_1+x_2),\quad    y_2=-r_2^{1/2}(x_3+x_4), \quad y_3=x_5, 
    $$
    $$
    y_4=-r_2^{-1/4}x_4,\quad y_5=-r_2^{-1/4}x_2,
    $$
    equation \eqref{eq:4A2newform} becomes 
    \begin{align}\label{eq:4A22S5fam}
           y_1y_2y_3 + y_2y_5^2 + y_1y_4^2+ y_3^3 -r_2^{-3/4}y_1y_2y_4 -r_2^{-3/4}y_1y_2y_5=0.
    \end{align}
    Let $r=-r_2^{-3/4}$, one may view \eqref{eq:4A22S5fam} as a family of cubic threefolds 
    $
    \cX\to\bA^1_{r}
    $
    parameterized by $r\in k$. The $\sigma_{(13)(24)}$-action extends to the family $\cX$ and takes the form
    $$
   \iota:\mathbf{(y)}\mapsto (y_2,y_1,y_3,y_5,y_4).
    $$
    The generic fiber of the family  has $4\sA_1$-singularities at 
    $$
    p_1=[1:0:0:0:0],\quad p_2=[1:0:0:0:-r],
    $$
    $$
    p_3=[0:1:0:0:0],\quad p_4=[0:1:0:-r:0],
    $$
    and the special fiber $X_0$ when $r=0$ has $2\sA_5$-singularities. The action of $\langle\sigma_{(13)(24)}\rangle$ on $X_0$ is not stably linearizable, by Proposition~\ref{prop:coho}. To resolve the singularities in the generic fiber, one can first equivariantly blow up two disjoint sections passing through the singular points $p_1$ and $p_3$ respectively, and then blow up those passing through $p_2$ and $p_3$, respectively. The resulting family has smooth generic fiber and the special fiber above $a=0$ has $BG$-rational singularities: it has $2\sA_1$-singularities in the same $\langle\sigma_{(13)(24)}\rangle$-orbit. Applying specialization, we obtain the desired assertion.
\end{proof}

\subsection*{Burnside obstructions}

\begin{prop}
\label{prop:burn-ob-4}
The following $G$-actions on the following cubic threefolds are nonlinearizable,
for general values of parameters of the corresponding families: 
\begin{enumerate}
\item $2\sA_2+2\sA_1$, and $G=\langle\sigma_{(12)}, \sigma_{(12)(34)}\rangle$, from Proposition~\ref{prop:2A2+2A1class},  
\item $2\sA_3+2\sA_1$, $d(X)=2$, and $G=\langle\sigma_{(12)}, \sigma_{(12)(34)}\rangle$, from Proposition~\ref{prop:2A32A1},
\item $2\sD_4+2\sA_1$, and $G=\langle\sigma_{(12)}, \sigma_{(12)(34)}\rangle$, 
from Proposition~\ref{prop:2D4+2A1}, 
\item $4\sA_2$, and $G=\langle\sigma_{(12)}, \sigma_{(12)(34)}\rangle$ in the cases when $r_2=r_3$ from Proposition~\ref{prop:4A2-newform}, 
\item $4\sA_2$, and $G=\langle\eta,\sigma_{(234)},\sigma_{(12)(34)}\rangle\simeq C_3\times \fA_4$, from Proposition~\ref{prop:4A2-newform}. 
\end{enumerate}  
\end{prop}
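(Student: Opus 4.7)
The plan is to exhibit, in each case, an incompressible divisorial symbol in $\Burn_3(G)$ that cannot arise from the class of any generically free linear action of $G$ on $\bP^3$, following the strategy of Propositions~\ref{prop:D4Burn} and \ref{prop:D4Burn2} and Example~\ref{exam:3A2Burn}.

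For cases (1)--(4), each family contains the involution $\sigma_{(12)}\in G$, so the first step is to compute its pointwise fixed surface
$$
S_{12}:=\{x_1=x_2\}\cap X\subset\bP^3_{u,x_3,x_4,x_5},\qquad u:=x_1=x_2,
$$
by substituting $x_1=x_2=u$ into the normal forms \eqref{eq:2A2+2A1eq}, \eqref{eq:2A3+2A1eq2}, \eqref{eq:2D4+2A1eq}, and \eqref{eq:4A2newform} respectively; a direct inspection shows that $S_{12}$ is, for generic parameters, an irreducible cubic surface with at worst mild isolated singularities. The residual involution $\sigma_{(34)}=\sigma_{(12)(34)}\sigma_{(12)}$ acts on $S_{12}$ by swapping $x_3,x_4$, and pointwise fixes the plane cubic
$$
C:=\{x_3=x_4\}\cap S_{12}\subset\bP^2_{u,x_3,x_5},
$$
which for generic parameter values is smooth of genus one. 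This reproduces exactly the fixed-surface-plus-fixed-elliptic-curve configuration of Proposition~\ref{prop:D4Burn}: the associated symbol
$$
(C_2,\langle\sigma_{(34)}\rangle\actsfromleft k(S_{12}),(1))\in\Burn_3(G)
$$
is incompressible and absent from the class of any linear $\bP^3$-action, yielding nonlinearizability.

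For case (5), $\sigma_{(12)}\notin G=C_3\times\fA_4$, so I would instead use the double transposition $\sigma_{(12)(34)}\in V_4\subset\fA_4$. Setting $u=x_1=x_2$, $v=x_3=x_4$, its fixed locus on the symmetric $4\sA_2$-cubic ($r_1=r_2=r_3=0$) is the smooth elliptic curve
$$
C=\{2uv(u+v)+x_5^3=0\}\subset\bP^2_{u,v,x_5},
$$
with generic stabilizer $\langle\sigma_{(12)(34)}\rangle\simeq C_2$. The $\fA_4/V_4\simeq C_3$-orbit of $C$ consists of three disjoint genus one curves, and their equivariant blowup yields a standard model with exceptional $\bP^1$-bundle $E=\bP(N_{C/X})$ over $C$. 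A direct computation gives $N_G(\langle\sigma_{(12)(34)}\rangle)=C_3\times V_4$, so the residual group acting on $E$ is $(C_3\times V_4)/C_2\simeq C_6$, acting nontrivially on the elliptic base. Applying the incompressibility argument of Proposition~\ref{prop:D4Burn2}, the divisorial symbol
$$
(C_2,C_6\actsfromleft k(E),\beta)\in\Burn_3(G),
$$
with $\beta$ the nontrivial character of $C_2$ on the normal bundle of $E$, cannot be birationally reduced to any blowup-of-a-curve symbol coming from a linear $G$-action, by functoriality of MRC quotients \cite[Theorem IV.5.5]{K-book}.

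The main obstacle is the incompressibility claim in case (5): one must rule out that a symbol over an elliptic base could appear in the Burnside class $[\bP(V)\actsfromright G]$ for any faithful $4$-dimensional representation $V$ of $C_3\times\fA_4$. Using the decomposition $V\simeq\chi\oplus V_3$ (with $\chi$ a faithful character of $C_3$ and $V_3$ the $3$-dimensional irreducible representation of $\fA_4$), every involution of $\fA_4$ has $(+1)$-eigenspace of dimension at most $2$ in $V_3$, so its fixed loci in $\bP(V)$ are unions of rational linear subspaces and cannot produce a divisorial symbol with an elliptic base. For cases (1)--(4), the analogous verification reduces to the explicit analysis of $S_{12}$ and $C$ described above and is routine.
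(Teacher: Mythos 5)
For Cases (1)--(4) your argument is essentially the paper's: you pass to the cubic surface $S=\mathrm{Fix}(\sigma_{(12)})\cap X$, note that the residual involution $\sigma_{(34)}$ pointwise fixes a smooth plane cubic on it for general parameters, and conclude incompressibility of the divisorial symbol as in Proposition~\ref{prop:D4Burn}; the paper records the same fact via $\rH^1(Y,\Pic(\tilde S))=(\bZ/2)^2$, citing Bogomolov--Prokhorov. Nothing to add there.

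For Case (5) you take a genuinely different route. The paper uses the central $C_3=\langle\eta\rangle$, whose fixed surface $\{x_5=0\}\cap X$ is the Cayley cubic $S'$, and the symbol $(C_3,\fA_4\actsfromleft k(S'),(\zeta_3))$, with incompressibility plus a multiplicity-one comparison against linear classes as in \cite[Remark 6.4]{CTZ}. You instead use $\sigma_{(12)(34)}\in V_4$ and its fixed elliptic curve $C=\{2uv(u+v)+x_5^3=0\}$; the attraction of your route is that the base of the resulting symbol has elliptic MRC quotient, so the exclusion of linear actions is automatic rather than requiring a comparison with rational-base symbols. Two points need repair, however. First, the three curves in the $V_4$-orbit are \emph{not} disjoint: all three pass through the three points $[1:1:1:1:t]$ with $4+t^3=0$, which are fixed by all of $\fA_4$; a standard model therefore requires first blowing up these points and tracking the larger stabilizers there. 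This does not change the symbol at the generic point of the exceptional divisor $E$, but your description of the standard model is wrong as stated. Second, the incompressibility of $(C_2,C_6\actsfromleft k(E),(1))$ is asserted rather than proved: unlike Proposition~\ref{prop:D4Burn2}, where the residual group acts \emph{trivially} on the elliptic base and one extracts an incompressible symbol in $\Burn_2$, here the residual $C_6$ acts nontrivially on $C$ (the $C_3$ scales $x_5$ and $V_4/\langle\sigma_{(12)(34)}\rangle$ swaps $u,v$), so that argument does not transfer verbatim; you must argue directly, via MRC functoriality, that $E$ is not $C_6$-equivariantly birational to a surface with rational MRC quotient and that the symbol cannot cancel in $\Burn_3(G)$. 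Both issues are fixable, but as written Case (5) is incomplete, whereas the paper's Cayley-cubic argument sidesteps them.
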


\begin{proof}
In Cases (1)--(4), we are in the situation of Proposition~\ref{prop:D4Burn}: the involution $\sigma_{(12)}$ gives rise to a Burnside symbol of the form  
\begin{equation}
        (\langle \sigma_{(12)}\rangle, Y\actsfromleft k(S), (1)) \in \Burn_3(G),
\end{equation}
where $S\subset X$ is a cubic surface. The residual $Y$-action on $S$ 
fixes a smooth cubic curve, for general values of parameters, so that 
$$
\rH^1(Y, \Pic(\tilde{S}))=(\bZ/2)^2, 
$$
by \cite{BogPro},  
i.e., the symbol is incompressible. Moreover, linear actions do not contribute such symbols. 

In Case (5), we have an incompressible symbol
$$
(C_3, \fA_4\actsfromleft k(S'), (\zeta_3)), 
$$
where $S'$ is the Cayley cubic surface (unique cubic surface with 4 nodes). The $\fA_4$-action on $S'$ is birational to the linear $\fA_4$-action on $\bP^2$.
This symbol is incompressible, appears in the class $[X\actsfromright G]$ 
with multiplicity one, 
and distinguishes the given $G$-action on $X$ from a linear action, 
as in \cite[Remark 6.4]{CTZ}. 
\end{proof}

\section{Five Singular points }
\label{sect:five}

Let $X$ be a cubic threefold with five singular points. Under our assumptions, we need to consider the following combinations of singularities:
$$
2\sA_2+3\sA_1, 
\quad 2\sA_3+3\sA_1,\quad 3\sA_2+2\sA_1,\quad 5\sA_2, \quad  2\sD_4+3\sA_1.
$$
We adapt the argument in \cite[\S 6]{CTZcub}, which handles $5\sA_1$-singularities. First note that if the singularities are not in linearly general position, then there is a distinguished $G$-fixed singular point, and the $G$-action on $X$ is linearizable. Thus we can assume that the singular points of $X$ are 
$$
p_1=[1:0:0:0:0],\quad p_2=[0:1:0:0:0], \quad p_3=[0:0:1:0:0],
$$
$$p_4=[0:0:0:1:0],\quad  p_5=[0:0:0:0:1].$$
Automorphisms $\Aut(\bP^4,5)$ of $\bP^4$ respecting these points fit into the exact sequence:
$$
0\to\bG_m^4(k)\to \Aut(\bP^4,5)\stackrel{\rho}{\to}\fS_5\to 0.
$$

\begin{lemm}\label{lemm:5trans}
  Let $X$ be a cubic threefold with at most $\sA_n$-singularities and  $\Sing(X)=\{p_1,\ldots, p_5\}$. Let $G\subseteq \Aut(X)$ be a finite subgroup acting intransitively on $\Sing(X).$ Then the $G$-action on $X$ is linearizable.    
\end{lemm}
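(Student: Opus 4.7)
My plan is to argue by case analysis on the orbit structure of $G$ acting on $\Sing(X)=\{p_1,\dots,p_5\}$, adapting the strategy of \cite[\S 6]{CTZcub}. Since $G$ is intransitive, the orbit partition of the five singular points either (i) contains a part equal to $1$, so $G$ fixes some $p_i$, or (ii) equals $3+2$.

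In case (i), let $p$ be a $G$-invariant singular point. Since $p$ is an isolated $\sA_n$-point on the cubic $X$, it has multiplicity exactly $2$, so a generic line through $p$ meets $X$ at $p$ with multiplicity $2$ and in one further point. Hence linear projection from $p$ defines a $G$-equivariant birational map $X\dashrightarrow\bP^3$, which is a linearization.

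In case (ii), write $A=\{p_1,p_2\}$ and $B=\{p_3,p_4,p_5\}$. By linear general position the line $L=\langle p_1,p_2\rangle$ and the plane $\Pi=\langle p_3,p_4,p_5\rangle$ are disjoint and span $\bP^4$. The multiplicity-$2$ contributions at $p_1$ and $p_2$ force $L\subset X$; the plane cubic $\Pi\cap X$ has double points at each of $p_3,p_4,p_5$, and the arithmetic genus formula forces it to split as a $G$-invariant triangle of three lines $\ell_{34},\ell_{35},\ell_{45}\subset X$. I would then consider the $G$-equivariant linear projection $\pi_L\colon X\dashrightarrow \Pi\simeq\bP^2$ from $L$, which after blowing up $L$ realizes $X$ as a $G$-equivariant conic bundle over $\bP^2$, with the triangle providing a $G$-invariant configuration of partial sections lying over itself. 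The plan is to promote this configuration to a genuine $G$-equivariant rational section of $\pi_L$ by exploiting the local $\sA_n$-geometry at the points of $A$ along $L$, whose tangent-cone data constrains the discriminant curve of $\pi_L$; once such a section exists, the no-name lemma produces a $G$-equivariant birational map $X\sim_G\bP^1\times\bP^2$, hence a linearization.

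The main obstacle is producing the equivariant section in case (ii) when $G$ acts transitively on $B$ (and hence on the three edges of the triangle), so that no individual $\ell_{ij}$ is $G$-invariant and one must assemble a section from $G$-orbits of divisors on the conic bundle. I expect this step requires the same sort of equivariant unprojection and Sarkisov-link manipulations used in \cite[\S 6]{CTZcub}, suitably refined to accommodate the $\sA_n$-singularities at the points of $A$ in place of nodes.
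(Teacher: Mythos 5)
Your case (i) matches the paper: a $G$-fixed $\sA_n$-point has multiplicity two, and projection from it linearizes. The problem is case (ii), where your argument stops exactly at the step that constitutes the entire content of the lemma. You set up the conic bundle $\pi_L\colon \Bl_L X\to\bP^2$ and correctly observe that if $G$ acts transitively on $\{p_3,p_4,p_5\}$ then no edge of the triangle $\Pi\cap X$ is individually $G$-invariant, but you then only say you ``expect'' an equivariant section can be assembled by unprojection and Sarkisov links. No construction is given, and it is not clear one exists along these lines: a $G$-orbit of three multisections of a conic bundle does not in general descend to a rational section, and the relevant obstruction (a $G$-invariant class of odd relative degree) is precisely what would need to be produced. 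As written, the proof is incomplete.

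The paper takes a different and more concrete route for case (ii). Choosing $\sigma\in G$ lifting $(1,2)(3,4,5)$ and conjugating by the torus, it shows (by direct computation) that the only $\sigma$-invariant cubics with five $\sA_n$-singularities in general position are
$$
x_3x_4x_5+a\,(x_1x_2x_3+x_1x_2x_4+x_1x_2x_5)+b\sum_{i\le 2<j<k}x_ix_jx_k=0,\qquad a,b\in k^\times,
$$
and that this normal form forces $G\cap\bG_m^4(k)$ to be trivial and every lift of a transposition in $\fS_3$ to be the plain coordinate permutation. Hence $G\simeq C_2\times C_3$ or $C_2\times\fS_3$ acts by permuting coordinates and pointwise fixes the line $l$ through $[1:1:0:0:0]$ and $[0:0:1:1:1]$. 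The standard Cremona transformation based at the five singular points carries $X$ to a smooth quadric threefold and preserves $l$, so $l\cap Q$ supplies a $G$-fixed point on $Q$ and projection from it linearizes. If you want to salvage your conic-bundle approach, you would need to either exhibit the invariant section explicitly (which in practice again requires the normal form above) or replace that step with the Cremona/quadric argument; the existence of a $G$-fixed point somewhere on $X$ or on a birational model is the essential input, and your proposal never produces one.
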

\begin{proof}
    If $G$ fixes a singular points, it is linearizable via projection. It suffices to show linearizability when $\rho(G)=C_2\times C_3$ or $C_2\times \fS_3$, i.e., when $G$ preserves the set $\{p_1, p_2\}$ and $\{p_3, p_4, p_5\}.$ In these cases, we can find an element $\sigma\in G$ such that $\rho(\sigma)=(1,2)(3,4,5)$. By conjugation under the torus action, one may assume that $\sigma$ permutes the coordinates $x_1,\ldots, x_5$ as the cycle $(1,2)(3,4,5)$. The $\langle\sigma\rangle$-invariant cubic threefolds with only $\sA_n$-singularities are given by 
    \begin{multline}\label{eq:5Anform}
        x_3x_4x_5+a(x_1x_2x_3 + x_1x_2x_4 + x_1x_2x_5)+\\
        b(
    x_1x_3x_4 + x_2x_3x_4 + x_1x_3x_5 + x_2x_3x_5 + x_1x_4x_5 + x_2x_4x_5)=0,
    \end{multline}
for some $a,b\in k^\times$ (via {\tt Magma}). Notice that if $a=0$, the cubic has nonisolated singularities, and if $b=0$, $p_1$ and $p_2$ are $\sD_4$-points. 
Based on the form 
    of \eqref{eq:5Anform}, one can see that the embedding $\bG^4_m(k)\subset G$ is trivial, i.e., $G$ does not contain diagonal elements. 
    
    If $\rho(G)=C_2\times \fS_3$, then there exists $\tau\in G$ such that $\rho(\tau)=(3,4)$ and $\tau$ had order $2.$ Thus $\tau$ takes the form
    $$
    \rho((x_1,x_2,x_3,x_4,x_5))=(a_1x_1,a_2x_2,x_4,x_3,a_3x_5),\quad a_1,a_2,a_3=\pm1.
    $$
    The only possibility for leaving \eqref{eq:5Anform} invariant is $a_1=a_2=a_3=1$. 
    
    Therefore, we conclude that $G=C_2\times C_3$ or $C_2\times\fS_3$, acting via corresponding permutations on the coordinates. Then $G$ pointwise fixes the line $l\subset \bP^4$ through $[1 : 1 : 0 : 0 : 0]$ and $[0 : 0 : 1 : 1 : 1]$. Let $\iota$ be the standard Cremona transformation on $\bP^4$. Observe that $\iota$ birationally transforms $X$ to a smooth quadric threefold $Q$, and $\iota(l)=l$. The intersection $l \cap Q $ contains $G$-fixed points, which implies the assertion.
\end{proof}

    \begin{coro}
    Let $X$ be a cubic threefold with singularities of type 
    $$
    2\sA_2+3\sA_1, \quad 2\sA_3+3\sA_1,\text{ or } \, \, 3\sA_2+2\sA_1.
    $$
    Then the $\Aut(X)$-action on $X$ is linearizable.
\end{coro}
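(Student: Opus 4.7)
The plan is to reduce the corollary directly to Lemma~\ref{lemm:5trans}. That lemma already does the heavy lifting: given a cubic threefold $X$ with only $\sA_n$-singularities at five points, any finite subgroup $G\subseteq \Aut(X)$ that acts \emph{intransitively} on $\Sing(X)$ is linearizable. All three singularity types in the statement consist entirely of $\sA_n$ points, so the hypothesis of the lemma is met, and it only remains to verify intransitivity.

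First I would note that any $\sigma\in\Aut(X)$ preserves the singularity type of each $p\in\Sing(X)$, because the analytic type of a singular point is a local isomorphism invariant. Hence $\Aut(X)$ permutes singularities of the same type among themselves, and in particular preserves the partition of $\Sing(X)$ induced by type. In each of the three cases
$$
2\sA_2+3\sA_1,\qquad 2\sA_3+3\sA_1,\qquad 3\sA_2+2\sA_1,
$$
this partition has two nonempty blocks, so $\Aut(X)$ cannot act transitively on the five singular points. Intransitivity is therefore automatic.

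With intransitivity established, Lemma~\ref{lemm:5trans} applies verbatim and gives linearizability of the $\Aut(X)$-action on $X$. The proof will thus amount to little more than invoking this lemma, after the one-line observation about preservation of singularity types. The only minor point to double-check is that in each case the hypothesis ``at most $\sA_n$-singularities'' of Lemma~\ref{lemm:5trans} is satisfied, which it clearly is since none of $\sA_1,\sA_2,\sA_3$ is of $\sD\sE$-type. There is no real obstacle in this argument; the work was already done in the preceding lemma, where the linearization via the standard Cremona transformation $\iota$ and the fixed line joining $[1{:}1{:}0{:}0{:}0]$ and $[0{:}0{:}1{:}1{:}1]$ was constructed.
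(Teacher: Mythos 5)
Your proposal is correct and is exactly the argument the paper intends: the corollary is stated without proof as an immediate consequence of Lemma~\ref{lemm:5trans}, the point being that $\Aut(X)$ preserves singularity types and each of the three configurations mixes two distinct types, forcing intransitivity on $\Sing(X)$. Your one-line observation about local isomorphism invariance of the singularity type is precisely the missing glue, so there is nothing to add.
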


Now we consider the cases when $G\subseteq \Aut(X)$ acts transitively on $\Sing(X)$: it follows that the only possible singularity type is $5\sA_2$.

\begin{lemm}
Let $X$ be a cubic threefold with $5\sA_2$-singularities such that $\Aut(X)$ does not fix any singular points. Then up to isomorphism, $X$ is given by 
\begin{multline}\label{eqn:5a3}
    x_1x_2x_3 + x_2x_3x_4 + x_1x_2x_5 + x_1x_4x_5 + x_3x_4x_5\\
+ a(x_1x_2x_4 + x_1x_3x_4 + x_1x_3x_5 + x_2x_3x_5 + x_2x_4x_5)=0,
\end{multline}
with $a=\zeta_3$ or $\zeta_3^2$, and $\Aut(X)=\fA_5$ is generated by 
\begin{align*}
    \sigma&:(\mathbf x)\mapsto (x_2,x_3,x_4,x_5,x_1),\\
      \tau&:(\mathbf x)\mapsto (a^2x_2,ax_1,a^2x_4,ax_3,x_5).
\end{align*}
\end{lemm}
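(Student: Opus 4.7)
The plan is to follow the algorithm of Section~\ref{sect:aut}: extract a normal form from the symmetries, then pin down the automorphism group. Since $|\Sing(X)|=5$ is prime and $\Aut(X)$ fixes no singular point, its orbits on $\Sing(X)$ either form a single orbit of size $5$ or split as $(3,2)$. The intransitive case $(3,2)$ I would rule out by showing that any such configuration, by the linearity of the defining constraints, in fact admits the larger transitive action, so that no new projective-equivalence class arises.

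In the transitive case, combined with the fact that the pointwise stabilizer $H$ of all singular points sits inside the torus $\bG_m^4$, a Sylow argument yields an element $\sigma\in\Aut(X)$ of order $5$. After conjugating by an element of $\bG_m^4\rtimes\fS_5\subset\Aut(\bP^4,5)$, I may take $\sigma$ to be the cyclic shift $(x_1,\ldots,x_5)\mapsto(x_2,\ldots,x_5,x_1)$. Since $X$ is singular at the five basis points, its equation is supported on the ten monomials $x_ix_jx_k$ with distinct indices, and these split into two $\sigma$-orbits of size $5$ (the ``consecutive'' triples $\{i,i{+}1,i{+}2\}$ and the ``non-consecutive'' ones, modulo $5$). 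Writing the orbit sums as $f_1,f_2$, we have $X=\{f_1+af_2=0\}$ for some $a\in k$, after scaling by $\bG_m^4$ so that the orbit-$1$ coefficient equals $1$.

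The $\sA_2$ condition at $p_1$ requires the coefficient of $x_1$ in $f_1+af_2$, viewed as a quadratic form $q_1$ in $x_2,\ldots,x_5$, to have rank exactly $3$. A direct computation gives
\[
\det(q_1)\;=\;a^4-2a^3-a^2-2a+1\;=\;(a^2+a+1)(a^2-3a+1),
\]
with roots $a=\zeta_3,\zeta_3^2,(3\pm\sqrt{5})/2$. For the two real values, I would verify, by bringing $q_1$ to rank-$3$ normal form and inspecting the remaining cubic term (or via {\tt magma}), that the singularity at $p_1$ is of type $\sA_n$ with $n\ge 3$, contradicting the $5\sA_2$ hypothesis; this leaves $a\in\{\zeta_3,\zeta_3^2\}$, which matches \eqref{eqn:5a3}.

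For the computation of $\Aut(X)$, $\sigma$ lies in $\Aut(X)$ by construction. I would solve explicitly for scalars $(\xi_1,\ldots,\xi_5)$ making the substitution $x_i\mapsto \xi_i x_{\pi(i)}$, with $\pi=(1,2)(3,4)$, into a projective automorphism of $f_1+af_2$; the constraints (one equation per monomial) form a consistent linear system and determine $\tau$ up to a global scalar. Since $\langle(1,2,3,4,5),(1,2)(3,4)\rangle=\fA_5$ (the dihedral $\fD_5$ is maximal among proper subgroups of $\fA_5$ containing a $5$-cycle), this gives $\fA_5\subseteq\Aut(X)$. For the reverse inclusion, I would show that the pointwise stabilizer of $\{p_1,\ldots,p_5\}$ in $\Aut(X)$ is trivial by solving the linear-algebra constraints on diagonal scalars using the asymmetry of $f_1+\zeta_3 f_2$, and that no transposition in $\fS_5$ lifts to an automorphism, forcing $\rho(\Aut(X))\subseteq\fA_5$. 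The two main obstacles are the clean exclusion of the $(3,2)$ intransitive case and the verification that the two real roots of $\det(q_1)$ correspond to deeper singularities than $\sA_2$.
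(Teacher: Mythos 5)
Your proposal follows essentially the same route as the paper's own proof: normalize a lift of the $5$-cycle to the standard cyclic shift by torus conjugation, note that singularity at the five coordinate points confines the equation to the ten squarefree monomials, which split into the two $\sigma$-orbits $f_1,f_2$, pin down $a$ via the $\sA_2$-condition, and then determine $\Aut(X)$ by checking that no torus element and no transposition lifts while $\tau$ does. The paper compresses the middle step into one sentence; your added detail is correct and worth having. I verified that the Hessian of $q_1=x_2x_3+x_2x_5+x_4x_5+a(x_2x_4+x_3x_4+x_3x_5)$ has determinant proportional to $a^4-2a^3-a^2-2a+1=(a^2+a+1)(a^2-3a+1)$, and that for $a^2-3a+1=0$ the residual cubic $c_1=v_3v_4(v_2+v_5)+av_2v_5(v_3+v_4)$ vanishes on the kernel of $q_1$ (the kernel vector is proportional to $(2a-1,-a,a,1-2a)$, which satisfies $v_2+v_5=v_3+v_4=0$), so those points are $\sA_{\ge 3}$; for $a=\zeta_3^{\pm 1}$ the kernel vector is $(1,a,a,1)$ and $c_1=4a^2\neq 0$, giving genuine $\sA_2$-points. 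So the exclusion of the two real roots works exactly as you propose.

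The one step I would not accept as written is your disposal of the intransitive $(3,2)$ case. An automorphism group is what it is; a configuration with orbit type $(3,2)$ cannot be dismissed by saying it "admits the larger transitive action." What actually happens --- and what the paper relies on via the discussion around Lemma~\ref{lemm:5trans} --- is that a $(2,3)$-intransitive action forces $X$ into the normal form \eqref{eq:5Anform}, and there the two points of the $2$-orbit are never $\sA_2$: the Hessian of the quadratic form attached to $p_1$ is $-\tfrac{3}{16}a^2b^2$, which is nonzero whenever the singularities are isolated of type $\sA_n$ (so $p_1$, $p_2$ are nodes), while $b=0$ makes them $\sD_4$-points. Hence no $5\sA_2$ cubic arises in the intransitive case, and your Sylow/transitivity argument covers everything that remains. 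Replace the heuristic with this computation and the proof is complete.
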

\begin{proof}
By assumption, there exists an element $(1,2,3,4,5)\in\rho(G)$.  Up to conjugation by the torus action, we may assume that $G$ contains $\sigma$ as is given in the assertion. The condition that $X$ has 5$\sA_2$-singularities at $p_1,\ldots,p_5$ and that $X$ is left invariant by $\sigma$ forces $X$ to have equation \eqref{eqn:5a3}. 
By computation, one can check that $(1,2)\in\fS_5$ does not lift to $\Aut(X)$, and $G$ contains no torus action. On the other hand, $X$ is left invariant by $\tau$. It follows that $\Aut(X)=\fA_5$.
\end{proof}

In this case, the $\fA_5$-action on $X$ is not linearizable: the image $\iota(X)$ under the standard Cremona transformation is a smooth quadric threefold. This $\fA_5$-action on a smooth quadric is (conjecturally) not linearizable, by a work in progress \cite{antZ}.


\subsection*{Singularity type $2\sD_4+3\sA_1$}

Assume that $p_1, p_2$ are the $\sD_4$-points and $p_3, p_4, p_5$ the $\sA_1$-points. 
 Following the proof of Lemma~\ref{lemm:5trans},
 if $G$ does not fix a singular point, then $X$ is given by
$$
x_3x_4x_5=x_1x_2(x_3+x_4+x_5),
$$
with the $\fS_3$-action permuting $x_3,x_4,x_5$ and the infinite dihedral group $\fD_\infty=\bG_m(k)\rtimes C_2$ acting  on $x_1,x_2$. Applying the Cremona transformation $\iota$, based at the 5 singular points, we obtain the smooth quadric 
$$
x_1x_2=x_3x_4+x_4x_5+x_3x_5. 
$$
The $G$-action does not have fixed points, by our assumptions. 

We can apply the Burnside formalism. Consider
$$
G\simeq C_2^2 \times  \fS_3\subset \fD_\infty\times \fS_3, 
$$
where one generator of $C_2^2$ switches $x_1,x_2$ and 
the other multiplies $x_1,x_2$ by $-1$.
The first gives rise to the symbol
$$
(C_2, C_2\times \fS_3\actsfromleft k(S), (1)) \in \Burn_3(C_2^2),
$$
with residual action 
on the quadric $S$, given by $x_1^2=x_3x_4+x_4x_5+x_3x_5$. By \cite[Example 9.2]{TYZ-3}, this is an incompressible symbol, and the $G$-action on the quadric threefold is not linearizable.

\section{Six singularities}
\label{sect:six}

The relevant cases are
\begin{equation} 
\label{eqn:rela}
2\sA_2+4\sA_1,\quad 2\sA_3+4\sA_1.
\end{equation}
By Propositions \ref{prop:4A1+2A3} and \ref{prop:4A1+2A2}, we may assume that  the $4 \sA_1$-points are 
$$
p_1=[1:1:1:0:0],\qquad p_2=[-1:1:1:0:0],
$$
$$
p_3=[1:-1:1:0:0],\qquad p_4=[1:1:-1:0:0],
$$
and the two $\sA_2$ or $\sA_3$-points are 
$$
p_5=[0:0:0:1:0],\qquad p_6:=[0:0:0:0:1].
$$

\begin{prop} 
\label{prop:6}
Let $X$ be a cubic threefold with six singularities which are not all $\sA_1$-points. Then the $\Aut(X)$-action on $X$ is linearizable.    
\end{prop}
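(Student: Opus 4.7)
The plan is to exhibit, in both singularity types of \eqref{eqn:rela}, an $\Aut(X)$-invariant pair consisting of a plane $\Pi\subset X$ and a disjoint line $L\subset X$, and then invoke \cite[Lemma 1.1]{CTZ} which linearizes the action directly from this configuration.

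First, I would pin down the plane. Propositions \ref{prop:4A1+2A2} and \ref{prop:4A1+2A3} tell us that in each case there is a \emph{unique} plane $\Pi\subset X$ containing all four $\sA_1$-points $p_1,\dots,p_4$. Uniqueness forces $\Pi$ to be $\Aut(X)$-invariant, since $\Aut(X)$ permutes the nodes among themselves and any plane in $X$ through all four of them must coincide with $\Pi$.

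Next, I would take $L:=\overline{p_5p_6}$, the line through the two non-nodal singularities. Both $p_5$ and $p_6$ are singular points of the cubic $X$, so any line through either of them meets $X$ with multiplicity $\ge 2$ at that point; since $L$ passes through both, one has $L\cdot X\ge 4>3=\deg X$, forcing $L\subset X$. By Propositions \ref{prop:4A1+2A2}(2) and \ref{prop:4A1+2A3}(4), $L\cap\Pi=\varnothing$. Finally, $\{p_5,p_6\}$ is the $\Aut(X)$-invariant set of singularities of type $\sA_n$ with $n\ge2$, so $L$ itself is $\Aut(X)$-invariant.

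The remaining step is to invoke \cite[Lemma 1.1]{CTZ}: a finite group action on a threefold $X$ that preserves both a plane $\Pi\subset X$ and a disjoint line $L\subset X$ is linearizable. Geometrically, projection from $L$ produces an $\Aut(X)$-equivariant conic bundle $X\dashrightarrow \bP^2$, and the restriction of this projection to $\Pi$ is an isomorphism (since $L\cap\Pi=\varnothing$), so $\Pi$ is an $\Aut(X)$-invariant rational section. Standard trivialization of a conic bundle with a section, combined with the no-name lemma applied to the resulting $\bP^1$-bundle over $\bP^2$, yields an $\Aut(X)$-equivariant birational map to $\bP^1\times\bP^2$ with trivial action on the first factor, proving linearizability. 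No substantive obstacle arises; the content of the argument is the $\Aut(X)$-invariance of $\Pi$ and $L$, already supplied by Section~\ref{sect:defect}.
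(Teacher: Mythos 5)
Your proof is correct and follows the same route as the paper: both arguments produce the $\Aut(X)$-invariant plane $\Pi$ through the four nodes and the invariant line through the two worse singular points, verify disjointness via Propositions~\ref{prop:4A1+2A2} and~\ref{prop:4A1+2A3}, and conclude by \cite[Lemma 1.1]{CTZ}. Your added justifications (uniqueness of $\Pi$ forcing invariance, the intersection-multiplicity argument for $L\subset X$) are sound and merely make explicit what the paper leaves implicit.
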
 

\begin{proof} 
In both cases of \eqref{eqn:rela}, the four $\sA_1$-singularities are necessarily in a 
$G$-stable plane $\Pi\subset X$, and the two points with worse singularities define a $G$-stable line $l\subset X$, which is disjoint from $\Pi$. 
Arguing as in \cite[Lemma 1.1]{CTZ}, we obtain linearization.  
\end{proof} 

Normal forms in these two cases in \eqref{eqn:rela} are not needed for the study of linearizability. 
Nevertheless, we present them, for completeness.

\begin{prop}
Let $X$ be a cubic threefold singular at $p_1,\ldots,p_6.$
\begin{itemize}
    \item 
If $X$ has singularity type $2\sA_2+ 4\sA_1$, then $X$ is given by
\begin{multline}\label{eq:4A1+2A2eq}
(a_1x_4+a_2x_5)(x_1^2 - x_3^2)+(a_3x_4+
    a_4x_5)(
    x_2^2 - x_3^2)+\\+
    (a_5x_1+a_6x_2+a_7
    x_3)x_4x_5=0,
\end{multline}
for general $a_1,\ldots,a_7\in k$ satisfying
\begin{align}\label{eq:4A1+2A2condition}
&a_1^2a_6^2 + a_1a_3a_5^2 + a_1a_3a_6^2 - a_1a_3a_7^2 + a_3^2a_5^2=0,\\
    &a_2^2a_6^2 + a_2a_4a_5^2 + a_2a_4a_6^2 - a_2a_4a_7^2 + a_4^2a_5^2=0\nonumber.
\end{align}
 \item  If $X$ has singularity type $2\sA_3+4\sA_1$, then $X$ is given by
    \begin{align}
        \label{eq:2A3+4A1form}
    (x_1^2-x_3^2)x_4+  (x_2^2-x_3^2)x_5+x_3x_4x_5=0.
    \end{align}
    \end{itemize}
\end{prop}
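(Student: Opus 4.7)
The plan is to derive both normal forms by successively imposing the constraints the configuration and the singularity types place on the defining cubic $F$. By Propositions~\ref{prop:4A1+2A2} and~\ref{prop:4A1+2A3}, the four nodes lie on the unique plane $\Pi=\{x_4=x_5=0\}\subset X$, and the line $\ell=\{x_1=x_2=x_3=0\}$ through $p_5,p_6$ is disjoint from $\Pi$. Since $\Pi\subset X$, every monomial of $F$ contains $x_4$ or $x_5$; imposing singularity at $p_5$ and $p_6$ kills the $x_4^3,x_5^3$ terms and all monomials $x_4^2 x_i$ ($i\neq 4$) and $x_5^2 x_j$ ($j\neq 5$), leaving
$$
F=x_4\,Q_1(x_1,x_2,x_3)+x_5\,Q_2(x_1,x_2,x_3)+x_4x_5\,L(x_1,x_2,x_3),
$$
with $Q_1,Q_2$ quadratic and $L$ linear. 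The requirement that $p_1,\ldots,p_4$ be singular reduces to $Q_1(p_j)=Q_2(p_j)=0$ for $j=1,\ldots,4$; the 2-dimensional space of conics through $(1,1,1),(-1,1,1),(1,-1,1),(1,1,-1)$ in $\bP^2_{x_1,x_2,x_3}$ is spanned by $x_1^2-x_3^2$ and $x_2^2-x_3^2$, which, together with the choice $L=a_5x_1+a_6x_2+a_7x_3$, yields the form \eqref{eq:4A1+2A2eq}.

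For the $2\sA_2+4\sA_1$-case, the conditions on the parameters are that the Hessian of $F$ at each of $p_5,p_6$ has rank $3$. In the chart $x_4=1$, the Hessian at $p_5$ is of block form $\left(\begin{smallmatrix}A&B\\B^\top&0\end{smallmatrix}\right)$ with $A=\mathrm{diag}(a_1,a_3,-(a_1+a_3))$ and $B=\tfrac12(a_5,a_6,a_7)^\top$; applying the Schur complement $\det\left(\begin{smallmatrix}A&B\\B^\top&0\end{smallmatrix}\right)=-\det(A)\,B^\top A^{-1}B$ and clearing denominators gives the first relation in \eqref{eq:4A1+2A2condition}, and the second is the symmetric condition at $p_6$ obtained by swapping $x_4\leftrightarrow x_5$.

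For the $2\sA_3+4\sA_1$-case, I further impose that the cubic term of $F$ at $p_5$, and symmetrically at $p_6$, vanishes along the 1-dimensional kernel of the quadratic part, so that the singularity is at least $\sA_3$. Combined with the stabilizer of the configuration -- independent scalings of $x_4,x_5$ and a residual rescaling on $(x_1,x_2,x_3)$ -- these conditions cut out a single orbit, represented by \eqref{eq:2A3+4A1form}. To verify that \eqref{eq:2A3+4A1form} has the claimed $\sA_3$-type at $p_5$, it suffices to set $u_3=\sqrt{1+x_5}\bigl(x_3-\tfrac{x_5}{2(1+x_5)}\bigr)$ and $u_5=\tfrac{x_5}{2}+x_2^2$ in the affine chart $x_4=1$ and use the identity $x_5x_2^2+\tfrac{x_5^2}{4}=\bigl(\tfrac{x_5}{2}+x_2^2\bigr)^2-x_2^4$ to bring the equation into the analytic normal form $x_1^2-u_3^2+u_5^2-x_2^4+(\text{h.o.t.})$, with the symmetric statement at $p_6$. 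The main obstacle is the uniqueness step in the $2\sA_3+4\sA_1$-case -- checking that the parameter conditions together with the configuration stabilizer really carve out a single orbit -- but this can also be deduced directly from the uniqueness up to isomorphism of such cubics in \cite{viktorova}.
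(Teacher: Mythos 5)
Your first bullet is correct and follows the paper's own route: the paper also obtains \eqref{eq:4A1+2A2eq} as the solution of the linear conditions imposed by singularity at $p_1,\ldots,p_6$, and derives \eqref{eq:4A1+2A2condition} by requiring the quadratic part of $F$ at $p_5$ and at $p_6$ to be degenerate. Your Schur-complement evaluation with $A=\mathrm{diag}(a_1,a_3,-(a_1+a_3))$ and $B=\tfrac12(a_5,a_6,a_7)^\top$ reproduces exactly the two stated polynomials (up to the overall factor $\tfrac14$), so this part is fine and in fact more explicit than the paper, which leaves the determinant computation implicit.

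For the second bullet the routes genuinely diverge, and yours has an unexecuted step. The paper does not impose the $\sA_{\geq 3}$ condition analytically. Instead it invokes Proposition~\ref{prop:4A1+2A3}: a $2\sA_3+4\sA_1$ cubic contains, besides the plane of the four nodes, two planes through each $\sA_3$-point, each containing two nodes. Containment of these planes is a \emph{linear} condition on the $a_i$ (after a suitable labeling of the nodes it reads $a_2=a_3=0$); substituting into \eqref{eq:4A1+2A2condition} then collapses the quadratic conditions to $a_1a_6=a_4a_5=0$, irreducibility forces $a_5=a_6=0$, isolatedness of the singular locus forces $a_7\neq 0$, and rescaling $x_4,x_5$ gives \eqref{eq:2A3+4A1form} with no appeal to an external uniqueness theorem. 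Your alternative --- impose vanishing of the cubic term on the kernel of each degenerate Hessian and argue that the resulting locus is a single orbit of the $\bG_m^2$-stabilizer --- is sound in principle, but the single-orbit claim is precisely the content of the statement and you neither solve the resulting equations nor verify the orbit count; the fallback to ``uniqueness up to isomorphism in \cite{viktorova}'' is not something the paper relies on, and even granting it you would still owe the existence half, i.e.\ the verification that \eqref{eq:2A3+4A1form} has the claimed singularities at the six points. Your local computation at $p_5$ does address that half and reaches the right conclusion (corank one, and the restriction of $f$ to $x_1=u_3=u_5=0$ is $-x_2^4+O(x_2^6)$), though the displayed expression is not yet literally $x_1^2-u_3^2+u_5^2-x_2^4$: terms such as $-x_5^3/\bigl(4(1+x_5)\bigr)$ still contribute $u_5^3$ and mixed monomials that must be absorbed by a further coordinate change before invoking the normal form. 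To close the argument, either carry out your elimination in full or replace it by the five-planes reduction above.
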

\begin{proof}
    Singularities at $p_1,\ldots,p_6$ impose linear conditions on the vector space $\rH^0(\bP^4,\cO_\bP^4(3)).$ In particular, every cubic threefold singular at $p_1,\ldots,p_6$ is of the form ~\eqref{eq:4A1+2A2eq}, with general parameters $a_1,\ldots, a_7$. Assume that $p_5, p_6$ are $\sA_2$-points. This implies  that the quadratic terms locally at $x_4=1$ and $x_5=1$ define a degenerate quadratic form in four variables. This gives the nonlinear conditions \eqref{eq:4A1+2A2condition}. A general solution to this system of equations in $a_1,\ldots,a_7$  defines a cubic with $2\sA_2+4\sA_1$-singularities via ~\eqref{eq:4A1+2A2eq}.

    If $X$ has $2\sA_3+4\sA_1$-singularities, from Proposition~\ref{prop:4A1+2A3}, we know that  it contains the five planes spanned by points 
    $$
        \Pi_1\supset\{p_2,p_3,p_5\},\quad\Pi_2\supset\{p_1,p_4,p_5\},\quad\Pi_3\supset\{p_1,p_2,p_3,p_4\},
        $$
        $$
        \Pi_4\supset\{p_1,p_2,p_6\},\quad\Pi_5\supset\{p_3,p_4,p_6\}.
   $$
   This imposes further linear conditions $a_2=a_3=0.$ Substituting into \eqref{eq:4A1+2A2condition}, we also have $a_1a_6=a_4a_5=0$. When $a_1$ or $a_4=0$, the cubic will be reducible, thus $a_5=a_6=0.$ Moreover, $a_7\ne 0$, since otherwise $X$ has  nonisolated singularities. By scaling $x_4$ and $x_5,$ we obtain the form ~\eqref{eq:2A3+4A1form}.
\end{proof}

\bibliographystyle{plain}
\bibliography{sing-cube}

\begin{thebibliography}{10}

\bibitem{All}
D.~Allcock.
\newblock The moduli space of cubic threefolds.
\newblock {\em J. Algebr. Geom.}, 12(2):201--223, 2003.

\bibitem{ACT}
D.~Allcock, J.~A. Carlson, and D.~Toledo.
\newblock {\em The moduli space of cubic threefolds as a ball quotient}, volume
  985 of {\em Mem. Am. Math. Soc.}
\newblock Providence, RI: American Mathematical Society (AMS), 2011.

\bibitem{Arnold}
V.~I. Arnold, V.~V. Goryunov, O.~V. Lyashko, and V.~A. Vasil'ev.
\newblock {\em Singularity theory. {I}}.
\newblock Springer-Verlag, Berlin, english edition, 1998.
\newblock Translated from the 1988 Russian original by A. Iacob.

\bibitem{BogPro}
F.~Bogomolov and Yu. Prokhorov.
\newblock On stable conjugacy of finite subgroups of the plane {C}remona group,
  {I}.
\newblock {\em Cent. Eur. J. Math.}, 11(12):2099--2105, 2013.

\bibitem{CTZcub}
I.~Cheltsov, Yu. Tschinkel, and Zh. Zhang.
\newblock Equivariant geometry of singular cubic threefolds.
\newblock {\em Forum Math. Sigma}, 13:Paper No. e9, 52, 2025.

\bibitem{CTZ}
I.~Cheltsov, Yu. Tschinkel, and Zh. Zhang.
\newblock Equivariant geometry of the {S}egre cubic and the {B}urkhardt
  quartic.
\newblock {\em Selecta Math. (N.S.)}, 31(1):Paper No. 7, 36, 2025.

\bibitem{CTT}
T.~Ciurca, Sh. Tanimoto, and Yu. Tschinkel.
\newblock Intermediate {J}acobians and linearizability, 2024.
\newblock To appear: Kyoto Journ. of Mathematics. {\tt arXiv:2403.06047}.

\bibitem{HKTsmall}
B.~Hassett, A.~Kresch, and Yu. Tschinkel.
\newblock Symbols and equivariant birational geometry in small dimensions.
\newblock In {\em Rationality of varieties}, volume 342 of {\em Progr. Math.},
  pages 201--236. Birkh\"{a}user/Springer, Cham, [2021] \copyright 2021.

\bibitem{HT-determinant}
B.~Hassett and Yu. Tschinkel.
\newblock Flops on holomorphic symplectic fourfolds and determinantal cubic
  hypersurfaces.
\newblock {\em J. Inst. Math. Jussieu}, 9(1):125--153, 2010.

\bibitem{HT-quad}
B.~Hassett and Yu. Tschinkel.
\newblock Rationality of complete intersections of two quadrics over nonclosed
  fields.
\newblock {\em Enseign. Math.}, 67(1-2):1--44, 2021.
\newblock With an appendix by Jean-Louis Colliot-Th\'{e}l\`ene.

\bibitem{HT-intersect}
B.~Hassett and Yu. Tschinkel.
\newblock Equivariant geometry of odd-dimensional complete intersections of two
  quadrics.
\newblock {\em Pure Appl. Math. Q.}, 18(4):1555--1597, 2022.

\bibitem{K-book}
J.~Koll\'{a}r.
\newblock {\em Rational curves on algebraic varieties}, volume~32 of {\em
  Ergebnisse der Mathematik und ihrer Grenzgebiete. 3. Folge. A Series of
  Modern Surveys in Mathematics}.
\newblock Springer-Verlag, Berlin, 1996.

\bibitem{BnG}
A.~Kresch and Yu. Tschinkel.
\newblock Equivariant birational types and {B}urnside volume.
\newblock {\em Ann. Sc. Norm. Super. Pisa Cl. Sci. (5)}, 23(2):1013--1052,
  2022.

\bibitem{MV}
L.~Marquand and S.~Viktorova.
\newblock The defect of a cubic threefold and applications to intermediate
  {J}acobian fibrations.
\newblock {\em Int. Math. Res. Not. IMRN}, (16):Paper No. rnaf244, 18, 2025.

\bibitem{antZ}
A.~Pinardin and Zh. Zhang.
\newblock Birational solidity of quadric threefolds with {$\mathfrak
  A_5$}-actions, 2025.
\newblock in preparation.

\bibitem{TYZ-3}
Yu. Tschinkel, K.~Yang, and Zh. Zhang.
\newblock Equivariant birational geometry of linear actions.
\newblock {\em EMS Surv. Math. Sci.}, 11(2):235--276, 2024.

\bibitem{viktorova}
S.~Viktorova.
\newblock On the classification of singular cubic threefolds, 2023.
\newblock To appear: Trans. Am. Math. Soc. {\tt arXiv:2304.10452}.

\bibitem{Wall99}
C.~T.~C. Wall.
\newblock Sextic curves and quartic surfaces with higher singularities.
\newblock (1999), 32 pp.

\end{thebibliography}

\end{document}